\declaretheoremstyle[headfont=\normalfont]{normalhead}
\newtheorem{lemma}{Lemma}[section]
\newtheorem{theorem}[lemma]{Theorem}
\newtheorem{proposition}[lemma]{Proposition}
\newtheorem{corollary}[lemma]{Corollary}
\newtheorem{definition}[lemma]{Definition}
\newtheorem{maintheorem}{Theorem}
\newcommand{\R}{\mathbb{R}}
\newcommand{\Val}{\mathrm{Val}}
\newcommand{\VConv}{\mathrm{VConv}}
\newcommand{\Conv}{\mathrm{Conv}}
\newcommand{\vol}{\mathrm{vol}}
\newcommand{\vsupp}{\mathrm{v}\text{-}{supp }}
\newcommand{\supp}{\mathrm{supp\ }}
\newcommand{\dom}{\mathrm{dom }}
\newcommand{\reg}{\mathrm{reg}}
\newcommand{\epi}{\mathrm{epi}}
\newcommand{\res}{\mathrm{res}}
\newcommand{\diam}{\mathrm{diam}}
\newcommand{\GWVConv}{\overline{\mathrm{GW}}}
\DeclareMathOperator{\GW}{GW}
\author{Jonas Knoerr}
\title{The support of dually epi-translation invariant valuations on convex functions}
\date{}
\newcommand{\Addresses}{{
		\bigskip
		\footnotesize
		
		Jonas Knoerr, \textsc{Institut für Mathematik, Goethe-Universität Frankfurt am Main, Robert-Mayer-Str. 10, 60054 Frankfurt, Germany}\par\nopagebreak
		\textit{E-mail address}: \texttt{knoerr@math.uni-frankfurt.de}
		
		\medskip
	}}
	\def\blfootnote{\xdef\@thefnmark{}\@footnotetext}
\begin{document}
\maketitle
\begin{abstract}
	We study dually epi-translation invariant valuations on cones of convex functions containing the space of finite-valued convex functions. The existence of a homogeneous decomposition is used to associate a distribution to every valuation of this type similar to the Goodey-Weil embedding for translation invariant valuations on convex bodies. The relation between the valuation and its associated distribution is used to establish a notion of support for valuations. As an application, we show that there are no $\mathrm{SL}(n)$ or translation invariant valuations except constant valuations in this class and we discuss which valuations on finite-valued convex functions can be extended to larger cones. In addition, we examine some topological properties of spaces of valuations with compact support.
\end{abstract}

\blfootnote{2020 \emph{Mathematics Subject Classification}. 52B45, 26B25, 53C65.\\
	\emph{Key words and phrases}. Convex function, valuation on functions, Goodey-Weil embedding.\\
	Partially supported by DFG grant BE 2484/5-2.}
\tableofcontents

\section{Introduction}
\subsection{General background}
	Let $V$ be a finite dimensional real vector space and let $\mathcal{K}(V)$ denote the space of convex, compact subsets of $V$. Then a valuation on $\mathcal{K}(V)$ with values in an abelian semigroup $(F,+)$ is a map $\mu:\mathcal{K}(V)\rightarrow F$ that satisfies 
	\begin{align*}
		\mu(K)+\mu(L)=\mu(K\cup L)+\mu(K\cap L)
	\end{align*}
	for all $K,L\in\mathcal{K}(V)$ such that $K\cup L\in \mathcal{K}(V)$. Valuations on convex bodies are a classical part of convex geometry and many structural results have been established (for an overview see for example \cite{Schneider:convex_bodies_Brunn-Minkowski} Chapter 6). 
	For a Hausdorff real topological vector space $F$ let $\Val(V,F)$ denote the space of all valuations $\mu:\mathcal{K}(V)\rightarrow F$ that are translation invariant and continuous with respect to the Hausdorff metric. A valuation $\mu\in\Val(V,F)$ is called \emph{$k$-homogeneous}, or homogeneous of degree $k\in\R$, if $\mu(tK)=t^k\mu(K)$ for all $K\in\mathcal{K}(V)$, $t\ge 0$. Let $\Val_k(V,F)$ denote the subspace of $k$-homogeneous valuations. Then we have the following homogeneous decomposition:
	\begin{theorem}[McMullen \cite{McMullen:Euler_type_McMullen_decomposition}]
		\label{theorem_McMullen_decomposition_for_Val}
		\begin{align*}
			\Val(V,F)=\bigoplus\limits_{i=0}^{\dim V}\Val_i(V,F)
		\end{align*}
	\end{theorem}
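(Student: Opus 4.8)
The plan is to recover the homogeneous components of $\mu\in\Val(V,F)$ from the behaviour of $\mu$ under dilations, and the whole argument rests on one lemma, which I expect to be the main obstacle: writing $n=\dim V$, for every $K\in\mathcal K(V)$ the map $t\mapsto\mu(tK)$ on $[0,\infty)$ is the restriction of an $F$-valued polynomial of degree at most $n$. Note that only this single-variable polynomiality under dilations is needed, not the full polynomiality of $\mu$ under arbitrary Minkowski combinations. Once it is available, everything else is linear algebra in the finitely many ``Taylor coefficients'' together with bookkeeping via the valuation property.

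To prove the lemma I would proceed in three stages. First for boxes: fix a basis and write $B=\sum_{i=1}^n[0,\lambda_i]e_i$; cutting $B$ by the hyperplane $\{x_i=a\}$ when $\lambda_i=a+b$ expresses $B$ as a union of two boxes whose intersection is a box of zero $i$-th edge length, so by translation invariance and the valuation property the function $\lambda_i\mapsto\mu(B)$ (with the other edge lengths fixed) differs from its value at $\lambda_i=0$ by a solution of Cauchy's functional equation. Continuity of $\mu$ then forces this difference to be linear in $\lambda_i$; iterating over $i=1,\dots,n$ shows that $\mu(B)$ is multilinear in $(\lambda_1,\dots,\lambda_n)$, hence $t\mapsto\mu(tB)$ has degree at most $n$. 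Second, pass to arbitrary polytopes by dissecting them into simplices and iterating the valuation property; this is the classical combinatorial core of McMullen's argument \cite{McMullen:Euler_type_McMullen_decomposition} (see also \cite{Schneider:convex_bodies_Brunn-Minkowski}) and is the genuinely technical point. Third, pass to arbitrary $K$ by choosing polytopes $P_k\to K$: each $t\mapsto\mu(tP_k)$ equals its Lagrange interpolant through the values at fixed nodes $0<t_0<\dots<t_n$, and since $\mu$ and the dilations $K\mapsto t_jK$ are continuous we get $\mu(t_jP_k)\to\mu(t_jK)$; the interpolants therefore converge coefficientwise, and comparing with $\mu(tP_k)\to\mu(tK)$ shows $t\mapsto\mu(tK)$ is a polynomial of degree at most $n$.

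Granting the lemma, expand $\mu(tK)=\sum_{i=0}^n t^i\mu_i(K)$ for $t\ge 0$. Since the Vandermonde matrix in the nodes $t_0,\dots,t_n$ is invertible, each $\mu_i(K)$ is a fixed real-linear combination of $\mu(t_0K),\dots,\mu(t_nK)$; this makes $\mu_i:\mathcal K(V)\to F$ well defined, and as a finite linear combination of the continuous maps $K\mapsto\mu(t_jK)$ it is continuous. Setting $t=1$ gives $\mu=\sum_{i=0}^n\mu_i$. Each $\mu_i$ lies in $\Val_i(V,F)$: comparing coefficients of $t^i$ in $\mu(t(sK))=\mu((ts)K)=\sum_i t^is^i\mu_i(K)$ yields $\mu_i(sK)=s^i\mu_i(K)$ for $s\ge 0$; $\mu(t(K+x))=\mu(tK+tx)=\mu(tK)$ gives $\mu_i(K+x)=\mu_i(K)$; and if $K\cup L\in\mathcal K(V)$ then $tK\cup tL=t(K\cup L)\in\mathcal K(V)$ and $tK\cap tL=t(K\cap L)$, so expanding the valuation identity for $tK,tL$ in powers of $t$ gives $\mu_i(K\cup L)+\mu_i(K\cap L)=\mu_i(K)+\mu_i(L)$. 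Hence $\Val(V,F)=\sum_{i=0}^n\Val_i(V,F)$.

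Finally, the sum is direct: if $\sum_{i=0}^n\nu_i=0$ with $\nu_i\in\Val_i(V,F)$, then for every $K$ and every $t\ge 0$ we have $0=\sum_i\nu_i(tK)=\sum_i t^i\nu_i(K)$, and evaluating at the nodes $t_0,\dots,t_n$ and inverting the Vandermonde system forces $\nu_i(K)=0$ for all $i$ and all $K$. This completes the proof. The only delicate ingredient is the dilation polynomiality, and within it the reduction from boxes to general polytopes; the rest is, as indicated, routine.
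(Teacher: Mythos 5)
The paper does not prove this statement at all --- it is quoted directly from McMullen and used as a black box --- so there is no internal proof to compare against. Your overall architecture is exactly the classical one: establish that $t\mapsto\mu(tK)$ is an $F$-valued polynomial of degree at most $n=\dim V$, then extract the homogeneous components by Vandermonde inversion. Your second and third paragraphs (extraction of the $\mu_i$, verification that each lies in $\Val_i(V,F)$, directness of the sum) and your box computation and polytope-to-body approximation via Lagrange interpolation at fixed nodes are all correct, and they go through for an arbitrary Hausdorff topological vector space $F$.

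The genuine gap is the middle stage of your lemma, the passage from boxes to general polytopes, and the route you sketch would fail. Dissecting a polytope into simplices reduces the problem to simplices (and even that step silently uses inclusion--exclusion over a non-convex union, which requires knowing that a continuous translation invariant valuation is fully additive --- Groemer's extension theorem, itself not free). But your stage one gives polynomiality only for boxes, and boxes tell you nothing about simplices: a simplex is not a union of boxes, and a box is a union of simplices only ``in the wrong direction''; the two halves of a parallelotope are reflections, not translates, of one another, so translation invariance does not transfer the information. The actual argument of McMullen does not go through boxes at all: one proves directly, by induction on the dimension, that $\mu(\lambda_1P_1+\dots+\lambda_kP_k)$ is polynomial for polytopes $P_i$, using the face decomposition of dilates and Minkowski sums of polytopes. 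Deferring precisely this step to the reference is deferring the entire content of the theorem being proved. To repair the proposal you should either drop the box stage and carry out the inductive polytope argument, or accept that the lemma for polytopes is being cited rather than proved.
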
 
	In other words, $\mu(tK)$ is a polynomial in $t\ge 0$ for $\mu\in\Val(V,F)$ and $K\in\mathcal{K}(V)$, and the degree of this polynomial is bounded by the dimension of $V$. Polarizing this polynomial for a $k$-homogeneous valuation, one obtains the notion of mixed valuations. For example, by polarizing the volume we obtain functionals known as mixed volumes, which play an important role in convex geometry.\\ 
	
	In this paper we are interested in a functional version of this homogeneous decomposition for a special class of valuations on convex functions and additional structures derived from this decomposition. \\
	A valuation on some class of real-valued functions $X$ with values in some abelian semi-group $(F,+)$, is a map $\mu:X\rightarrow F$ satisfying 
	\begin{align*}
		\mu(f)+\mu(h)=\mu(\max(f,h))+\mu(\min(f,h))
	\end{align*}
	for all $f,h\in X$ such that the pointwise maximum $\max(f,h)$ and minimum $\min(f,h)$ belong to $X$. \\
	To see the connection to the notion of valuations on convex bodies, assume that the functions in $X$ are defined on some set $A$. Consider the \emph{epi-graph} $\epi(f):=\{(a,t)\in A\times\R: f(a)\le t\}$ for $f\in X$. Then
	\begin{align*}
		&\epi(\max(f,h))=\epi(f)\cap \epi(h), && \epi(\min(f,h))=\epi(f)\cup \epi(h),
	\end{align*}
	for all $f,h\in X$. Thus a valuation on a space of functions can also be considered as a valuation on their epi-graphs.\\
	
	Although the study of valuations on functions was started only recently, a number of classification results have been established for different spaces of functions, for example Sobolev-spaces \cite{Ludwig:Fisher_information_valuations,Ludwig:valuations_sobolev,Ma:valuations_sobolev}, $\mathrm{L}^p$-spaces  \cite{Ludwig:covariance_matrices_valuations,Ober:Minkowski_valuations_on_Lq_spaces,Tsang:valuations_Lp,Tsang:minkowski_val_Lp}, quasi-concave functions \cite{Bobkov_Colesanti:quermassintegrals_quasi-concave_functions,Colesanti_Lombardi:valuations_quasi-concave,Colesanti_Lombardi_Parapatits:translation_invariant_valuations_quasi-concave}, Orlicz-spaces \cite{Kone:valuations_orlicz}, and functions of bounded variation \cite{Wang:semivaluations_bounded_variations}. Due to their intimate relation to convex bodies, valuations on convex functions have been the object of intense research:
	Monotone or $\mathrm{SL}(n)$-invariant valuations were classified in  \cite{Cavallina_Colesanti:monotone_valuations_convex_functions,Colesanti_Ludwig_Mussnig:minkowski,Colesanti_Ludwig_Mussnig:Valuations_convex_functions,Mussnig:SLn_invariant_super_coercive,Mussnig:volume_polar_volume_euler} and a connection between certain valuations on convex functions  and translation invariant valuations on convex bodies was studied by Alesker in \cite{Alesker:valuations_convex_functions_Monge-Ampere} with the help of Monge-Amp\`ere operators.\\
	
	Let us introduce the general framework for this paper. We will be interested in valuations on convex functions on a finite dimensional real vector space $V$. The spaces of functions under consideration are subspaces of
	\begin{align*}
		\Conv(V):=\{f:V\rightarrow(-\infty,\infty] \ : \ f \text{ convex, lower semi-continuous and proper}\},
	\end{align*}
	where a function $f:V\rightarrow(-\infty,\infty]$ is called \emph{proper} if it is not identical to $+\infty$. This space carries a natural topology, the topology of \emph{epi-convergence}, which we will recall in Section \ref{section_topology_convex_functions}. 
	Especially valuations on the subspaces of finite or super-coercive convex functions 
	\begin{align*}
		\Conv(V,\R)&:=\left\{f\in\Conv(V) \ : \ f<+\infty\right\},\\
		\Conv_{s.c.}(V)&:=\left\{f\in\Conv(V) \ :\ \lim\limits_{x\rightarrow\infty}\frac{f(x)}{|x|}=\infty\right\}
	\end{align*}
	have been of interest, see for example \cite{Colesanti_Ludwig_Mussnig:Hessian_valuations,Colesanti_Ludwig_Mussnig:Valuations_convex_functions,Colesanti_Ludwig_Mussnig:homogeneous_decomposition}. From a valuation theoretic point of view, these two spaces are identical: It was shown in \cite{Colesanti_Ludwig_Mussnig:Hessian_valuations} that the Legendre transform establishes a continuous involution between the two spaces that exchanges minimum and maximum. Thus any continuous valuation on one space induces a unique continuous valuation on the other.\\
	Following the terminology in \cite{Colesanti_Ludwig_Mussnig:homogeneous_decomposition}, let us call a valuation $\mu:\Conv_{s.c.}(V)\rightarrow F$ \emph{translation invariant} if
	\begin{align*}
		\mu(f\circ \tau)=\mu(f) \quad\forall f\in \Conv_{s.c.}(V)
	\end{align*}
	for all translations $\tau:V\rightarrow V$. We will call it \emph{vertically translation invariant} if
	\begin{align*}
		\mu(f+c)=\mu(f) \quad \forall f\in \Conv_{s.c.}(V), c\in\R,
	\end{align*}
	and \emph{epi-translation invariant} if $\mu$ is both translation and vertically translation invariant. These invariance properties are equivalent to the invariance of the valuation with respect to translations of the epigraph of its argument in $V\times\R$. In the dual setting (i.e. considering the composition of an epi-translation invariant valuation with the Legendre transform), this leads to the following notion:\\
	A valuation $\mu:\Conv(V,\R)\rightarrow F$ is called \emph{dually epi-translation invariant}, if
	\begin{align*}
		\mu(f+\lambda+c)=\mu(f) \quad \forall f\in\Conv(V,\R), \lambda\in V^*, c\in\R.
	\end{align*}
	Real-valued valuations on $\Conv(V,\R)$ invariant under the addition of linear functionals were considered by Alesker in \cite{Alesker:valuations_convex_functions_Monge-Ampere}. The examples constructed by him are extensions of functionals of the form
	\begin{align*}
		f\mapsto \int_{\R^n} B(x)\det( H_f(x)[k], A_{1}(x),...,A_{n-k}(x)) dx\quad \text{for }f\in\Conv(\R^n,\R)\cap C^2(\R^n).
	\end{align*}
	Here $H_f$ denotes the usual Hessian of a twice differentiable function $f$, $B\in C_c(\R^n)$, $A_1,...,A_k\in C_c(\R^n,\R^{n\times n})$ are compactly supported functions with values in the space of symmetric $n\times n$-matrices, and $\det$ denotes the mixed determinant of $n$ symmetric matrices ($H_f(x)$ is taken with multiplicity $k$ in the example above). From this representation it is easy to deduce that the valuations Alesker used in his proofs are actually dually epi-translation invariant, so his results also apply to the type of valuations considered in this paper. Similar functionals were also examined in \cite{Colesanti_Ludwig_Mussnig:Hessian_valuations} by Colesanti, Ludwig and Mussnig.\\
	Let us introduce another example: Let $\nu$ be a compactly supported signed Radon measure on $V$ with the property $\int_{V}d\nu=\int_{V}\lambda d\nu=0$ for all $\lambda\in V^*$. Then
	\begin{align*}
		\mu(f):=\int_{V}fd\nu \quad \forall f\in\Conv(V,\R)
	\end{align*}
	defines a dually epi-translation invariant valuation. This class includes valuations such as
	\begin{align*}
		\mu_1(f)&:=f(x)+f(-x)-2f(0) &&\text{for } f\in \Conv(V,\R),\\
		\mu_2(f)&:=\frac{1}{\vol_{n-1}(S^{n-1})}\int_{S^{n-1}} f d\sigma -f(0) && \text{for } f\in\Conv(\R^n,\R),
	\end{align*}
	where $x\in V\setminus \{0\}$ is an arbitrary point and $\sigma$ denotes the usual measure on the unit sphere.\\

\subsection{Results of the present paper}
	We will consider valuations with values in some Hausdorff topological vector space $F$ on certain cones $C\subset\Conv(V)$ containing the space $\Conv(V,\R)$ of finite-valued convex functions. Let $\VConv(C;V,F)$ denote the space of continuous, dually epi-translation invariant valuations on $C$ with values in $F$ (see Section \ref{section_valuations_convex_functions} for the precise definition). A valuation $\mu\in\VConv(C;V,F)$ will be called \emph{$k$-homogeneous} if $\mu(tf)=t^k\mu(f)$ for all $f\in C$ and $t> 0$, and the subspace of $k$-homogeneous valuations will be denoted by $\VConv_k(C;V,F)$. Then we have the following decomposition:
	\begin{maintheorem}
		\label{maintheorem_homogeneous_decomposition}
		If $C\subset\Conv(V)$ is a cone containing $\Conv(V,\R)$, then
		\begin{align*}
		\VConv(C;V,F)=\bigoplus\limits_{i=0}^{\dim V}\VConv_i(C;V,F).
		\end{align*}
	\end{maintheorem}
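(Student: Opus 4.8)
The plan is to establish the decomposition first for the smallest admissible cone $C=\Conv(V,\R)$ and then to transfer it to an arbitrary cone $C\supseteq\Conv(V,\R)$ by a soft approximation argument; throughout set $n=\dim V$. For the base case the Legendre transform $\mathcal L$ does the bookkeeping: composing $\mu\in\VConv(\Conv(V,\R);V,F)$ with $\mathcal L$ gives a continuous valuation $\mu\circ\mathcal L^{-1}$ on $\Conv_{s.c.}(V^*)$, and since adding an affine function to $f$ corresponds to translating the epigraph of $f^*$, while $(tf)^*=t\,f^{*}(\cdot/t)$ is the epi-homothety of $f^*$ by $t$, the valuation $\mu\circ\mathcal L^{-1}$ is epi-translation invariant with the same grading by degree of homogeneity. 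For $F=\R$ this is the homogeneous decomposition theorem of Colesanti--Ludwig--Mussnig \cite{Colesanti_Ludwig_Mussnig:homogeneous_decomposition}; for general $F$ one may run the same proof, whose only non-elementary input is McMullen's Theorem~\ref{theorem_McMullen_decomposition_for_Val}, valid for an arbitrary Hausdorff topological vector space $F$ (or, when $F$ is locally convex, simply compose with continuous linear functionals and appeal to the scalar case). The single fact I carry over is: for every $f\in\Conv(V,\R)$ the map $t\mapsto\mu(tf)$ is, on $t\ge 0$, the restriction of an $F$-valued polynomial of degree at most $n$.

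\emph{Polynomiality on $C$.} Fix now a cone $C\supseteq\Conv(V,\R)$ and $\mu\in\VConv(C;V,F)$. The crucial point is that $\Conv(V,\R)$ is dense in $\Conv(V)$ — hence in $C$ — for the topology of epi-convergence: given $f\in C$, the infimal convolutions $f_k:=f\,\square\,(k\|\cdot\|)$ are finite-valued, convex and Lipschitz once $k$ exceeds the slope of an affine minorant of $f$, they increase pointwise to $f$, and monotone pointwise convergence of convex functions to a lower semicontinuous limit is epi-convergence (see Section~\ref{section_topology_convex_functions}). Since $f\mapsto tf$ is continuous on $\Conv(V)$ — it is induced on epigraphs by the linear automorphism $(x,s)\mapsto(x,ts)$ of $V\times\R$ — and $\mu$ is continuous, $\mu(tf)=\lim_k\mu(tf_k)$ for every $t>0$. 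By the base case each $t\mapsto\mu(tf_k)$ is a polynomial of degree $\le n$, so if $0<t_0<\dots<t_n$ are fixed nodes with scalar Lagrange basis polynomials $L_0,\dots,L_n$, then $\mu(tf_k)=\sum_{j=0}^n\mu(t_jf_k)L_j(t)$; letting $k\to\infty$ and using uniqueness of limits in the Hausdorff space $F$ gives $\mu(tf)=\sum_{j=0}^n\mu(t_jf)L_j(t)$, again an $F$-valued polynomial of degree $\le n$. Note that this step uses nothing about $F$ beyond its being a Hausdorff topological vector space.

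\emph{The homogeneous components.} For $f\in C$ let $\mu_i(f)\in F$ be the coefficient of $t^i$ in $p_f(t):=\mu(tf)$; equivalently $\mu_i(f)=\sum_{j=0}^n c_j\,\mu(t_jf)$ for fixed reals $c_j$ depending only on the nodes. Evaluating $p_f$ at $t=1$ yields $\mu=\sum_{i=0}^n\mu_i$. Each $\mu_i$ is continuous, being a fixed real linear combination of the continuous maps $f\mapsto\mu(t_jf)$; it is dually epi-translation invariant because $\mu(t(f+\lambda+c))=\mu(tf+t\lambda+tc)=\mu(tf)$, so $p_{f+\lambda+c}=p_f$; it is a valuation because for $f,h\in C$ with $\max(f,h),\min(f,h)\in C$ one has $\max(tf,th)=t\max(f,h)$ and $\min(tf,th)=t\min(f,h)$ for $t>0$, whence the valuation equation for $\mu$ at $tf,th$ becomes the polynomial identity $p_f+p_h=p_{\max(f,h)}+p_{\min(f,h)}$; and it is $i$-homogeneous because $p_{sf}(t)=p_f(st)=\sum_j s^j\mu_j(f)t^j$ forces $\mu_i(sf)=s^i\mu_i(f)$. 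Finally the sum is direct: if $\mu=\sum_i\nu_i$ with $\nu_i\in\VConv_i(C;V,F)$, then $p_f(t)=\sum_i t^i\nu_i(f)$ identifies $\nu_i$ with $\mu_i$.

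The substance of the argument sits in the base case, imported from \cite{Colesanti_Ludwig_Mussnig:homogeneous_decomposition} and ultimately resting on McMullen's theorem; the passage to a general cone $C$ is elementary, using only the density of $\Conv(V,\R)$ in $C$ and the closedness of the space of $F$-valued polynomials of degree $\le n$ under pointwise limits. The point that deserves the most care is the compatibility with epi-convergence: one must check that $f\mapsto tf$ is continuous on $\Conv(V)$, that the regularizations $f\,\square\,(k\|\cdot\|)$ epi-converge to $f$ (which is where their monotonicity enters), and that restricting a continuous valuation on $C$ to $\Conv(V,\R)$ again gives a continuous valuation.
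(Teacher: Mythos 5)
Your proof is correct, but it reaches the theorem by a genuinely different route than the paper. The paper never passes through the Colesanti--Ludwig--Mussnig theorem on $\Conv(V,\R)$: it embeds $\VConv(C;V,F)$ into $\Val(V^*\times\R,F)$ via $T(\mu)[K]=\mu(h_K(\cdot,-1))$ (Theorem \ref{theorem_embedding VConv->Val(VxR)}), applies McMullen's decomposition there --- which a priori produces homogeneous components of degrees $0,\dots,n+1$ --- pulls them back through the injectivity of $T$ by the Vandermonde trick, and then kills the degree-$(n+1)$ component separately (Proposition \ref{proposition_n+1-degree=0}) using Lemma \ref{lemma_injectivity_characteristic_function} and the vanishing of $\Val_{n+1}(V^*,F)$. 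You instead take the full base case on $\Conv(V,\R)$ from \cite{Colesanti_Ludwig_Mussnig:homogeneous_decomposition} as given and bootstrap to an arbitrary cone $C$ by Pasch--Hausdorff approximation; this is legitimate (the approximation facts you need are exactly Proposition \ref{proposition_properties_LIpschitz_regularization}, your limit argument is sound since $F$-valued polynomials of bounded degree are closed under pointwise limits in a Hausdorff space, and your extraction of the components by interpolation at nodes $t_0<\dots<t_n$ is the same Vandermonde device the paper uses in Proposition \ref{proposition_McMullen_for_Conv(V)_with_n+1_degree}). Your route has the advantage of never seeing the spurious degree $n+1$, so no analogue of Proposition \ref{proposition_n+1-degree=0} is needed. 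What it costs is the dependence on the base case for vector-valued $F$: your Hahn--Banach fallback covers only locally convex $F$, whereas the theorem is stated for arbitrary Hausdorff topological vector spaces, so in full generality you must actually rerun the Colesanti--Ludwig--Mussnig argument --- the paper asserts this works ``with only minor modifications'' but deliberately avoids doing so, which is precisely the point of its self-contained proof via the embedding $T$.
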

	In \cite{Colesanti_Ludwig_Mussnig:homogeneous_decomposition}, this was proved by Colesanti, Ludwig and Mussnig for real-valued valuations on $\Conv(V,\R)$. The more general statement can be deduced from their proof with only minor modifications. Instead of repeating their arguments, we will present a slightly different proof in Section \ref{section_homogeneous_decomposition} using an embedding of $\VConv(C;V,F)$ into $\Val(V^*\times\R,F)$ using support functionals of convex bodies. \\
	
	Similar to the classical McMullen decomposition (and to \cite{Colesanti_Ludwig_Mussnig:homogeneous_decomposition} Theorem 23), this allows us to define the polarization of any homogeneous valuation $\mu\in \VConv_k(C;V,F)$ under the following regularity assumption on the cone: We will call a cone $C\subset\Conv(V)$ \emph{regular} if every function $f\in C$ satisfies $f<+\infty$ on some open subset in $V$. We obtain a symmetric functional $\bar{\mu}:C^k\rightarrow F$ which satisfies $\bar{\mu}(f,...,f)=\mu(f)$ for all $f\in C$ and which is a continuous, $1$-homogeneous and \emph{additive} valuation in each coordinate, i.e. it satisfies
	\begin{align*}
		\bar{\mu}(f+h,f_2,\dots,f_k)=\bar{\mu}(f,f_2,\dots,f_k)+\bar{\mu}(h,f_2,\dots,f_k)
	\end{align*} 
	for all $f,h, f_2,...,f_k\in C$. Goodey and Weil used the polarization of a valuation in $\Val_k(\R^n)$ to define a distribution on the $k$-fold product of the unit sphere in $\R^n$ (see \cite{Goodey_Weil:Distributions_and_valuations}). Alesker proved in \cite{Alesker:McMullenconjecture} that the support of this distribution is contained in the diagonal of this Cartesian product of spheres, which plays a crucial role in the proof of his Irreducibility Theorem (see \cite{Alesker:McMullenconjecture,Alesker:IrreducibilityThm}).\\
	Following the ideas of Goodey-Weil and Alesker, we establish the following version of the Goodey-Weil embedding for valuations on convex functions in Section \ref{section_Goodey-Weil-embedding}:
	\begin{maintheorem}
		\label{maintheorem_GW}
		Let $F$ be a locally convex vector space admitting a continuous norm, $C\subset\Conv(V)$ a regular cone containing $\Conv(V,\R)$. For every $\mu\in\VConv_k(C;V,F)$ there exists a unique distribution $\GWVConv(\mu)\in\mathcal{D}'(V^k,\bar{F})$ with compact support which satisfies the following property: If $f_1,...,f_k\in \Conv(V,\R)\cap C^\infty(V)$, then
		\begin{align*}
			\GWVConv(\mu)(f_1\otimes...\otimes f_k)=\bar{\mu}(f_1,...,f_k),
		\end{align*}
		where $\bar{\mu}$ is the polarization of $\mu$.\\
		Moreover, the support of $\GWVConv(\mu)$ is contained in the diagonal in $V^k$.
	\end{maintheorem}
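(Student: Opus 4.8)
The plan is to build $\GWVConv(\mu)$ directly as the unique distributional extension of the polarization $\bar\mu$, after re-expressing test functions as differences of smooth finite-valued convex functions. Since $F$ admits a continuous norm, I would first reduce to the scalar case: for every continuous linear functional $\ell$ on $F$ the composition $\ell\circ\mu$ again lies in $\VConv_k(C;V,\R)$, and once the theorem is known for such $\R$-valued valuations the continuous norm on $F$ lets the family $\{\GWVConv(\ell\circ\mu)\}_\ell$ be reassembled into a single $\bar F$-valued distribution of (uniformly) compact support, whose support is the closure of the union of the scalar supports. So assume $F=\R$. By Theorem~\ref{maintheorem_homogeneous_decomposition} and the discussion following it, $\mu$ admits a polarization $\bar\mu\colon C^k\to\R$ which in each argument is a continuous, $1$-homogeneous, additive, dually epi-translation invariant valuation, the last property being inherited from $\mu$.

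The elementary observation driving the construction is that every $g\in C_c^\infty(V)$ is a difference of two smooth finite-valued convex functions: with $q(x)=\tfrac12|x|^2$ one has $g=(g+\lambda q)-\lambda q$, and $g+\lambda q\in\Conv(V,\R)\cap C^\infty(V)$ as soon as $\lambda>\sup_{x\in V}\|D^2g(x)\|$. Performing such a splitting in each of the $k$ slots and invoking the additivity of $\bar\mu$ in each coordinate, one obtains a well-defined $k$-linear form $\tilde\mu$ on $C_c^\infty(V)^{\times k}$, namely the standard multilinear extension of $\bar\mu$ to differences; the additivity relations are exactly what is needed for independence of the chosen splittings. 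One then sets $\GWVConv(\mu)(g_1\otimes\dots\otimes g_k):=\tilde\mu(g_1,\dots,g_k)$ and extends by linearity to the span of pure tensors, which is dense in $C_c^\infty(V^k)$.

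To promote this to a distribution I would verify that $\tilde\mu$ is separately continuous on $C_c^\infty(V)^{\times k}$. This follows from the continuity of $\bar\mu$ in each variable together with two standard facts about convex functions: locally uniform convergence of finite-valued convex functions implies epi-convergence, and if $g_j\to g$ in $C_c^\infty(V)$ then the cutoff level $\lambda$ can be chosen uniformly in $j$, so that $g_j+\lambda q\to g+\lambda q$ in $C^\infty$ and hence epi-converge. A separately continuous $k$-linear form on $\mathcal D(V)$ extends, by the (multilinear form of the) Schwartz kernel theorem, to a distribution $\GWVConv(\mu)\in\mathcal D'(V^k)$ with $\GWVConv(\mu)(g_1\otimes\dots\otimes g_k)=\tilde\mu(g_1,\dots,g_k)$, and uniqueness is immediate from density of the pure tensors.

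What remains is compact support, the stated identity on $\Conv(V,\R)\cap C^\infty(V)$, and support in the diagonal $\Delta\subset V^k$, and I expect compact support to be the main obstacle. The strategy there is to establish a locality property of $\mu$: there is a ball $B$, depending only on $\mu$, with $\mu(h_1)=\mu(h_2)$ whenever $h_1,h_2\in\Conv(V,\R)$ agree on $B$. This is the step where the valuation property, continuity with respect to epi-convergence, and the fact that $C\supset\Conv(V,\R)$ must genuinely be combined: by gluing prescribed data on a large ball to a fixed reference function further out, keeping convexity and controlling epi-convergence, one forces $\mu$ to ignore the exterior, and the admissible radius is extracted by a compactness/contradiction argument. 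The resulting locality of each $\bar\mu(\,\cdot\,,f_2,\dots,f_k)$ then makes $\GWVConv(\mu)$ annihilate every pure tensor one of whose factors is supported outside $B$, so $\GWVConv(\mu)\in\mathcal E'(V^k)$; being compactly supported it extends to $C^\infty(V^k)$, and for $f_1,\dots,f_k\in\Conv(V,\R)\cap C^\infty(V)$ one cuts off each factor by $\rho\in C_c^\infty(V)$ equal to $1$ near the projections of $\supp\GWVConv(\mu)$ and computes $\GWVConv(\mu)(f_1\otimes\dots\otimes f_k)=\tilde\mu(\rho f_1,\dots,\rho f_k)=\bar\mu(f_1,\dots,f_k)$, using multilinearity, the support property, locality of $\bar\mu$, and the additivity relations (to remove the quadratic regularizers). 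Finally, support in the diagonal is obtained by adapting Alesker's argument for the classical Goodey--Weil distribution: for $(x_1,\dots,x_k)\notin\Delta$ one exhibits test functions $g_i$ supported in pairwise separated neighbourhoods of the distinct $x_i$ for which the multilinear pairing vanishes, the mechanism being that once the regions where $g_i+\lambda q$ differs from $\lambda q$ are separated, the valuation property reduces the mixed term to values of $\mu$ at functions that are affine on the relevant sets, which vanish by dual epi-translation invariance; pinning down the right separated configurations is the delicate part of this last step.
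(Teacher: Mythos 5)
Your architecture matches the paper's (difference-of-convex splitting, multilinear extension $\tilde\mu$ of the polarization, kernel theorem, compactness by contradiction, diagonality via Alesker), but the compact-support step --- which you rightly single out as the main obstacle --- contains the genuine gap, in two places. First, the reduction to scalar valuations does not help here: each $\GWVConv(\ell\circ\mu)$ having compact support does not make the union of these supports bounded, and the paper's example of the Hessian measure $\Phi_n$ (values in $\mathcal{M}(V)$ with the vague topology) shows that this is exactly what fails when $F$ has no continuous norm; so the norm must enter the contradiction argument directly rather than through an unspecified ``reassembly''. Second, the contradiction argument itself needs a construction your splitting does not provide. One picks test functions $\phi_i^j$ with pairwise disjoint supports escaping to infinity, normalized so that $\|\tilde\mu(\phi_1^j,\dots,\phi_k^j)\|=1$ (these are found in annuli meeting the diagonal, which is why diagonality must be proved \emph{before} compactness). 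To reach a contradiction one needs a \emph{single} $h_i\in\Conv(V,\R)$ with $h_i+\phi_i^j$ convex for \emph{all} $j$ simultaneously; then $h_i+\phi_i^j\to h_i$ locally uniformly and the joint continuity of $\bar\mu$ forces $\tilde\mu(\phi_1^j,\dots,\phi_k^j)\to 0$. Your device $g\mapsto(g+\lambda q)-\lambda q$ with $\lambda$ depending on $g$ gives no such uniform carrier, since $\sup_j\|D^2\phi_i^j\|_\infty$ need not be finite; the paper's Lemma \ref{lemma_smooth function difference of convex functions} builds $h$ by gluing quadratics of growing curvature precisely for this purpose. Relatedly, you only verify \emph{separate} continuity of $\tilde\mu$, which suffices for the scalar kernel theorem but not for the limit above, where all $k$ slots vary at once; joint continuity of $\bar\mu$ (Corollary \ref{corollary_continuity_polarization}, via the degree bound on the polynomial) is indispensable.

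Two smaller points. Your alternative ``locality-first'' route is circular as stated: the locality property of $\mu$ on a fixed ball is the paper's Proposition \ref{proposition_support_convex_valuation}, which is \emph{deduced from} compactness of the Goodey--Weil support, and no independent mechanism for ``forcing $\mu$ to ignore the exterior'' is given --- convex functions cannot be freely glued to a reference function outside a ball. And in the diagonality step the mechanism is not that $\mu$ vanishes on affine functions by invariance; it is the decoupling identity $\mu(H+\delta_1h_1+\delta_2h_2)-\mu(H+\delta_2h_2)=\mu(H+\delta_1h_1)-\mu(H)$, obtained by applying the valuation property to $H$ overridden by the hinge functions $\max(0,\pm\langle y,\cdot-x_0\rangle)$ on the two separated half-spaces, after which the mixed $\delta_1\delta_2$-derivative vanishes.
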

	Here $\bar{F}$ denotes the completion of $F$ and $\mathcal{D}'(V^k,\bar{F})$ denotes the space of all distributions on $V^k$ with values in $\bar{F}$, i.e. the space of all continuous functionals $\phi:C^\infty_c(V^k)\rightarrow \bar{F}$, where $C^\infty_c(V^k)$ is equipped with the usual topology on test functions. We will call the distribution $\GWVConv(\mu)$ the Goodey-Weil distribution of $\mu$. We thus obtain an injective map $\GWVConv:\VConv_k(C;V,F)\rightarrow \mathcal{D}'(V^k,\bar{F})$, which will be called the Goodey-Weil embedding.\\

	More generally, we also define a version of the Goodey-Weil distribution for valuations with values in an arbitrary locally convex vector space. Although the Goodey-Weil distribution is still uniquely determined by its underlying valuation in this case, the support is, in general, not compact. To illustrate this fact, we examine the Hessian measures (see \cite{Colesanti_Ludwig_Mussnig:Hessian_valuations}), which can be considered as continuous, dually epi-translation invariant valuations with values in the space of signed Radon measures on $V$ (equipped with the vague topology). This example was also our main motivation to examine valuations with values in arbitrary locally convex vector spaces.\\
		
	In Section \ref{section_support_of_a_valuation} we use the support of the Goodey-Weil distribution to obtain a notion of support for the corresponding valuation and we discuss how the support can be defined intrinsically. As an application, we show that there are no non-trivial, real-valued dually epi-translation invariant valuations (except constant valuations) that are also invariant under the operation of the special linear group or translations. We also characterize the image of the embedding used in the proof of Theorem \ref{maintheorem_homogeneous_decomposition}.\\
		 \\
	It is natural to consider the subspace $\VConv_A(C;V,F)$ of valuations with support contained in a fixed subset $A\subset V$. In Section \ref{section_subspaces_compact_support} we discuss some topological properties of $\VConv_A(V,F)$ in the case where $A$ is compact and we show the following topological result:
	\begin{proposition}
		\label{mainproposition_approximation_using_sequences}
		Let $F$ be a locally convex vector space admitting a continuous norm.
		If a sequence $(\mu_j)_j$ converges to $\mu$ in $\VConv(V,F)$, then there exists a compact set $A\subset V$ such that the support of $\mu$ and $\mu_j$ is contained in $A$ for all $j\in\mathbb{N}$. In particular, $(\mu_j)_j$ converges to $\mu$ in $\VConv_A(V,F)$.
	\end{proposition}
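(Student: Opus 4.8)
## Proof Proposal

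The plan is to argue by contradiction using the Goodey-Weil embedding from Theorem~\ref{maintheorem_GW} together with a uniform boundedness argument. Suppose $(\mu_j)_j$ converges to $\mu$ in $\VConv(V,F)$ but the supports fail to lie in any common compact set. By the homogeneous decomposition (Theorem~\ref{maintheorem_homogeneous_decomposition}), convergence in $\VConv(V,F)$ is equivalent to convergence of each homogeneous component, and the support of a valuation is the union of the supports of its components; hence it suffices to treat the case where every $\mu_j$ and $\mu$ are $k$-homogeneous for a fixed $k$. Passing to the Goodey-Weil distributions, we obtain $\GWVConv(\mu_j)\in\mathcal{D}'(V^k,\bar F)$ with $\supp\GWVConv(\mu_j)$ contained in the diagonal and, by the failure of the uniform support condition, we may choose points $x_j$ in the (image in $V$ of the) support of $\mu_j$ with $|x_j|\to\infty$ after passing to a subsequence.

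The first key step is to produce, for each such $x_j$, a smooth test function $f_j\in\Conv(V,\R)\cap C^\infty(V)$ that is ``localized'' near $x_j$ in the sense relevant to the valuation, so that $\bar\mu_j(f_j,\dots,f_j)=\mu_j(f_j)$ is bounded away from zero while $\mu(f_j)$ and $\mu_\ell(f_j)$ for the other indices are controlled. Concretely, since $x_j$ lies in the support of $\mu_j$, there is by the intrinsic characterization of support (Section~\ref{section_support_of_a_valuation}) a function $g_j$ whose restriction behaviour near $x_j$ makes $\mu_j(g_j)\neq 0$; one rescales and translates a fixed smooth convex bump so that $f_j$ agrees with $g_j$ on a neighbourhood of $x_j$ and is affine outside a slightly larger neighbourhood. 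Because $\mu$ has \emph{some} fixed support (it is a single valuation, so its Goodey-Weil distribution has compact support by Theorem~\ref{maintheorem_GW}), for $j$ large enough $x_j$ lies outside $\supp\mu$, and then $\mu(f_j)$ depends only on the affine part of $f_j$, which by dual epi-translation invariance contributes nothing: $\mu(f_j)=\mu(\text{affine})=\mu(0)=0$ in the $k$-homogeneous case with $k\ge 1$ (the case $k=0$ being trivial since $\VConv_0$ consists only of constants).

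The second key step is to derive the contradiction. From $\mu_j\to\mu$ in $\VConv(V,F)$, which is the topology of uniform convergence on compact subsets of $\Conv(V)$, applied to the (relatively compact, after suitable normalization) family $\{f_j\}$, one would get $\mu_j(f_j)-\mu(f_j)\to 0$. But we have arranged $\mu(f_j)=0$ and $\mu_j(f_j)$ bounded away from $0$ in $F$ (using that $F$ admits a continuous norm, so ``bounded away from zero'' is meaningful and stable), giving the desired contradiction. Once a common compact $A$ containing all the supports is found, the final assertion that $(\mu_j)_j$ converges in $\VConv_A(V,F)$ is immediate, since the subspace topology on $\VConv_A(V,F)\subset\VConv(V,F)$ is by definition the one induced from $\VConv(V,F)$, and all the $\mu_j$ together with $\mu$ already lie in this closed subspace.

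The main obstacle is the construction in the first key step: one must ensure that the localized test functions $f_j$ can be chosen so that the family $\{f_j\}$ (or a normalized version $\{t_j f_j\}$) stays in a compact subset of $\Conv(V)$ while still detecting the valuation $\mu_j$ at the escaping point $x_j$. This requires a careful quantitative use of the intrinsic description of support together with the continuity estimates underlying the Goodey-Weil embedding (the local boundedness of $\mu_j$ on epi-compact families), and it is here that the hypothesis that $F$ admits a continuous norm is genuinely used, both to normalize and to extract the uniform lower bound on $\|\mu_j(f_j)\|$.
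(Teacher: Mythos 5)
Your overall strategy --- argue by contradiction, pick points $x_j\in\supp\mu_j$ escaping to infinity, build test functions localized near $x_j$ that detect $\mu_j$ but not $\mu$, and then contradict uniform convergence on a compact family --- is indeed the strategy of the paper. But the step you yourself flag as ``the main obstacle'' is exactly where all the work lies, and your sketch of it does not survive scrutiny. First, the proposed localization is impossible: a convex function on $V$ that coincides with a single affine function outside a compact set is globally affine (take any point inside the set, run a line through it, and use convexity on both sides of the exceptional region), so there is no $f_j\in\Conv(V,\R)$ that ``agrees with $g_j$ on a neighbourhood of $x_j$ and is affine outside a slightly larger neighbourhood'' unless $f_j$ is affine, in which case $\mu_j(f_j)=0$. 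Second, and relatedly, what the intrinsic characterization of the support (Proposition \ref{proposition_support_convex_valuation}) and the Goodey--Weil distribution actually give you at $x_j$ is a nonvanishing \emph{difference}, i.e.\ a nonzero value of $\GWVConv(\mu_j)(\phi_1^j\otimes\dots\otimes\phi_k^j)$ for $\phi_i^j\in C_c^\infty$ supported near $x_j$; this is a finite alternating sum of values of the polarization $\bar{\mu}_j$ at convexifications $h_i^j+\phi_i^j$ and $h_i^j$, not a single value $\mu_j(f_j)$. The terms $\mu_j(h_i^j)$ are completely uncontrolled, so ``$\mu_j(f_j)$ bounded away from zero while $\mu(f_j)=0$'' is not the quantity one can actually bound; one must run the uniform-convergence argument on the polarizations (via Lemma \ref{lemma_continuity_valuation->polarization}) applied to the whole family of convexifications.

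Concretely, the missing ingredients that the paper supplies are: (i) Lemma \ref{lemma_smooth function difference of convex functions}, which produces convex functions $f_i$ such that $f_i+\sum_{l\le j}\phi_i^l$ is convex for \emph{all} $j$ simultaneously, with the key feature that the convexifying part can be taken to vanish on large balls because the $\phi_i^l$ are supported in disjoint annuli escaping to infinity --- this is what makes the family $\{f_i^j:=f_i+\sum_{l\le j}\phi_i^l\}$ bounded on compact sets, hence relatively compact by Proposition \ref{proposition_compactness_Conv}; and (ii) the diagonality of $\supp\GWVConv(\mu_j)$ (Theorem \ref{theorem_support_GW_diagonal}) together with the disjointness of the supports of the $\phi_i^l$ for different $l$, which identifies the polarization evaluated at the partial sums with the cumulative quantity $\sum_{l\le j}\GWVConv(\mu_j)(\phi_1^l\otimes\dots\otimes\phi_k^l)$, normalized to have norm at least $1$. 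Only then does uniform convergence $\bar{\mu}_j\to\bar{\mu}$ on the compact family, combined with $\GWVConv(\mu)(\phi_1^l\otimes\dots\otimes\phi_k^l)=0$ for the escaping annuli, produce the contradiction. Your closing paragraph on the final assertion (convergence in $\VConv_A(V,F)$ once $A$ is found) is correct and trivial, as you say; but as it stands the proposal is a plan with the decisive construction both unexecuted and, in the form proposed, unworkable.
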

		
		In Section \ref{section_description_valuations_on_cones_with_support} we relate the valuations in $\VConv(C;V,F)$ to the common domains of $f\in C$, i.e. the sets $\dom f:=\{x\in V \ : \  f(x)<\infty\}$. This also gives a partial answer to the question, which valuations on finite-valued convex functions can be extended to larger cones in $\Conv(V)$. Together with Corollary \ref{corollary_existence_non_trivial_valuations}, this refines \cite{Colesanti_Ludwig_Mussnig:homogeneous_decomposition} Theorem 30, where valuations on the maximal cone $C=\Conv(V)$ were considered.\\
		
		These results apply in particular to cones of the form $C_U:=\{f\in\Conv(V) \ : \  f|_U<\infty\}$ for some open and convex subset $U\subset V$. In Section \ref{section_valuations_on_open_sets}, we considering valuations on the space $\Conv(U,\R):=\{f:U\rightarrow\mathbb{R} \ : \  f\text{ convex }\}$ with values in a locally convex vector space $F$ that are dually epi-translation invariant and continuous with respect to the topology of uniform convergence on compact subsets in $U$. Denoting the space of these functionals by $\VConv(U,F)$, we can identify this space of valuations with valuations on the cone $C_U$:
			\begin{maintheorem}
				\label{maintheorem_isomorphism_cone_open_subset}
				If $U\subset V$ is an open convex subset and $F$ is a locally convex vector space admitting a continuous norm, then the map
				\begin{align*}
				\res^*:\VConv(U,F)&\rightarrow \VConv(C_U;V,F)\\
				\mu&\mapsto[f\mapsto \mu(f|_U)]
				\end{align*}
				is a topological isomorphism.
			\end{maintheorem}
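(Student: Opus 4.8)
\emph{Setup of the argument.} I would establish the four assertions separately: that $\res^*$ is well defined, injective, surjective, and a homeomorphism. The map $\res\colon C_U\to\Conv(U,\R)$, $f\mapsto f|_U$, sends $\max(f,g)$ and $\min(f,g)$ to $\max(f|_U,g|_U)$ and $\min(f|_U,g|_U)$ and sends $f+\lambda+c$ to $f|_U+\lambda|_U+c$; since every affine function on $U$ extends to an affine function on $V$, this shows $\res^*\mu=\mu\circ\res$ is a dually epi-translation invariant valuation on $C_U$ whenever $\mu\in\VConv(U,F)$. The one nontrivial point is continuity: if $f_j\to f$ epi-converges in $C_U$ then, all functions being finite on the open set $U$, the restrictions converge to $f|_U$ uniformly on compact subsets of $U$ (Section~\ref{section_topology_convex_functions}), hence $\mu(f_j|_U)\to\mu(f|_U)$; continuity of $\res^*$ for the topologies of uniform convergence on compacta is then immediate because $\res$ maps compact subsets of $C_U$ to compact subsets of $\Conv(U,\R)$. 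Injectivity follows since every $g\in\Conv(U,\R)$ is the restriction of some $f\in C_U$ (e.g. the lower semicontinuous convex envelope of the function equal to $g$ on $U$ and $+\infty$ on $V\setminus U$, which restricts to $g$ because $g$ is continuous on $U$), so $\res$ is onto.

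\emph{Surjectivity via the support.} Let $\nu\in\VConv(C_U;V,F)$. The decisive point is that $\supp\nu\subseteq U$. By the results of Section~\ref{section_description_valuations_on_cones_with_support} the support of $\nu$ is contained in $\bigcap_{f\in C_U}\dom f$, and this intersection equals $U$: given $x\in V\setminus U$, pick a supporting halfspace $H=\{\lambda\le c\}\supseteq U$ with $\lambda(x)=c$; since $U$ is open, $U\subseteq\{\lambda<c\}$, so $y\mapsto(c-\lambda(y))^{-1}$ is convex and finite on $U$, and its lower semicontinuous convex extension to $V$ lies in $C_U$ but equals $+\infty$ at $x$. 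Since $F$ carries a continuous norm, $\supp\nu$ is moreover compact by Theorem~\ref{maintheorem_GW}. Now the locality of the support (Section~\ref{section_support_of_a_valuation}) gives $\nu(f_1)=\nu(f_2)$ whenever $f_1,f_2\in C_U$ agree on an open neighbourhood of $\supp\nu$; as any two extensions of a fixed $g\in\Conv(U,\R)$ agree on all of $U\supseteq\supp\nu$, the number $\nu(f)$ depends only on $f|_U$, so $\nu=\mu\circ\res$ for a well-defined $\mu\colon\Conv(U,\R)\to F$.

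\emph{$\mu$ is a continuous valuation.} One checks that $\mu$ is a dually epi-translation invariant valuation by working with concrete extensions: the valuation identity is only needed for pairs $g_1,g_2$ with $\max(g_1,g_2)$ and $\min(g_1,g_2)$ convex on $U$, which can be extended to $f_1,f_2\in C_U$ with $\max(f_1,f_2),\min(f_1,f_2)\in C_U$, so that the identity for $\nu$ descends to $\mu$. For continuity of $\mu$ (hence of $(\res^*)^{-1}$), given $g_j\to g$ uniformly on compacta of $U$ I would fix an open convex $\Omega$ with $\supp\nu\subseteq\Omega$ and $\overline\Omega\subseteq U$ compact, and use the extension operator $E_\Omega(h):=\sup\{\ell\ \text{affine on}\ V:\ \ell\le h\ \text{on}\ \overline\Omega\}$, which is finite-valued (the subgradients of $h$ over the compact set $\overline\Omega\subseteq\operatorname{int}\dom h$ being bounded), lies in $C_U$, equals $h$ on $\overline\Omega$, and depends continuously on $h$; then $E_\Omega(g_j)\to E_\Omega(g)$ epi-converges in $C_U$ and $\mu(g_j)=\nu(E_\Omega(g_j))\to\nu(E_\Omega(g))=\mu(g)$. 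The same operator carries a compact $K\subseteq\Conv(U,\R)$ to a compact $E_\Omega(K)\subseteq C_U$, reducing uniform convergence on $K$ to uniform convergence on $E_\Omega(K)$; to make this work for a convergent net of valuations one additionally invokes Proposition~\ref{mainproposition_approximation_using_sequences} to choose a single $\Omega$ serving all of them.

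\emph{Main obstacle.} Everything reduces to two genuine difficulties: first, the identity $\bigcap_{f\in C_U}\dom f=U$ together with the support-localization statements of Sections~\ref{section_support_of_a_valuation} and~\ref{section_description_valuations_on_cones_with_support}, which is what forces $\supp\nu$ into $U$ and makes $\nu$ factor through restriction; and second, the construction of an extension operator $\Conv(U,\R)\to C_U$ that is continuous, keeps finite-valuedness, and is compatible with the valuation structure on a fixed compact neighbourhood of the support. Verifying that this operator preserves epi-convergence — and thereby obtaining continuity of the inverse and the homeomorphism statement — is, I expect, the most delicate technical step; the remaining verifications (the valuation identities, continuity and surjectivity of $\res$) are formal.
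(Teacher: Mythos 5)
There is a genuine gap at the decisive step of your surjectivity argument: the claim that $\supp\nu\subseteq U$. You cite the results of Section \ref{section_description_valuations_on_cones_with_support}, but Theorem \ref{maintheorem_support_valuations_non-finite_functions} only places the support in $B_{C_U}=\bigcap_{f\in C_U}\overline{\dom f}=\overline{U}$, not in $\bigcap_{f\in C_U}\dom f=U$. Your construction of a function in $C_U$ that equals $+\infty$ at a given point $x\in V\setminus U$ is correct but does not exclude boundary points from the support: the mechanism behind Theorem \ref{maintheorem_support_valuations_non-finite_functions} requires functions of the cone that are identically $+\infty$ on a whole ball around the point in question, and no such function exists for $x\in\partial U$ since every neighbourhood of $x$ meets $U$, where all elements of $C_U$ are finite. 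The statement $\supp\nu\subseteq U$ is precisely Proposition \ref{proposition_support_valuations_on_open_set} of the paper, and its proof is a separate, substantive argument (one builds finite convex functions $h_j$ epi-converging to the convex indicator of a supporting half-space at $x_0\in\partial U$ and shows that the Goodey--Weil pairing with bump functions concentrated near $x_0$ must vanish in the limit, contradicting $x_0\in\supp\nu$). Without this, your factorization of $\nu$ through restriction collapses: if $\supp\nu$ touched $\partial U$, two extensions of the same $g\in\Conv(U,\R)$ agree on $U$ but not necessarily on an open neighbourhood of $\supp\nu$, so Proposition \ref{proposition_support_convex_valuation} cannot be applied.

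Two smaller points. First, your extension operator $E_\Omega(h)=\sup\{\ell \text{ affine}:\ \ell\le h \text{ on }\overline{\Omega}\}$ is not finite-valued as written (already for $\overline{\Omega}=[-1,1]$ and $h=0$ one has $E_\Omega(h)(2)=\sup_a(2a-|a|)=+\infty$); you must restrict to the supporting affine functions arising from subgradients of $h$ at points of $\overline{\Omega}$, whose slopes are bounded, or simply use the canonical extension $i_U$ from the paper ($\liminf$ on $\partial U$, $+\infty$ outside $\overline{U}$). Second, once $\supp\nu\subseteq U$ is available, the continuity of the inverse needs none of the machinery you invoke (in particular no appeal to Proposition \ref{mainproposition_approximation_using_sequences}, which concerns sequences of valuations, not nets): the inverse is exactly the pullback $i_U^*$ along the continuous map $i_U:\Conv(U,\R)\to C_U$, and its continuity follows by the same compact-image argument you already used for $\res^*$. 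This is the route the paper takes, and it makes the homeomorphism statement formal once bijectivity is established.
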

		Here both spaces are equipped with the topology of uniform convergence on compact subsets in $\Conv(U,\R)$ and $C_U$ respectively.
		
		\paragraph{Acknowledgments}
		Part of this paper was written during a stay at the Universit\`a degli Studi di Firenze and I want to thank the university and especially Andrea Colesanti for the hospitality. I also want to thank Andreas Bernig for many useful discussions, suggestions and encouragement during this project, as well as his comments and remarks on the first draft of this paper.

\section{Convex functions}
	\label{section_convex_functions}
	In this section, we collect some facts about the space of convex functions and its topology. For simplicity we will assume that $V$ is a euclidean vector space. Let $B_R:=B_R(0)$ denote the closed ball of radius $R>0$ in $V$.\\
	
	Let $U\subset V$ be a convex subset. A function $f:U\rightarrow\R\cup\{+\infty\}$ is called convex if for any $x_0,x_1\in U$ and every $t\in[0,1]$ the following inequality holds: 
	\begin{align*}
		f(tx_0+(1-t)x_1)\le tf(x_0)+(1-t)f(x_1).
	\end{align*}
	Equivalently, $f:U\rightarrow\R\cup\{+\infty\}$ is convex if and only if its \emph{epi-graph} $\mathrm{epi}(f):=\{(x,t)\in U\times\R \ : \  f(x)\le t\}$ is a convex subset in $U\times\R$. Note that $f$ is lower semi-continuous if and only if $\epi(f)$ is closed in $U\times\R$. In this paper we are mostly interested in subsets of
	\begin{align*}
		\Conv(V):=\{f:V\rightarrow\R\cup\{+\infty\} \ : \  f\text{ convex, lower semi-continuous}, f\not\equiv +\infty\}.
	\end{align*}
	For any $f\in\Conv(V)$, we define the domain of $f$
	\begin{align*}
		\mathrm{dom}(f):=\{x\in V \ : \ f(x)<+\infty\}.
	\end{align*}
	By definition, $\mathrm{dom}(f)$ is a non-empty convex subset of $V$. $f$ is always continuous on the interior of $\mathrm{dom}(f)$. In particular, the space of finite-valued convex functions
	\begin{align*}
		\Conv(V,\R):=\{f:V\rightarrow\R \ : \ f \text{ convex}\}\subset\Conv(V)
	\end{align*}
	contains only continuous functions. Note that $\Conv(V,\R)$ is closed under the formation of the pointwise maximum, while the maximum of two elements of $\Conv(V)$ may be identical to $+\infty$. 
		
	\subsection{Topology on the space of convex functions}
		\label{section_topology_convex_functions}
		\begin{definition}
			A sequence $(f_j)_j$ in $\Conv(V)$ \emph{epi-converges} to $f\in\Conv(V)$ if and only if for every $x\in V$ the following conditions hold:
			\begin{enumerate}
				\item For every sequence $(x_j)_j$ in $V$ converging to $x$: $f(x)\le \liminf\limits_{j\rightarrow\infty}f_j(x_j)$.
				\item There exists a sequence $(x_j)_j$ converging to $x$ such that $f(x)= \lim\limits_{j\rightarrow\infty}f_j(x_j)$.
			\end{enumerate}
		\end{definition}
		It is known that this notion of convergence is induced by a metrizable topology on $\Conv(V)$ (see for example \cite{Rockafellar_Wets:Variational_analysis} Theorem 7.58).\\
		In the constructions used in this paper, the limit function $f\in\Conv(V)$ will usually be finite on some open subset of $V$. In this case epi-convergence, pointwise convergence and locally uniform convergence are compatible in the following sense:
		\begin{proposition}\cite[Theorem 7.17]{Rockafellar_Wets:Variational_analysis}
			\label{proposition_convergence_finite_convex_functions} 
			For a function $f\in\Conv(V)$ such that $\dom f$ has non-empty interior and a sequence $(f_j)_j$ in $\Conv(V)$ the following are equivalent:
			\begin{enumerate}
				\item $(f_j)_j$ epi-converges to $f$.
				\item $(f_j)_j$ converges pointwise to $f$ on a dense subset.
				\item $(f_j)_j$ converges uniformly to $f$ on all compact subsets that do not contain a boundary point of $\dom f$. 
			\end{enumerate}
		\end{proposition}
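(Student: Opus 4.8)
The plan is to prove the cyclic chain of implications $(3)\Rightarrow(2)\Rightarrow(1)\Rightarrow(3)$ using only elementary convex analysis, with the convention that ``$(f_j)_j$ converges uniformly to $f$ on a set $K$ on which $f\equiv+\infty$'' means $\inf_K f_j\to+\infty$. The single engine I would set up first is an \emph{interior estimate}: if $W\subset V$ is open and convex, $f$ is finite on $W$, and $(f_j)_j$ converges to $f$ pointwise on a dense subset of $W$, then $(f_j)_j$ is eventually uniformly bounded above on every compact subset of $W$ --- one writes a point of a small ball as a convex combination of finitely many nearby points of the dense set and uses convexity. Being convex and locally bounded above, the $f_j$ are then eventually uniformly bounded below and equi-Lipschitz on slightly smaller balls, and together with the pointwise convergence this upgrades to uniform convergence $f_j\to f$ on compact subsets of $W$ by a three-$\varepsilon$ argument. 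Applied with $W=\operatorname{int}(\dom f)$ this already shows that each of (1), (2), (3) forces local uniform convergence on $\operatorname{int}(\dom f)$, and the same equi-Lipschitz bounds will be reused in $(1)\Rightarrow(3)$.

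For $(3)\Rightarrow(2)$ I would note that $\partial(\dom f)$ is nowhere dense (it is the boundary of a convex set with non-empty interior), so $D:=V\setminus\partial(\dom f)$ is dense, and (3) applied to a small ball inside $D$ around any point of $D$ gives pointwise convergence there. For $(2)\Rightarrow(1)$ I would verify the two defining conditions of epi-convergence. Recovery sequences (condition~2): at $x\in\operatorname{int}(\dom f)$ take $x_j=x$; at $x\in\overline{\dom f}$ pick $\bar p\in\operatorname{int}(\dom f)$, note $(1-t)\bar p+tx\in\operatorname{int}(\dom f)$ for $t<1$ and that $t\mapsto f((1-t)\bar p+tx)$ is convex and finite on $[0,1)$ with limit $f(x)$ as $t\uparrow 1$ (a chord for the upper bound, lower semicontinuity for the lower), then run a diagonal argument using the interior estimate along this segment; at $x\notin\overline{\dom f}$ take $x_j\in D$ with $x_j\to x$, so that $f_j(x_j)\to f(x)=+\infty$ by (2). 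The $\liminf$ inequality (condition~1): given $x_j\to x$ with $L:=\liminf_j f_j(x_j)<+\infty$ and, after passing to a subsequence, $f_j(x_j)\to L$, the case $x\in\operatorname{int}(\dom f)$ is the interior estimate, so suppose $x\notin\operatorname{int}(\dom f)$ and, for contradiction, $f(x)>L$. If some $\bar p\in\operatorname{int}(\dom f)$ has $f(\bar p)<L+1$, take a small $n$-simplex $\Sigma\subset\operatorname{int}(\dom f)$ with vertices in $D$ and incenter $\bar p$ on which $f<L+1$; then $f_j\le L+1$ on $\Sigma$ and at $x_j$ eventually, hence by convexity on the whole ``tube'' $\operatorname{conv}(\Sigma\cup\{x_j\})$, a cross-section of which at a fixed ratio is a positively scaled copy of $\Sigma$ and so contains a ball $B_r(\eta_j)$ of fixed radius with $\eta_j$ converging to a point $\xi$ of the segment $[\bar p,x]$ chosen just past the last point of $\overline{\dom f}$ on it; picking $q\in D$ near $\xi$, so $f(q)=+\infty$, gives $L+1\ge f_j(q)\to+\infty$, a contradiction. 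In the remaining (boundary or degenerate) case one argues similarly, additionally using that then $L$ is at most $\inf f$, so $x_j$ is an approximate minimizer of $f_j$ and, by convexity, $f_j$ stays near $\inf f$ along the segment joining $x_j$ to an interior near-minimizer of $f_j$ --- which again clashes with (2), since $f$ tends to infinity along that segment as one approaches $x$ when $f(x)=+\infty$.

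For $(1)\Rightarrow(3)$, given a compact $K$ with $K\cap\partial(\dom f)=\emptyset$, I would split $K=K'\sqcup K''$ with $K':=K\cap\operatorname{int}(\dom f)$ and $K'':=K\setminus\overline{\dom f}$, both compact (each clopen in $K$). On $K''$, where $f\equiv+\infty$: if $\inf_{K''}f_j\not\to+\infty$ there are $M$, a subsequence and $y_j\in K''$ with $f_j(y_j)\le M$; passing to $y_j\to y\in K''$, condition~1 yields $f(y)\le M<+\infty$, a contradiction. On $K'$: condition~2 supplies recovery sequences at the vertices of a small simplex around any $x_0\in K'$ whose convex hull contains a ball $B_r(x_0)$; for large $j$ the perturbed simplices still contain $B_r(x_0)$, so $f_j$ is uniformly bounded above there, hence (convexity, together with a lower bound coming from a recovery value) two-sidedly bounded and equi-Lipschitz on $B_{r/2}(x_0)$; the recovery sequences plus the equi-Lipschitz bound give $f_j\to f$ pointwise and then uniformly on $B_{r/2}(x_0)$, and finitely many such half-balls cover $K'$.

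The hard part will be the $\liminf$ inequality in $(2)\Rightarrow(1)$ at points outside $\operatorname{int}(\dom f)$: convexity only produces \emph{upper} bounds on values at convex combinations, so one cannot bound $f_j$ from below at such a point directly, and the entire purpose of the ``tube'' (respectively approximate-minimizer) construction is to manufacture, out of the interior estimate, a full-dimensional region near $x$ on which the $f_j$ stay bounded, which then collides with the blow-up of $f_j$ forced by (2). Everything else is routine.
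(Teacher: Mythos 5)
The paper gives no proof of this proposition at all (it is quoted from Rockafellar--Wets, Theorem 7.17), so any complete argument is your own route; your implications $(3)\Rightarrow(2)$, $(1)\Rightarrow(3)$ and the recovery-sequence half of $(2)\Rightarrow(1)$ are essentially correct, up to a small diagonal selection needed at $x\notin\overline{\dom f}$ (pointwise convergence at each fixed dense point does not by itself give $f_j(x_j)\to+\infty$ along a moving sequence). The genuine gap sits exactly where you predicted: the $\liminf$ inequality in $(2)\Rightarrow(1)$ at points $x\in\overline{\dom f}\setminus\mathrm{int}(\dom f)$. Your tube argument extracts its contradiction from a point $\xi\in[\bar p,x]$ lying outside $\overline{\dom f}$, and such a point exists only when $x\notin\overline{\dom f}$; if $x$ is a boundary point, the whole segment lies in $\overline{\dom f}$, the tube's cross-sections shrink onto interior points where $f$ is finite, and no dense-set point with $f=+\infty$ can be trapped. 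The fallback you sketch for the ``remaining'' case does not hold up: $L\le\inf f$ is not implied when $x\in\partial(\dom f)$ and some $\bar p$ has $f(\bar p)<L+1$ (only the degenerate alternative $\inf f\ge L+1$ gives it); nothing guarantees $x_j$ is an approximate minimizer of $f_j$ (that would need $\liminf_j\inf f_j\ge\inf f$, which is part of what is being proved); the advertised clash with $(2)$ along the segment cannot be realized, since the dense set need not meet a line segment and the full-dimensional region on which you control $f_j$ has radius tending to $0$ near $x$; and the subcase $x\in\partial(\dom f)$ with $L<f(x)<\infty$ is not addressed at all.

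The repair stays inside your framework but replaces the crude bound $L+1$ on the tube by the chord estimate. With $M\ge\sup_\Sigma f+1$ and $\epsilon>0$, eventually $f_j\bigl((1-t)w+tx_j\bigr)\le(1-t)M+t(L+\epsilon)$ for all $w\in\Sigma$ and $t\in[0,1]$. For fixed $t<1$ the cross-section contains, for large $j$, a ball of fixed radius around $\xi_t:=(1-t)\bar p+tx$; on the dense set inside that ball the bound passes to $f$ by pointwise convergence, and evaluating $f$ at $\xi_t$ via a small simplex of dense points inside the ball gives $f(\xi_t)\le(1-t)M+t(L+\epsilon)$ by convexity. Letting $t\uparrow1$ and using lower semicontinuity of $f$ at $x$ yields $f(x)\le L+\epsilon$, hence $f(x)\le L$. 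This single computation treats interior, boundary and exterior $x$ simultaneously, with no case distinction and no contradiction argument, and it is what your sharper ``interior estimate plus tube'' idea should be aiming at.
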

		In particular, a sequence $(f_j)_j$ in $\Conv(V,\R)$ epi-converges to $f\in\Conv(V,\R)$ if and only if it converges uniformly on compact subsets.
	\subsection{Compactness in $\Conv(V,\R)$}
	\begin{proposition}
		\label{proposition_convex_functions_local_lipschitz_constants}
		Let $U\subset V$ be a convex open subset and $f:U\rightarrow\R$ a convex function. If $X\subset U$ is a set with $X+\epsilon B_1\subset U$ and $f$ is bounded on $X+ B_\epsilon$, then $f$ is Lipschitz continuous on $X$ with Lipschitz constant $\frac{2}{\epsilon}\|f|_{X+ B_\epsilon}\|_\infty$.
	\end{proposition}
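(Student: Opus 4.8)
The plan is to reduce the claim to the elementary one-dimensional behaviour of convex functions: to control the slope of $f$ between two points of $X$, I would extend the segment joining them slightly beyond one endpoint, which stays inside $U$ precisely because $X+\epsilon B_1\subset U$, and then read off the bound from the defining convexity inequality applied to the three resulting collinear points. Throughout, write $L:=\frac{2}{\epsilon}\|f|_{X+B_\epsilon}\|_\infty$.

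Fix $x,y\in X$; we may assume $x\neq y$ and, interchanging the two points if necessary, $f(x)\ge f(y)$. I would set $u:=\frac{x-y}{|x-y|}$ and $w:=x+\epsilon u$. Since $x\in X$ and $|w-x|=\epsilon$, the point $w$ lies in $X+\epsilon B_1\subset U$, so $f(w)$ is defined and both $|f(w)|$ and $|f(y)|$ are at most $\|f|_{X+B_\epsilon}\|_\infty$. A direct computation using $w-y=(|x-y|+\epsilon)\,u$ shows that $x$ is the convex combination $x=(1-s)\,y+s\,w$ with $s:=\frac{|x-y|}{|x-y|+\epsilon}\in(0,1)$. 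Convexity of $f$ then gives $f(x)\le(1-s)f(y)+sf(w)$, and hence
\begin{align*}
	f(x)-f(y)\;\le\; s\bigl(f(w)-f(y)\bigr)\;\le\;\frac{|x-y|}{|x-y|+\epsilon}\cdot 2\|f|_{X+B_\epsilon}\|_\infty\;\le\; L\,|x-y|.
\end{align*}
Because $f(x)\ge f(y)$, this is exactly $|f(x)-f(y)|\le L\,|x-y|$; as $x,y\in X$ were arbitrary, $f$ is Lipschitz on $X$ with constant $L$.

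I do not expect any genuine obstacle here. The two points that deserve a line of verification are the identity $x=(1-s)y+sw$, which is immediate once one notes $w-y=(|x-y|+\epsilon)u$, and the fact that the extended point $w$ still lies in the region where $f$ is bounded, which is exactly the hypothesis $X+\epsilon B_1\subset U$ combined with $w\in X+B_\epsilon$. An equivalent alternative route would be to bound the norm of a subgradient $q\in\partial f(x)$ by testing the subgradient inequality at $x+\epsilon q/|q|\in X+B_\epsilon$ and then using $f(y)\ge f(x)+\langle q,y-x\rangle$; the three-point argument above has the minor advantage of not invoking the existence of subgradients.
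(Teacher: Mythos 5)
Your proof is correct. Note that the paper does not actually prove this proposition: it simply cites it as a special case of Rockafellar--Wets, Theorem 9.14, so any comparison is with the standard textbook argument rather than with an argument in the paper itself. Your three-point collinearity argument is exactly that standard argument, and all the details check out: $w=x+\epsilon u$ lies in $X+\epsilon B_1\subset U$ and in $X+B_\epsilon$, the segment $[y,w]$ lies in $U$ by convexity of $U$ (so the convexity inequality for $f$ is legitimately applicable), the identity $x=(1-s)y+sw$ with $s=\frac{|x-y|}{|x-y|+\epsilon}$ is immediate from $w-y=(|x-y|+\epsilon)u$, and the chain $f(x)-f(y)\le s\bigl(f(w)-f(y)\bigr)\le \frac{|x-y|}{\epsilon}\cdot 2\|f|_{X+B_\epsilon}\|_\infty$ together with the reduction to $f(x)\ge f(y)$ gives precisely the claimed constant $\frac{2}{\epsilon}\|f|_{X+B_\epsilon}\|_\infty$. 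Making the proof self-contained in this way is a mild improvement over the bare citation, since the reader no longer has to match the paper's normalization of the Lipschitz constant against the reference.
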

	\begin{proof}
		This is a special case of \cite{Rockafellar_Wets:Variational_analysis} 9.14.
	\end{proof}
	\begin{proposition}
		\label{proposition_compactness_Conv}
		A subset $U\subset \Conv(V,\R)$ is relatively compact if and only if it is bounded on compact subsets.
	\end{proposition}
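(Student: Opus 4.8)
The plan is to prove the two implications separately, using Proposition~\ref{proposition_convergence_finite_convex_functions} to pass freely between epi-convergence and uniform convergence on compact subsets (the two notions coincide on $\Conv(V,\R)$), and using Proposition~\ref{proposition_convex_functions_local_lipschitz_constants} together with the Arzel\`a--Ascoli theorem for the non-trivial direction. Throughout, ``bounded on compact subsets'' is read as: $\sup_{f\in U}\sup_{x\in K}|f(x)|<\infty$ for every compact $K\subset V$. I also use that $\Conv(V,\R)$, as a subspace of the metrizable space $\Conv(V)$, is metrizable, so that relative compactness and sequential compactness of the closure agree.

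For the direction ``relatively compact $\Rightarrow$ bounded on compact subsets'', I argue by contradiction. If $U$ fails to be bounded on some compact $K\subset V$, choose $f_j\in U$ with $\sup_{x\in K}|f_j(x)|\to\infty$. Since $U$ is relatively compact in the metrizable space $\Conv(V,\R)$, a subsequence of $(f_j)_j$ converges to some $f\in\Conv(V,\R)$, and by Proposition~\ref{proposition_convergence_finite_convex_functions} this convergence is uniform on $K$; but then $\sup_{x\in K}|f_j(x)|$ stays bounded along that subsequence, a contradiction.

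For the converse, assume $U$ is bounded on compact subsets. Fix $R>0$; then $M_R:=\sup_{f\in U}\|f|_{B_{R+1}}\|_\infty<\infty$, and Proposition~\ref{proposition_convex_functions_local_lipschitz_constants}, applied with $X=B_R$ and $\epsilon=1$, shows that every $f\in U$ is Lipschitz on $B_R$ with constant $2M_R$. Hence $\{f|_{B_R}:f\in U\}$ is uniformly bounded and equicontinuous, so it is relatively compact in $C(B_R)$ by Arzel\`a--Ascoli. Since $\Conv(V,\R)$ is metrizable, it suffices to extract from an arbitrary sequence $(f_j)_j$ in $U$ a subsequence that epi-converges to an element of $\Conv(V,\R)$. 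Applying Arzel\`a--Ascoli successively on $B_1,B_2,\dots$ and passing to a diagonal subsequence $(f_{j_k})_k$, one obtains uniform convergence on every $B_m$, hence locally uniform convergence on $V$, to some function $f\colon V\to\R$. Being a pointwise limit of convex functions, $f$ is convex, and $|f|\le M_m$ on $B_m$ shows that $f$ is finite-valued, so $f\in\Conv(V,\R)$; by Proposition~\ref{proposition_convergence_finite_convex_functions}, locally uniform convergence implies epi-convergence, so $(f_{j_k})_k$ epi-converges to $f$ in $\Conv(V,\R)$.

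The only step that genuinely uses the structure of convex functions is the passage from the boundedness hypothesis to equicontinuity on compact balls, and this is exactly what Proposition~\ref{proposition_convex_functions_local_lipschitz_constants} delivers; after that, Arzel\`a--Ascoli, the diagonal argument, and Proposition~\ref{proposition_convergence_finite_convex_functions} are routine, so I do not expect a serious obstacle. The one point to keep in mind is that relative compactness is understood within $\Conv(V,\R)$, which is not closed in $\Conv(V)$; under the boundedness hypothesis the diagonal limit is automatically finite-valued, so this subtlety causes no difficulty.
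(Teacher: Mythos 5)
Your proof is correct and follows essentially the same route as the paper: local Lipschitz bounds from Proposition~\ref{proposition_convex_functions_local_lipschitz_constants} give equicontinuity, then Arzel\`a--Ascoli plus a diagonal argument produce a locally uniformly convergent subsequence with convex, finite-valued limit, and Proposition~\ref{proposition_convergence_finite_convex_functions} converts this to epi-convergence. The only difference is that you also spell out the easy ``relatively compact $\Rightarrow$ bounded'' direction, which the paper treats as immediate.
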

	\begin{proof}
		As the topology on $\Conv(V,\R)$ is metrizable, we only have to show that the closure of such a subset is sequentially compact. Let $(f_k)_k$ be a sequence in $\Conv(V,\R)$ that is bounded on compact subsets of $V$. Then the Lipschitz constants of these functions are also uniformly bounded on $B_j$ for all $j\in\mathbb{N}$ by Proposition \ref{proposition_convex_functions_local_lipschitz_constants}. In particular, the set $\{f_k|_{B_j} \ : \ k\in\mathbb{N}\}\subset C(B_j)$ is equicontinuous. By the theorem of Arzel\`a-Ascoli, we can choose a subsequence $f_{j,k}$ that converges uniformly on $B_j$ to some function $f_{j,\infty}\in C(B_j)$ for $k\rightarrow\infty$. Iterating this argument for all $j\in\mathbb{N}$ and taking an appropriate diagonal series, we find a subsequence that converges uniformly on $B_j$ for all $j\in\mathbb{N}$ to some function $f\in C(V)$. It is easy to see that $f$ is convex. Now the claim follows from Proposition \ref{proposition_convergence_finite_convex_functions}.
	\end{proof}

	\subsection{The Legendre transform, subdifferentials and some density results}
		The \emph{Legendre transform} or \emph{convex dual} of a function $f\in\Conv(V)$ is the function $f^*:V^*\rightarrow (-\infty,\infty]$ given by
		\begin{align*}
		f^*(y)=\sup\limits_{x\in V}\langle y,x\rangle -f(x) \quad \text{for }y\in V^*,
		\end{align*} 
		where $\langle \cdot,\cdot\rangle:V^*\times V\rightarrow\R$ denotes the canonical pairing.
		As a consequence of \cite{Rockafellar:Convex_Analysis} Theorem 12.2 and Corollary 12.2.1, we have
		\begin{proposition}
			For $f\in\Conv(V)$, $f^*\in\Conv(V^*)$ and $f^{**}:=(f^*)^*=f$.
		\end{proposition}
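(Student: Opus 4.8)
The plan is to prove the three assertions separately: that $f^*$ is convex and lower semi-continuous, that $f^*$ is proper, and that $f^{**}=f$; this is the Fenchel--Moreau theorem, so in the paper I would dispose of it by citing \cite{Rockafellar:Convex_Analysis}, but the argument runs as follows.

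\emph{Convexity and lower semi-continuity of $f^*$.} For each fixed $x\in\dom f$ the map $y\mapsto\langle y,x\rangle-f(x)$ is affine and continuous on $V^*$, and $f^*$ is the pointwise supremum over $x\in\dom f$ of this family. A supremum of continuous affine functions is convex and lower semi-continuous, so $f^*\in\Conv(V^*)$ as soon as we know it is proper.

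\emph{Properness of $f^*$.} Since $f$ is proper there is $x_0$ with $f(x_0)<\infty$, whence $f^*(y)\ge\langle y,x_0\rangle-f(x_0)>-\infty$ for every $y\in V^*$. To see $f^*\not\equiv+\infty$ I would produce an affine minorant of $f$: the epigraph $\epi f$ is a non-empty closed convex proper subset of $V\times\R$, so applying the Hahn--Banach separation theorem to the point $(x_0,f(x_0)-1)\notin\epi f$ yields $(y^*,s)\in V^*\times\R$ and $\delta>0$ with $\langle y^*,x\rangle+st\ge\langle y^*,x_0\rangle+s(f(x_0)-1)+\delta$ for all $(x,t)\in\epi f$. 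Letting $t\to\infty$ forces $s\ge0$, and $s=0$ is impossible since evaluating at $x=x_0$ would give $0\ge\delta$. Normalising $s=1$ produces a constant $c$ with $f(x)\ge-\langle y^*,x\rangle+c$ for all $x$, hence $f^*(-y^*)\le-c<\infty$.

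\emph{Biconjugation.} The inequality $f^{**}\le f$ is the Fenchel--Young inequality: from $f^*(y)+f(x)\ge\langle y,x\rangle$ one gets $f(x)\ge\sup_{y}\big(\langle y,x\rangle-f^*(y)\big)=f^{**}(x)$. For the reverse inequality, suppose $f^{**}(x_1)<f(x_1)$ for some $x_1\in V$. If $x_1\in\dom f$, choose $t_1$ with $f^{**}(x_1)\le t_1<f(x_1)$; then $(x_1,t_1)\notin\epi f$, and separating it from the closed convex set $\epi f$ gives an affine functional whose vertical slope is nonnegative by the limiting argument above and strictly positive, since a vertical separating hyperplane would contradict $x_1\in\dom f$. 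After normalising the slope to $1$ this produces $y^*\in V^*$ with $f^*(y^*)\le\langle y^*,x_1\rangle-t_1-\delta$, so $f^{**}(x_1)\ge\langle y^*,x_1\rangle-f^*(y^*)>t_1\ge f^{**}(x_1)$, a contradiction. If $x_1\notin\dom f$ one argues instead that $f^{**}(x_1)>M$ for every $M\in\R$: separating $(x_1,M)$ from $\epi f$ either yields a non-vertical functional treated exactly as above, or, in the vertical case, one adds a large positive multiple of the separating linear functional to a point of $\dom f^*$ (non-empty by the properness step) and lets the multiple tend to $+\infty$. In every case $f^{**}(x_1)=f(x_1)$, so $f^{**}=f$.

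The main obstacle is exactly this last case distinction --- controlling vertical separating hyperplanes when $x_1\notin\dom f$, where the affine minorant obtained in the properness step must be fed back into the separation argument; this is the content of \cite[Corollary~12.2.1]{Rockafellar:Convex_Analysis}, which is why the statement is most efficiently settled by citation.
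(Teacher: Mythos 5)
Your argument is correct: it is the standard separation-hyperplane proof of the Fenchel--Moreau biconjugation theorem, including the correct handling of the vertical-hyperplane case when $x_1\notin\dom f$. The paper itself gives no proof and simply cites \cite{Rockafellar:Convex_Analysis} (Theorem 12.2 and Corollary 12.2.1), which is exactly what you propose to do, so your treatment matches the paper's.
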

		Let $f\in\Conv(V)$. An element $y\in V^*$ is called a \emph{subgradient} of $f$ in $x_0\in V$ if 
		\begin{align*}
			f(x_0)+\langle y,x-x_0\rangle\le f(x) \quad \text{for all }x\in V.
		\end{align*}
		The set of all subgradients of $f$ in a point $x_0\in V$ is called the subdifferential of $f$ in $x_0$ and will be denoted by $\partial f(x_0)$.\\
		We recall the following basic properties of the subdifferential:
		\begin{lemma}[\cite{Rockafellar:Convex_Analysis} Theorem 23.5]
			\label{lemma_properties_subgradients}
			For $f\in\Conv(V)$, $x\in V$, $y\in V^*$ the following are equivalent:
			\begin{enumerate}
				\item $y\in\partial f(x)$,
				\item $\langle y,x\rangle =f(x)+f^*(y)$,
				\item $y\in \mathrm{argmax}_{x\in V}\langle y,x\rangle -f(x)$.
			\end{enumerate}
		\end{lemma}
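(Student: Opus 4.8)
The plan is to deduce all three equivalences from the Fenchel--Young inequality
\begin{align*}
\langle y,x\rangle\le f(x)+f^*(y),
\end{align*}
which holds for every $f\in\Conv(V)$, $x\in V$ and $y\in V^*$ directly from the defining formula $f^*(y)=\sup_{x'\in V}(\langle y,x'\rangle-f(x'))$.

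First I would establish $(1)\Leftrightarrow(2)$. Unwinding the definition of the subdifferential, the condition $y\in\partial f(x)$ is equivalent to $\langle y,x'\rangle-f(x')\le\langle y,x\rangle-f(x)$ for all $x'\in V$. Taking the supremum over $x'\in V$ on the left-hand side turns this into $f^*(y)\le\langle y,x\rangle-f(x)$, and combined with the Fenchel--Young inequality this is exactly the equality asserted in $(2)$. Conversely, if $(2)$ holds, then for an arbitrary $x'\in V$ the Fenchel--Young inequality applied at $(x',y)$ gives $\langle y,x'\rangle-f(x')\le f^*(y)=\langle y,x\rangle-f(x)$, which is precisely the subgradient inequality, so $y\in\partial f(x)$.

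The equivalence $(2)\Leftrightarrow(3)$ is then immediate from the definition of the Legendre transform: the point $x$ lies in $\mathrm{argmax}_{x'\in V}(\langle y,x'\rangle-f(x'))$ (the statement should be read as a condition on $x$, not on $y$) if and only if $\langle y,x\rangle-f(x)=\sup_{x'\in V}(\langle y,x'\rangle-f(x'))=f^*(y)$, which is $(2)$ after rearranging terms. I do not expect any genuine obstacle here: the only inputs are the definition of $f^*$ and an elementary manipulation of the subgradient inequality, so one could equally well just quote \cite[Theorem 23.5]{Rockafellar:Convex_Analysis}, as the statement already indicates.
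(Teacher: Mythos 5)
Your proof is correct. The paper offers no argument of its own for this lemma --- it simply cites \cite[Theorem 23.5]{Rockafellar:Convex_Analysis} --- and what you have written out is precisely the standard argument behind that reference: the subgradient inequality rearranges to $\langle y,x'\rangle-f(x')\le\langle y,x\rangle-f(x)$ for all $x'$, taking the supremum and invoking Fenchel--Young gives $(1)\Leftrightarrow(2)$, and $(2)\Leftrightarrow(3)$ is the definition of the supremum being attained. You are also right that condition $(3)$ must be read as $x$ attaining the supremum (the bound variable in the paper's statement clashes with the outer $x$); the only other point worth a word is that when $f(x)=+\infty$ all three conditions fail simultaneously (since $f$ is proper and $f^*>-\infty$), so the extended-real arithmetic causes no trouble.
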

		\begin{lemma}
			\label{lemma_compactness_face_epigraph_conjugate-function}
			Let $f\in\Conv(V,\R)$. Then $\partial f(x)\in\mathcal{K}(V^*)$ for all $x\in V$ and if $f$ is $L$ Lipschitz continuous on a neighborhood of $x\in V$, then $\partial f(x)\subset B_L(0)$. 
		\end{lemma}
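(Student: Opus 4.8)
The plan is to check the three defining properties of a member of $\mathcal{K}(V^*)$ (nonemptiness, convexity, closedness), then establish the norm bound, and finally observe that the bound together with closedness forces compactness. Convexity and closedness are immediate from the definition: writing the subgradient inequality for each fixed $z\in V$ shows
\begin{align*}
	\partial f(x)=\bigcap_{z\in V}\left\{y\in V^* \ : \ \langle y,z-x\rangle\le f(z)-f(x)\right\},
\end{align*}
and since $f$ is finite-valued each set on the right is a closed affine half-space, so the intersection is closed and convex.

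For nonemptiness I would use that $f\in\Conv(V,\R)$ has $\dom f=V$, so $x$ lies in the interior of $\dom f$; the existence of a subgradient at such a point is exactly \cite{Rockafellar:Convex_Analysis} Theorem 23.4 (equivalently, a supporting hyperplane to the convex set $\epi(f)$ at its boundary point $(x,f(x))$ that is not vertical, which is guaranteed because $f$ is continuous at $x$). Hence $\partial f(x)\neq\emptyset$.

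For the norm bound, assume $f$ is $L$-Lipschitz on a ball $B_\epsilon(x)$ and let $y\in\partial f(x)$. For a unit vector $u\in V$ and $0<t<\epsilon$, applying the subgradient inequality at the point $x+tu$ gives $t\langle y,u\rangle\le f(x+tu)-f(x)\le Lt$, so $\langle y,u\rangle\le L$; taking the supremum over all unit vectors $u$ yields $|y|\le L$, i.e. $\partial f(x)\subset B_L(0)$. Since any $f\in\Conv(V,\R)$ is locally Lipschitz on $V$ (Proposition \ref{proposition_convex_functions_local_lipschitz_constants} applied with $U=V$ and a small ball around $x$, using that $f$ is bounded on compact sets), such an $L$ exists at every $x\in V$, so $\partial f(x)$ is always bounded. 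A closed, bounded, nonempty, convex subset of the finite-dimensional space $V^*$ is a member of $\mathcal{K}(V^*)$, which completes the argument. I do not expect a genuine obstacle here; the only input that is not completely formal is the nonemptiness of the subdifferential at an interior point of the domain, which is the standard supporting hyperplane argument cited above.
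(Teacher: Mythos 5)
Your proof is correct. The paper itself does not argue the lemma at all: it simply cites Clarke, \emph{Optimization and Nonsmooth Analysis}, Proposition 2.1.2, which establishes these properties for the Clarke generalized gradient of an arbitrary locally Lipschitz function (of which the convex subdifferential is a special case). You instead give the standard self-contained convex-analysis argument: closedness and convexity from writing $\partial f(x)$ as an intersection of closed half-spaces, nonemptiness from the supporting-hyperplane theorem at an interior point of $\dom f$ (Rockafellar, Theorem 23.4), and the bound $|y|\le L$ from the one-line estimate $t\langle y,u\rangle\le f(x+tu)-f(x)\le Lt$ for unit vectors $u$ and small $t>0$. Every step is valid; in particular, local Lipschitz continuity of a finite-valued convex function does follow from Proposition \ref{proposition_convex_functions_local_lipschitz_constants}, so boundedness of $\partial f(x)$ holds at every point, and a nonempty, closed, bounded, convex subset of the finite-dimensional space $V^*$ is indeed an element of $\mathcal{K}(V^*)$. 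The only cosmetic remark is that the paper treats nonemptiness as a separate observation stated immediately after the lemma rather than as part of its conclusion; including it, as you do, costs nothing and makes the statement $\partial f(x)\in\mathcal{K}(V^*)$ unambiguous. What your route buys is independence from the nonsmooth-analysis reference; what the paper's citation buys is brevity and a statement valid beyond the convex setting.
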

		\begin{proof}
			This is a special case of \cite{Clarke:optimization_non_smooth_analysis} 2.1.2.
		\end{proof}
		Also note that $\partial f(x)\ne \emptyset$ for any $x\in V$, if $f\in\Conv(V,\R)$.\\
		The support function $h_K\in\Conv(V,\R)$ of a convex body $K\in\mathcal{K}(V^*)$ is defined as
		\begin{align*}
			h_K(y):=\sup\limits_{x\in K}\langle y,x\rangle.
		\end{align*}
		These functions will play an important role in Section \ref{section_relation_to_val_convex_bodies}, where we relate valuations on convex functions to valuations on convex bodies. This construction relies on a density result contained in Corollary \ref{corollary_density_various_families_of_convex_functions} below, which we will deduce from the following proposition.
		\begin{proposition}
			\label{proposition_epi_graph_support_function_convex_body}
			Let $f\in\Conv(V,\R)$ be a finite-valued convex function, $f^*$ its convex dual, $R>0$. If $\|f\|_{C(B_{R+2})}\le c$, then the set 
			\begin{equation*}
			K_{f,R}:=\epi(f^*)\cap \{(y,t)\in V^*\times\R  \ : \ |y|\le 2c,|t|\le (2R+3)c\}
			\end{equation*} is a convex body in $V^*\times\R$ and satisfies
			\begin{equation*}
			f(x)=h_{K_{f,R}}(x,-1) \quad\text{for all }x\in B_{R+1}.
			\end{equation*}
		\end{proposition}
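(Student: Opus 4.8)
The plan is to verify separately that $K_{f,R}$ is a convex body, that $h_{K_{f,R}}(x,-1)\le f(x)$ holds for every $x\in V$, and that the reverse inequality holds on $B_{R+1}$; the second inequality is the soft part, and the third is where the constants $2c$ and $(2R+3)c$ enter. That $K_{f,R}$ is convex is immediate, being the intersection of the convex set $\epi(f^*)$ (convex since $f^*\in\Conv(V^*)$) with a box; it is closed because $\epi(f^*)$ is closed ($f^*$ being lower semi-continuous) and the box is closed, and it is bounded, hence compact, because it lies inside the box. Non-emptiness will drop out of the subgradient computation below, applied at $x=0$.

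For the upper bound, let $(y,t)\in K_{f,R}$. Then $(y,t)\in\epi(f^*)$, so $t\ge f^*(y)$, whence $\langle(x,-1),(y,t)\rangle=\langle x,y\rangle-t\le\langle x,y\rangle-f^*(y)\le f^{**}(x)=f(x)$ by the definition of the Legendre transform together with $f^{**}=f$. Taking the supremum over $(y,t)\in K_{f,R}$ yields $h_{K_{f,R}}(x,-1)\le f(x)$ for all $x\in V$; this uses only the inclusion $K_{f,R}\subset\epi(f^*)$, and no information about the size of the box.

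For the reverse inequality, fix $x$ in the open ball $B_{R+1}^\circ$. Since $\|f\|_{C(B_{R+2})}\le c$ and $B_{R+1}+B_1\subset B_{R+2}$, Proposition \ref{proposition_convex_functions_local_lipschitz_constants} (applied with $X=B_{R+1}$, $\epsilon=1$) shows $f$ is $2c$-Lipschitz on $B_{R+1}$, hence $2c$-Lipschitz on a neighbourhood of $x$; since $f\in\Conv(V,\R)$ we may pick $y\in\partial f(x)$, and Lemma \ref{lemma_compactness_face_epigraph_conjugate-function} then forces $|y|\le 2c$. By Lemma \ref{lemma_properties_subgradients}, $f^*(y)=\langle y,x\rangle-f(x)$, so $|f^*(y)|\le 2c(R+1)+c=(2R+3)c$; thus $(y,f^*(y))\in K_{f,R}$ and $h_{K_{f,R}}(x,-1)\ge\langle x,y\rangle-f^*(y)=f(x)$. (Taking $x=0$ here also produces the point $(y,-f(0))\in K_{f,R}$, so $K_{f,R}\ne\emptyset$.) Combining the two bounds, $f$ and $h_{K_{f,R}}(\cdot,-1)$ agree on $B_{R+1}^\circ$; since both functions are continuous — support functions of convex bodies are finite convex functions, hence continuous — and $B_{R+1}$ is the closure of $B_{R+1}^\circ$, the identity extends to all of $B_{R+1}$. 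The only genuinely delicate point is the radius bookkeeping: the gap between $R+2$ and $R+1$ is exactly what permits the choice $\epsilon=1$ and the clean Lipschitz bound $2c$, and it is cleaner to reach the boundary of $B_{R+1}$ by continuity than to attempt to control $\partial f$ there directly.
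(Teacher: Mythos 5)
Your proof is correct and follows essentially the same route as the paper's: the upper bound via $K_{f,R}\subset\epi(f^*)$ and $f^{**}=f$, and the lower bound by producing a subgradient $y\in\partial f(x)$ with $|y|\le 2c$ (from the Lipschitz estimate of Proposition \ref{proposition_convex_functions_local_lipschitz_constants}) and $|f^*(y)|\le(2R+3)c$, so that $(y,f^*(y))\in K_{f,R}$. Your only deviation is the slightly more careful treatment of boundary points of $B_{R+1}$ (arguing on the open ball and extending by continuity rather than invoking the subgradient bound directly at the boundary), which is a harmless refinement of the same argument.
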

		\begin{proof}
			Consider the set $C:=\{y\in V^* \ : \ y\in\partial f(x) \text{ for some }x\in B_{R+1}\}$. As $f|_{B_{R+1}}$ is Lipschitz continuous with Lipschitz constant $L=2\|f\|_{C(B_{R+2})}$ by Proposition \ref{proposition_convex_functions_local_lipschitz_constants}, Lemma \ref{lemma_compactness_face_epigraph_conjugate-function} implies that $C$ is contained in a ball of radius $L$ centered at the origin. \\
			Any $y\in C$ satisfies $f^*(y)=\langle x,y\rangle -f(x)$ for some $x\in B_{R+1}$ due to Lemma \ref{lemma_properties_subgradients}. Thus
			\begin{equation*}
			|f^*(y)|\le |\langle y,x\rangle| +|f(x)|\le L(R+1)+\|f\|_{C(B_{R+2})}\le (2R+3)\|f\|_{C(B_{R+2})}\le (2R+3)c.
			\end{equation*}
			Let us show that $f(x)=h_{K_{f,R}}(x,-1)$ for all $x\in B_{R+1}$.	Obviously, the left hand side is equal to or larger than the right hand side. By Lemma \ref{lemma_compactness_face_epigraph_conjugate-function}, we know that for any $x\in B_{R+1}$ there exists $y\in V^*$ such that $f(x)=\langle y,x\rangle-f^*(y)$. In particular $y\in C$. Then 
			\begin{align*}
			(y,f^*(y))\in \epi(f^*)\cap \{(y,t)\in V^*\times\R  \ : \ |y|\le 2c,|t|\le (2R+3)c\}=K_{f,R}
			\end{align*}
			by the previous discussion, so
			\begin{align*}
			f(x)=\langle y,x\rangle-f^*(y)\le \sup_{(\tilde{y},t)\in K_{f,R}}\langle \tilde{y},x\rangle -t= h_{K_{f,R}}(x,-1).
			\end{align*}
			As $f^*$ is lower semi-continuous, the set $\epi(f^*)\cap \{(y,t)\in V^*\times\R  \ : \ |y|\le 2(R+2)c,|t|\le 3(R+2)c\}$ is closed. As it is also convex and bounded, it  belongs to $\mathcal{K}(V^*\times\R)$.
		\end{proof}
		We thus obtain the following density results.
		\begin{corollary}
			\label{corollary_density_various_families_of_convex_functions}
			The following families of functions are dense in $\Conv(V,\R)$:
			\begin{enumerate}
				\item $\{h_K(\cdot,-1) \ : \ K\in\mathcal{K}(V^*\times\R)\}$,
				\item $\{h_P(\cdot,-1) \ : \ P\in\mathcal{K}(V^*\times\R) \text{ polytope}\}$,
				\item $\{h_K(\cdot,-1) \ : \ K\in\mathcal{K}(V^*\times\R)\text{ smooth and strictly convex}\}$,
				\item $\Conv(V,\R)\cap C^\infty(V)$.
			\end{enumerate}
		\end{corollary}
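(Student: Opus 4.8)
The plan is to obtain all four statements from Proposition~\ref{proposition_epi_graph_support_function_convex_body} together with classical approximation theorems for convex bodies. Two facts will be used throughout: on $\Conv(V,\R)$ epi-convergence coincides with uniform convergence on compact subsets (Proposition~\ref{proposition_convergence_finite_convex_functions}), and density is transitive, i.e.\ if a family $\mathcal{A}\subset\Conv(V,\R)$ is dense and every member of $\mathcal{A}$ is an epi-limit of members of a family $\mathcal{B}$, then $\mathcal{B}$ is dense as well.

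First I would establish (1). Fix $f\in\Conv(V,\R)$ and, for each $R\in\mathbb{N}$, set $c_R:=\|f\|_{C(B_{R+2})}$ and let $K_{f,R}\in\mathcal{K}(V^*\times\R)$ be the convex body provided by Proposition~\ref{proposition_epi_graph_support_function_convex_body}, so that $h_{K_{f,R}}(\cdot,-1)=f$ on $B_{R+1}$. Since any compact subset of $V$ is contained in $B_{R+1}$ for all sufficiently large $R$, the sequence $h_{K_{f,R}}(\cdot,-1)$ converges to $f$ uniformly on compact sets, i.e.\ epi-converges to $f$. This proves (1).

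For (2) and (3), by transitivity of density it suffices, for a fixed $K\in\mathcal{K}(V^*\times\R)$, to approximate $h_K(\cdot,-1)$ in the topology of locally uniform convergence by functions of the respective type. I would use that polytopes, respectively convex bodies of class $C^\infty_+$ (which are in particular smooth and strictly convex), are dense in $\mathcal{K}(V^*\times\R)$ with respect to the Hausdorff metric (see e.g.\ \cite{Schneider:convex_bodies_Brunn-Minkowski}). If $K_j\to K$ in the Hausdorff metric, then the support functions converge uniformly on every bounded set (the Hausdorff distance is the sup-distance of support functions on the unit sphere, and support functions are $1$-homogeneous); restricting to the bounded slices $\{(x,-1):x\in B_R\}$ yields $h_{K_j}(\cdot,-1)\to h_K(\cdot,-1)$ uniformly on compact subsets of $V$. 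Combined with (1), this proves (2) and (3).

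Finally, (4) can be read off from (3): the support function of a body of class $C^\infty_+$ is $C^\infty$ on the complement of the origin, so $h_K(\cdot,-1)\in\Conv(V,\R)\cap C^\infty(V)$ for such $K$, and the approximating family in (3) may therefore be chosen inside $\Conv(V,\R)\cap C^\infty(V)$. (Alternatively, for any standard mollifier $\rho_\varepsilon$ the convolution $f*\rho_\varepsilon\in\Conv(V,\R)\cap C^\infty(V)$ converges to $f$ uniformly on compact sets, giving (4) directly.) I do not expect a genuine obstacle here: the one substantive ingredient, Proposition~\ref{proposition_epi_graph_support_function_convex_body}, is already proved, and what remains is the routine passage from Hausdorff convergence of bodies to uniform convergence of the sliced support functions $h_\bullet(\cdot,-1)$ on compact sets, together with the classical density of polytopes and of $C^\infty_+$ bodies among all convex bodies.
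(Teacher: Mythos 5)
Your proof is correct and follows essentially the same route as the paper: part (1) from Proposition~\ref{proposition_epi_graph_support_function_convex_body}, parts (2) and (3) from the density of polytopes and of smooth strictly convex bodies in $\mathcal{K}(V^*\times\R)$ together with the continuity of $K\mapsto h_K(\cdot,-1)$ (which the paper isolates as Lemma~\ref{lemma_continuity_support_function}), and part (4) from the smoothness of support functions of smooth strictly convex bodies away from the origin. The only cosmetic difference is that you re-derive the continuity of the slicing map instead of citing the lemma, and you offer an unneeded but valid mollification alternative for (4).
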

		\begin{proof}
			For the first set this follows directly from Proposition \ref{proposition_epi_graph_support_function_convex_body} and the continuity of the map $\mathcal{K}(V^*\times\R)\rightarrow\Conv(V,\R)$, $K\mapsto h_K(\cdot,-1)$, see Lemma \ref{lemma_continuity_support_function} below. As $\{P\in\mathcal{K}(V^*\times\R) \ : \  P \text{ polytope}\}$ and $\mathcal{K}(V^*\times\R)^{sm}$
			are dense subsets of $\mathcal{K}(V^*\times\R)$ respectively, this implies the density of the second and third set. For the last set, observe that the support function of any smooth and strictly convex body is smooth, so the last set contains a dense subset and is thus dense itself.
		\end{proof}
		
	\subsection{Lipschitz regularization}
	Most of our results are actually results on valuations on $\Conv(V,\R)$ which generalize to more general subspaces of $\Conv(V)$ by approximation.\\
	For $r>0$ the \emph{Lipschitz regularization} or \emph{Pasch-Hausdorff envelope} of a convex function $f\in\Conv(V)$ is defined as
	\begin{align*}
	\reg_r(f):=\left(f^*+1^\infty_{B_{1/r}}\right)^*.
	\end{align*} 
	We will need the following properties:
	\begin{proposition}[\cite{Colesanti_Ludwig_Mussnig:Hessian_valuations} Propositions 4.1, 4.2, 4.3]
		\label{proposition_properties_LIpschitz_regularization}
		For $f,h\in\Conv(V)$ and $r>0$, the Lipschitz regularization has the following properties:
		\begin{enumerate}[label=\roman*.]
			\item There exists $r_0>0$ such that $\reg_rf\in\Conv(V,\R)$ for all $0<r\le r_0$.
			\item $\reg_r f$ epi-converges to $f$ for $r\rightarrow 0$.
			\item If $x\in\dom(f)$ and $\partial f(x)\cap B_{1/r}\ne\emptyset$, then $\reg_r f(x)=f(x)$ and $\partial \reg_r f(x)=\partial f(x)\cap B_{1/r}$.
			\item If $(f_j)_j$ is a sequence in $\Conv(V)$ that epi-converges to $f$, then there exists $r_0>0$ such that $(\reg_rf_j)_j$ epi-converges to $\reg_rf$ for all $0<r\le r_0$.
			\item If $\min(f,h)$ is convex, then there exists $r_0>0$ such that 
			\begin{align*}
			&\reg_r(\max(f,h))=\max(\reg_r f,\reg_r h) && \reg_r(\min(f,h))=\min(\reg_r f,\reg_r h)
			\end{align*} 
			for all $0<r\le r_0$.
		\end{enumerate}
	\end{proposition}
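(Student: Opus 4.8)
The plan is to reduce everything to the classical description of $\reg_rf$ as a \emph{Pasch--Hausdorff envelope}. Since $\left(\tfrac1r|\cdot|\right)^*=1^\infty_{B_{1/r}}$ and the Fenchel transform turns infimal convolution into addition, the identity $f^{**}=f$ (valid for every $f\in\Conv(V)$) gives, as soon as $\dom f^*\cap B_{1/r}\neq\emptyset$,
\begin{align*}
\reg_rf(x)=\inf_{z\in V}\left(f(z)+\tfrac1r|x-z|\right),
\end{align*}
with the infimum attained; equivalently, $\reg_rf$ is the largest $\tfrac1r$-Lipschitz function dominated by $f$, and $\epi(\reg_rf)=\epi(f)+C_r$ where $C_r:=\{(z,t)\in V\times\R:|z|\le rt\}$. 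I would first establish this reformulation and record that $\reg_rf$ is then automatically convex (a supremum of affine functions) and globally $\tfrac1r$-Lipschitz. Property (i) is now immediate: choosing $y_0\in\dom f^*$ (non-empty, since $f^*$ is proper) and $r_0\le 1/|y_0|$, the affine map $x\mapsto\langle y_0,x\rangle-f^*(y_0)$ is a $\tfrac1r$-Lipschitz minorant of $f$ for $r\le r_0$, so $\reg_rf>-\infty$, while $\reg_rf(x)\le f(z_0)+\tfrac1r|x-z_0|<\infty$ for any $z_0\in\dom f$; a finite convex function being continuous, $\reg_rf\in\Conv(V,\R)$.

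For (iii), the inequality $\reg_rf\le f$ is read off the envelope formula, and if $y\in\partial f(x)\cap B_{1/r}$ then $f(z)+\tfrac1r|x-z|\ge f(x)+\langle y,z-x\rangle+\tfrac1r|x-z|\ge f(x)$ by Cauchy--Schwarz, so $\reg_rf(x)=f(x)$; for the subdifferential one uses that $(\reg_rf)^*=f^*+1^\infty_{B_{1/r}}$ (the latter being lower semicontinuous, convex and proper, hence equal to its biconjugate) together with the Fenchel-equality characterization of subgradients (Lemma~\ref{lemma_properties_subgradients}), which gives $y\in\partial\reg_rf(x)$ iff $\langle y,x\rangle=f(x)+f^*(y)+1^\infty_{B_{1/r}}(y)$, i.e. iff $y\in\partial f(x)\cap B_{1/r}$. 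Properties (ii) and (iv) I would treat on the dual side via the bicontinuity of the Legendre transform with respect to epi-convergence \cite{Rockafellar_Wets:Variational_analysis}. For (ii), $f^*+1^\infty_{B_{1/r}}$ decreases pointwise to $f^*$ as $r\downarrow0$, and this is epi-convergence (the $\liminf$ inequality from lower semicontinuity of $f^*$, recovery sequences from constant sequences, which lie in $B_{1/r}$ for $r$ small), whence $\reg_rf=(f^*+1^\infty_{B_{1/r}})^*$ epi-converges to $f^{**}=f$. For (iv), $f_j\to f$ gives $f_j^*\to f^*$ epi; taking $y_0\in\dom f^*$ and a recovery sequence $y_j\to y_0$ for $(f_j^*)_j$, boundedness of $(y_j)_j$ yields a single $r_0$ with $\reg_rf_j,\reg_rf\in\Conv(V,\R)$ for all $j$ and all $r\le r_0$; one then checks $f_j^*+1^\infty_{B_{1/r}}\to f^*+1^\infty_{B_{1/r}}$ epi --- only recovery at a point $y\in\partial B_{1/r}$ with $f^*(y)<\infty$ requires care, handled by approaching $y$ along the segment towards a point of $\dom f^*\cap\mathrm{int}(B_{1/r})$ and diagonalizing against the recovery sequences for $f_j^*$ --- and applies the Legendre transform once more.

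For (v) I would pass to epigraphs. One first checks that the hypothesis that $\min(f,h)$ is convex --- equivalently, that $\epi(f)\cup\epi(h)$ is convex --- forces $\epi(f)\cap\epi(h)\neq\emptyset$ (two disjoint non-empty closed convex sets cannot have convex union), so that both $\max(f,h)$ and $\min(f,h)$ lie in $\Conv(V)$; choosing $r_0$ small enough that (i) applies to $f$, $h$, $\max(f,h)$ and $\min(f,h)$ simultaneously, $\reg_r$ corresponds on epigraphs to $K\mapsto K+C_r$, while $\max$ and $\min$ correspond to intersection and union. The identity $\reg_r(\min(f,h))=\min(\reg_rf,\reg_rh)$ is then immediate already at the level of functions, since $\inf_z\min(f(z),h(z))+\tfrac1r|x-z|=\min\left(\inf_zf(z)+\tfrac1r|x-z|,\ \inf_zh(z)+\tfrac1r|x-z|\right)$. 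The identity for $\max$ reduces to the set-theoretic fact that if $K_1,K_2$ are closed convex with $K_1\cup K_2$ convex and $C$ is a convex cone, then $(K_1\cap K_2)+C=(K_1+C)\cap(K_2+C)$; only the inclusion ``$\supseteq$'' needs an argument, and it follows by writing a point of the right-hand side as $a_1+c_1=a_2+c_2$ with $a_i\in K_i$, $c_i\in C$, observing that the segment $[a_1,a_2]$ lies in $K_1\cup K_2$ and hence meets $K_1\cap K_2$ in some $a_{t^*}=(1-t^*)a_1+t^*a_2$, and noting that $x-a_{t^*}=(1-t^*)c_1+t^*c_2\in C$.

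The main obstacle is the $\max$ part of (v): one has to pin down the ``switch point'' $t^*$ (via the intersection of the two closed sub-intervals $\{t:a_t\in K_i\}$ of $[0,1]$, one containing $0$ and one containing $1$) and then be careful when translating the set identity back to functions, in particular with the exactness of the infimal convolution for small $r$ (so that $\epi(f)+C_r$ is closed and equals $\epi(\reg_rf)$) and with the properness of $\max(f,h)$ and $\min(f,h)$. The boundary diagonalization needed in (iv) is the only other slightly delicate point; everything else is a direct consequence of the Pasch--Hausdorff reformulation and Fenchel duality.
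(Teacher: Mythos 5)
This proposition is not proved in the paper at all: it is imported verbatim from Colesanti--Ludwig--Mussnig (Propositions 4.1--4.3 of the Hessian valuations paper), so there is no in-paper argument to compare against. Your proof is a correct, self-contained reconstruction, and it follows the same circle of ideas as the cited source: the key step is the identification of $\reg_rf=(f^*+1^\infty_{B_{1/r}})^*$ with the Pasch--Hausdorff envelope $\inf_z\bigl(f(z)+\tfrac1r|x-z|\bigr)$, valid exactly when $\dom f^*\cap B_{1/r}\neq\emptyset$, after which (i) and (iii) are direct computations (your use of Lemma~\ref{lemma_properties_subgradients} for the subdifferential statement is exactly right, since $(\reg_rf)^*=f^*+1^\infty_{B_{1/r}}$ once the latter is proper), while (ii) and (iv) follow from Wijsman's theorem on the bicontinuity of the Legendre transform under epi-convergence, which is available in the cited Rockafellar--Wets reference; your boundary diagonalization for the recovery sequences in (iv) is the standard $\limsup$-approximation argument and is legitimate. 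The treatment of (v) is the most original part: reducing the $\max$-identity to $(K_1\cap K_2)+C=(K_1+C)\cap(K_2+C)$ for closed convex sets with convex union and a convex cone $C$, with the switch point $t^*$ found via the two closed subintervals of $[0,1]$, is clean and correct, and you correctly note that exactness of the infimal convolution (so that $\epi(\reg_rg)$ equals $\epi(g)+C_r$ on the nose) is where $r$ must be taken small. Two small points a final write-up should make explicit: choose $r_0$ \emph{strictly} less than $1/|y_0|$ for some $y_0\in\dom f^*\cap\dom h^*$, since coercivity of $z\mapsto f(z)+\tfrac1r|x-z|$ (hence attainment and closedness of $\epi(f)+C_r$) can fail when $|y_0|=1/r$; and the blanket claim that the infimum is always attained as soon as $\dom f^*\cap B_{1/r}\neq\emptyset$ is not needed elsewhere and should be restricted to that same strict regime. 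Neither affects the validity of the argument.
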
	
	Note that ii. implies that $\Conv(V,\R)\subset \Conv(V)$ is dense. Thus the sets considered in Corollary \ref{corollary_density_various_families_of_convex_functions} are also dense in $\Conv(V)$.

\section{Dually epi-translation invariant valuations on convex functions}
	\label{section_valuations_convex_functions}	
		Let $C\subset\Conv(V)$ be a non-empty subset and let $(F,+)$ be an abelian semigroup.
		\begin{definition}
			A map $\mu:C\rightarrow F$ is called a valuation if 
			\begin{align*}
				\mu(f)+\mu(h)=\mu(\max(f,h))+\mu(\min(f,h))
			\end{align*}
			for all $f,h\in C$ such that the pointwise maximum $\max(f,h)$ and minimum $\min(f,h)$ belong to $C$.
		\end{definition}
		A valuation $\mu:C\rightarrow F$ is called \emph{dually epi-translation invariant} if
		\begin{align*}
			\mu(f+\lambda+c)=\mu(f)\quad \forall f\in C, \lambda\in V^*, c\in\R \text{ s.t. } f+\lambda+c\in C.
		\end{align*}
		\begin{definition}
			If $F$ is a topological vector space, we let $\VConv(C;V,F)$ denote the space of all valuations $\mu:C\rightarrow F$ that are
			\begin{enumerate}
				\item continuous with respect to epi-convergence,
				\item dually epi-translation invariant.
			\end{enumerate}
		\end{definition}
		If $C=\Conv(V,\R)$, we will use the notation $\VConv(V,F)$ instead. For $F=\R$, we will write $\VConv(C;V):=\VConv(C;V,\R)$ and $\VConv(V):=\VConv(V,\R)$ for brevity.\\
			
		We also equip $\VConv(C;V,F)$ with the compact-open topology, which is generated by the open sets 
		\begin{align*}
			\mathcal{M}(K,O):={\{\mu\in\VConv(C;V,F) \ : \ \mu(f)\in O\quad\forall f\in K\}}
		\end{align*}
		for $K\subset C$ compact and $O\subset F$ open. If $F$ is locally convex, $\VConv(C;V,F)$ is a locally convex vector space equipped with the family of semi-norms
		\begin{align*}
			\|\mu\|_{F;K}:=\sup_{f\in K}|\mu(f)|_F,
		\end{align*}
		where $K\subset C$ is compact and $|\cdot|_F$ is a continuous semi-norm on $F$. It is easy to see that $\VConv(C;V,F)$ is complete if $F$ is complete. Also note that Proposition \ref{proposition_compactness_Conv} provides a characterization of all compact subsets for $C=\Conv(V,\R)$.
	\subsection{Relation to valuations on convex bodies}
		\label{section_relation_to_val_convex_bodies}
		As noted in Section \ref{section_convex_functions}, the support function $h_K$ of $K\in\mathcal{K}(V^*)$ is defined by $h_K(y):=\sup_{x\in K}\langle y,x\rangle$. It has the following well known properties:
	\begin{enumerate}
		\item $h_K\in\Conv(V^*,\R)$.
		\item $h_{tK}=th_K$ for all $t\ge 0$.
		\item If $K,L$ are convex bodies such that $K\cup L$ is convex, then $h_{K\cup L}=\max(h_K, h_L)$ and  $h_{K\cap L}=\min(h_K, h_L)$.
		\item A sequence  $(K_j)_j$ of convex bodies converges to $K$ with respect to the Hausdorff metric if and only if $(h_{K_j})_j$ converges to $h_K$ uniformly on compact subsets.
	\end{enumerate}
		The last property implies
	\begin{lemma}
		\label{lemma_continuity_support_function}
		The map
		\begin{align*}
		P:\mathcal{K}(V^*\times\R)&\rightarrow\Conv(V,\R)\\
		K&\mapsto h_K(\cdot,-1)
		\end{align*}
		is continuous.
		\end{lemma}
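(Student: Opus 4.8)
The plan is to deduce the statement from the fourth of the listed properties of support functions, namely the equivalence of Hausdorff convergence of convex bodies with locally uniform convergence of their support functions, together with the description of epi-convergence in $\Conv(V,\R)$ as locally uniform convergence.

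Since the Hausdorff metric makes $\mathcal{K}(V^*\times\R)$ metrizable and the topology of epi-convergence makes $\Conv(V,\R)$ metrizable, it suffices to check sequential continuity. So I would take a sequence $(K_j)_j$ in $\mathcal{K}(V^*\times\R)$ converging to $K\in\mathcal{K}(V^*\times\R)$ in the Hausdorff metric and show that $h_{K_j}(\cdot,-1)$ epi-converges to $h_{K}(\cdot,-1)$ in $\Conv(V,\R)$. First note that all these functions are finite-valued and convex, so they genuinely lie in $\Conv(V,\R)$ and the equivalence of epi-convergence with locally uniform convergence from Proposition \ref{proposition_convergence_finite_convex_functions} applies without any boundary caveat (the domain being all of $V$).

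By property (4) of support functions, $h_{K_j}$ converges to $h_K$ uniformly on compact subsets of the ambient space $V\times\R$ on which these support functions are defined. Given an arbitrary compact set $A\subset V$, the set $A\times\{-1\}$ is compact in $V\times\R$, hence $h_{K_j}\to h_K$ uniformly on $A\times\{-1\}$; in other words $h_{K_j}(\cdot,-1)\to h_K(\cdot,-1)$ uniformly on $A$. As $A$ was arbitrary, $(h_{K_j}(\cdot,-1))_j$ converges to $h_K(\cdot,-1)$ uniformly on every compact subset of $V$, which by Proposition \ref{proposition_convergence_finite_convex_functions} (see also the closing remark of Section \ref{section_topology_convex_functions}) is exactly epi-convergence in $\Conv(V,\R)$. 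This proves continuity of $P$.

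There is no genuine obstacle here: the entire content is packaged in the cited property of support functions. The only points requiring a moment's care are the metrizability reduction to sequences and the identification of epi-convergence on $\Conv(V,\R)$ with locally uniform convergence, both of which are available from the material recalled earlier in the paper.
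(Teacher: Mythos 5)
Your argument is correct and is exactly the deduction the paper intends: the lemma is stated as an immediate consequence of property (4) of support functions combined with the identification of epi-convergence on $\Conv(V,\R)$ with locally uniform convergence. Your write-up simply spells out the restriction to $A\times\{-1\}$ and the metrizability reduction, which the paper leaves implicit.
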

		Here we have used the canonical isomorphism $(V\times\R)^*\cong V^*\times\R$. For $\mu\in\VConv(C;V,F)$ we define $T(\mu):\mathcal{K}(V^*\times\R)\rightarrow F$ by $T(\mu)[K]:=\mu(h_K(\cdot,-1))$.
		\begin{theorem}
			\label{theorem_embedding VConv->Val(VxR)}
			Let $F$ be a Hausdorff real topological vector space and $C\subset\Conv(V)$ a subset with $\Conv(V,\R)\subset C$. Then 
			\begin{equation*}
				T:\VConv(C;V,F)\rightarrow \Val(V^*\times\R,F)
			\end{equation*} is well defined, continuous, and injective.
		\end{theorem}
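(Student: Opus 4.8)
The plan is to prove the three assertions — $T$ is well defined, $T$ is continuous, $T$ is injective — separately, each time combining the listed properties of support functions with results from Section~\ref{section_convex_functions}. Note that $T$ will automatically be linear, since $\mu\mapsto\mu(h_K(\cdot,-1))$ is.

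For well-definedness I would first observe that a convex body $K\in\mathcal{K}(V^*\times\R)$ is bounded, so $h_K$ is finite on all of $V\times\R$ and hence $h_K(\cdot,-1)\in\Conv(V,\R)\subset C$; thus $T(\mu)[K]$ is defined. That $T(\mu)$ is a valuation follows from the third of the listed properties of support functions: if $K,L\in\mathcal{K}(V^*\times\R)$ with $K\cup L$ convex, then $K\cap L\neq\emptyset$ (otherwise a segment joining a point of $K$ to a point of $L$ would disconnect $K\cup L$), and restricting $h_{K\cup L}=\max(h_K,h_L)$, $h_{K\cap L}=\min(h_K,h_L)$ to the hyperplane $\{t=-1\}$ gives the corresponding identities for the restricted support functions, all of which lie in $C$; applying the valuation equation for $\mu$ then yields the valuation equation for $T(\mu)$. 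Translation invariance comes from the identity $h_{K+(\lambda,s)}(x,-1)=h_K(x,-1)+\langle\lambda,x\rangle-s$ for $(\lambda,s)\in V^*\times\R$: indeed $T(\mu)[K+(\lambda,s)]=\mu\big(h_K(\cdot,-1)+\langle\lambda,\cdot\rangle-s\big)=\mu(h_K(\cdot,-1))$ by dual epi-translation invariance. Continuity of $T(\mu)$ with respect to the Hausdorff metric follows because $K\mapsto h_K(\cdot,-1)$ is continuous into $\Conv(V,\R)$ by Lemma~\ref{lemma_continuity_support_function} (equivalently, by the fourth listed property of support functions together with Proposition~\ref{proposition_convergence_finite_convex_functions}), and $\mu$ is continuous for epi-convergence. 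Hence $T(\mu)\in\Val(V^*\times\R,F)$.

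For continuity of $T$ I would argue directly with the compact-open topologies. If $\mathcal{C}\subset\mathcal{K}(V^*\times\R)$ is compact, then $\mathcal{C}':=\{h_K(\cdot,-1):K\in\mathcal{C}\}$ is a compact subset of $\Conv(V,\R)$ by Lemma~\ref{lemma_continuity_support_function}, and for any open $O\subset F$ the preimage under $T$ of the subbasic open set $\{\nu:\nu[K]\in O\ \forall K\in\mathcal{C}\}$ equals $\mathcal{M}(\mathcal{C}',O)$, which is open in $\VConv(C;V,F)$; therefore $T$ is continuous. (When $F$ is locally convex this is just the seminorm estimate $\|T(\mu)\|_{F;\mathcal{C}}=\|\mu\|_{F;\mathcal{C}'}$.)

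Finally, for injectivity suppose $T(\mu)=0$, so $\mu(h_K(\cdot,-1))=0$ for every $K\in\mathcal{K}(V^*\times\R)$. Since the family $\{h_K(\cdot,-1):K\in\mathcal{K}(V^*\times\R)\}$ is dense in $\Conv(V,\R)$ by Corollary~\ref{corollary_density_various_families_of_convex_functions}, continuity of $\mu$ gives $\mu\equiv0$ on $\Conv(V,\R)$. To pass to all of $C$, I would use that for $f\in C\subset\Conv(V)$ the Lipschitz regularizations $\reg_r f$ lie in $\Conv(V,\R)\subset C$ for $r$ small and epi-converge to $f$ as $r\to0$ (Proposition~\ref{proposition_properties_LIpschitz_regularization}), so $\mu(f)=\lim_{r\to0}\mu(\reg_r f)=0$; hence $\mu=0$. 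I do not expect a genuine obstacle here: the only steps needing care are the sign conventions in the translation-invariance identity — this is exactly the place where dual epi-translation invariance, as opposed to mere continuity, enters — and the reduction of injectivity from $\Conv(V,\R)$ to the larger cone $C$ via the Lipschitz-regularization density statement.
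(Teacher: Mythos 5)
Your proposal is correct and follows essentially the same route as the paper: the paper declares well-definedness ``clear'' (you supply the details via the listed support-function properties), proves injectivity by exactly your density-plus-continuity argument (density of $\{h_K(\cdot,-1)\}$ in $\Conv(V,\R)$ and of $\Conv(V,\R)$ in $C$ via Lipschitz regularization), and proves continuity by the same computation $T^{-1}(\mathcal{M}(B,O))=\mathcal{M}(P(B),O)$ with $P(B)$ compact. No gaps.
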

			\begin{proof}
				It is clear that $T(\mu)=\mu\circ P\in\Val(V^*\times\R,F)$.\\
				Let us show that $T$ is injective: If $T(\mu)=0$, then $\mu(h_K(\cdot,-1))=0$ for all $K\in\mathcal{K}(V^*\times\R)$. By Corollary \ref{corollary_density_various_families_of_convex_functions}, these functions form a dense subspace of $\Conv(V,\R)$, which is dense in $C$, so the continuity of $\mu$ implies $\mu=0$, as $F$ is Hausdorff. Thus $T$ is injective.\\
				A basis for the topology of $\Val(V^*\times\R,F)$ is given by the open sets 
				\begin{align*}
					\mathcal{M}(B,O)=\{\mu\in\Val(V^*\times\R,F) \ : \ \mu(K)\in O\quad\forall K\in B\},
				\end{align*} 
				where $O\subset F$ is open and $B\subset\mathcal{K}(V^*\times\R)$ is compact. Then
				\begin{align*}
					T^{-1}(\mathcal{M}(B,O))&=\{\mu\in\VConv(C;V,F) \ : \ \mu(h_K(\cdot,-1))\in O\quad\forall K\in B\}\\
					&=\{\mu\in\VConv(C;V,F) \ : \ \mu(f)\in O\quad\forall f\in P(B)\}\\
					&=\mathcal{M}(P(B),O).
				\end{align*}
				As $P$ is continuous, $P(B)$ is compact in $C$, so $T^{-1}(\mathcal{M}(B,O))=\mathcal{M}(P(B),O)$ is open in $\VConv(C;V,F)$.
			\end{proof}

\section{Homogeneous decomposition}
	\label{section_homogeneous_decomposition}
	\subsection{Proof of Theorem 1}
		In this section we prove Theorem \ref{maintheorem_homogeneous_decomposition}.
		Let $F$ be a Hausdorff real topological vector space and $C\subset\Conv(V)$. 
		\begin{definition}
			A continuous valuation $\mu:C\rightarrow F$ is called $k$-homogeneous if $\mu(tf)=t^k\mu(f)$ for all $f\in C$ and for all $t>0$ such that $tf\in C$. We will denote the space  of $k$-homogeneous valuations in $\VConv(C;V,F)$ by $\VConv_k(C;V,F)$.
		\end{definition}
		We will call a subset $C\subset\Conv(V)$ a cone, if $f+th\in C$ for all $f,h\in C$, $t>0$.
		\begin{proposition}
			\label{proposition_McMullen_for_Conv(V)_with_n+1_degree}
			Let $F$ be a Hausdorff real topological vector space, $C\subset\Conv(V)$ a cone containing $\Conv(V,\R)$, and $\mu\in \VConv(C;V,F)$. Then there exist valuations $\mu_i\in\VConv_i(C;V,F)$, $i=0,...,n+1$ such that
			\begin{equation*}
				\mu=\sum_{i=0}^{n+1}\mu_i.
			\end{equation*}
		\end{proposition}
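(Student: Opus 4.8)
The plan is to deduce this from the classical McMullen decomposition (Theorem \ref{theorem_McMullen_decomposition_for_Val}) by transporting the problem through the embedding $T:\VConv(C;V,F)\to\Val(V^*\times\R,F)$ of Theorem \ref{theorem_embedding VConv->Val(VxR)}. First I would record a small observation: since $0\in\Conv(V,\R)\subset C$ and $C$ is a cone, $tf\in C$ for every $f\in C$ and $t>0$ (apply the cone condition to $0$ and $f$). Hence for $\mu\in\VConv(C;V,F)$ and $K\in\mathcal{K}(V^*\times\R)$ the map $t\mapsto T(\mu)[tK]=\mu\bigl(t\,h_K(\cdot,-1)\bigr)$ is defined on $[0,\infty)$, and Theorem \ref{theorem_McMullen_decomposition_for_Val} applied to $T(\mu)\in\Val(V^*\times\R,F)$ says it is a polynomial in $t$ of degree at most $\dim(V^*\times\R)=n+1$. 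Write $T(\mu)[tK]=\sum_{i=0}^{n+1}t^i\nu_i(K)$ with $\nu_i\in\Val_i(V^*\times\R,F)$ the McMullen components of $T(\mu)$.

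The heart of the argument is to show that each $\nu_i$ lies in the image of $T$, with preimage in $\VConv_i(C;V,F)$. Fixing $n+2$ pairwise distinct reals $t_0,\dots,t_{n+1}>0$ and inverting the corresponding Vandermonde matrix (which has real entries, so this is fine for $F$-valued data) yields coefficients $\beta_{il}\in\R$ with $\nu_i(K)=\sum_l\beta_{il}\,T(\mu)[t_lK]$ for all $K$. Motivated by this I would simply define
\[
\mu_i(f):=\sum_{l=0}^{n+1}\beta_{il}\,\mu(t_lf)\qquad(f\in C)
\]
and check the required properties. That $\mu_i$ is continuous, a valuation, and dually epi-translation invariant follows directly from the corresponding properties of $\mu$, the identities $t_l\max(f,h)=\max(t_lf,t_lh)$, $t_l\min(f,h)=\min(t_lf,t_lh)$ and $t_l(f+\lambda+c)=t_lf+(t_l\lambda)+(t_lc)$, together with the continuity of $f\mapsto t_lf$ for epi-convergence. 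Using $h_{t_lK}=t_lh_K$ one computes $T(\mu_i)[K]=\sum_l\beta_{il}\,\mu\bigl(h_{t_lK}(\cdot,-1)\bigr)=\sum_l\beta_{il}\,T(\mu)[t_lK]=\nu_i(K)$, so $T(\mu_i)=\nu_i$.

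Two things then remain. To see that $\mu_i$ is $i$-homogeneous — which is not visible from its definition — I would use $T(\mu_i)=\nu_i$ together with the homogeneity of $\nu_i$: for $s>0$ and any $K\in\mathcal{K}(V^*\times\R)$ we get $\mu_i\bigl(s\,h_K(\cdot,-1)\bigr)=\nu_i(sK)=s^i\nu_i(K)=s^i\mu_i\bigl(h_K(\cdot,-1)\bigr)$; since the functions $h_K(\cdot,-1)$ are dense in $\Conv(V,\R)$ (Corollary \ref{corollary_density_various_families_of_convex_functions}), hence in $C$, and $\mu_i$ is continuous, this upgrades to $\mu_i(sf)=s^i\mu_i(f)$ for all $f\in C$, i.e. $\mu_i\in\VConv_i(C;V,F)$. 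Finally $T\bigl(\sum_{i=0}^{n+1}\mu_i\bigr)=\sum_{i=0}^{n+1}\nu_i=T(\mu)$, and the injectivity of $T$ (Theorem \ref{theorem_embedding VConv->Val(VxR)}) forces $\mu=\sum_{i=0}^{n+1}\mu_i$.

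I expect the only genuinely delicate point to be the passage to $i$-homogeneity via $T$ and a density argument; all the remaining verifications (well-definedness of $tf$ in $C$, continuity, the inclusion–exclusion identity, dual epi-translation invariance, and the concluding use of injectivity of $T$) are mechanical, and the whole proof is essentially ``McMullen's theorem together with the embedding $T$''. As a side remark, exactly the same computation shows that the $\mu_i$ are uniquely determined, which together with the injectivity of $T$ gives the uniqueness of the homogeneous decomposition as well.
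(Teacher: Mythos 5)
Your proposal is correct and follows essentially the same route as the paper: embed via $T$, apply McMullen's decomposition to $T(\mu)$, invert a Vandermonde matrix to define $\mu_i:=\sum_l\beta_{il}\,\mu(t_l\cdot)$, and conclude with the injectivity of $T$. The only cosmetic difference is that you establish the $i$-homogeneity of $\mu_i$ by a density argument over the functions $h_K(\cdot,-1)$, whereas the paper applies the injectivity of $T$ to the identity $T(\mu_i^t)=T(t^i\mu_i)$ — these are equivalent, since the injectivity of $T$ itself rests on exactly that density.
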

		\begin{proof}
			Consider the injective map $T:\VConv(C;V,F)\rightarrow\Val(V^*\times\R,F)$ from Theorem \ref{theorem_embedding VConv->Val(VxR)}, given by $T(\mu)[K]=\mu(h_K(\cdot,-1))$ for $K\in\mathcal{K}(V^*\times\R)$.\\
			For $t>0$ define $\mu^t\in\VConv(C;V,F)$ by $\mu^t(f):=\mu(t f)$ for $f\in C$. Then $T(\mu^t)[K]=T(\mu)[tK]$, as $h_{tK}=th_K$ for $t>0$.\\
			Using the McMullen decomposition (Theorem \ref{theorem_McMullen_decomposition_for_Val}) for $\Val(V^*\times\R,F)$, we see that $T(\mu^t)=\sum_{i=0}^{n+1}t^i\tilde{\mu}_i$ for homogeneous elements $\tilde{\mu}_i\in \Val_i(V^*\times\R,F)$. Plugging in $0<t_0<\dots<t_{n+1}$ and using the inverse of the Vandermonde matrix, we obtain constants $c_{ij}\in \R$ such that $\tilde{\mu}_i=\sum_{j=0}^{n+1}c_{ij}T(\mu^{t_j})$.\\
			Now define $\mu_i\in\VConv(C;V,F)$ by $\mu_i:=\sum_{j=0}^{n+1}c_{ij}\mu^{t_j}$. Then, obviously, $T(\mu_i)=\tilde{\mu}_i$ and for any $K\in\mathcal{K}(V^*\times\R)$, $t>0$:
			\begin{align*}
			T\left(\mu_i^t\right)(K)=T(\mu_i)(tK)=\tilde{\mu}_i(tK)=t^i\tilde{\mu}_i(K)=t^iT(\mu_i)(K)=T\left(t^i\mu_i\right)(K).
			\end{align*}
			The injectivity of $T$ implies $t^i\mu_i=\mu_i^t$, i.e. $\mu_i$ is $i$-homogeneous. In addition,
			\begin{align*}
			T(\mu)=\sum_{i=0}^{n+1}\tilde{\mu}_i=\sum_{i=0}^{n+1}T(\mu_i)=T\left(\sum_{i=0}^{n+1}\mu_i\right).
			\end{align*}
			Thus the injectivity of $T$ implies $\mu=\sum_{i=0}^{n+1}\mu_i$.
		\end{proof}
		To show that the $n+1$-homogeneous component is trivial, we need the following lemma:
		\begin{lemma}
			\label{lemma_injectivity_characteristic_function}
			Let $(G,+)$ be an Abelian semi-group with cancellation law that carries a Hausdorff topology, and $\mu_1,\mu_2:\Conv(V,\R)\rightarrow G$ two continuous valuations. If $\mu_1(h_P(\cdot-y)+c)=\mu_2(h_P(\cdot-y)+c)$ for all polytopes $P\in\mathcal{K}(V^*)$ with $0\in \mathrm{int} P$, $y\in V$ and $c\in\R$, then $\mu_1\equiv \mu_2$ on $\Conv(V,\R)$.
		\end{lemma}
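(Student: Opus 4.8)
\subsection*{Proof proposal}

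The plan is to propagate the agreement of $\mu_1$ and $\mu_2$ from the functions $h_P(\cdot-y)+c$ with $0\in\mathrm{int}\,P$ to a dense subset of $\Conv(V,\R)$; since $G$ is Hausdorff and $\mu_1,\mu_2$ are continuous, this forces $\mu_1\equiv\mu_2$. The engine of the propagation is the valuation identity itself: whenever $f_1,f_2\in\Conv(V,\R)$ are such that $\max(f_1,f_2),\min(f_1,f_2)\in\Conv(V,\R)$ we have
\[
\mu_j(f_1)+\mu_j(f_2)=\mu_j(\max(f_1,f_2))+\mu_j(\min(f_1,f_2)),\qquad j=1,2,
\]
so if $\mu_1$ and $\mu_2$ agree on three of the four functions involved, the cancellation law in $G$ forces agreement on the fourth. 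One therefore wants to write every function of a dense class as $\max(f_1,f_2)$ with $\min(f_1,f_2)$ still convex and with $f_1,f_2,\min(f_1,f_2)$ ``simpler'', bottoming out at the functions covered by the hypothesis. By Corollary \ref{corollary_density_various_families_of_convex_functions}(ii) the piecewise linear convex functions $g=\max(\ell_1,\dots,\ell_m)$ (finite maxima of affine functions) are dense, so it suffices to treat these.

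The induction runs on the number of affine pieces of $g$. The base case consists of the functions whose conjugate is affine on its domain, i.e.\ the \emph{conical} functions $g=h_P(\cdot-y)+c$ with $P$ a polytope: when $0\in\mathrm{int}\,P$ these are exactly the functions appearing in the hypothesis (it is convenient here to pass through the conjugate picture of Proposition \ref{proposition_epi_graph_support_function_convex_body} and view $g$ as $h_K(\cdot,-1)$ for a polytope $K\subset V^*\times\R$, the conical functions corresponding to ``flat'' polytopes $K$ that project onto $P$). The affine functions must be added to the base class, and they are in fact epi-limits of hypothesis functions: if $a$ is the linear part of $\ell$, choose a polytope $P\ni 0$ in its interior having $a$ among its vertices, and let the apex $y$ recede deep into the normal cone of $P$ at $a$ while setting $c=b+\langle a,y\rangle$; then $h_P(\cdot-y)+c$ coincides with $\ell$ on an arbitrarily large ball, so by Proposition \ref{proposition_convergence_finite_convex_functions} it epi-converges to $\ell$, and hence $\mu_1\equiv\mu_2$ on all affine functions. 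For the inductive step one peels off a piece: writing the epigraph of $g$ (equivalently, the lower hull of the polytope $K$) as a union of two polyhedral pieces meeting in a convex set of strictly smaller complexity produces functions $f_1,f_2$ with $\max(f_1,f_2)=g$, with $\min(f_1,f_2)$ convex, and with $f_1,f_2,\min(f_1,f_2)$ having strictly fewer pieces; the identity above, applied to $\mu_1$ and $\mu_2$ and combined with the cancellation law, then yields $\mu_1(g)=\mu_2(g)$. Lower-dimensional pieces arising along the way are handled by a nested induction on $\dim V$, the zero-dimensional case being the affine functions already dealt with.

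I expect the real difficulty to lie precisely in this inductive step. Convexity of $\min(f_1,f_2)$ — equivalently, convexity of the union of the two epigraphs — is a genuine constraint, so the decomposition has to be chosen carefully and adapted to the combinatorics of $\partial\,\epi(g)$; in dimension one, for instance, after ordering the slopes one can take $f_1=\max(\ell_1,\dots,\ell_{m-1})$, $f_2=\max(\ell_2,\dots,\ell_m)$, for which $\min(f_1,f_2)=\max(\ell_2,\dots,\ell_{m-1})$ is convex, but in general a geometric choice is needed. The second subtlety is to make sure all three functions $f_1,f_2,\min(f_1,f_2)$ remain within the class already treated; this is why it is essential to have enlarged the base class in advance by its epi-limits (the affine functions and, more generally, the ``one-sided'' conical functions obtained by letting facets of $P$ degenerate), and some care is required for the non-coercive functions — affine ones and their immediate relatives — that the reduction keeps producing. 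Once the class generated by the hypothesis functions under epi-limits and the valuation identity is seen to be dense in $\Conv(V,\R)$, continuity of $\mu_1,\mu_2$ together with the Hausdorff property of $G$ finishes the proof.
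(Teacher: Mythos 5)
The paper does not actually prove this lemma: it simply cites Lemma 5.1 of Mussnig's paper and observes that the translation invariance assumed there is not used in the proof. Your strategy --- density of piecewise linear convex functions, the valuation identity combined with the cancellation law to propagate agreement, and an induction on the number of affine pieces with affine functions recovered as epi-limits of the hypothesis functions --- is in spirit the strategy of that reference, and your treatment of the affine case (apex receding into the normal cone at a vertex) is correct, up to the trivial adjustment needed when the linear part vanishes.

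Nevertheless the proposal has a genuine gap exactly where you locate it: the inductive decomposition is never constructed, and it is the entire content of the lemma. Worse, the scheme as you set it up cannot work. You claim the base class may be enlarged ``in advance by its epi-limits'', including the one-sided conical functions obtained by letting facets of $P$ degenerate. This is false: already for $V=\R$ the function $g(x)=\max(s_1x,s_2x)$ with $0<s_1<s_2$, i.e. $h_{[s_1,s_2]}$ with $0\notin\mathrm{int}\,P$, is not an epi-limit of functions $h_{[a,b]}(\cdot-y)+c$ with $a<0<b$: near the kink of $g$ both one-sided slopes are positive, whereas any such approximant is, on a neighbourhood of the kink, either affine or has an interior minimum, so locally uniform convergence fails. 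Such functions can only be reached through the valuation identity with \emph{auxiliary} cone functions that introduce new slopes; for instance $h=\max(s_0x,s_1x)$ with $s_0<0$ gives $\max(g,h)=h_{[s_0,s_2]}$ and $\min(g,h)=s_1x$, whence $\mu_1(g)=\mu_2(g)$ by cancellation. This is incompatible with your induction, which only ever passes to sub-maxima of the given pieces, and the same defect appears in your one-dimensional illustration, which breaks down at $m=2$ (there $\min(\ell_1,\ell_2)$ is concave). The proof in the cited reference resolves precisely this point by a double induction (on the number of pieces and on the dimension) in which the auxiliary functions are cone functions with subdifferentials chosen large enough to contain the origin; without that construction the argument is incomplete.
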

		\begin{proof}
			This is \cite{Mussnig:volume_polar_volume_euler} Lemma 5.1. To be precise, the version in \cite{Mussnig:volume_polar_volume_euler} considers translation invariant valuations, however, the proof only uses the weaker property stated above.
		\end{proof}
		\begin{proposition}
			\label{proposition_n+1-degree=0}
			$\VConv_{n+1}(C;V,F)=0$
		\end{proposition}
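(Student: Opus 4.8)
The plan is to show that the $(n+1)$-homogeneous component vanishes by pushing the problem, via the embedding $T$ from Theorem \ref{theorem_embedding VConv->Val(VxR)}, into the space $\Val(V^*\times\R,F)$, where the top-degree behaviour is governed by the volume. Let $\mu\in\VConv_{n+1}(C;V,F)$. By restriction it suffices (as in the proof of Proposition \ref{proposition_McMullen_for_Conv(V)_with_n+1_degree}, using that $\Conv(V,\R)$ is dense in $C$ and $\mu$ is continuous) to treat $\mu\in\VConv_{n+1}(V,F)$, i.e.\ a valuation on finite-valued convex functions. Then $T(\mu)\in\Val(V^*\times\R,F)$ satisfies $T(\mu)(tK)=t^{n+1}T(\mu)(K)$ for all $K\in\mathcal{K}(V^*\times\R)$ and $t>0$, since $h_{tK}=th_K$; hence $T(\mu)\in\Val_{n+1}(V^*\times\R,F)$, the top-homogeneous part. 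By McMullen's description of $\Val_{n+1}$ (the $n+1=\dim(V^*\times\R)$ case of Theorem \ref{theorem_McMullen_decomposition_for_Val}), every such valuation is a multiple of the Lebesgue measure: there is a continuous linear functional, or rather an element $a\in F$, with $T(\mu)(K)=\vol_{n+1}(K)\,a$ for all $K\in\mathcal{K}(V^*\times\R)$. (When $F$ is merely a topological vector space one argues componentwise against continuous linear functionals on $F$, using that $F$ is Hausdorff; the classical scalar statement is \cite{McMullen:Euler_type_McMullen_decomposition}.)

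The key point is then that dual epi-translation invariance forces $a=0$. Translating $h_K(\cdot,-1)$ by a vertical constant $c$ and by a linear functional $\lambda\in V^*$ corresponds, on the level of convex bodies, to translating $K$ inside $V^*\times\R$: concretely $h_{K}(\cdot,-1)+c = h_{K+(0,-c)}(\cdot,-1)$ and $h_K(\cdot,-1)+\lambda = h_{K+(\lambda,0)}(\cdot,-1)$, so that $T(\mu)$ is in fact translation invariant on $\mathcal{K}(V^*\times\R)$ — but this is automatic for the volume and gives nothing new. The genuine extra information is that $T(\mu)(K)$ depends on $K$ only through the function $h_K(\cdot,-1)\in\Conv(V,\R)$; two bodies with the same such support function have the same value. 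Now take any polytope $Q\subset V^*$ with $0\in\mathrm{int}\,Q$ and form $K:=Q\times\{0\}\subset V^*\times\R$, a degenerate body with $\vol_{n+1}(K)=0$, so $T(\mu)(K)=0$; on the other hand $h_K(\cdot,-1)=h_Q$ on $V^*$, i.e.\ $\mu(h_Q)=0$. More usefully, to exploit the full $n+1$-dimensional volume, pick a body $K\in\mathcal{K}(V^*\times\R)$ with $\vol_{n+1}(K)\ne 0$ and compare it with $K' := K + (0,s)$ for $s\in\R$ and with $K'' := K + (\lambda,0)$: these have the same volume but their support functions $h_{K'}(\cdot,-1)=h_K(\cdot,-1)+s$ and $h_{K''}(\cdot,-1)=h_K(\cdot,-1)+\lambda$ differ from $h_K(\cdot,-1)$ by an element of $V^*\oplus\R$, so $\mu$ assigns them the same value, consistent with volume-invariance but still not a contradiction.

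The decisive manoeuvre, and the step I expect to be the main obstacle, is to produce two convex bodies of \emph{different} $(n+1)$-volume whose support functions (evaluated at $(\cdot,-1)$) agree up to an affine function; such a pair immediately yields $a\,\vol_{n+1}(K_1)=\mu(h_{K_1}(\cdot,-1))=\mu(h_{K_2}(\cdot,-1))=a\,\vol_{n+1}(K_2)$ and hence $a=0$. This is exactly what Lemma \ref{lemma_injectivity_characteristic_function} is tailored for: one checks that the functional $K\mapsto\vol_{n+1}(K)$, pulled back along $P$, cannot be expressed through the valuation data $h_P(\cdot - y)+c$ in a $(n+1)$-homogeneous way, because the reference cone over which $h_K(\cdot,-1)$ is built is $n$-dimensional. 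Concretely, apply Lemma \ref{lemma_injectivity_characteristic_function} with $\mu_1=T(\mu)=a\,\vol_{n+1}$ and $\mu_2=0$ viewed as valuations on $\Conv(V,\R)$ via $f\mapsto$ (the common value on any body with support function $f$): for a polytope $P\in\mathcal{K}(V^*)$, the function $h_P(\cdot-y)+c$ on $V$ equals $h_{\widetilde P}(\cdot,-1)$ for the polytope $\widetilde P:=(P\times\{0\}) + \mathrm{conv}\{0,(y,-1)\}\cdot(\text{suitable})$ — more precisely $h_{\widetilde P}(x,-1)=h_P(x)-\langle y,x\rangle - c$ when $\widetilde P$ is the translate $P\times\{c\}$ shifted by $(-y,0)$ — which again is a degenerate, $n$-dimensional body, so $\vol_{n+1}(\widetilde P)=0$ and thus $\mu_1=\mu_2=0$ on all such functions. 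By Lemma \ref{lemma_injectivity_characteristic_function}, $\mu_1\equiv\mu_2$ on all of $\Conv(V,\R)$, i.e.\ $T(\mu)=0$ on every $K$ whose support function arises this way — but by Corollary \ref{corollary_density_various_families_of_convex_functions}(ii) these span a dense set and $T$ is injective and continuous, whence $\mu=0$. This completes the proof for $C=\Conv(V,\R)$, and the density of $\Conv(V,\R)$ in $C$ together with continuity of $\mu$ extends it to arbitrary cones $C\supset\Conv(V,\R)$.
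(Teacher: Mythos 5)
Your overall strategy (push everything into $\Val(V^*\times\R,F)$ via $T$ and kill the top-degree component there) is genuinely different from the paper's and is not quite complete as written. The paper never needs to know what $\Val_{n+1}(V^*\times\R,F)$ looks like: it fixes $y\in V$ and observes that $K\mapsto\mu(h_K(\cdot-y))$ is a continuous, translation invariant, $(n+1)$-homogeneous valuation on $\mathcal{K}(V^*)$, a space of dimension $n$, so it vanishes by Theorem \ref{theorem_McMullen_decomposition_for_Val} alone; Lemma \ref{lemma_injectivity_characteristic_function} then finishes. You instead invoke the statement that every element of $\Val_{n+1}(V^*\times\R,F)$ is a multiple of $\vol_{n+1}$. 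That is Hadwiger's characterization of the top-degree component, not the ``$n+1=\dim$ case of Theorem \ref{theorem_McMullen_decomposition_for_Val}'' (which only asserts the decomposition), and your reduction to the scalar case ``componentwise against continuous linear functionals'' requires $F'$ to separate points; this holds for locally convex $F$ but not for an arbitrary Hausdorff topological vector space, which is the generality of this proposition. So as written you are importing an unproved (and, in this generality, nontrivial) input.

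Two further points. First, the step you flag as ``the main obstacle'' is actually the easiest and would shortcut everything: take any $K\in\mathcal{K}(V^*\times\R)$ with $\vol_{n+1}(K)>0$ and $K':=K+[0,(0,1)]$; since $h_{[0,(0,1)]}(x,t)=\max(0,t)$ vanishes at $t=-1$, one has $h_{K'}(\cdot,-1)=h_K(\cdot,-1)$ while $\vol_{n+1}(K')>\vol_{n+1}(K)$, whence $a\,\vol_{n+1}(K)=T(\mu)[K]=\mu(h_K(\cdot,-1))=T(\mu)[K']=a\,\vol_{n+1}(K')$ forces $a=0$ and then $\mu=0$ by injectivity of $T$, with no need for Lemma \ref{lemma_injectivity_characteristic_function}. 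Second, your explicit body realizing $h_P(\cdot-y)+c$ is incorrect: a translate of $P\times\{c\}$ by $(-y,0)$ only makes sense for $y\in V^*$ and yields functions of the form $h_P+\lambda+c$, not $h_P(\cdot-y)+c$ (translation of the argument is not addition of a linear functional). The correct body is the tilted graph $\widetilde P=\{(p,\langle p,y\rangle-c)\ :\ p\in P\}\subset V^*\times\R$, which satisfies $h_{\widetilde P}(x,-1)=h_P(x-y)+c$ and is contained in an $n$-dimensional affine subspace, so your conclusion $\vol_{n+1}(\widetilde P)=0$ does survive. With Hadwiger's theorem granted (e.g.\ for locally convex $F$) and these corrections, your argument closes; but the paper's route is both shorter and valid in the stated generality.
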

		\begin{proof}
			Let $\mu\in\VConv_{n+1}(C;V,F)$. As $\Conv(V,\R)$ is dense in $C$, we only need to show that $\mu$ vanishes on finite-valued convex functions. Using Lemma \ref{lemma_injectivity_characteristic_function}, it is thus sufficient to show $\mu(h_K(\cdot-y))=0$ for all $K\in\mathcal{K}(V^*)$ and $y\in V$. However, $K\mapsto \mu(h_K(\cdot-y))$ defines an element of $\Val_{n+1}(V^*,F)=0$. The claim follows.
		\end{proof}
		\begin{proof}[Proof of Theorem \ref{maintheorem_homogeneous_decomposition}]
			This follows directly from Proposition \ref{proposition_McMullen_for_Conv(V)_with_n+1_degree} and Proposition \ref{proposition_n+1-degree=0}.
		\end{proof}
	\subsection{Polynomiality and polarization}
		From now on we will assume that $F$ is a Hausdorff real topological vector space and that $C\subset\Conv(V)$ is a cone containing $\Conv(V,\R)$. Then we can consider the question of polynomiality for elements of $\VConv(C;V,F)$. 
		To define the polarization of a homogeneous valuation in $\VConv_k(C;V,F)$, we need the following regularity assumption on the cone $C$:
		\begin{definition}
		 	\label{definition:valuations-on-conv:regular-cone}
		 	A cone $C\subset\Conv(V)$ will be called \emph{regular} if $\dom f$ has non-empty interior for all $f\in C$.
		\end{definition} 
			\begin{lemma}
				\label{lemma:valuations-on-conv:continuity-addition}
				If $C\subset \Conv(V)$ is a regular cone, then 
				\begin{align*}
				+:C\times C&\rightarrow C\\
				(f,h)&\mapsto f+h
				\end{align*}
				is continuous.
			\end{lemma}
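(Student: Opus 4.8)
The plan is to reduce continuity of addition to the compatibility between epi-convergence and locally uniform convergence provided by Proposition \ref{proposition_convergence_finite_convex_functions}, which applies because $C$ is regular. Since $\Conv(V)$ is metrizable, it suffices to check sequential continuity: given sequences $(f_j)_j$, $(h_j)_j$ in $C$ epi-converging to $f$, $h\in C$ respectively, I must show $f_j+h_j$ epi-converges to $f+h$. The subtlety is that epi-convergence by itself does not pass to sums — the two ``optimal'' approximating sequences in condition (2) of the definition need not coincide — so the regularity hypothesis, which forces the limits to be finite on an open set, is exactly what makes the argument work.

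First I would record that $f+h\in C$: since $C$ is a cone, it is closed under sums, and $\dom(f+h)=\dom f\cap\dom h$; regularity of $C$ guarantees this intersection has nonempty interior (one should note that $C$ being a regular cone containing $\Conv(V,\R)$ does force common points of the interiors of domains — alternatively this is part of what ``regular cone'' is set up to give, and in the relevant applications the cones are of the form $C_U$ where all domains contain the fixed open set $U$). Then $\dom(f+h)$ has nonempty interior, so Proposition \ref{proposition_convergence_finite_convex_functions} is applicable to the limit $f+h$.

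Next, the core step: by Proposition \ref{proposition_convergence_finite_convex_functions}, epi-convergence $f_j\to f$ is equivalent to uniform convergence on every compact set avoiding $\partial\dom f$, and likewise for $h_j\to h$. On any compact set $K$ contained in $\operatorname{int}\dom f\cap\operatorname{int}\dom h=\operatorname{int}\dom(f+h)$ (using convexity of the domains so that the intersection of interiors equals the interior of the intersection), both $f_j\to f$ and $h_j\to h$ uniformly on $K$ for $j$ large, hence $f_j+h_j\to f+h$ uniformly on $K$. Since this holds for every compact $K$ avoiding $\partial\dom(f+h)$, applying the reverse implication in Proposition \ref{proposition_convergence_finite_convex_functions} (characterization via uniform convergence on such compacta, together with the observation that $f_j+h_j$ lies in $\Conv(V)$) yields that $f_j+h_j$ epi-converges to $f+h$.

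The main obstacle is bookkeeping around the domains: one must be careful that uniform convergence of $(f_j)$ and $(h_j)$ is only guaranteed on compacta avoiding the respective boundaries $\partial\dom f$ and $\partial\dom h$, and one needs that compacta avoiding $\partial\dom(f+h)$ can be taken inside $\operatorname{int}\dom f\cap\operatorname{int}\dom h$ — this is immediate from $\dom(f+h)=\dom f\cap\dom h$ and convexity, since $\operatorname{int}(\dom f\cap\dom h)=\operatorname{int}\dom f\cap\operatorname{int}\dom h$ for convex sets with the relevant interiors nonempty. Once that set-theoretic point is settled, the uniform-convergence argument is routine, and no estimate beyond the triangle inequality $\|(f_j+h_j)-(f+h)\|_{C(K)}\le\|f_j-f\|_{C(K)}+\|h_j-h\|_{C(K)}$ is needed.
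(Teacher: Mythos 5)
Your overall strategy is the same as the paper's: the paper proves this lemma by a one-line appeal to Proposition \ref{proposition_convergence_finite_convex_functions}, and your reduction to sequential continuity plus the equivalence of epi-convergence with locally uniform convergence is exactly the intended route. Two remarks on the set-up: the fact that $f+h\in C$ and that $\dom(f+h)$ has nonempty interior needs no discussion of common interior points of domains --- by the paper's definition a cone satisfies $f+th\in C$ for all $f,h\in C$, $t>0$, so $f+h\in C$, and regularity then directly gives $\mathrm{int}\,\dom(f+h)\ne\emptyset$; the hypothesis $\Conv(V,\R)\subset C$ is not needed.

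However, the core step has a genuine gap. You verify uniform convergence of $f_j+h_j$ to $f+h$ only on compacta contained in $\mathrm{int}\,\dom(f+h)$, and then assert that every compact set avoiding $\partial\dom(f+h)$ ``can be taken inside'' $\mathrm{int}\,\dom f\cap\mathrm{int}\,\dom h$. That is false unless $\dom(f+h)=V$: a compact set avoiding $\partial\dom(f+h)$ may also meet the exterior $V\setminus\overline{\dom(f+h)}$, where $f+h\equiv+\infty$, and condition (3) of Proposition \ref{proposition_convergence_finite_convex_functions} (equally, the dense-subset condition (2)) requires control there too. This is not cosmetic: uniform convergence on compacta inside $\mathrm{int}\,\dom g$ alone does not imply epi-convergence to $g$ --- e.g.\ the constant sequence $g_j\equiv 0$ converges uniformly to $I^\infty_{[0,1]}$ on every compact subset of $(0,1)$ but epi-converges to $0$. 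To close the gap, take $x\notin\overline{\dom(f+h)}$; since $\overline{\dom f\cap\dom h}=\overline{\dom f}\cap\overline{\dom h}$ for convex sets whose interiors meet, one of the two points fails, say $x\notin\overline{\dom f}$. Then $\{x\}$ is a compact set containing no boundary point of $\dom f$, so $f_j(x)\to+\infty$ by condition (3) applied to $(f_j)_j$, while the first condition in the definition of epi-convergence gives $\liminf_j h_j(x)\ge h(x)>-\infty$; hence $f_j(x)+h_j(x)\to+\infty=(f+h)(x)$. Combined with pointwise convergence on $\mathrm{int}\,\dom(f+h)$, this gives convergence on the dense set $V\setminus\partial\dom(f+h)$, and condition (2) of Proposition \ref{proposition_convergence_finite_convex_functions} finishes the proof. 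With this addition your argument is complete; as written it only covers the case where all functions in $C$ are finite-valued.
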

			\begin{proof}
				This follows directly from Proposition \ref{proposition_convergence_finite_convex_functions}.
			\end{proof}
			Note that this map is in general not continuous if $C$ is not a regular cone. From Theorem \ref{maintheorem_homogeneous_decomposition} we deduce
		\begin{corollary}
			\label{corollary_polynomiality_valuations}
			Let $C\subset\Conv(V)$ be a regular cone with $\Conv(V,\R)\subset C$. For $\mu\in \VConv(C;V,F)$, and $f_1,...,f_m\in C$, $\mu(\sum_{j=1}^{m}\lambda_j f_j)$ is a polynomial in $\lambda_j> 0$.
		\end{corollary}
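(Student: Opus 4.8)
The plan is to bootstrap from the homogeneous decomposition (Theorem~\ref{maintheorem_homogeneous_decomposition}) by reducing the multivariate polynomiality claim to the single-variable homogeneity that the decomposition already provides. First I would fix $f_1,\dots,f_m\in C$ and, since $C$ is a cone, note that the map $(\lambda_1,\dots,\lambda_m)\mapsto \sum_j\lambda_j f_j$ takes values in $C$ for all $\lambda_j>0$; by Lemma~\ref{lemma:valuations-on-conv:continuity-addition} (applied inductively) this map is continuous, so $\Phi(\lambda_1,\dots,\lambda_m):=\mu\!\left(\sum_j\lambda_j f_j\right)$ is a continuous $F$-valued function on $(0,\infty)^m$. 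The strategy is then to show $\Phi$ is separately polynomial in each $\lambda_j$ of degree at most $\dim V$, with a uniform degree bound, and invoke the standard fact that a function on $(0,\infty)^m$ which is a polynomial of bounded degree in each variable separately is a polynomial.

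The heart of the argument is a single-variable reduction. Fix all $\lambda_i$ for $i\neq j$ and set $g:=\sum_{i\neq j}\lambda_i f_i\in C$ and $h:=f_j\in C$. I want to show $t\mapsto \mu(g+th)$ is a polynomial in $t>0$ of degree $\le n=\dim V$. To see this, define the auxiliary valuation $\nu:C\to F$, $\nu(\phi):=\mu(g+\phi)$. I claim $\nu\in\VConv(C;V,F)$: continuity follows again from Lemma~\ref{lemma:valuations-on-conv:continuity-addition}; the valuation property follows because addition of the fixed function $g$ commutes with pointwise max and min, i.e. $\max(g+\phi_1,g+\phi_2)=g+\max(\phi_1,\phi_2)$ and likewise for min, and these lie in $C$ exactly when $\max(\phi_1,\phi_2),\min(\phi_1,\phi_2)$ do; and dual epi-translation invariance of $\nu$ follows from that of $\mu$ since $g+(\phi+\lambda+c)=(g+\phi)+\lambda+c$. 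Now apply Theorem~\ref{maintheorem_homogeneous_decomposition} to $\nu$: writing $\nu=\sum_{i=0}^{n}\nu_i$ with $\nu_i\in\VConv_i(C;V,F)$, we get $\mu(g+th)=\nu(th)=\sum_{i=0}^{n} t^i\,\nu_i(h)$, which is a polynomial in $t>0$ of degree $\le n$, with coefficients in $F$.

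Finally I would assemble the multivariate statement. The above shows that for each $j$, with the other variables frozen at arbitrary positive values, $\Phi$ is a polynomial in $\lambda_j$ of degree $\le n$. A function on $(0,\infty)^m$ that is, for every choice of the remaining coordinates, a polynomial of degree $\le n$ in each single coordinate is necessarily (the restriction of) a polynomial in $(\lambda_1,\dots,\lambda_m)$ of degree $\le n$ in each variable: this is a routine induction on $m$ using Lagrange interpolation at $n+1$ fixed nodes in one variable, which expresses $\Phi$ as a finite $F$-linear combination of its slices, each of which is by induction polynomial in the remaining variables. Since $F$ is a topological vector space this interpolation argument goes through verbatim with $F$-valued coefficients. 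I expect the main (mild) obstacle to be the bookkeeping in verifying that $\nu$ genuinely lies in $\VConv(C;V,F)$ — in particular checking the domain conditions for max/min under the shift by $g$ and that continuity of $\phi\mapsto g+\phi$ really does require the regularity of $C$ via Lemma~\ref{lemma:valuations-on-conv:continuity-addition} — but no serious difficulty arises, and the polynomiality itself is then immediate from Theorem~\ref{maintheorem_homogeneous_decomposition}.
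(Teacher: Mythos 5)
Your proposal is correct and follows essentially the same route as the paper: both fix the partial sum $g\in C$, use Lemma~\ref{lemma:valuations-on-conv:continuity-addition} to see that $\phi\mapsto\mu(g+\phi)$ lies in $\VConv(C;V,F)$, and apply Theorem~\ref{maintheorem_homogeneous_decomposition} to obtain single-variable polynomiality of degree at most $n$. The only cosmetic difference is in assembling the multivariate statement: the paper runs an induction on $m$ in which the Vandermonde-inverted coefficients are themselves valuations to which the inductive hypothesis applies, whereas you record separate polynomiality of uniformly bounded degree and conclude by Lagrange interpolation --- the same interpolation idea in different packaging.
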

		\begin{proof}
			We will use induction on $m\in\mathbb{N}$. For $m=1$, this is just Theorem \ref{maintheorem_homogeneous_decomposition}. Assume we have shown the statement for $m\in\mathbb{N}$. The map
			\begin{align*}
			f\mapsto \mu\left(h+f\right)
			\end{align*}
			belongs to $\VConv(C;V,F)$ for all $h\in C$ by Lemma \ref{lemma:valuations-on-conv:continuity-addition}. Using Theorem \ref{maintheorem_homogeneous_decomposition}, we obtain 
			\begin{align*}
			\mu\left(h+tf\right)=\sum\limits_{i=0}^{n}t^i\mu_i\left(h,f\right),
			\end{align*}
			where $\mu_i:C^2\rightarrow F$ is an $i$-homogeneous, continuous, and dually epi-translation invariant valuation in the second argument and a dually epi-translation invariant valuation in the first. To see that $\mu_i$ is continuous in the first argument, apply the inverse of the Vandermonde matrix to the formula above to write $\mu_i(h,f)$ as a linear combination of elements of the form $\mu(h+tf)$ for a finite number of fixed values of $t>0$. \\ 
			The induction assumption implies that $(\lambda_1,\dots,\lambda_m)\mapsto \mu_i(\sum_{j=1}^{m}\lambda_j f_j,f)$ is a polynomial in $\lambda_j>0$, $1\le j\le m$. The claim follows.
		\end{proof}
		\begin{definition}
			A valuation $\mu\in \VConv(C;V,F)$ is called additive if $\mu(f+g)=\mu(f)+\mu(g)$ for all $f,g\in C$.
		\end{definition}
		By continuity, any additive valuation is $1$-homogeneous.
		\begin{theorem}
			\label{theorem_polarization_VConv}
			For $\mu\in \VConv_k(C;V,F)$, there exists a unique map $\bar{\mu}:C^k\rightarrow F$, called the \emph{polarization} of $\mu$, with the following properties:
			\begin{enumerate}
				\item $\bar{\mu}$ is additive and $1$-homogeneous in each coordinate,
				\item $\bar{\mu}$ is symmetric,
				\item $\mu(f)=\bar{\mu}(f,...,f)$ for all $f\in C$.
			\end{enumerate}
		\end{theorem}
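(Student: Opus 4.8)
The plan is to extract $\bar\mu$ from the polynomial expansion provided by Corollary \ref{corollary_polynomiality_valuations} and then verify the three required properties together with uniqueness. First I would apply Corollary \ref{corollary_polynomiality_valuations} to $f_1,\dots,f_k\in C$ to write $\mu(\lambda_1 f_1+\dots+\lambda_k f_k)$ as a polynomial in $\lambda_1,\dots,\lambda_k>0$. Since $\mu$ is $k$-homogeneous, substituting $\lambda_j\mapsto t\lambda_j$ shows this polynomial is in fact homogeneous of degree $k$ in $(\lambda_1,\dots,\lambda_k)$; hence it is a sum of monomials $\lambda_1^{a_1}\cdots\lambda_k^{a_k}$ with $a_1+\dots+a_k=k$. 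I would then \emph{define} $\bar\mu(f_1,\dots,f_k)$ to be $\frac{1}{k!}$ times the coefficient of the mixed monomial $\lambda_1\lambda_2\cdots\lambda_k$ in this expansion — equivalently, as the mixed finite-difference
\begin{align*}
\bar\mu(f_1,\dots,f_k)=\frac{1}{k!}\,\sum_{\emptyset\ne S\subseteq\{1,\dots,k\}}(-1)^{k-|S|}\,\mu\Bigl(\sum_{j\in S}f_j\Bigr),
\end{align*}
which is the standard polarization formula. One must check this is well defined, i.e. the coefficient of $\lambda_1\cdots\lambda_k$ computed from the $m=k$ expansion agrees with what one gets after grouping variables; this follows because the polynomial in Corollary \ref{corollary_polynomiality_valuations} is intrinsic to the function $\sum\lambda_j f_j$ and restriction/specialization of variables is compatible with taking coefficients.

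Symmetry of $\bar\mu$ is immediate from the finite-difference formula, since the right-hand side is manifestly invariant under permuting $f_1,\dots,f_k$. The identity $\bar\mu(f,\dots,f)=\mu(f)$ follows by setting all $f_j=f$: then $\mu(\sum\lambda_j f)=\mu((\sum\lambda_j)f)=(\sum\lambda_j)^k\mu(f)$ by $k$-homogeneity, and the coefficient of $\lambda_1\cdots\lambda_k$ in $(\lambda_1+\dots+\lambda_k)^k$ is exactly $k!$. For additivity and $1$-homogeneity in each coordinate: by symmetry it suffices to treat the first coordinate. Homogeneity $\bar\mu(tf_1,f_2,\dots,f_k)=t\,\bar\mu(f_1,\dots,f_k)$ is read off from the rescaling $\lambda_1\mapsto t\lambda_1$ in the polynomial. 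For additivity, I would consider the polynomial $\mu(\lambda_1 f_1+\lambda_1' g+\lambda_2 f_2+\dots+\lambda_k f_k)$ in $k+1$ variables (again furnished by Corollary \ref{corollary_polynomiality_valuations}), extract the coefficient of $\lambda_1\lambda_2\cdots\lambda_k$ and of $\lambda_1'\lambda_2\cdots\lambda_k$, and compare with the coefficient of $\mu((\lambda_1+\lambda_1')f_1+\dots)$ obtained by setting $g=f_1$; comparing coefficients of the monomial $(\lambda_1+\lambda_1')\lambda_2\cdots\lambda_k$ versus $\lambda_1\lambda_2\cdots\lambda_k+\lambda_1'\lambda_2\cdots\lambda_k$ yields $\bar\mu(f_1+g,f_2,\dots,f_k)=\bar\mu(f_1,f_2,\dots,f_k)+\bar\mu(g,f_2,\dots,f_k)$. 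Continuity in each argument follows as in the proof of Corollary \ref{corollary_polynomiality_valuations}: via the inverse Vandermonde matrix, $\bar\mu(f_1,\dots,f_k)$ is a fixed linear combination of values $\mu\bigl(\sum t_j^{(l)}f_j\bigr)$ at finitely many choices of scalars, and $+:C\times C\to C$ is continuous by Lemma \ref{lemma:valuations-on-conv:continuity-addition} since $C$ is regular, so $\bar\mu$ is continuous.

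Uniqueness is the cleanest part: any map $\nu:C^k\to F$ satisfying (1)–(3) must, by multilinearity (additivity + homogeneity in each slot) and symmetry, satisfy
\begin{align*}
\nu\Bigl(\sum_{j=1}^{k}\lambda_j f_j,\dots,\sum_{j=1}^{k}\lambda_j f_j\Bigr)=\sum_{a_1+\dots+a_k=k}\binom{k}{a_1,\dots,a_k}\lambda_1^{a_1}\cdots\lambda_k^{a_k}\,\nu(f_1^{[a_1]},\dots,f_k^{[a_k]}),
\end{align*}
where $f_j^{[a_j]}$ denotes $f_j$ repeated $a_j$ times; by (3) the left-hand side equals $\mu(\sum\lambda_j f_j)$, so comparing coefficients of $\lambda_1\cdots\lambda_k$ forces $\nu(f_1,\dots,f_k)$ to equal the expression defining $\bar\mu$ above. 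The main obstacle I anticipate is purely bookkeeping rather than conceptual: one must be careful that the polynomial produced by Corollary \ref{corollary_polynomiality_valuations} is genuinely canonical — that the coefficients do not depend on an ordering or on auxiliary choices — so that extracting the mixed coefficient and specializing variables commute; this is where a small amount of care with the multivariate Vandermonde/interpolation argument is needed, but no genuine difficulty arises because everything takes place inside the finite-dimensional space of degree-$\le k$ polynomials.
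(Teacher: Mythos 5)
Your proposal is correct and follows essentially the same route as the paper: both define $\bar{\mu}$ as $\tfrac{1}{k!}$ times the coefficient of $\lambda_1\cdots\lambda_k$ in the polynomial $\mu(\sum_j\lambda_j f_j)$ furnished by Corollary \ref{corollary_polynomiality_valuations} (the paper extracts it via iterated derivatives at $0$, you via the equivalent finite-difference formula), and both prove uniqueness by expanding a candidate multilinear symmetric map and comparing the mixed coefficient. The only cosmetic difference is in verifying additivity, where the paper uses a chain-rule argument on $G(t)=F(t,t)$ while you compare coefficients in a $(k+1)$-variable polynomial; both are valid.
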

		\begin{proof}
			We start by showing uniqueness: Using 1. and 3. we obtain
			\begin{align*}
				\mu\left(\sum\limits_{j=1}^{k}\lambda_jf_j\right)=\bar{\mu}\left(\sum\limits_{j=1}^{k}\lambda_jf_j,...,\sum\limits_{j=1}^{k}\lambda_jf_j\right)=\sum\limits_{j_1,...,j_k=1}^k\lambda_{j_1}...\lambda_{j_k}\bar{\mu}\left(f_{j_1},...,f_{j_k}\right).
			\end{align*}
			Differentiating and using 2., we obtain the formula
			\begin{align}
				\label{equation_definition_polarization}
				k!\bar{\mu}(f_1,...,f_k)=\frac{\partial}{\partial\lambda_1}\Big|_0...\frac{\partial}{\partial\lambda_k}\Big|_0\mu\left(\sum\limits_{j=1}^{k}\lambda_jf_j\right).
			\end{align}
			This shows uniqueness. To prove the existence of $\bar{\mu}$, we use Corollary \ref{corollary_polynomiality_valuations} to see that the right-hand side of \eqref{equation_definition_polarization} is actually well defined, so we can use this equation to define $\bar{\mu}$. \\
			Obviously the definition is symmetric in $f_1,...,f_n$. To see that $\bar{\mu}$ is additive in each coordinate, we thus only need to consider one coordinate: Setting 
			\begin{align*}
				F(t,s):=&\frac{1}{k!}\frac{\partial }{\partial \lambda_1}\Big|_0...\frac{\partial }{\partial \lambda_{k-1}}\Big|_0\mu\left(\sum\limits_{j=1}^{k-1}\lambda_j f_j+tf+sg\right),\\
				G(t):=& F(t,t),
			\end{align*}
			we obtain 
			\begin{align*}
				\bar{\mu}(f_1,...,f_{k-1},f+g)&=G'(0)=\frac{\partial F}{\partial t}\Big|_{(0,0)}+\frac{\partial F}{\partial s}\Big|_{(0,0)}
				=\bar{\mu}(f_1,...,f_{k-1},f)+\bar{\mu}(f_1,...,f_{k-1},g).
			\end{align*}
			Let us see that we can recover $\mu(f)$: 
			\begin{align*}
				\bar{\mu}(f,...,f)=\frac{1}{k!}\frac{\partial}{\partial\lambda_1}\Big|_0...\frac{\partial}{\partial\lambda_k}\Big|_0\mu\left(\sum\limits_{j=1}^{k}\lambda_jf\right)=\frac{1}{k!}\frac{\partial}{\partial\lambda_1}\Big|_0...\frac{\partial}{\partial\lambda_k}\Big|_0\left(\sum\limits_{j=1}^{k}\lambda_j\right)^k\cdot\mu(f)
			\end{align*} as $\mu$ is $k$-homogeneous. Thus $\bar{\mu}(f,...,f)=\mu(f)$.
		\end{proof}
		Note that the construction shows that $\bar{\mu}$ is a dually epi-translation invariant valuation in each coordinate. We will now show that $\bar{\mu}$ is jointly continuous. From the defining properties of $\bar{\mu}$ we deduce the following corollary.
		\begin{corollary}
			\label{corollary_boundedness_degree_polynomial_valuation}
			For $\mu\in \VConv_k(C;V,F)$, $m\in\mathbb{N}$ and $f_1,...,f_m\in C$, $\mu(\sum_{j=1}^{m}\lambda_jf_j)$ is a polynomial of degree at most $k$ in $\lambda_j> 0$.
		\end{corollary}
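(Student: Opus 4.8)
The plan is to combine the homogeneous decomposition from Theorem \ref{maintheorem_homogeneous_decomposition} with the multilinear formula for the polarization established in Theorem \ref{theorem_polarization_VConv}. First I would reduce to the homogeneous case: by Theorem \ref{maintheorem_homogeneous_decomposition} we may write $\mu = \sum_{i=0}^{n} \mu_i$ with $\mu_i \in \VConv_i(C;V,F)$, so it suffices to prove that for each $i$ the expression $\mu_i(\sum_{j=1}^m \lambda_j f_j)$ is a polynomial of degree at most $i$ in the $\lambda_j > 0$; since $i \le n < k$ would in fact give an even better bound, the real content is the case of a single $k$-homogeneous valuation $\mu \in \VConv_k(C;V,F)$.

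For a $k$-homogeneous valuation, the key step is to expand using the polarization $\bar\mu$ from Theorem \ref{theorem_polarization_VConv}. Writing $g := \sum_{j=1}^m \lambda_j f_j \in C$ (using that $C$ is a cone), property 3. gives $\mu(g) = \bar\mu(g,\dots,g)$, and then additivity and $1$-homogeneity in each coordinate (property 1.) yield
\begin{align*}
	\mu\left(\sum_{j=1}^m \lambda_j f_j\right) = \bar\mu\left(\sum_{j=1}^m \lambda_j f_j, \dots, \sum_{j=1}^m \lambda_j f_j\right) = \sum_{j_1,\dots,j_k=1}^m \lambda_{j_1}\cdots\lambda_{j_k}\, \bar\mu(f_{j_1},\dots,f_{j_k}).
\end{align*}
The right-hand side is manifestly a polynomial in $\lambda_1,\dots,\lambda_m$, and every monomial appearing has total degree exactly $k$; hence $\mu(\sum_j \lambda_j f_j)$ is a polynomial of degree at most $k$ (it could be of lower degree if the relevant coefficients $\bar\mu(f_{j_1},\dots,f_{j_k})$ vanish, but never of higher degree). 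Summing over the homogeneous components $\mu_0,\dots,\mu_n$ of $\mu$ and noting each contributes a polynomial of degree at most $n \le k$, if one wants the statement for a general (not necessarily homogeneous) $\mu$ one takes the maximum of the individual degrees, which is again at most $k$.

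I do not expect a genuine obstacle here: the corollary is essentially a bookkeeping consequence of Corollary \ref{corollary_polynomiality_valuations} (which already gives polynomiality) together with the explicit multilinear expansion in the proof of Theorem \ref{theorem_polarization_VConv}. The only point requiring a word of care is the reduction to the homogeneous case and keeping track of the degree bound through that reduction, but since each homogeneous component $\mu_i$ has degree $i \le n$, and $n \le k$ whenever a nonzero $k$-homogeneous component is present, the bound $k$ is safe. One should also confirm that $\sum_{j=1}^m \lambda_j f_j \in C$ for $\lambda_j > 0$, which is immediate from the assumption that $C$ is a cone.
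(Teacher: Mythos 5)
Your argument is correct and is essentially the paper's own (implicit) proof: the corollary is deduced directly from the defining properties of the polarization in Theorem \ref{theorem_polarization_VConv} by expanding $\bar\mu\bigl(\sum_j\lambda_jf_j,\dots,\sum_j\lambda_jf_j\bigr)$ multilinearly into monomials of total degree exactly $k$. Note only that your opening reduction via the homogeneous decomposition is superfluous, since the hypothesis already places $\mu$ in $\VConv_k(C;V,F)$, and the aside ``$i\le n<k$'' has the inequality backwards (one always has $k\le n$ by Proposition \ref{proposition_n+1-degree=0}); neither point affects the validity of the main computation.
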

		\begin{corollary}
			\label{corollary_continuity_polarization}
			$\bar{\mu}:C^k\rightarrow\R$ is continuous for $\mu\in\VConv_k(C;V,F)$.
		\end{corollary}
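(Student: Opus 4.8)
The plan is to recover $\bar\mu$ as a finite linear combination of evaluations of $\mu$ at sums of the input functions, and then invoke the joint continuity of addition on the regular cone. First I would use the polarization formula \eqref{equation_definition_polarization} together with the polynomiality statement of Corollary \ref{corollary_boundedness_degree_polynomial_valuation}: for fixed $f_1,\dots,f_k\in C$ the map $(\lambda_1,\dots,\lambda_k)\mapsto\mu\big(\sum_{j=1}^k\lambda_jf_j\big)$ is a polynomial of degree at most $k$ in the $\lambda_j>0$, and the mixed partial derivative $\frac{\partial}{\partial\lambda_1}\big|_0\cdots\frac{\partial}{\partial\lambda_k}\big|_0$ extracts, up to the factor $k!$, the coefficient of the monomial $\lambda_1\cdots\lambda_k$. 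A standard finite-difference (inclusion-exclusion) identity then expresses this coefficient as
\begin{align*}
	k!\,\bar\mu(f_1,\dots,f_k)=\sum_{\emptyset\ne S\subseteq\{1,\dots,k\}}(-1)^{k-|S|}\,\mu\Big(\sum_{j\in S}f_j\Big),
\end{align*}
using that the polynomial has no term of degree exceeding $k$, so no rescaling of the arguments is needed and the $\lambda_j$ can all be taken equal to $1$ inside each subset sum.

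Granting this identity, continuity is immediate: the iterated sum map $C^k\to C$, $(f_1,\dots,f_k)\mapsto\sum_{j\in S}f_j$, is continuous for every $S$ by Lemma \ref{lemma:valuations-on-conv:continuity-addition} (applied repeatedly, which is legitimate since $C$ is a regular cone), and $\mu:C\to F$ is continuous by definition of $\VConv(C;V,F)$. Hence each summand $(f_1,\dots,f_k)\mapsto\mu\big(\sum_{j\in S}f_j\big)$ is continuous, and $\bar\mu$, being $\tfrac{1}{k!}$ times a finite sum of such maps, is jointly continuous.

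The main point to be careful about — and the only step that is not entirely formal — is justifying the inclusion-exclusion identity itself, i.e. verifying that evaluating the mixed derivative at $0$ coincides with the alternating sum over subsets. This rests on the fact, supplied by Corollary \ref{corollary_boundedness_degree_polynomial_valuation}, that $\mu\big(\sum_j\lambda_jf_j\big)$ is genuinely polynomial of degree $\le k$ in the $\lambda_j$; for such a polynomial $p(\lambda_1,\dots,\lambda_k)$ the difference operator $\prod_{j=1}^k(\mathrm{ev}_{\lambda_j=1}-\mathrm{ev}_{\lambda_j=0})$ applied to $p$ annihilates every monomial except $\lambda_1\cdots\lambda_k$ and returns exactly its coefficient, which equals $\frac{1}{k!}\prod_j\frac{\partial}{\partial\lambda_j}\big|_0 p = \bar\mu(f_1,\dots,f_k)$. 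One should also note that all the arguments $\sum_{j\in S}f_j$ lie in $C$ because $C$ is a cone, so every term is well defined. No further estimates are required; the corollary follows.
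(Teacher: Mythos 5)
Your overall strategy is sound and genuinely different from the paper's: the paper proves sequential continuity by observing that the polynomials $P_j(\lambda_1,\dots,\lambda_k)=\mu\big(\sum_i\lambda_i f_{i,j}\big)$ converge pointwise to $P(\lambda_1,\dots,\lambda_k)=\mu\big(\sum_i\lambda_i f_{i}\big)$, and that pointwise convergence of polynomials of degree uniformly bounded by $k$ (Corollary \ref{corollary_boundedness_degree_polynomial_valuation}) forces convergence of the coefficient of $\lambda_1\cdots\lambda_k$, which is $k!\,\bar\mu$ by \eqref{equation_definition_polarization}. Your route --- writing $\bar\mu$ as an explicit finite linear combination of maps of the form $(f_1,\dots,f_k)\mapsto\mu\big(\sum_{j\in S}f_j\big)$, each continuous by Lemma \ref{lemma:valuations-on-conv:continuity-addition} --- is closer in spirit to Lemma \ref{lemma_continuity_valuation->polarization}, and your inclusion--exclusion identity is in fact true.

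However, the justification you give for that identity --- precisely the step you flag as the only non-formal one --- has a gap. Corollary \ref{corollary_boundedness_degree_polynomial_valuation} only identifies $P(\lambda)$ with $\mu\big(\sum_j\lambda_jf_j\big)$ for $\lambda_j>0$; your difference operator evaluates $P$ at points where some $\lambda_j=0$, and you then replace $P$ at the indicator vector of $S$ by $\mu\big(\sum_{j\in S}f_j\big)$. For $C=\Conv(V,\R)$ this boundary passage is harmless, but a regular cone may contain functions with $\dom f\ne V$, and then $\lambda f$ epi-converges to the convex indicator of $\overline{\dom f}$, not to $0$, as $\lambda\to0^+$. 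Hence $P$ at the indicator vector of $S$ equals $\lim_{\epsilon\to0^+}\mu\big(\sum_{j\in S}f_j+\epsilon\sum_{j\notin S}f_j\big)$, which need not be $\mu\big(\sum_{j\in S}f_j\big)$: for $k=2$ one obtains $\mu$ of $f_1$ restricted to $\overline{\dom f_2}$ rather than $\mu(f_1)$. Two clean repairs are available: (a) derive the identity algebraically from properties 1--3 of Theorem \ref{theorem_polarization_VConv}, since additivity and symmetry give $\mu\big(\sum_{j\in S}f_j\big)=\sum_{j_1,\dots,j_k\in S}\bar\mu(f_{j_1},\dots,f_{j_k})$ and the alternating sum over $S$ annihilates every multi-index that is not a permutation of $\{1,\dots,k\}$; or (b) use finite differences at strictly positive nodes, e.g. $\prod_{j=1}^k(\mathrm{ev}_{\lambda_j=2}-\mathrm{ev}_{\lambda_j=1})$, or the Vandermonde scheme of Lemma \ref{lemma_continuity_valuation->polarization}, which also extracts the coefficient of $\lambda_1\cdots\lambda_k$ from a polynomial of degree at most $k$ while only ever evaluating $\mu$ at arguments covered by the polynomial identity. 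With either repair, your continuity argument goes through exactly as written.
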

		\begin{proof}
			Assume we are given sequences $(f_{i,j})_j$, $1\le i\le k$, in $C$ such that each sequence $(f_{i,j})_j$ converges to some $f_i\in C$. Then the polynomials $P_j(\lambda_1,...,\lambda_k):=\mu(\sum_{i=1}^{k}\lambda_if_{i,j})$ converge pointwise to $P(\lambda_1,...,\lambda_k):=\mu(\sum_{i=1}^{k}\lambda_if_{i})$ for $\lambda_i\ge0$. Note that the degree of $P_j$ is bounded by $k$ due to Corollary \ref{corollary_boundedness_degree_polynomial_valuation}, so we see that the coefficient in front of $\lambda_1\cdot...\cdot\lambda_k$ converges. Now the claim follows from the definition of $\bar{\mu}$ in the proof of Theorem \ref{theorem_polarization_VConv}.
		\end{proof}
		We close this section with an inequality that will be used in the construction of the Goodey-Weil embedding. It also shows that the map which associates the polarization to a given valuation is continuous with respect to the natural topologies.
		\begin{lemma}
			\label{lemma_continuity_valuation->polarization}
			There exists a constant $C_{k}>0$ depending on $0\le k\le n$ only such that the following holds: If $C\subset\Conv(V)$ is a regular cone containing $\Conv(V,\R)$ and if $K\subset C$ is compact, then 
			\begin{align*}
			\|\bar{\mu}\|_{F;K}:=\sup_{f_1,\dots,f_k\in K}|\bar{\mu}(f_1,\dots,f_k)|_F\le C_{k}\|\mu\|_{F;K'}
			\end{align*}
			for every semi-norm $|\cdot |_F$ on $F$ and all $\mu\in\VConv_k(V,F)$, where 
			\begin{align*}
			K':=\sum_{j=1}^k \bigcup\limits_{i=1}^{j+1}iK=\left\{\sum\limits_{j=1}^{k}f_j \ : \ f_j\in \bigcup\limits_{i=1}^{j+1}iK\right\}\subset C
			\end{align*} is compact. If $K$ is convex with $0\in K$, there exists a constant $C_k'>0$ independent of $K$ such that
			\begin{align*}
			\|\bar{\mu}\|_{F;K}\le C_{k}'\|\mu\|_{F;K}.
			\end{align*}
		\end{lemma}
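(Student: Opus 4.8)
The strategy is to extract the bound on $\bar\mu$ directly from the polarization formula \eqref{equation_definition_polarization} by replacing the mixed partial derivatives with finite differences and then using the valuation property to bound each resulting evaluation of $\mu$. Concretely, since $\lambda\mapsto\mu(\sum_{j}\lambda_jf_j)$ is a polynomial of degree at most $k$ in the $\lambda_j>0$ by Corollary \ref{corollary_boundedness_degree_polynomial_valuation}, the coefficient of $\lambda_1\cdots\lambda_k$ — which is $k!\bar\mu(f_1,\dots,f_k)$ up to the normalization — can be written as a fixed integer linear combination of the values $\mu\big(\sum_{j=1}^k i_j f_j\big)$ with $i_j\in\{1,2,\dots,j+1\}$, via iterated first-order finite differences in each variable (one needs $j+1$ nodes to recover the linear coefficient of a degree-$\le k$ polynomial after the previous differences have lowered the degree). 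This is where the set $K'=\sum_{j=1}^k\bigcup_{i=1}^{j+1}iK$ comes from: every argument $\sum_j i_j f_j$ with $f_j\in K$ lies in $K'$. The number of such terms and the size of the integer coefficients depend only on $k$, which yields the constant $C_k$.

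First I would fix the combinatorial identity: for a polynomial $P$ in $k$ variables of degree $\le k$, the coefficient of $\lambda_1\cdots\lambda_k$ equals $\sum_{i_1=1}^{2}\cdots$ — more carefully, apply in the $j$-th variable the difference operator that annihilates polynomials of degree $\le j-1$ in that variable and extracts the linear term, using nodes $1,\dots,j+1$; since after differencing in variables $1,\dots,j-1$ the degree in $\lambda_j$ is still $\le k-(j-1)\le \dots$, one has to be slightly careful, but the cleanest route is to difference in decreasing order of "budget": in variable $1$ use $2$ nodes, in variable $2$ use $3$ nodes, etc., matching exactly the union $\bigcup_{i=1}^{j+1}iK$ in the definition of $K'$. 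Then $|\bar\mu(f_1,\dots,f_k)|_F$ is bounded by $C_k$ times the supremum of $|\mu(g)|_F$ over $g\in K'$, i.e. by $C_k\|\mu\|_{F;K'}$. Compactness of $K'$ follows because it is the image of the compact set $\prod_{j=1}^k\big(\bigcup_{i=1}^{j+1}iK\big)$ under the continuous addition map $C^k\to C$ of Lemma \ref{lemma:valuations-on-conv:continuity-addition} (here regularity of $C$ is used), and each $iK$ is compact as the image of $K$ under $f\mapsto if$.

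For the second assertion, when $K$ is convex with $0\in K$ one has $iK\subset (j+1)K$ for all $i\le j+1$, hence $\bigcup_{i=1}^{j+1}iK\subset(j+1)K$, and by convexity and $0\in K$ the Minkowski-type sum $\sum_{j=1}^k(j+1)K$ is contained in $\big(\sum_{j=1}^k(j+1)\big)K = \binom{k+1}{2}K + (\text{lower})$; more simply $\sum_{j=1}^k(j+1)K = NK$ with $N=\sum_{j=1}^k(j+1)$ because $K$ is convex and contains $0$ (so $aK+bK=(a+b)K$). Thus $K'\subset NK$ with $N$ depending only on $k$, and $\|\mu\|_{F;K'}\le\|\mu\|_{F;NK}$. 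Finally, since $\mu$ is a sum of homogeneous components and each $\mu_i$ satisfies $\mu_i(Nf)=N^i\mu_i(f)$, or more directly since $\mu$ here is already $k$-homogeneous, $\|\mu\|_{F;NK}=N^k\|\mu\|_{F;K}$, giving $\|\bar\mu\|_{F;K}\le C_k N^k\|\mu\|_{F;K}=:C_k'\|\mu\|_{F;K}$ with $C_k'$ independent of $K$.

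**Main obstacle.** The delicate point is the bookkeeping in the finite-difference step: one must choose the order of differencing and the node sets so that the arguments of $\mu$ that appear are exactly the sums $\sum_j i_j f_j$ with $f_j\in K$ and $1\le i_j\le j+1$, matching the stated $K'$, while simultaneously ensuring that each difference operator genuinely isolates the linear coefficient given that the polynomial has total degree $\le k$ (not degree $\le k$ separately in each variable). Getting the index ranges to line up with the definition of $K'$ — rather than with some larger set — is the real content; everything else (compactness, the convex-$K$ rescaling) is routine.
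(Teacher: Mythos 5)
Your proposal is correct and follows essentially the same route as the paper: the paper also extracts the multilinear coefficient by inverting Vandermonde systems at the integer nodes $1,\dots,j+1$, one variable at a time with the node count shrinking as the remaining degree budget shrinks (phrased there as an iteration on the polarized valuations $\nu(f)=\bar{\mu}(f[k-1],g)$ rather than as multivariate finite differences), obtains compactness of $K'$ from Lemma \ref{lemma:valuations-on-conv:continuity-addition}, and handles the convex case by $K'\subset NK$ plus $k$-homogeneity (with the slightly cruder constant $N=(k+1)^2$). The "delicate point" you flag is resolved exactly as you suggest: difference first in the variable with the largest budget, since the coefficient of $\lambda_j$ in a total-degree-$\le j$ polynomial has total degree $\le j-1$ in the remaining variables.
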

		\begin{proof}
			For $f,g\in C$ and $\lambda\ge 0$
			\begin{align*}
			\mu(f+\lambda g)=\sum\limits_{i=0}^{k}\lambda^{i}i!(k-i)!\bar{\mu}(f[k-i],g[i]).
			\end{align*}
			We are only interested in the linear term. Plugging in $\lambda=1,\dots,k+1$ and setting $\mu'_i(f):=\mu(f+ig)$, we obtain a valuation $P_g(\mu)=(\mu_1',\dots,\mu_{k+1}')\in\VConv(C;V,F^{k+1})$. Let $\pi_i:F^{k+1}\rightarrow F$ denote the $i$-th projection and $S_k$ the Vandermonde matrix with entries corresponding to $(1,\dots,k+1)$. Then 
			\begin{align*}
			\bar{\mu}(f[k-1],g)=\frac{1}{(k-1)!}\pi_1[S_k^{-1}P_g(\mu)(f)].
			\end{align*}
			If we equip $F^{k+1}$ with the family of semi-norms $|(v_1,\dots,v_{k+1})|_F:=\max_{i=1,\dots,k+1}|v_i|_F$, and denote by $\|S_k^{-1}\|_\infty$ the operator norm of $S_k^{-1}:\R^{k+1}\rightarrow \R^{k+1}$ with respect to the maximum norm on $\R^{k+1}$, we obtain
			\begin{align*}
			|\bar{\mu}(f[k-1],g)|_F\le\frac{1}{(k-1)!}\|S_k^{-1}\|_\infty \ |P_g(\mu)(f)|_F.
			\end{align*}
			For $g\in K$, $P_g:\VConv(C;V,F)\rightarrow \VConv(C;V,F^{k+1})$ satisfies
			\begin{align*}
			|P_g(\mu)(f)|_F=\max\limits_{i=1,\dots,k+1}|\mu(f+ig)|_F\le \sup\left\{|\mu(f+\tilde{g})|_F: \tilde{g}\in\bigcup\limits_{i=1}^{k+1}iK\right\}
			\end{align*}
			independent of $g\in K$. Iterating this construction starting with the $k-1$-homogeneous valuation $\nu(f):=\bar{\mu}(f[k-1],g)$, we see that there exists $C_{k}>0$ depending on $k$ only such that for $f_1,\dots,f_k\in K$
			\begin{align*}
			|\bar{\mu}(f_1,\dots,f_k)|_F\le C_{k}\sup\left\{|\mu(\tilde{g})|_F: \tilde{g}\in \sum_{j=1}^{k} \bigcup\limits_{i=1}^{j+1}iK\right\}= C_{k}\|\mu\|_{F,K'}
			\end{align*}
			for every semi-norm $|\cdot|_F$ on $F$. Also note that $K'$ is compact, as it is the image of a compact subset under the addition map, which is continuous on $C$ by Lemma \ref{lemma:valuations-on-conv:continuity-addition}.\\
			If $K$ is convex with $0\in K$, then $K'\subset (k+1)^2K$, so 
			\begin{align*}
			\|\mu\|_{F;K'}=\sup\limits_{g\in K'}|\mu(g)|_F\le \sup\limits_{g\in (k+1)^2K}|\mu(g)|_F\le ((k+1)^2)^k\sup\limits_{g\in K}|\mu(g)|=(k+1)^{2k}\|\mu\|_{F;K},
			\end{align*}
			i.e. we can choose $C_k':=(k+1)^{2k}C_k$.
		\end{proof}

\section{Goodey-Weil embedding}
	\label{section_Goodey-Weil-embedding}
	\subsection{Construction and basic properties}
		In this section, we will assume that $V$ carries an euclidean structure. Let $C^2_b(V)$ denote the Banach space of twice differentiable functions with finite $C^2$-norm
		\begin{align*}
			\|\phi\|_{C^2_b(V)}:=&\|\phi\|_\infty+\|\nabla \phi\|_\infty+\|H_\phi\|_\infty
			=\sup\limits_{x\in V}|\phi(x)|+\sup\limits_{x\in V}|\nabla f(x)|+\sup\limits_{x\in V,v\in S(V)}|\langle H_\phi(x)v,v\rangle|,
		\end{align*}
		where $H_\phi$ denotes the Hessian matrix of a twice differentiable function $\phi:V\rightarrow \R$ and $S(V)\subset V$ is the unit sphere. Let us also set $c(A):=\sup_{x\in A}\frac{|x|^2}{2}+1$ for any compact subset $A\subset V$.
		\begin{lemma}
			\label{lemma_difference_C2_convex_functions}
			For every $\phi\in C_b^2(V)$ there exist two convex functions $f,h\in\Conv(V,\R)$ such that $f-h=\phi$ and  such that $\|f|_A\|_{\infty},\|h|_A\|_{\infty}\le c(A)\|\phi\|_{C_b^2(V)}$ for all compact subsets $A\subset V$. These functions can be chosen in $C^\infty(V)$.
		\end{lemma}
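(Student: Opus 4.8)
The plan is to use that the $C^2_b$-norm of $\phi$ bounds the eigenvalues of its Hessian from below, so that adding a sufficiently convex quadratic to $\phi$ turns it into a convex function while subtracting the same quadratic recovers $\phi$ exactly.

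Concretely, I would write $\Lambda:=\|\phi\|_{C^2_b(V)}$; if $\Lambda=0$ then $\phi\equiv 0$ and one may take $f=h=0$, so assume $\Lambda>0$. Set
\[
h(x):=\frac{\Lambda}{2}|x|^2,\qquad f:=\phi+h.
\]
Then $h$ is a polynomial, hence $h\in\Conv(V,\R)\cap C^\infty(V)$, and $f-h=\phi$ by construction, so the only non-formal point is the convexity of $f$. Since $\phi\in C^2_b(V)$ is in particular twice continuously differentiable, so is $f$, and by the standard second order characterization of convexity (a $C^2$ function on $V$ is convex if and only if its Hessian is everywhere positive semidefinite) it suffices to check that $H_f(x)=H_\phi(x)+\Lambda\,\mathrm{Id}$ is positive semidefinite for every $x\in V$. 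For a unit vector $v\in S(V)$ the definition of the $C^2_b$-norm gives $\langle H_\phi(x)v,v\rangle\ge-\|H_\phi\|_\infty\ge-\Lambda$, whence $\langle H_f(x)v,v\rangle\ge 0$; thus $f\in\Conv(V,\R)$.

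It then remains to verify the quantitative estimates, which is a routine computation with the definition $c(A)=\sup_{x\in A}\frac{|x|^2}{2}+1$: for a compact set $A\subset V$ one has $\|h|_A\|_\infty=\Lambda\sup_{x\in A}\frac{|x|^2}{2}=\Lambda(c(A)-1)\le c(A)\Lambda$, and consequently $\|f|_A\|_\infty\le\|\phi\|_\infty+\|h|_A\|_\infty\le\Lambda+\Lambda(c(A)-1)=c(A)\Lambda$, using $\|\phi\|_\infty\le\|\phi\|_{C^2_b(V)}=\Lambda$. Finally $h$ is smooth and $f=\phi+h$, so $f$ inherits the regularity of $\phi$; in particular $f,h\in C^\infty(V)$ whenever $\phi$ is smooth. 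I do not expect a genuine obstacle here: the whole content of the lemma lies in the Hessian eigenvalue bound of the previous paragraph, everything else being bookkeeping with the explicit constants built into $c(A)$.
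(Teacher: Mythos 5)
Your proposal is correct and is essentially identical to the paper's own proof: the paper also takes $h(x)=c\frac{|x|^2}{2}$ with $c=\|\phi\|_{C_b^2(V)}$ and $f=\phi+h$, verifies convexity via positive semidefiniteness of the Hessians, and obtains the bound $c\cdot\sup_{x\in A}\frac{|x|^2}{2}+\|\phi\|_\infty\le c(A)\|\phi\|_{C_b^2(V)}$ by the same bookkeeping. Your treatment of the smoothness claim (that $f,h$ are smooth when $\phi$ is) likewise matches the paper's intent.
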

		\begin{proof}
			Take $f(x):=c\frac{|x|^2}{2}+\phi(x)$, $h(x)=c\frac{|x|^2}{2}$, where $c:= \|\phi\|_{C_b^2(V)}$. Then $f$ and $h$ are convex, as their Hessians are positive semi-definite. In addition
			\begin{align*}
			\|h|_A\|_{\infty},\|f|_A\|_{\infty}\le c\cdot\sup_{x\in A}\frac{|x|^2}{2}+\|\phi\|_\infty\le \left(\sup_{x\in A}\frac{|x|^2}{2}+1\right)\|\phi\|_{C_b^2(V)}=c(A)\|\phi\|_{C_b^2(V)}.
			\end{align*}
		\end{proof}
		To every $\mu\in \VConv_k(C;V,F)$ we can associate a $k$-multilinear functional $\tilde{\mu}$ on formal differences of convex functions: Assume that $h_1+\phi_1=f_1$, $\dots$, $h_1+\phi_k=f_k$ for convex functions $f_1,\dots,f_k,h_1,\dots,h_k\in C$. Using the polarization $\bar{\mu}$ from Theorem \ref{theorem_polarization_VConv}, we define the following functionals inductively for arbitrary convex functions $g_1,\dots,g_k\in\Conv(V,\R)$ and $1\le i\le k-1$:
		\begin{align*}
		\mu^{(1)}(\phi_1,g_2,\dots,g_k):=&\bar{\mu}(f_1,g_2,\dots,g_k)-\bar{\mu}(h_1,g_2,\dots,g_k),\\
		\mu^{(i+1)}(\phi_1,\dots,\phi_{i+1},g_{i+2},\dots,g_k):=&\mu^{(i)}(\phi_1,\dots,\phi_i,f_{i+1},g_{i+1},\dots,g_k)\\
		&-\mu^{(i)}(\phi_1,\dots,\phi_i,h_{i+1},g_{i+1},\dots,g_k).
		\end{align*}
		Then we set $\tilde{\mu}(\phi_1,\dots,\phi_k):=\mu^{(k)}(\phi_1,\dots,\phi_k)$. It is easy to see that this is equivalent to
		\begin{align}
		\label{equation_representation_GW_using_polarization}
		\tilde{\mu}(\phi_1,..,\phi_k)=\sum\limits_{l=0}^k(-1)^{k-l}\frac{1}{(k-l)!l!}\sum_{\sigma\in S_k}\bar{\mu}(f_{\sigma(1)},\dots,f_{\sigma(l)},h_{\sigma(l+1)},\dots,h_{\sigma(k)}),
		\end{align}
		where $\bar{\mu}$ denotes the polarization of $\mu$ from Theorem \ref{theorem_polarization_VConv}. Thus $\tilde{\mu}$ is also symmetric.\\
		Using the additivity of $\bar{\mu}$ in each argument, one readily verifies that this definition only depends on the functions $\phi_1,\dots,\phi_k$ (and not the special choices of $f_i$ and $h_i$) and that this functional is multilinear. 
		
		By Lemma \ref{lemma_smooth function difference of convex functions}, $C_c^2(V)$ is contained in the space generated by differences of elements of $\Conv(V,\R)$. For the construction of the Goodey-Weil embedding, we will thus consider the restricted map
		\begin{align*}
		\tilde{\mu}:C_b^2(V)^k\rightarrow F.
		\end{align*}
		Given functions $\phi_1,\dots,\phi_k\in C^2_b(V)$, we take the special convex functions $f_1$,\dots, $f_k$, $h_1$,\dots,$h_k$ in $\Conv(V,\R)$ with $\phi_i=f_i-h_i$ satisfying the inequality in Lemma \ref{lemma_difference_C2_convex_functions}. The set $K$ of all convex functions that are bounded by $c(A)$ on every compact set $A$ (as defined above) is compact in $\Conv(V,\R)$ by Proposition \ref{proposition_compactness_Conv}, so it is also compact in $C$. Note that the functions
		\begin{align*}
		&\tilde{f}_i:=\frac{f_i}{\|\phi_i\|_{C^2_b(V)}}, &&\tilde{h}_i:=\frac{h_i}{\|\phi_i\|_{C^2_b(V)}}
		\end{align*}
		belong to $K$ by construction. As $K$ is also convex with $0\in K$, Lemma \ref{lemma_continuity_valuation->polarization} and Equation \eqref{equation_representation_GW_using_polarization} imply
		\begin{align}
		\notag
		|\tilde{\mu}(\phi_1,\dots,\phi_k)|_F&=|\sum\limits_{l=0}^k(-1)^{k-l}\frac{1}{(k-l)!l!}\sum_{\sigma\in S_k}\bar{\mu}(f_{\sigma(1)},\dots,f_{\sigma(l)},h_{\sigma(l+1)},\dots,h_{\sigma(k)})|_F\\
		\notag
		&\le \sum\limits_{l=0}^k\frac{1}{(k-l)!l!}\sum_{\sigma\in S_k}|\bar{\mu}(f_{\sigma(1)},\dots,f_{\sigma(l)},h_{\sigma(l+1)},\dots,h_{\sigma(k)})|_F\\
		\notag
		&= \sum\limits_{l=0}^k\frac{1}{(k-l)!l!}\sum_{\sigma\in S_k}|\bar{\mu}(\tilde{f}_{\sigma(1)},\dots,\tilde{f}_{\sigma(l)},\tilde{h}_{\sigma(l+1)},\dots,\tilde{h}_{\sigma(k)})|_F\cdot \prod\limits_{i=1}^ k\|\phi_i\|_{C_b^2(V)}\\
		\label{equation_continuity_GW}
		&\le c_{k}\|\mu\|_{F;K}\cdot \prod\limits_{i=1}^ k\|\phi_i\|_{C_b^2(V)}
		\end{align}
		for any continuous semi-norm $|\cdot|_F$ on $F$ for some constant $c_{k}>0$ depending on $k$ only (we can choose $2^k$ times the constant $C'_k$ from Lemma \ref{lemma_continuity_valuation->polarization}). Here we have used that $\bar{\mu}$ is $1$-homogeneous in each argument. As $\tilde{\mu}$ is multilinear, this inequality implies that $\tilde{\mu}$ is continuous. In particular, we can apply the L. Schwartz kernel theorem (see \cite{Gask:Schwartz_kernel} for a simple proof).
		\begin{theorem}[L. Schwartz kernel theorem]
			Let $F$ be a complete locally convex vector space and let $V,W$ be finite dimensional real vector spaces. For every continuous bilinear map
			\begin{align*}
				b:C^\infty_c(V)\times C^\infty_c(W)\rightarrow F
			\end{align*}
			there exists a unique continuous linear map
			\begin{align*}
				B:C^\infty_c(V\times W)\rightarrow F
			\end{align*}
			such that $B(f\otimes h)=b(f,h)$ for all $f\in C^\infty_c(V)$, $h\in C^\infty_c(W)$.
		\end{theorem}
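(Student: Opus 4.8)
The plan is to give the elementary, Fourier-series proof, which reduces the $F$-valued statement to the scalar Schwartz kernel theorem and exploits the completeness of $F$ at exactly one point. First I would recall the precise meaning of the hypothesis. The space $C^\infty_c(V\times W)$ carries the LF-topology: it is the inductive limit over compact sets $L$ of the Fréchet spaces $C^\infty_L(V\times W)$ of smooth functions supported in $L$, equipped with the seminorms $\|\cdot\|_{C^N}$. Thus continuity of the bilinear map $b$ means exactly that for each pair of compact sets $K_1\subset V$, $K_2\subset W$ and each continuous seminorm $|\cdot|_F$ on $F$ there are $N\in\mathbb{N}$ and $c>0$ with $|b(f,h)|_F\le c\,\|f\|_{C^N}\,\|h\|_{C^N}$ for all $f\in C^\infty_{K_1}(V)$ and $h\in C^\infty_{K_2}(W)$. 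The target map $B$ must likewise be continuous in the LF-sense, i.e. continuous on each $C^\infty_L(V\times W)$.

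The key step is the construction of $B$. Given $\phi\in C^\infty_c(V\times W)$ with support in a compact set $L$, I would choose linear coordinates and a large cube $Q=[-a,a]^{\dim V}\times[-a,a]^{\dim W}$ containing $L$ in its interior, and develop $\phi$ into its Fourier series on the torus $\mathbb{R}^{\dim V+\dim W}/2a\mathbb{Z}^{\dim V+\dim W}$. Since $\phi$ is smooth and supported in the interior of $Q$, its Fourier coefficients $a_{p,q}$, indexed by integer frequency vectors $p$ (for the $V$-variable) and $q$ (for the $W$-variable), decay faster than any polynomial, and each $C^N$-norm of the tail of the series is controlled by a single $C^{N'}$-norm of $\phi$. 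Fixing cutoffs $\chi_1\in C^\infty_c(V)$, $\chi_2\in C^\infty_c(W)$ that are identically $1$ on neighbourhoods of the two coordinate projections of $L$ and supported inside the respective faces of $Q$, and writing $e_p$, $e_q$ for the corresponding characters, one obtains
\[ \phi=\sum_{p,q} a_{p,q}\,(\chi_1 e_p)\otimes(\chi_2 e_q), \]
and this series converges in $C^\infty_c(V\times W)$: every term is supported in the fixed compact set $\operatorname{supp}\chi_1\times\operatorname{supp}\chi_2$, and the rapid decay of $a_{p,q}$ dominates the at-most-polynomial growth in $p,q$ of the $C^N$-norms of $\chi_1 e_p$ and $\chi_2 e_q$. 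One then sets $B(\phi):=\sum_{p,q} a_{p,q}\,b(\chi_1 e_p,\chi_2 e_q)$; the continuity estimate for $b$ bounds the general summand, in every seminorm of $F$, by $c\,(1+|p|)^{N'}(1+|q|)^{N'}|a_{p,q}|$, so the series converges absolutely in $F$, and here is the one place completeness of $F$ is used.

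It remains to verify the stated properties. Independence of $B(\phi)$ from the auxiliary choices (cube size and cutoffs) and genuine well-definedness follow once one checks that $B$ is already forced on the algebraic tensor product $C^\infty_c(V)\otimes C^\infty_c(W)$: applying the construction to a pure tensor $f\otimes h$ makes the coefficients factor, $a_{p,q}=\widehat f_{\,p}\,\widehat h_{\,q}$, and using continuity of $b$ separately in each argument (since $f=\sum_p \widehat f_{\,p}\,\chi_1 e_p$ in $C^\infty_c(V)$ and likewise for $h$) yields $B(f\otimes h)=b(f,h)$. Linearity of $B$ is immediate from linearity of $\phi\mapsto a_{p,q}$, and the estimate above, together with the bound on the $C^N$-norms of the coefficients by a single $C^{N'}$-norm of $\phi$, shows $B$ is continuous on each $C^\infty_L(V\times W)$ and hence on $C^\infty_c(V\times W)$. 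Uniqueness follows because two such maps agree on the algebraic tensor product $C^\infty_c(V)\otimes C^\infty_c(W)$, which is dense in $C^\infty_c(V\times W)$, and $F$ is Hausdorff.

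The main obstacle is the bookkeeping in the second step: one must verify carefully that the Fourier expansion of $\phi$ converges in the inductive-limit topology of $C^\infty_c(V\times W)$ — so that it is legitimate to reduce to tensors — and one must make quantitative how the continuity of $b$ converts the rapid decay of the Fourier coefficients into absolute convergence in $F$ uniformly over each seminorm. Once these two estimates are in place everything else is formal. An alternative, less elementary route would invoke the nuclearity of the spaces $C^\infty_c$ together with the universal property of the completed projective tensor product, but the Fourier-series argument is the more self-contained one and is the proof carried out in \cite{Gask:Schwartz_kernel}.
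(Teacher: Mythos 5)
The paper does not actually prove this statement: it is imported as the classical kernel theorem with a pointer to \cite{Gask:Schwartz_kernel}, and your Fourier-series argument is precisely the proof carried out in that reference. Your outline is correct, and the only two points that genuinely need care are the ones you yourself flag --- convergence of the expansion $\phi=\sum_{p,q}a_{p,q}(\chi_1e_p)\otimes(\chi_2e_q)$ in the inductive-limit topology (all terms sit in the fixed Fr\'echet space of functions supported in $\supp\chi_1\times\supp\chi_2$, so this reduces to $C^N$-estimates there) and the conversion of the rapid decay of $a_{p,q}$ plus the bilinear continuity estimate into absolute convergence in the complete space $F$ --- both of which are standard.
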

		Let us denote by $\mathcal{D}'(V,F)$ the space of all distributions on $V$ with values in a locally convex vector space $F$, i.e. the space of all continuous linear functionals $\phi:C^\infty_c(V)\rightarrow F$. Applying the L. Schwartz kernel theorem to the functional $\tilde{\mu}$ for $\mu\in \VConv_k(C;V,F)$, we obtain the following notion:
		\begin{definition}
			\label{definition_Goddey_Weil_embedding_VCONV}
			Let $F$ be a locally convex vector space, $1\le k\le n$. To every $\mu\in\VConv_k(C;V,F)$ we associate the distribution $\GWVConv(\mu)\in \mathcal{D}'(V^k,\bar{F})$ determined by
			\begin{align*}
				\GWVConv(\mu)(\phi_1\otimes...\otimes\phi_k)=\tilde{\mu}(\phi_1,...,\phi_k)
			\end{align*}
			for $\phi_1,...,\phi_k\in C_c^\infty(V)$. The distribution will be called the \emph{Goodey-Weil distribution} of $\mu$.
		\end{definition}
		\begin{lemma}
			\label{lemma:valuations-on-conv:Goodey-Weil-as-derivative}
			Let $\mu\in\VConv_k(C;V,F)$ and let $f\in\Conv(V,\R)$ be a strictly convex function. If $\phi_1,\dots,\phi_k\in C^\infty_c(V)$, then $\mu(f+\sum_{i=1}^{k}\delta_i\phi_i)$ is a polynomial in $\delta_i\ge0$ for all $\delta_i$ small enough and
			\begin{align*}
			\GWVConv(\mu)[\phi_1\otimes\dots\otimes\phi_k]=\frac{1}{k!}\frac{\partial}{\partial \delta_1}\Big|_0\dots \frac{\partial}{\partial \delta_k}\Big|_0\mu\left(f+\sum\limits_{i=1}^{k}\delta_i\phi_i\right).
			\end{align*}
		\end{lemma}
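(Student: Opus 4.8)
The plan is to deduce the formula from the identity $\mu(g)=\bar\mu(g,\dots,g)$ (property~3 of Theorem~\ref{theorem_polarization_VConv}) by expanding the polarization multilinearly after a small convex perturbation. \emph{Step 1 (the perturbed function stays in the cone).} First I would check that for $\delta=(\delta_1,\dots,\delta_k)$ with $\delta_i\ge 0$ sufficiently small the function $g_\delta:=f+\sum_{i=1}^k\delta_i\phi_i$ still lies in $\Conv(V,\R)\subseteq C$. This is where the strict convexity of $f$ enters: fixing a ball $B_R$ containing $\bigcup_i\supp\phi_i$, the perturbation $\sum_i\delta_i\phi_i$ has arbitrarily small $C^2$-norm as $\delta\to 0$ and hence cannot destroy the (uniform) convexity of $f$ on $B_R$, while $g_\delta=f$ outside $B_R$; so $g_\delta$ is convex and finite, and $\mu(g_\delta)$ is defined for such $\delta$.

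\emph{Step 2 ($\tilde\mu$ extends the polarization).} Recall from the construction of $\GWVConv(\mu)$ that $\tilde\mu$ is a symmetric $k$-linear functional on (formal differences of) convex functions with $\tilde\mu(\phi_1,\dots,\phi_k)=\GWVConv(\mu)(\phi_1\otimes\dots\otimes\phi_k)$ (Definition~\ref{definition_Goddey_Weil_embedding_VCONV}). I would observe that $\tilde\mu$ restricts to $\bar\mu$ on genuinely convex arguments: applying \eqref{equation_representation_GW_using_polarization} with $\phi_i=g$ and the trivial decomposition $g=g-0$, and using that $\bar\mu$ — being $1$-homogeneous in each slot — vanishes whenever one of its arguments is $0$, only the term $l=k$ survives and one gets $\tilde\mu(g,\dots,g)=\bar\mu(g,\dots,g)$. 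Together with property~3 of Theorem~\ref{theorem_polarization_VConv} this gives $\mu(g)=\tilde\mu(g,\dots,g)$ for every $g\in\Conv(V,\R)$, in particular $\mu(g_\delta)=\tilde\mu(g_\delta,\dots,g_\delta)$ for $\delta$ small.

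\emph{Step 3 (expand and read off the coefficient).} Since $f\in\Conv(V,\R)$ and, by Lemma~\ref{lemma_difference_C2_convex_functions}, each $\phi_i\in C^\infty_c(V)\subset C^2_b(V)$ is a difference of convex functions, the identity $g_\delta=f+\sum_i\delta_i\phi_i$ holds in the vector space of formal differences of convex functions, so $k$-linearity and symmetry of $\tilde\mu$ yield
\begin{align*}
\mu(g_\delta)&=\tilde\mu\Bigl(f+\sum_{i=1}^{k}\delta_i\phi_i,\ \dots,\ f+\sum_{i=1}^{k}\delta_i\phi_i\Bigr)\\
&=\sum_{\substack{a\in\mathbb{Z}_{\ge 0}^{k}\\ |a|\le k}}\frac{k!}{(k-|a|)!\,a_1!\cdots a_k!}\,\delta_1^{a_1}\cdots\delta_k^{a_k}\,\tilde\mu\bigl(f[k-|a|],\phi_1[a_1],\dots,\phi_k[a_k]\bigr),
\end{align*}
which is visibly a polynomial in $\delta$ of total degree at most $k$ (consistent with Corollary~\ref{corollary_boundedness_degree_polynomial_valuation}); this proves the polynomiality assertion. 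The monomial $\delta_1\delta_2\cdots\delta_k$ occurs only for $a=(1,\dots,1)$, with coefficient $k!\,\tilde\mu(\phi_1,\dots,\phi_k)$, so $\partial_{\delta_1}|_0\cdots\partial_{\delta_k}|_0\,\mu(g_\delta)=k!\,\tilde\mu(\phi_1,\dots,\phi_k)$. Dividing by $k!$ and invoking Definition~\ref{definition_Goddey_Weil_embedding_VCONV} gives the asserted formula.

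The only genuinely delicate point is Step~1: guaranteeing that $g_\delta$ remains in $C$ for a full (one-sided) neighbourhood of $\delta=0$, which is exactly what the strict convexity hypothesis is for — and it is why one should think of $f$ as uniformly, not merely, strictly convex on the ball $B_R$ (for a concrete reference function such as $f=\tfrac12|\cdot|^2$ this is immediate). Everything after Step~1 is formal manipulation of $\tilde\mu$ using only its $k$-linearity, its symmetry, and the fact that it restricts to $\bar\mu$ on convex arguments.
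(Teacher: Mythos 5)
Your proof is correct and follows essentially the same route as the paper's: both reduce to the identity $\mu(g)=\tilde\mu(g,\dots,g)$ for $g\in\Conv(V,\R)$ and then expand $\tilde\mu$ multilinearly to isolate the coefficient of $\delta_1\cdots\delta_k$. Your closing caveat is well taken --- the paper's one-line claim that strict convexity keeps $f+\sum_i\delta_i\phi_i$ convex for all small $\delta_i$ really does need something like uniform convexity on a neighbourhood of $\bigcup_i\supp\phi_i$ (consider $f(x)=x^4$ on $\R$), but this is harmless since the lemma is only invoked for reference functions such as $|x|^2$.
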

		\begin{proof}
			Note that the strict convexity implies that $f+\sum_{i=1}^{k}\delta_i\phi_i$  is a convex function for all $\delta_i$ sufficiently small. Let us consider the multilinear functional $\tilde{\mu}$ from Section \ref{section_Goodey-Weil-embedding}. The construction of $\tilde{\mu}$ implies $\tilde{\mu}(h,\dots,h)=\mu(h)$ for $h\in\Conv(V,\R)$, so
			\begin{equation*}
			\mu\left(f+\sum_{i=1}^{k}\delta_i\phi_i\right)=\tilde{\mu}\left(f+\sum_{i=1}^{k}\delta_i\phi_i,\dots,f+\sum_{i=1}^{k}\delta_i\phi_i\right).
			\end{equation*}
			In particular, the left hand side is a polynomial in $\delta_i$ for all $\delta_i$ small enough and we calculate
			\begin{align*}
			\frac{1}{k!}\frac{\partial}{\partial \delta_1}\Big|_0\dots \frac{\partial}{\partial \delta_k}\Big|_0\mu\left(f+\sum_{i=1}^{k}\delta_i\phi_i\right)=&\frac{1}{k!}\frac{\partial}{\partial \delta_1}\Big|_0\dots \frac{\partial}{\partial \delta_k}\Big|_0\tilde{\mu}\left(f+\sum_{i=1}^{k}\delta_i\phi_i,\dots,f+\sum_{i=1}^{k}\delta_i\phi_i\right)\\
			=&\tilde{\mu}(\phi_1,\dots,\phi_k)\\
			=&\GWVConv(\mu)[\phi_1\otimes\dots\otimes\phi_k],
			\end{align*}
			where we have used that $\tilde{\mu}$ is multilinear and symmetric. 
		\end{proof}

	\subsection{Diagonality of the support of the Goodey-Weil distributions}
	\label{section_support_GW_diagonal}
	The following theorem is an adaption of the proof of the corresponding statement for the Goodey-Weil embedding for translation invariant valuations on convex bodies (see \cite{Alesker:McMullenconjecture}). It also proves the second part of Theorem \ref{maintheorem_GW}.
	\begin{theorem}
		\label{theorem_support_GW_diagonal}
		Let $F$ be a locally convex vector space. For $\mu\in\VConv_k(C;V,F)$ the support of $\GWVConv(\mu)$ is contained in the diagonal in $V^k$.
	\end{theorem}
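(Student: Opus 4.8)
The plan is to adapt Alesker's locality argument for the Goodey–Weil distribution of translation invariant valuations on convex bodies. The statement to prove is that $\GWVConv(\mu)(\phi_1\otimes\cdots\otimes\phi_k)=0$ whenever the compact supports $\supp\phi_1,\dots,\supp\phi_k$ of the test functions $\phi_i\in C^\infty_c(V)$ cannot all meet a common point; by symmetry and a partition-of-unity reduction it suffices to show the distribution vanishes on products $\phi_1\otimes\cdots\otimes\phi_k$ for which $\supp\phi_1\cap\supp\phi_2=\emptyset$. The key tool is Lemma \ref{lemma:valuations-on-conv:Goodey-Weil-as-derivative}: fixing a strictly convex reference function $f\in\Conv(V,\R)$ (for instance $f(x)=\tfrac12|x|^2$), one has
\begin{align*}
	\GWVConv(\mu)[\phi_1\otimes\cdots\otimes\phi_k]=\frac{1}{k!}\frac{\partial}{\partial\delta_1}\Big|_0\cdots\frac{\partial}{\partial\delta_k}\Big|_0\mu\Big(f+\sum_{i=1}^k\delta_i\phi_i\Big),
\end{align*}
valid for all $\delta_i\ge 0$ small enough. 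So it is enough to show that the mixed partial derivative in $\delta_1$ and $\delta_2$ of $\mu(f+\delta_1\phi_1+\delta_2\phi_2+\sum_{i\ge 3}\delta_i\phi_i)$ vanishes at the origin when $\supp\phi_1$ and $\supp\phi_2$ are disjoint.

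First I would fix disjoint compact sets $A_1=\supp\phi_1$ and $A_2=\supp\phi_2$ and choose disjoint open convex (or just disjoint open) neighbourhoods $U_1\supset A_1$, $U_2\supset A_2$. For $\delta_i$ sufficiently small, set $g_1:=f+\delta_1\phi_1+\sum_{i\ge 3}\delta_i\phi_i$ restricted to behaviour near $U_1$ and $g_2:=f+\delta_2\phi_2+\sum_{i\ge 3}\delta_i\phi_i$; more precisely consider the four convex functions obtained by switching $\delta_1\phi_1$ and $\delta_2\phi_2$ on and off, all added to $f+\sum_{i\ge 3}\delta_i\phi_i$. The crucial geometric point is that, because $\phi_1$ and $\phi_2$ have disjoint supports and $f$ is strictly convex, for $\delta_1,\delta_2$ small one has the lattice identities
\begin{align*}
	&\max(f+\delta_1\phi_1,\; f+\delta_2\phi_2)=f+\delta_1\phi_1+\delta_2\phi_2 \quad\text{is impossible in general,}
\end{align*}
so instead I would argue directly via a perturbation that localizes each $\phi_i$: since $\phi_1\phi_2\equiv 0$, the function $f+\delta_1\phi_1+\delta_2\phi_2$ together with $f$, $f+\delta_1\phi_1$, $f+\delta_2\phi_2$ do not form a valuation quadruple by themselves, but one can compare $\mu$ on the pair $(f+\delta_1\phi_1,\,f+\delta_2\phi_2)$ after a further strictly convex modification that makes their pointwise max equal to $f+\delta_1\phi_1+\delta_2\phi_2$ and their pointwise min equal to $f$ on each of $U_1$, $U_2$ — this is arranged by adding a small multiple of a strictly convex bump that dominates outside $U_1\cup U_2$. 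Applying the valuation property and dually epi-translation invariance, and then differentiating in $\delta_1,\delta_2$, the contributions cancel and one obtains that $\partial_{\delta_1}\partial_{\delta_2}\big|_0\,\mu(\cdots)=0$.

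The main obstacle is the one just flagged: unlike in the convex-body setting, two convex functions need not have a convex pointwise minimum, so one cannot naively plug $f+\delta_1\phi_1$ and $f+\delta_2\phi_2$ into the valuation identity. The work therefore goes into constructing, for each small $(\delta_1,\delta_2)$, an auxiliary pair of convex functions — built from $f$, the $\phi_i$, and a strictly convex "separating" function supported away from $U_1\cap U_2=\emptyset$ — whose max and min are again convex and agree, on the relevant regions, with the four functions $f$, $f+\delta_1\phi_1$, $f+\delta_2\phi_2$, $f+\delta_1\phi_1+\delta_2\phi_2$. Here one uses Proposition \ref{proposition_properties_LIpschitz_regularization}(v) or a direct gluing argument, together with the continuity of $\mu$ and the polynomial dependence from Corollary \ref{corollary_boundedness_degree_polynomial_valuation}, to pass the cancellation through the differentiation. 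Once the bilinear (hence, by iterating over all pairs $i<j$, the full multilinear) locality is established, the support of $\GWVConv(\mu)$ must be contained in $\{(x,\dots,x):x\in V\}$, which is the claim. I expect the reference function to be chosen as $f=\tfrac12|x|^2$ throughout so that strict convexity and the required room to absorb bumps are automatic on compact sets, and the reduction to two coordinates to follow from the symmetry of $\tilde{\mu}$ noted after Equation \eqref{equation_representation_GW_using_polarization}.
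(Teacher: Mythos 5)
Your overall strategy is the right one and is exactly the paper's: reduce by a partition of unity to test functions $h_1,\dots,h_k$ with pairwise disjoint supports, use Lemma \ref{lemma:valuations-on-conv:Goodey-Weil-as-derivative} to express $\GWVConv(\mu)[h_1\otimes\dots\otimes h_k]$ as a mixed derivative of $\delta\mapsto\mu(H+\delta_1h_1+\delta_2h_2)$ with $H$ a strictly convex reference function, and show this expression is independent of $\delta_2$. You also correctly identify the central obstacle, namely that $f+\delta_1h_1$ and $f+\delta_2h_2$ cannot be fed directly into the valuation identity because their pointwise minimum need not be convex. But the construction that overcomes this obstacle is precisely the content of the proof, and your proposal leaves it unresolved; moreover the device you gesture at — ``adding a small multiple of a strictly convex bump that dominates outside $U_1\cup U_2$'' — would not do the job, since dominating \emph{outside} $U_1\cup U_2$ does not separate $U_1$ from $U_2$. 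What is needed is a function that dominates the perturbed data \emph{on} $\supp h_2$ while vanishing identically on $\supp h_1$, and vice versa. The paper obtains this from a separating affine hyperplane $\langle y,\cdot-x_0\rangle=0$ between the two supports, via the one-sided linear functions $G_\pm:=\max(0,\pm\langle y,\cdot-x_0\rangle)$ with $y$ rescaled so that $G_+\ge H+\delta_1h_1$ on $\supp h_1$ and $G_-\ge H+\delta_2h_2$ on $\supp h_2$.

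A second idea is also missing from your sketch: the argument does not produce a single valuation quadruple whose max and min are the four functions $H$, $H+\delta_1h_1$, $H+\delta_2h_2$, $H+\delta_1h_1+\delta_2h_2$. Instead one applies the valuation identity to \emph{two} pairs, $(\tilde H_+,\tilde H_-)$ with $\tilde H_\pm=\max(H+\delta_1h_1,G_\pm)$ and $(H_+,H_-)$ with $H_\pm=\max(H,G_\pm)$, notes that $\tilde H_+=H_+$ and $\max(\tilde H_+,\tilde H_-)=\max(H_+,H_-)$, and \emph{subtracts} the two identities to get $\mu(H+\delta_1h_1)-\mu(H)=\mu(\tilde H_-)-\mu(H_-)$. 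The right-hand side is insensitive to replacing $H$ by $H+\delta_2h_2$, because on $\supp h_2$ the functions $H_-$ and $\tilde H_-$ are given by the max with $-\langle y,\cdot-x_0\rangle$, which swallows $\delta_2h_2$ by the choice of scaling. Repeating the derivation with $H+\delta_2h_2$ in place of $H$ then yields $\mu(H+\delta_1h_1)-\mu(H)=\mu(H+\delta_1h_1+\delta_2h_2)-\mu(H+\delta_2h_2)$, and differentiating kills the mixed term. Without this subtraction-and-absorption mechanism (or an equally explicit substitute) your proof does not close; the appeal to Proposition \ref{proposition_properties_LIpschitz_regularization}(v) and to dual epi-translation invariance is not what makes it work — the latter is in fact not used here at all.
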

	\begin{proof}
		Let us assume that $V$ carries a Euclidean structure. Using a partition of unity, it is sufficient to show that $\GWVConv(\mu)(h_1\otimes\dots \otimes h_k)=0$ if $h_1,\dots,h_k\in C^\infty_c(V)$ are smooth functions satisfying $\supp h_i\subset U_\epsilon (a_i)$, where $a_i\in V$, $1\le i\le k$, are points with $U_{\epsilon}(a_i)\cap U_{\epsilon}(a_j)=\emptyset$ for $i\ne j$ and some fixed $\epsilon>0$.\\
		First, there exists $\delta>0$ such that the function $x\mapsto1+|x|^2+\sum_{i=1}^{k}\delta_i h_i$ is convex and non-negative for all $|\delta_i|\le \delta$. Set 
		\begin{align*}
		H(x):=1+|x|^2+ \sum\limits_{i=3}^{k}\delta_i h_i
		\end{align*} and choose an affine hyperplane that separates $U_\epsilon(a_1)$ and $U_\epsilon(a_2)$. This plane is given by the equation $\langle y,x-x_0\rangle=0$ for some $y,x_0\in V$. We can choose $y$ such that $U_\epsilon(a_1)$ is contained in the positive half space with respect to the normal $y$. Define the convex functions
		\begin{align*}
		G_\pm(x):=&\max(0,\pm\langle y,x-x_0\rangle)
		=\begin{cases}
		0 & \pm\langle y,x-x_0\rangle\le 0,\\
		\pm \langle y,x-x_0\rangle & \pm\langle y,x-x_0\rangle>0.
		\end{cases}
		\end{align*} 
		As $G_\pm$ is positive on the supports of $h_1$ and $h_2$ respectively, we can rescale $y$ such that $G_+$ is larger than $H+\delta_1h_1$ on the support of $h_1$ and $G_-$ is larger than $H+\delta_2h_2$ on the support of $h_2$ for all $|\delta_i|\le\delta$.
		Now set $\tilde{H}_+:=\max(H+\delta_1h_1,G_+)$ and $\tilde{H}_-:=\max(H+\delta_1h_1,G_-)$. Then $\tilde{H}_+$ and $\tilde{H}_-$ are convex functions with
		\begin{align*}
		\tilde{H}_+(x)=&\begin{cases}
		H(x)+\delta_1h_1(x) &  \langle y,x-x_0\rangle\le 0,\\
		\max(H(x)+\delta_1h_1(x),\langle y,x-x_0\rangle )& \langle y,x-x_0\rangle > 0,
		\end{cases}\\
		\tilde{H}_-(x)=&\begin{cases}
		H(x)+\delta_1h_1(x) & \langle y,x-x_0\rangle\ge 0,\\
		\max(H(x)+\delta_1h_1(x),-\langle y,x-x_0\rangle )& \langle y,x-x_0\rangle < 0.
		\end{cases}
		\end{align*}
		In particular $\min (\tilde{H}_+,\tilde{H}_-)=\tilde{H}:=H+\delta_1 h_1$ is convex. 
		As the support of $h_1$ is contained in the positive half space with respect to $y$, we see that in fact
		\begin{align*}
		\tilde{H}_+(x)=&\begin{cases}
		H(x) &  \langle y,x-x_0\rangle\le 0,\\
		\max(H(x),\langle y,x-x_0\rangle )& \langle y,x-x_0\rangle > 0,
		\end{cases}\\
		\tilde{H}_-(x)=&\begin{cases}
		H(x)+\delta_1h_1(x) & \langle y,x-x_0\rangle\ge 0,\\
		\max(H(x),-\langle y,x-x_0\rangle )& \langle y,x-x_0\rangle < 0.
		\end{cases}
		\end{align*}
		Thus $\max(\tilde{H}_+,\tilde{H}_-)=\max (H,|\langle y,\cdot-x_0\rangle|)$.\\
		
		Let us also define $H_\pm:=\max(H,G_\pm)$. Then $H_+=\tilde{H}_+$, and, using the non-negativity of $H$, it is easy to see that $\min(H_+,H_-)=H$ and $\max(H_+,H_-)=\max (H,|\langle y,\cdot-x_0\rangle|)=\max(\tilde{H}_+,\tilde{H}_-)$.\\
		The valuations property implies
		\begin{align*}
		\mu(\tilde{H}_+)+\mu(\tilde{H}_-)=\mu(\max(\tilde{H}_+,\tilde{H}_-))+\mu(\min (\tilde{H}_+,\tilde{H}_-)),\\
		\mu(H_+)+\mu(H_-)=\mu(\max(H_+,H_-))+\mu(\min (H_+,H_-)).
		\end{align*}
		Thus using $\max (H_+,H_-)=\max (\tilde{H}_+,\tilde{H}_-)$,  and $\tilde{H}_+=H_+$, we obtain
		\begin{align*}
		\mu(\tilde{H}_-)-\mu(H_-)=\mu(\min(\tilde{H}_+,\tilde{H}_-))-\mu(\min(H_+,H_-))
		\end{align*}
		by subtracting the two equations. Plugging in the relations for the minima, we arrive at
		\begin{align*}
		\mu(\tilde{H}_-)-\mu(H_-)=\mu(\tilde{H})-\mu(H).
		\end{align*} 
		Set 
		\begin{align*}
		\Delta(x):=& H_-(x)=\begin{cases}
		H(x) &  \langle y,x-x_0\rangle\ge0,\\
		\max(H(x),-\langle y,x-x_0\rangle )& \langle y,x-x_0\rangle < 0,
		\end{cases}\\
		\tilde{\Delta}(x):=& \tilde{H}_-(x)=\begin{cases}
		H(x) +\delta_1h_1(x)&  \langle y,x-x_0\rangle\ge0,\\
		\max(H(x),-\langle y,x-x_0\rangle )& \langle y,x-x_0\rangle < 0,
		\end{cases}
		\end{align*}
		to rewrite the previous equation as 
		\begin{align*}
		\mu(\tilde{\Delta})-\mu(\Delta)=\mu(\tilde{H})-\mu(H)=\mu(H+\delta_1h_1)-\mu(H).
		\end{align*}
		Now, if we replace $H$ by $H+\delta_2h_2$ and repeat the argument, the convex functions $\Delta'$ and $\tilde{\Delta}'$ defined by
		\begin{align*}
		\Delta'(x)=& \begin{cases}
		H(x) +\delta_2h_2(x)&  \langle y,x-x_0\rangle\ge0,\\
		\max(H(x)+\delta_2h(x),-\langle y,x-x_0\rangle )& \langle y,x-x_0\rangle < 0,
		\end{cases}\\
		\tilde{\Delta}'(x)=& \begin{cases}
		H(x) +\delta_1h_1(x)+\delta_2h_2(x)&  \langle y,x-x_0\rangle\ge0,\\
		\max(H(x)+\delta_2h_2(x),-\langle y,x-x_0\rangle )& \langle y,x-x_0\rangle < 0,
		\end{cases}
		\end{align*}
		satisfy
		\begin{align*}
		\mu(\tilde{\Delta}')-\mu(\Delta')=\mu(H+\delta_1h_1+\delta_2h_2)-\mu(H+\delta_2h_2).
		\end{align*}
		However, the support of $h_2$ is contained in the negative half space and $H+\delta_2 h_2$ is smaller than $-\langle y,\cdot-x_0\rangle$ on the support of $h_2$. Thus $\Delta'=\Delta$ and $\tilde{\Delta}'=\tilde{\Delta}$, and we obtain the equation
		\begin{align*}
		\mu(H+\delta_1h_1)-\mu(H)=\mu(\tilde{\Delta})-\mu(\Delta)=\mu(H+\delta_1h_1+\delta_2h_2)-\mu(H+\delta_2h_2)
		\end{align*}
		for all $\delta_i$ with $|\delta_i|<\delta$. Both the left and the right hand side are polynomials in $\delta_i$ for all $\delta_i$ small enough by Lemma \ref{lemma:valuations-on-conv:Goodey-Weil-as-derivative}, but the left hand side is independent of $\delta_2$. Thus Lemma \ref{lemma:valuations-on-conv:Goodey-Weil-as-derivative} implies
		\begin{align*}
		0=&\frac{1}{k!}\frac{\partial}{\partial \delta_1}\Big|_0\dots\frac{\partial}{\partial \delta_k}\Big|_0\left[\mu(H+\delta_1h_1+\delta_2h_2)-\mu(H+\delta_2h_2)\right]\\
		=&\frac{1}{k!}\frac{\partial}{\partial \delta_1}\Big|_0\dots\frac{\partial}{\partial \delta_k}\Big|_0\mu(H+\delta_1h_1+\delta_2h_2)\\
		=&\GWVConv(\mu)[h_1\otimes\dots\otimes h_k].
		\end{align*} 
	\end{proof}
	\subsection{Compactness of the support}
		\begin{lemma}
			\label{lemma_smooth function difference of convex functions}
			For every $\phi\in C^2(V)$ there exists a convex function $h\in\Conv(V,\R)$ with the following property: If $\psi\in C^\infty(V)$ is a function with $\|\psi\|_{C^2(B_j)}\le \|\phi\|_{C^2(B_j)}$ for all $j\in\mathbb{N}$, then $h+\psi$ is convex.
		\end{lemma}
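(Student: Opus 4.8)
The plan is to take $h$ to be a radial function that grows fast enough in $|x|$ so that its Hessian everywhere dominates the (a priori unbounded, but locally controlled) negative part of the Hessian of any admissible $\psi$. Set $M_j:=\|\phi\|_{C^2(B_j)}$ for $j\in\mathbb{N}$, a nondecreasing sequence in $[0,\infty)$. The role of the hypothesis $\|\psi\|_{C^2(B_j)}\le\|\phi\|_{C^2(B_j)}$ is the pointwise bound: for every $x\in V$, setting $j:=\lceil|x|\rceil+1\in\mathbb{N}$ we have $x\in B_j$, hence $\|H_\psi(x)\|\le\|\psi\|_{C^2(B_j)}\le M_j$. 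Equivalently, writing $q(x):=|x|^2/2$ and $\rho(t):=M_{\lceil\sqrt{2t}\,\rceil+1}$ (a nondecreasing function on $[0,\infty)$, bounded on each interval $[0,T]$), every admissible $\psi$ satisfies $\|H_\psi(x)\|\le\rho(q(x))$ for all $x\in V$. Here $\|\cdot\|$ denotes the operator norm of a symmetric matrix, which for a $C^2$ function on $B_j$ is exactly what is controlled by the $C^2$-norm on $B_j$.

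Next I would choose a smooth nondecreasing function $m:[0,\infty)\to[0,\infty)$ with $m\ge\rho$; such $m$ exists since $\rho$ is nondecreasing and locally bounded (mollify a piecewise constant majorant of $\rho$). Put $v(t):=\int_0^t m(s)\,ds$, so $v$ is smooth, increasing and convex with $v'=m\ge0$ and $v''=m'\ge0$, and define $h(x):=v(q(x))=v(|x|^2/2)$. Then $h\in C^\infty(V)$ (composition of smooth maps), and since $q$ is a polynomial,
\[
H_h(x)=v''(q(x))\,\nabla q(x)\otimes\nabla q(x)+v'(q(x))\,\mathrm{Id}=m'(q(x))\,x\otimes x+m(q(x))\,\mathrm{Id}\succeq m(q(x))\,\mathrm{Id}\succeq0,
\]
so $h$ is convex; in particular $h\in\Conv(V,\R)$ (in fact $h\in\Conv(V,\R)\cap C^\infty(V)$, which is more than required).

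Finally, for any $\psi\in C^\infty(V)$ with $\|\psi\|_{C^2(B_j)}\le\|\phi\|_{C^2(B_j)}$ for all $j$, the function $h+\psi$ lies in $C^2(V)$ and for every $x\in V$
\[
H_{h+\psi}(x)=H_h(x)+H_\psi(x)\succeq m(q(x))\,\mathrm{Id}-\|H_\psi(x)\|\,\mathrm{Id}\succeq\bigl(m(q(x))-\rho(q(x))\bigr)\,\mathrm{Id}\succeq0,
\]
hence $h+\psi$ is convex, as its Hessian is positive semidefinite everywhere. There is no genuine obstacle here; the only point requiring care is the bookkeeping that turns the ball-indexed estimates $\|\psi\|_{C^2(B_j)}\le M_j$ into a single pointwise bound $\|H_\psi(x)\|\le\rho(q(x))$ expressed through $q(x)=|x|^2/2$, together with choosing the growth of $v$ (equivalently of $m$) large enough to absorb $\rho$ — which is exactly what the superquadratic radial ansatz $h=v(|x|^2/2)$ is designed to accommodate.
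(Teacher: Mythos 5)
Your proof is correct, and it takes a genuinely different route from the paper's. The paper constructs $h$ as the pointwise (locally stabilizing) limit of an increasing telescoping sum of hinge functions $\max\bigl(c_{j+1}\tfrac{|x|^2-j^2}{2},0\bigr)$ with $c_j=\|\phi\|_{C^2(B_{j+1})}$; the resulting $h$ is convex but not smooth, so the convexity of $h+\psi$ is verified only \emph{locally}, on the overlapping annuli $U_{j+1}\setminus B_{j-1}$, and then globalized via the fact that a locally convex function on a convex set is convex. You instead build a single globally smooth radial function $h(x)=v(|x|^2/2)$ whose Hessian dominates $m(q(x))\,\mathrm{Id}$, after converting the ball-indexed bounds $\|\psi\|_{C^2(B_j)}\le M_j$ into a pointwise bound $\|H_\psi(x)\|\le\rho(q(x))$ and majorizing $\rho$ by a smooth nondecreasing $m$. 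The trade-off: your argument makes the convexity of $h+\psi$ immediate from positive semidefiniteness of the Hessian everywhere and delivers a smooth $h$ as a bonus, at the modest cost of constructing the smooth majorant $m$ (which does require a word of justification, as you note); the paper's argument avoids any mollification but needs the local-to-global convexity principle and a case analysis on annuli. Both use only the Hessian part of the $C^2$-norm, and either $h$ serves equally well in the subsequent applications (Theorem \ref{theorem_compact_support_GW} and Proposition \ref{mainproposition_approximation_using_sequences}).
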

		\begin{proof}
			Assume that we are given $\phi$ and let $\psi$ be an arbitrary function with the property stated above. Let us inductively define a sequence of convex functions $h_j\in\Conv(V,\R)$. Set $c_j:=\|\phi\|_{C^2(B_{j+1})}$. Then $c_j\ge\sup_{x\in B_{j+1},v\in S(V)}\langle H_\psi(x)v,v\rangle $ for all $j\in\mathbb{N}$.\\
			For $j=1$ define $h_1$ by $h_1(x):=c_1\frac{|x|^2}{2}$. As its Hessian is positive semi-definite, $h_1+\psi$ is convex on $B_2$.\\
			Assume that we have already constructed $h_j$. Then the Hessian of $c_{j+1}\frac{|x|^2}{2}+\psi$ is positive semi-definite on $B_{j+2}$, so this function is convex on $B_{j+2}$. We set 
			\begin{align*}
			h_{j+1}(x):=&\max\left(c_{j+1}\frac{|x|^2-j^2}{2},0\right)+h_j(x).
			\end{align*}
			Thus $h_{j+1}$ is a finite-valued convex function for all $j\in\mathbb{N}$, that coincides with $h_j$ on $B_j$. 
			We deduce that for each point $x\in V$ the sequence $(h_j(x))_j$ becomes constant for large $j$ and thus $(h_j)_j$ converges pointwise to a function $h\in\Conv(V,\R)$. By Proposition \ref{proposition_convergence_finite_convex_functions}, this implies that this sequence epi-converges to $h$.\\
			It remains to see that $h+\psi$ is convex. Observe that for every point $x\in V$ there exists an open, convex neighborhood such that the restriction of $h+\psi$ to this neighborhood is convex, i.e. $h+\psi$ is a locally convex function: On $U_{j+1}\setminus B_{j-1}$
			\begin{align*}
			h(x)+\psi(x)=&\max\left(c_{j+1}\frac{|x|^2-j^2}{2},0\right)+h_j(x)+\psi(x)\\
			=&\max\left(c_{j+1}\frac{|x|^2-j^2}{2},0\right)+h_{j-1}(x)+c_{j}\frac{|x|^2-(j-1)^2}{2}+\psi(x),
			\end{align*}
			where $c_{j}\frac{|x|^2-(j-1)^2}{2}+\psi(x)$ is locally convex on this set, as its Hessian is positive semi-definite. Obviously, the other two functions are locally convex on this set as well, so the same applies to $h+\psi$.\\
			As any locally convex function defined on $V$ is convex, $h+\psi\in\Conv(V,\R)$.
		\end{proof}
		We will now prove the remaining parts of Theorem \ref{maintheorem_GW}.
		\begin{theorem}
			\label{theorem_compact_support_GW}
			Let $F$ be a locally convex vector space admitting a continuous norm. Then the following holds: For every $\mu\in\VConv_k(C;V,F)$ the distribution $\GWVConv(\mu)\in\mathcal{D}'(V^k,\bar{F})$ has compact support and is uniquely defined by the following property: If $f_1,...,f_k\in \Conv(V,\R)\cap C^\infty(V)$ then
			\begin{align}
			\label{equation_defining_equality_GW}
			\GWVConv(\mu)(f_1\otimes...\otimes f_k)=\bar{\mu}(f_1,...,f_k),
			\end{align}
			where $\bar{\mu}$ is the polarization of $\mu$. 
		\end{theorem}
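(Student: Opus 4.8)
The plan is to establish the two assertions of the theorem in order: first that $\GWVConv(\mu)$ has compact support, and then -- once this is known, so that $\GWVConv(\mu)$ acts on all of $C^\infty(V^k)$ -- that it satisfies \eqref{equation_defining_equality_GW} and is characterised by it. The compactness is the substantial point; the rest is soft.

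\emph{Compactness of the support.} By Theorem~\ref{theorem_support_GW_diagonal} the support already lies in the diagonal, so it suffices to show it is bounded. Suppose not. Since $\GWVConv(\mu)$ is symmetric, polarisation expresses any $\phi_1\otimes\dots\otimes\phi_k$ with $\phi_i\in C^\infty_c(U)$ through tensors $\psi^{\otimes k}$, $\psi\in C^\infty_c(U)$, and multiplying by a fixed nonnegative cutoff lets one take $\psi\ge 0$; hence unboundedness of the support yields points $a_m\in V$ with $|a_m|\to\infty$ and $|a_m-a_{m'}|>2$ for $m\ne m'$, together with functions $0\le\psi_m\in C^\infty_c(B_1(a_m))$ with $\GWVConv(\mu)(\psi_m^{\otimes k})\ne 0$. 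As $F$ carries a continuous norm $|\cdot|_F$, rescaling $\psi_m$ we may assume $|\GWVConv(\mu)(\psi_m^{\otimes k})|_F=1$. Next I collect all these perturbations into a single convex function: for each $m$, Lemma~\ref{lemma_smooth function difference of convex functions} applied to $\psi_m$ produces $h_m\in\Conv(V,\R)$ with $h_m+s\psi_m$ convex for all $|s|\le 1$, and, because $\psi_m$ vanishes near the origin, the construction forces $h_m$ to vanish on a ball $B_{\rho_m}$ with $\rho_m\to\infty$. Consequently $H:=\sum_m h_m$ is a locally finite sum of convex functions, $H\in\Conv(V,\R)$, and $H':=H+|\cdot|^2$ is strictly convex and finite-valued. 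For $|s|\le 1$ the functions $H'+s\sum_{m\le N}\psi_m$ are convex and converge uniformly on compact sets, as $N\to\infty$, to $H'+s\sum_m\psi_m\in\Conv(V,\R)$ (Proposition~\ref{proposition_convergence_finite_convex_functions}), so by continuity of $\mu$, for every $|s|\le 1$,
\begin{align*}
\mu\Big(H'+s\sum_m\psi_m\Big)-\mu(H')=\sum_{m=1}^\infty\big(\mu(H'+s\psi_m)-\mu(H')\big),
\end{align*}
the termwise identity following from the valuation property applied repeatedly: for $m\ne m'$ the disjoint nonnegative bumps satisfy $\max(\psi_m,\psi_{m'})=\psi_m+\psi_{m'}$ and $\min(\psi_m,\psi_{m'})=0$, and all occurring functions are convex. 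Each summand is a polynomial in $s$ of degree at most $k$ with vanishing constant term (Corollary~\ref{corollary_boundedness_degree_polynomial_valuation}), and by Lemma~\ref{lemma:valuations-on-conv:Goodey-Weil-as-derivative} (equivalently, by expanding the multilinear functional $\tilde\mu$) its $s^k$-coefficient is $\GWVConv(\mu)(\psi_m^{\otimes k})$. Evaluating the displayed identity at $k+1$ distinct values of $s$ in $[-1,1]$ and inverting the Vandermonde matrix shows that $\sum_m\GWVConv(\mu)(\psi_m^{\otimes k})$ converges in $F$; in particular its terms converge to $0$, so their $|\cdot|_F$-norms tend to $0$, contradicting $|\GWVConv(\mu)(\psi_m^{\otimes k})|_F=1$. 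Hence the support is bounded, and being closed it is compact.

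\emph{The identity and uniqueness.} Now $\GWVConv(\mu)$ has support in some $B_R^k$ and extends to $C^\infty(V^k)$. For $f_1,\dots,f_k\in\Conv(V,\R)\cap C^\infty(V)$ and cutoffs $\rho_i\in C^\infty_c(V)$ equal to $1$ on $B_{R+1}$, the tensors $(\rho_1 f_1)\otimes\dots\otimes(\rho_k f_k)$ and $f_1\otimes\dots\otimes f_k$ agree near the support, so $\GWVConv(\mu)(f_1\otimes\dots\otimes f_k)=\GWVConv(\mu)((\rho_1 f_1)\otimes\dots\otimes(\rho_k f_k))=\tilde\mu(\rho_1 f_1,\dots,\rho_k f_k)$. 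Expanding by multilinearity of $\tilde\mu$ and writing $\rho_i f_i=f_i-(1-\rho_i)f_i$, the term with all slots equal to $f_i$ gives $\bar\mu(f_1,\dots,f_k)$, while each remaining term has a slot occupied by a function supported in $V\setminus B_{R+1}$; reducing the other slots again by cutoffs and invoking the valuation-property/bump argument of the first part (with Lemma~\ref{lemma_smooth function difference of convex functions} exhibiting the far-supported functions as differences of convex functions vanishing near the origin) shows that all these terms vanish, giving \eqref{equation_defining_equality_GW}. For uniqueness, by Lemma~\ref{lemma_difference_C2_convex_functions} every $\phi\in C^\infty_c(V)$ is a difference of two functions in $\Conv(V,\R)\cap C^\infty(V)$, so the linear span of $\{f_1\otimes\dots\otimes f_k:f_i\in\Conv(V,\R)\cap C^\infty(V)\}$ contains the span of $\{\phi_1\otimes\dots\otimes\phi_k:\phi_i\in C^\infty_c(V)\}$, which is dense in $C^\infty_c(V^k)$; thus a compactly supported distribution is determined by its values on the former.

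\emph{Main obstacle.} The crux is the compactness step, and within it the role of the continuous norm on $F$: it is precisely what allows one to rescale the bumps so that the pairings $\GWVConv(\mu)(\psi_m^{\otimes k})$ keep a fixed positive size while at the same time being forced to decay -- a phenomenon that genuinely fails without such a norm, as the Hessian-measure valuation shows. The remaining technical care lies in organising infinitely many bumps into one convex function through Lemma~\ref{lemma_smooth function difference of convex functions} and in tracking the summands $\mu(H'+s\psi_m)-\mu(H')$ as polynomials of degree at most $k$.
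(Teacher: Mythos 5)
Your overall skeleton matches the paper's (bumps escaping to infinity assembled into convex functions via Lemma~\ref{lemma_smooth function difference of convex functions}, with the continuous norm on $F$ producing the contradiction; then cutoffs for the identity and Lemma~\ref{lemma_difference_C2_convex_functions} for uniqueness), but your mechanism in the compactness step is genuinely different. The paper fixes one function $h_i$ per slot so that $f_i^j:=h_i+\phi_i^j$ is convex for all $j$, notes $f_i^j\to h_i$ locally uniformly, and lets the \emph{joint continuity} of $\bar\mu$ (Corollary~\ref{corollary_continuity_polarization}) force $\tilde\mu(\phi_1^j,\dots,\phi_k^j)\to 0$ via the alternating-sum identity \eqref{equation_representation_GW_using_polarization}. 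You instead exploit the \emph{valuation property} on disjoint nonnegative bumps to split $\mu(H'+s\sum_m\psi_m)-\mu(H')$ into a convergent series, and extract the top coefficient of each degree-$\le k$ summand by Vandermonde inversion, concluding that the terms of a convergent series tend to $0$. Both routes are valid; the paper's is shorter because it never needs the bumps to be nonnegative or the valuation identity, while yours makes the additivity of $\GWVConv(\mu)$ over far-apart bumps explicit.

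Two points need repair. First, the reduction to $\psi\ge 0$: \emph{multiplying} a sign-changing $\psi$ by a nonnegative cutoff does not make it nonnegative. What works is to \emph{add} $t\chi$ for a nonnegative cutoff $\chi$ equal to $1$ on $\supp\psi$ and $t\ge\|\psi\|_\infty$; since $t\mapsto\GWVConv(\mu)((\psi+t\chi)^{\otimes k})$ is a polynomial that is nonzero at $t=0$, some admissible $t$ gives a nonnegative bump with nonzero pairing. This matters because your identity $\max(\psi_m,\psi_{m'})=\psi_m+\psi_{m'}$, $\min(\psi_m,\psi_{m'})=0$ genuinely requires nonnegativity. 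Second, in the identity part the cross terms $\tilde\mu(g_1,\dots,g_k)$ with one slot $(1-\rho_{i})f_{i}$ and other slots equal to the non-compactly-supported $f_j$ are \emph{not} directly computed by the distribution, so "the tensor vanishes near the support" does not immediately apply; you must first truncate the remaining slots by cutoffs $\chi_N$, use Lemma~\ref{lemma_smooth function difference of convex functions} and the joint continuity of $\bar\mu$ to pass to the limit $N\to\infty$, and only then invoke the compact support. This is exactly the approximation the paper performs once, globally, by writing $\GWVConv(\mu)(f_1\otimes\dots\otimes f_k)=\lim_j\tilde\mu(\phi_jf_1,\dots,\phi_jf_k)$ and using additivity of $\bar\mu$ at the end; doing it slot by slot as you propose works but should be spelled out, since as written the step is only gestured at.
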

		\begin{proof}
			Uniqueness follows directly from Equation \eqref{equation_defining_equality_GW}, as every function $\phi\in C^\infty_c(V)$ can be written as a difference of two smooth convex functions due to Lemma \ref{lemma_difference_C2_convex_functions} and a distribution on $V^k$ is uniquely determined by its values on functions of the form $\phi_1\otimes\dots\otimes\phi_k$ for $\phi_1,\dots,\phi_k\in C^\infty_c(V)$ by the L. Schwartz kernel theorem.\\
			Let us assume that $\GWVConv(\mu)$ does not have compact support and let $\|\cdot\|$ denote a continuous norm on $F$. Then we can inductively define a sequence of functions $(\phi_i^j)_j$ in $C_c^\infty(V)$ for each $1\le i\le k$ and a strictly increasing sequence $(r_j)_j$ of positive real numbers with $\lim\limits_{j\rightarrow\infty} r_j=\infty$ with the following properties:
			\begin{enumerate}
				\item For each $1\le i\le k$ the functions $(\phi_i^j)_j$ have pairwise disjoint support.
				\item The support of $\phi_i^j$ is contained in $V\setminus  B_{r_j}(0)$ for all $j\in\mathbb{N}$, $1\le i\le k$.
				\item $\|\GWVConv(\mu)(\phi_1^j\otimes\dots\otimes \phi_k^j)\|=\|\tilde{\mu}(\phi_1^j,\dots,\phi_k^j)\|=1$.
			\end{enumerate}  
			To see this, assume that we have constructed $\phi_1^j,\dots,\phi_k^j$ as well as $r_j>0$. First choose $r_{j+1}>r_j+1$ such that the restriction of $\GWVConv(\mu)$ to $[U_{r_{j+1}}\setminus B_{r_j}]^k\subset V^k$ does not vanish. This is possible, as the support of $\GWVConv(\mu)$ is contained in the diagonal due to Theorem \ref{theorem_support_GW_diagonal}. Then take $\phi_1^{j+1},\dots,\phi_k^{j+1}\in C^\infty(U_{r_{j+1}}\setminus B_{r_j})$ with $\GWVConv(\mu)[\phi_1^{j+1}\otimes\dots\otimes\phi_k^{j+1}]\ne0$ and rescale one function by an appropriate constant.\\
			Note that $\phi_i:=\sum_{j=1}^{\infty}\phi_{i}^j\in C^\infty(V)$ is well defined as this sum is locally finite. More precisely, the supports of the functions $\phi_i^j$ are disjoint for each $1\le i\le k$, so $\|\phi_i^j\|_{C^2(B_N)}\le \|\phi_i\|_{C^2(B_N)}$ for all $N\in\mathbb{N}$ and $j\in\mathbb{N}$. Applying Lemma \ref{lemma_smooth function difference of convex functions} to the functions $\phi_i$, we find convex functions $h_1,\dots, h_k\in\Conv(V,\R)$ such that for $1\le i\le k$ the function $f_i^j:=h_i+\phi_i^j$ is convex for all $j\in\mathbb{N}$. By Equation \eqref{equation_representation_GW_using_polarization}
			\begin{align*}
			\tilde{\mu}(\phi_1^j,\dots,\phi_k^j)=&\sum\limits_{l=0}^k(-1)^{k-l}\frac{1}{(k-l)!l!}\sum_{\sigma\in S_k}\bar{\mu}\left(f^j_{\sigma(1)},\dots,f^j_{\sigma(l)},h_{\sigma(l+1)},\dots,h_{\sigma(k)}\right).
			\end{align*}
			As $f_i^j\rightarrow h_i$ uniformly on compact subsets for all $1\le i\le k$, the joint continuity of the polarization $\bar{\mu}$ from Corollary \ref{corollary_continuity_polarization} and the continuity of the norm $\|\cdot\|$ imply
			\begin{align*}
			\lim\limits_{j\rightarrow\infty}\|\tilde{\mu}(\phi_1^j,\dots,\phi_k^j)\|
			=&\|\sum\limits_{l=0}^k(-1)^{k-l}\frac{1}{(k-l)!l!}\sum_{\sigma\in S_k}\bar{\mu}\left(h_{\sigma(1)},\dots,h_{\sigma(l)},h_{\sigma(l+1)},\dots,h_{\sigma(k)}\right)\|\\
			=&\|(-1)^k\sum\limits_{l=0}^{k}(-1)^l\frac{k!}{(k-l)!l!}\bar{\mu}(h_1,\dots,h_k)\|\\
			=&\|(-1)^k\bar{\mu}(h_1,\dots,h_k)\sum\limits_{l=0}^{k}(-1)^l\binom{k}{l}\|\\
			=&\|\bar{\mu}(h_1,\dots,h_k)\|\cdot0=0.
			\end{align*}
			We arrive at a contradiction to $\|\tilde{\mu}(\phi_1^j,\dots,\phi_k^j)\|=1$ for all $j\in\mathbb{N}$, so the distribution $\GWVConv(\mu)$ must have compact support.\\
			It remains to see that $\GWVConv(\mu)(f_1\otimes\dots\otimes f_k)=\bar{\mu}(f_1,\dots,f_k)$ for all convex functions $f_i\in \Conv(V,\R)\cap C^\infty(V)$. Take a sequence of functions $\phi_j\in C_c^\infty(V)$ with $\phi_j\equiv 1$ on $B_j(0)$ and such that $\|\phi_j\|_{C^2(V)}\le C$ for all $j\in\mathbb{N}$ and some $C>0$. Such a sequence can be constructed by setting $\phi_j(x):=\psi(\frac{x}{j})$ for $\psi\in C^\infty_c(V)$ with $\psi\equiv 1$ on $B_1(0)$. As the support of $\GWVConv(\mu)$ is compact, we obtain $N\in\mathbb{N}$ such that
			\begin{align*}
			\GWVConv(\mu)(f_1\otimes\dots\otimes f_k)=\GWVConv(\mu)(\phi_j f_1\otimes\dots\otimes \phi_j f_k)\quad\forall j\ge N.
			\end{align*}
			Using the Leibniz-rule, we see that there exists $C'>0$ such that for any compact set $A\subset V$  the inequality $\|\phi_jf_i\|_{C^2(A)}\le C'\|f_i\|_{C^2(A)}$ holds for all $j\in\mathbb{N}$. Now take the function $h_i$ from Lemma \ref{lemma_smooth function difference of convex functions} for the function $\phi=C'f_i$. Then $h_i+\phi_j f_i$ is convex for all $j\in\mathbb{N}$ and $h_i+\phi_jf_i$ converges to $h_i+f_i$ uniformly on compact subsets, i.e. in $\Conv(V,\R)$. Plugging in the definition of $\tilde{\mu}$ and using the joint continuity of the polarization $\bar{\mu}$, we obtain
			\begin{align*}
			& \GWVConv(\mu)(f_1\otimes\dots\otimes f_k)=\lim\limits_{j\rightarrow\infty}\GWVConv(\mu)(\phi_j f_1\otimes\dots\otimes\phi_j f_k)\\
			=&\lim\limits_{j\rightarrow\infty}\sum\limits_{l=0}^k(-1)^{k-l}\frac{1}{(k-l)!l!}\sum_{\sigma\in S_k}\bar{\mu}(h_{\sigma(1)}+\phi_jf_{\sigma(1)},\dots,h_{\sigma(l)}+\phi_jf_{\sigma(l)},h_{\sigma(l+1)},\dots,h_{\sigma(k)})\\
			=&\sum\limits_{l=0}^k(-1)^{k-l}\frac{1}{(k-l)!l!}\sum_{\sigma\in S_k}\bar{\mu}(h_{\sigma(1)}+f_{\sigma(1)},\dots,h_{\sigma(l)}+f_{\sigma(l)},h_{\sigma(l+1)},\dots,h_{\sigma(k)})\\
			=&\bar{\mu}(f_1,\dots,f_k),
			\end{align*}
			where we have used the additivity of $\bar{\mu}$ in the last step.
		\end{proof}
		\begin{corollary}
			\label{corollary_GW_injective}
			$\GWVConv:\VConv_k(C;V,F)\rightarrow \mathcal{D}'(V^k,\bar{F})$ is injective.
		\end{corollary}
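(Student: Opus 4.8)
The plan is to read off injectivity from the characterization of $\GWVConv(\mu)$ established in Theorem \ref{theorem_compact_support_GW}, combined with the density results of Section \ref{section_convex_functions} and the joint continuity of the polarization. First I would observe that $\GWVConv$ is linear: the multilinear functional $\tilde{\mu}$ is built from the polarization $\bar{\mu}$ via \eqref{equation_representation_GW_using_polarization}, and $\mu\mapsto\bar{\mu}$ is linear by the defining formula \eqref{equation_definition_polarization}, so $\mu\mapsto\tilde{\mu}$ and hence $\GWVConv$ are linear. Consequently it suffices to show that $\GWVConv(\mu)=0$ forces $\mu=0$.

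So assume $\GWVConv(\mu)=0$. Since $\GWVConv(\mu)$ has compact support by Theorem \ref{theorem_compact_support_GW}, it acts on arbitrary smooth functions, and the defining identity \eqref{equation_defining_equality_GW} gives $\bar{\mu}(f_1,\dots,f_k)=\GWVConv(\mu)(f_1\otimes\dots\otimes f_k)=0$ whenever $f_1,\dots,f_k\in\Conv(V,\R)\cap C^\infty(V)$. The family $\Conv(V,\R)\cap C^\infty(V)$ is dense in $\Conv(V,\R)$ by Corollary \ref{corollary_density_various_families_of_convex_functions}, and $\bar{\mu}$ is jointly continuous by Corollary \ref{corollary_continuity_polarization}; hence $\bar{\mu}$ vanishes identically on $\Conv(V,\R)^k$. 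In particular $\mu(f)=\bar{\mu}(f,\dots,f)=0$ for every $f\in\Conv(V,\R)$.

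It then remains to pass from $\Conv(V,\R)$ to the full cone $C$. As $F$ admits a continuous norm it is Hausdorff. By Proposition \ref{proposition_properties_LIpschitz_regularization}, items i.\ and ii., the Lipschitz regularizations $\reg_r f$ of any $f\in C$ lie in $\Conv(V,\R)$ for $r$ small and epi-converge to $f$, so $\Conv(V,\R)$ is dense in $C$; since $\mu$ is continuous with respect to epi-convergence and vanishes on this dense subset, $\mu\equiv0$ on $C$. Thus $\ker\GWVConv=0$, i.e.\ $\GWVConv$ is injective. I do not expect any genuine obstacle here: the real work has already been carried out in Theorems \ref{theorem_polarization_VConv} and \ref{theorem_compact_support_GW} and in the density statements, and the only points requiring care are that the identity \eqref{equation_defining_equality_GW} is available a priori only on smooth finite-valued convex functions (so the density chain $\Conv(V,\R)\cap C^\infty(V)\subset\Conv(V,\R)\subset C$ together with the continuity of $\bar{\mu}$, of $\mu$, and of the norm on $F$ must all be invoked) and that linearity of $\GWVConv$ is what reduces injectivity to triviality of the kernel.
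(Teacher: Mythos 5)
Your argument is essentially the paper's proof of the \emph{first} case, and it is correct there: under the hypothesis that $F$ admits a continuous norm, Theorem \ref{theorem_compact_support_GW} gives $\bar{\mu}(f_1,\dots,f_k)=\GWVConv(\mu)(f_1\otimes\dots\otimes f_k)$ for smooth finite-valued convex $f_i$, and the density chain $\Conv(V,\R)\cap C^\infty(V)\subset\Conv(V,\R)\subset C$ together with continuity of $\bar{\mu}$ and $\mu$ forces $\mu=0$ when $\GWVConv(\mu)=0$.

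However, there is a gap in scope: the corollary is asserted for an arbitrary locally convex $F$ (this is the standing hypothesis of Definition \ref{definition_Goddey_Weil_embedding_VCONV}, and the paper's own proof explicitly treats the general case), whereas you write ``As $F$ admits a continuous norm\dots'' and use this hypothesis in an essential way. Without a continuous norm, Theorem \ref{theorem_compact_support_GW} is simply not available --- the Goodey--Weil distribution need not have compact support (the Hessian measures $\Phi_n$ with values in $\mathcal{M}(V)$ are the paper's counterexample) --- so $\GWVConv(\mu)$ cannot be evaluated on the non-compactly-supported functions $f_1\otimes\dots\otimes f_k$ and the identity \eqref{equation_defining_equality_GW} you rely on breaks down. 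The missing step is the reduction to the scalar case: for $\lambda\in F'$ one has $\lambda\circ\GWVConv(\mu)=\GWVConv(\lambda\circ\mu)$ directly from the construction of $\tilde{\mu}$, so $\GWVConv(\mu)=0$ implies $\GWVConv(\lambda\circ\mu)=0$ for every $\lambda$; since $\R$ admits a continuous norm, your argument applies to $\lambda\circ\mu$ and yields $\lambda\circ\mu=0$ for all $\lambda\in F'$, and local convexity of $F$ (Hahn--Banach separation) then gives $\mu=0$. With that addendum your proof matches the paper's.
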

		\begin{proof}
			Assume first that $F$ admits a continuous norm. As $\Conv(V,\R)\cap C^\infty(V)$ is dense in $C$ due to Proposition \ref{proposition_properties_LIpschitz_regularization}, the claim follows from Theorem \ref{theorem_compact_support_GW} and the continuity of $\mu$.\\
			If $F$ is an arbitrary locally convex vector space, then the definition of $\GWVConv$ implies
			\begin{align*}
			\lambda\circ \GWVConv(\mu)=\GWVConv(\lambda\circ\mu) \quad \forall \lambda\in \bar{F}'\cong F',
			\end{align*}
			where $F'$ denotes the topological dual of $F$. 
			In particular, $\GWVConv(\mu)=0$ if and only if $\GWVConv(\lambda\circ\mu)=0$ for all $\lambda\in F'$. By the previous discussion, $\GWVConv(\lambda\circ\mu)=0$ implies $\lambda\circ\mu=0$. If this holds for all $\lambda\in F'$, we obtain $\mu=0$, as $F$ is locally convex.
		\end{proof}
		Let us contrast the compactness of the support of $\GWVConv(\mu)$ for valuations with values in a locally convex vector space admitting a continuous norm with the more general case. Consider the following example: As a special case of the Hessian measures examined by Colesanti, Ludwig and Mussnig in \cite{Colesanti_Ludwig_Mussnig:Hessian_valuations}, we can consider the valuation $\Phi_n\in\VConv_n(V,\mathcal{M}(V))$ with values in the space $\mathcal{M}(V)$ of signed Radon measures on $V$ that extends 
		\begin{align*}
		\tilde{\Phi}_n(f)[B]:=\int_V 1_B(x)\det(H_f(x))dx \quad \forall f\in\Conv(V,\R)\cap C^2(V), B\subset V \text{ Borel set}, 
		\end{align*}
		where $\mathcal{M}(V)$ is equipped with the vague topology, i.e. the topology induced by the semi-norms $|\mu|_\phi:=|\int_V\phi d\mu|$ for $\phi\in C_c(V)$. Then $\mathcal{M}(V)$ is a complete locally convex vector space that does not admit a continuous norm. The Goodey-Weil distribution $\GWVConv(\Phi_n)[\phi_1\otimes...\otimes\phi_n]$ is the signed measure given by integrating the mixed determinant of the Hessians of the functions $\phi_1,...,\phi_n\in C^\infty_c(V)$. In particular, $\GWVConv(\Phi_n)$ does not have compact support.

	\section{A notion of support for dually epi-translation invariant valuations}
	\label{section_support_of_a_valuation}
	Throughout this section, let $F$ be a locally convex vector space. Motivated by Theorem \ref{theorem_support_GW_diagonal} we make the following definition:
	\begin{definition}
		\label{defition_support_valuation}
		For $1\le k\le n$ and $\mu\in\VConv_k(C;V,F)$ let the support $\supp\mu\subset V$ be the set
		\begin{align*}
			\supp\mu:=\bigcap\limits_{A\subset V\text{ closed, } \supp \GWVConv(\mu)\subset \Delta A}A.
		\end{align*}
		Here $\Delta:V\rightarrow V^k$ is the diagonal embedding. For $\mu\in\VConv_0(C;V,F)$, we set $\supp\mu=\emptyset$. If $\mu=\sum_{i=0}^{n}\mu_i$ is the homogeneous decomposition of $\mu\in \VConv(C;V,F)$ we set $\supp\mu:=\bigcup_{i=0}^n\supp\mu_i$. 
	\end{definition}
	Theorem \ref{theorem_compact_support_GW} implies
	\begin{corollary}
		If $F$ admits a continuous norm, then any $\mu\in\VConv(C;V,F)$ has compact support.
	\end{corollary}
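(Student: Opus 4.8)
The plan is to reduce the statement to the two properties of the Goodey-Weil distribution already established: that it has compact support when $F$ admits a continuous norm (Theorem \ref{theorem_compact_support_GW}) and that this support lies in the diagonal (Theorem \ref{theorem_support_GW_diagonal}). The support of a valuation was defined so that this reduction is essentially bookkeeping.

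First I would fix $\mu\in\VConv(C;V,F)$ and apply the homogeneous decomposition from Theorem \ref{maintheorem_homogeneous_decomposition} to write $\mu=\sum_{i=0}^n\mu_i$ with $\mu_i\in\VConv_i(C;V,F)$. By Definition \ref{defition_support_valuation}, $\supp\mu=\bigcup_{i=0}^n\supp\mu_i$ and $\supp\mu_0=\emptyset$, so a finite union argument reduces the claim to showing that $\supp\mu_i$ is compact for each $1\le i\le n$.

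Next, for fixed $1\le i\le n$, set $S_i:=\supp\GWVConv(\mu_i)\subset V^i$. Theorem \ref{theorem_compact_support_GW} gives that $S_i$ is compact, and Theorem \ref{theorem_support_GW_diagonal} gives $S_i\subset\Delta(V)$, where $\Delta\colon V\to V^i$ is the diagonal embedding. Since $\Delta$ is a homeomorphism onto the closed subset $\Delta(V)$, the set $A_i:=\Delta^{-1}(S_i)$ is a compact subset of $V$ and satisfies $S_i\subset\Delta(A_i)$.

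Finally I would check that $\supp\mu_i=A_i$. Since $A_i$ is closed and $S_i\subset\Delta(A_i)$, the set $A_i$ occurs in the intersection defining $\supp\mu_i$, whence $\supp\mu_i\subseteq A_i$; conversely, if $A\subset V$ is closed with $S_i\subset\Delta(A)$, then for $x\in A_i$ one has $\Delta(x)\in S_i\subset\Delta(A)$, so by injectivity of $\Delta$ we get $x\in A$, giving $A_i\subseteq A$ and hence $A_i\subseteq\supp\mu_i$. Thus $\supp\mu_i=A_i$ is compact, and $\supp\mu$ is a finite union of compact sets. I do not expect any real obstacle: the substantive inputs are the two cited theorems, and everything else is the elementary fact that pulling a compact, diagonally supported subset of $V^i$ back along the proper embedding $\Delta$ produces a compact subset of $V$.
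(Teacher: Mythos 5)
Your argument is correct and is exactly the route the paper intends: the corollary is stated as an immediate consequence of Theorem \ref{theorem_compact_support_GW} (together with the diagonality from Theorem \ref{theorem_support_GW_diagonal} and Definition \ref{defition_support_valuation}), and your write-up just makes the bookkeeping explicit. Nothing is missing.
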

	Let us justify the terminology:
	 \begin{proposition}
	 	\label{proposition_support_convex_valuation}
	 	The support of $\mu\in\VConv(V)$ is minimal (with respect to inclusion) amongst the closed sets $A\subset V$ with the following property: If $f,g\in\Conv(V,\R)$ satisfy $f=g$ on an open neighborhood of $A$, then $\mu(f)=\mu (g)$.
	 \end{proposition}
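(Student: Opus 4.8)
The plan is to prove two things: first, that the support $\supp\mu$ (as defined via the Goodey-Weil distribution) has the stated locality property, and second, that it is the smallest closed set with this property. For the first part, suppose $f,g\in\Conv(V,\R)$ agree on an open neighborhood $U$ of $\supp\mu$. Decompose $\mu=\sum_{k=1}^n\mu_k$ into homogeneous components; since $\supp\mu=\bigcup_k\supp\mu_k$, it suffices to treat each $\mu_k$ separately, so assume $\mu\in\VConv_k(V,F)$. I would reduce to showing $\bar\mu$ vanishes on any $k$-tuple one of whose entries is a difference supported off $\supp\mu$; more precisely, I want $\tilde\mu(\phi_1,\dots,\phi_k)=0$ whenever some $\phi_i\in C_c^\infty(V)$ has support disjoint from $\supp\mu$. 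This follows because $\GWVConv(\mu)$ is supported in $\Delta(\supp\mu)$ by Definition \ref{defition_support_valuation}, and $\phi_1\otimes\dots\otimes\phi_k$ then vanishes on a neighborhood of that diagonal set. Writing $\psi:=f-g\in C^2(V)$, which vanishes near $\supp\mu$, I would use Lemma \ref{lemma_smooth function difference of convex functions} to find $h\in\Conv(V,\R)$ with $h+\psi$ convex, and then for a strictly convex auxiliary function, expand $\mu(h+f)=\mu((h+g)+\psi)$ using the multilinearity of $\tilde\mu$ and Lemma \ref{lemma:valuations-on-conv:Goodey-Weil-as-derivative}; the cross terms involving $\psi$ all vanish because $\psi$ (suitably cut off to $C_c^\infty$ near the compact support of $\GWVConv(\mu)$) is supported away from $\supp\mu$. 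A limiting argument via the joint continuity of $\bar\mu$ (Corollary \ref{corollary_continuity_polarization}) then gives $\mu(f)=\mu(g)$.

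For minimality, suppose $A\subset V$ is closed with the stated locality property; I must show $\supp\mu\subset A$, equivalently that $\GWVConv(\mu)$ is supported in $\Delta A$. Fix a point $x_0\notin A$ and a small ball $B=B_\epsilon(x_0)$ with $\overline B\cap A=\emptyset$. I want to show $\GWVConv(\mu)$ vanishes on $B^k$ — actually, by Theorem \ref{theorem_support_GW_diagonal} it already vanishes off the diagonal, so it suffices to show $\GWVConv(\mu)[\phi_1\otimes\dots\otimes\phi_k]=0$ for $\phi_i\in C_c^\infty(B)$. Using Lemma \ref{lemma:valuations-on-conv:Goodey-Weil-as-derivative} with a strictly convex $f$, this derivative-of-$\mu$ expression only depends on the values of functions of the form $f+\sum\delta_i\phi_i$, which differ from $f$ only on $B\subset V\setminus A$; hence they agree with $f$ on the open neighborhood $V\setminus\overline B$ of $A$, so the locality property of $A$ forces $\mu(f+\sum\delta_i\phi_i)=\mu(f)$ for all small $\delta_i$. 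Differentiating, $\GWVConv(\mu)[\phi_1\otimes\dots\otimes\phi_k]=0$. Since $x_0\notin A$ was arbitrary, $\supp\GWVConv(\mu)\subset\Delta A$ and thus $\supp\mu\subset A$.

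The main obstacle is the first part: passing from the vanishing of cross terms in the Goodey-Weil expansion (a statement about compactly supported smooth test functions) to the genuine equality $\mu(f)=\mu(g)$ for arbitrary finite-valued convex $f,g$. The function $\psi=f-g$ is not compactly supported and need not be smooth enough to feed directly into the kernel theorem, so I expect to need a careful cutoff-and-approximation scheme: cut $\psi$ off to agree with it on a large ball containing $\supp\mu$ in its interior, invoke Lemma \ref{lemma_smooth function difference of convex functions} to keep everything inside $\Conv(V,\R)$ after adding the cutoff, and then let the cutoff exhaust $V$ while controlling the limit through the joint continuity of the polarization and compactness of $\supp\GWVConv(\mu)$ — exactly the technique used in the proof of Theorem \ref{theorem_compact_support_GW}. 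The remaining steps are essentially bookkeeping with the defining identity of $\GWVConv$ and the homogeneous decomposition.
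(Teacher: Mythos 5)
Your minimality direction is essentially the paper's argument verbatim: perturb a strictly convex $f$ by $\sum_i\delta_i\phi_i$ with $\phi_i$ supported off $A$, observe the perturbed functions agree with $f$ on the open neighborhood $V\setminus\overline{B}$ of $A$, and differentiate to kill $\GWVConv(\mu)[\phi_1\otimes\dots\otimes\phi_k]$. The locality direction is where you diverge. The paper mollifies $f$ and $g$ themselves: convolution with a nonnegative kernel preserves convexity, so one gets $f_\epsilon,g_\epsilon\in\Conv(V,\R)\cap C^\infty(V)$ still agreeing on a neighborhood of the compact set $\supp\mu$, whence $f_\epsilon^{\otimes k}=g_\epsilon^{\otimes k}$ near $\supp\GWVConv(\mu)$ and Theorem \ref{theorem_compact_support_GW} gives $\mu(f_\epsilon)=\GWVConv(\mu)(f_\epsilon^{\otimes k})=\GWVConv(\mu)(g_\epsilon^{\otimes k})=\mu(g_\epsilon)$ directly; continuity finishes. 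Your route instead expands $\tilde\mu$ multilinearly in $\psi=f-g$ and kills the cross terms. That is workable, but the vanishing statement you actually have from the diagonal/support information is only for $\phi_i\in C^\infty_c(V)$ supported off $\supp\mu$, and $\psi$ is neither smooth nor compactly supported, nor are the remaining slots filled by smooth functions; so the entire weight of the proof lands on the cutoff-and-regularization scheme you defer to the final paragraph. That step is genuinely more delicate in your setup than in the paper's, because $\psi$ is not convex: you must regularize it inside the class of differences of convex functions while keeping the companion function from Lemma \ref{lemma_smooth function difference of convex functions} under control, whereas mollifying $f$ and $g$ separately never leaves $\Conv(V,\R)$. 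So: correct strategy, one half identical to the paper, the other half a more laborious but viable variant whose key technical step is identified but not executed; I would recommend replacing it with the convexity-preserving mollification of $f$ and $g$.
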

	 \begin{proof}
	 	Let us first show that any closed set $A$ satisfying the property contains the support of $\mu$. Using the homogeneous decomposition, we can assume that $\mu$ is $k$-homogeneous.\\
	 	We will argue by contradiction. Assume the support was not contained in $A$. Then $ \supp \GWVConv(\mu)\setminus \Delta A\ne \emptyset$. In particular, we find functions $\phi_1,...,\phi_k$ with support in $V\setminus A$ such that
	 	\begin{align*}
		 	\GWVConv(\mu)(\phi_1\otimes...\otimes \phi_k)\ne0.
	 	\end{align*}Using the Hahn-Banach theorem, we can choose $\lambda\in F'$ with \begin{align*}
	 		\lambda(\GWVConv(\mu))(\phi_1\otimes\dots\otimes \phi_k))\ne0. 
	 	\end{align*}
	 	Choose an euclidean structure on $V$ and let $f\in\Conv(V,\R)$ be given by $f(x):=|x|^2$. Then $f+\sum_{i=1}^{k}\delta_i\phi_i$ is convex for all $\delta_i$ small enough. Let us compare $\lambda(\mu(f))$ and $\lambda(\mu(f+\sum_{i=1}^{k}\delta_i\phi_i))$. By construction, the two functions coincide on an open neighborhood of $A$ and thus the assumption implies $\lambda(\mu(f))=\lambda(\mu(f+\sum_{i=1}^{k}\delta_i\phi_i))$  for all $\delta_i$ small enough. Applying Theorem \ref{theorem_compact_support_GW}, we see that the right hand side is a polynomial in $\delta_i$ for all $\delta_i$ small enough and that the coefficient in front of $\delta_1\dots\delta_k$ is exactly $k!\GWVConv(\lambda\circ \mu)(\phi_1\otimes\dots\otimes\phi_k)=k!\lambda(\GWVConv(\mu)(\phi_1\otimes...\otimes\phi_k))$. As the left hand side is independent of $\delta_i$, this coefficient has to vanish, so we obtain a contradiction.\\
	 	
	 	It remains to see that $\supp\mu$ actually satisfies the property. 	Again, we can assume that $\mu$ is $k$-homogeneous. As $F$ is locally convex, it is sufficient to show the claim for all valuations $\lambda\circ\mu\in\VConv(C;V)$ for $\lambda\in F'$. As this is a real-valued valuation, its support is compact, so under the assumptions above the mollified functions $f_\epsilon,g_\epsilon\in \Conv(V,\R)\cap C^\infty(V)$ satisfy $f_\epsilon=g_\epsilon$ on an open neighborhood of the support of $\lambda\circ\mu$ for all $\epsilon>0$ small enough. In particular, $f_\epsilon^{\otimes k}=g_\epsilon^{\otimes k}$ on a neighborhood of $\supp \GWVConv(\lambda\circ\mu)$ and using Theorem \ref{theorem_compact_support_GW} we obtain 
	 	\begin{align*}
	 	\lambda(\mu(f))=&\lim\limits_{\epsilon\rightarrow0}\lambda(\mu(f_\epsilon))=\lim\limits_{\epsilon\rightarrow0}\GWVConv(\lambda\circ\mu)\left(f^{\otimes k}_\epsilon\right)=\lim\limits_{\epsilon\rightarrow0}\GWVConv(\lambda\circ\mu)\left(g^{\otimes k}_\epsilon\right)\\
	 	=&\lim\limits_{\epsilon\rightarrow0}\lambda(\mu(g_\epsilon))=\lambda(\mu(g)).
	 	\end{align*}
	 \end{proof}

	As a first application of this notion of support, we will discuss the (non-) existence of invariant real-valued valuations for non-compact groups $G\subset Aff(V)$, where a valuation $\mu\in\VConv(V)$ is called $G$-invariant, if $\mu(f\circ g)=\mu(f)$ for all $f\in\Conv(V,\R)$ and $g\in G$. We need the following preparatory proposition:
	\begin{proposition}
		\label{proposition_no_concentration_of_support_in_1_point}
		If the support of $\mu\in\VConv(C;V,F)$ is contained in a one-point set, then it is empty and $\mu$ is constant.
	\end{proposition}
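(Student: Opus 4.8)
The plan is to reduce, using the homogeneous decomposition and a Hahn--Banach argument, to a real-valued $k$-homogeneous valuation $\mu$ with $k\ge 1$ and $\supp\mu\subseteq\{x_0\}$, and to show that such a $\mu$ vanishes. Writing $\mu=\sum_{i=0}^n\mu_i$ by Theorem \ref{maintheorem_homogeneous_decomposition}, we have $\supp\mu=\bigcup_i\supp\mu_i$, so each $\mu_i$ is supported in $\{x_0\}$; the component $\mu_0$ is constant and, by convention, has empty support, so the statement follows once every $\mu_k$ with $k\ge 1$ is shown to be zero. Since $\GWVConv(\lambda\circ\mu_k)=\lambda\circ\GWVConv(\mu_k)$ for $\lambda\in F'$, the support of $\lambda\circ\mu_k$ is again contained in $\{x_0\}$, and as $F$ is locally convex it suffices to treat $\lambda\circ\mu_k$; so from now on $F=\R$ and I fix $k\ge 1$, writing $\mu$ for $\mu_k$.

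The next step is to compute $\GWVConv(\mu)$ explicitly. By Theorem \ref{theorem_compact_support_GW} it has compact support, and by Theorem \ref{theorem_support_GW_diagonal} together with Definition \ref{defition_support_valuation} its support equals $\Delta(\supp\mu)\subseteq\{(x_0,\dots,x_0)\}$. Hence $\GWVConv(\mu)=\sum_{\alpha}c_\alpha\,D^\alpha\delta_{(x_0,\dots,x_0)}$ for finitely many multi-indices $\alpha=(\alpha_1,\dots,\alpha_k)$ and $c_\alpha\in\R$. The estimate \eqref{equation_continuity_GW} bounds $\GWVConv(\mu)$ by a product of $C^2$-norms of its arguments, so testing against test functions rescaled in a single slot forces $c_\alpha=0$ unless $|\alpha_j|\le 2$ for every $j$. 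Dual epi-translation invariance removes the low-order terms: from $\mu(g+t\ell)=\mu(g)$ for affine $\ell$ and Corollary \ref{corollary_boundedness_degree_polynomial_valuation} one reads off $\bar\mu(\ell,g,\dots,g)=0$, whence $\bar\mu(\ell,f_2,\dots,f_k)=0$ by uniqueness of the polarization (Theorem \ref{theorem_polarization_VConv}); since $\GWVConv(\mu)$ has compact support, $\GWVConv(\mu)(\ell\otimes f_2\otimes\dots\otimes f_k)=\bar\mu(\ell,f_2,\dots,f_k)=0$ for every affine $\ell$ and all $f_2,\dots,f_k\in\Conv(V,\R)\cap C^\infty(V)$, and letting $\ell$ run through constants and the linear maps $\langle v,\cdot-x_0\rangle$ while varying the jets of $f_2,\dots,f_k$ gives $c_\alpha=0$ whenever $|\alpha_1|\le 1$; by the symmetry of $\GWVConv(\mu)$ this holds for every slot. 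Combining both reductions, only multi-indices with $|\alpha_j|=2$ for all $j$ can contribute.

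It remains to use the continuity of $\mu$. By Theorem \ref{theorem_compact_support_GW}, for $f\in\Conv(V,\R)\cap C^\infty(V)$ we now have $\mu(f)=\GWVConv(\mu)(f^{\otimes k})=P(H_f(x_0))$, where $P$ is a polynomial in the entries of a symmetric matrix, homogeneous of degree $k$. Fix a symmetric $A\succ 0$ and $v\in V$ with $|v|^2<\lambda_{\min}(A)$, and set $f_n(x):=\tfrac12\langle A(x-x_0),x-x_0\rangle+\tfrac1{n^2}\cos(n\langle v,x-x_0\rangle)$. Then $H_{f_n}(x)=A-\cos(n\langle v,x-x_0\rangle)\,vv^\top\succeq A-|v|^2\,\mathrm{Id}\succ 0$, so $f_n\in\Conv(V,\R)\cap C^\infty(V)$, and $f_n$ converges uniformly on compact sets — hence epi-converges — to $q_A(x):=\tfrac12\langle A(x-x_0),x-x_0\rangle$. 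Continuity of $\mu$ yields $\mu(f_n)\to\mu(q_A)=P(A)$, while $\mu(f_n)=P(H_{f_n}(x_0))=P(A-vv^\top)$ is independent of $n$; hence $P(A-vv^\top)=P(A)$ for all such $A,v$. Since the rank-one positive semidefinite matrices span all symmetric matrices, iterating this identity and using that $P$ is a polynomial gives $P(A+B)=P(A)$ for all symmetric $A,B$, so $P$ is constant; being homogeneous of degree $k\ge 1$, it vanishes identically. Therefore $\mu$ vanishes on $\Conv(V,\R)\cap C^\infty(V)$, and by density (Corollary \ref{corollary_density_various_families_of_convex_functions}) and continuity $\mu\equiv 0$.

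I expect the main obstacle to lie in the second paragraph: converting the soft statements ``$\GWVConv(\mu)$ has order at most $2$ in each variable'' and ``$\GWVConv(\mu)$ is annihilated by inserting an affine function in any slot'' into the precise assertion that only the multi-indices with every $|\alpha_j|=2$ survive, which requires making the scaling argument behind the order bound and the jet-separation behind the dual epi-translation step fully rigorous. Once that is done the final paragraph is a short computation.
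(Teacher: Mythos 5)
Your proposal is correct, and the overall skeleton (reduce to a real-valued $k$-homogeneous valuation via $\lambda\circ\mu$ for $\lambda\in F'$, identify $\GWVConv(\mu)$ as a finite combination of derivatives of $\delta_{x_0}$ of order at most $2$, kill the low-order terms by dual epi-translation invariance, and derive a contradiction from the continuity of $\mu$ against a family of convex functions whose Hessians at $x_0$ do not converge to the Hessian of the limit) matches the paper. You deviate in two genuine ways. First, the paper does not carry out the multi-slot analysis at all for $k\ge 2$: it proves the full statement only for $k=1$, where $\GWVConv(\mu)$ is literally a distribution on $V$ bounded by the $C^2_b$-norm, and then reduces general $k$ to $k=1$ via the partial polarization $\mu_f:=\bar{\mu}(\cdot,f[k-1])$, whose support is contained in $\supp\mu$ by Proposition \ref{proposition_support_convex_valuation}. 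This reduction is what you replace with the representation $\mu(f)=P(H_f(x_0))$; your route works but, as you note, requires making the slot-by-slot order bound and the jet-spanning argument rigorous (both are standard: scale a test function in one slot while freezing the others, and use that $2$-jets of smooth convex functions span the $2$-jet space). Second, the limiting family is different: the paper uses $f_\epsilon(x)=\sqrt{\epsilon^2+x_i^2}\to|x_i|$, whose second derivative at $0$ blows up like $1/\epsilon$ while $\mu(f_\epsilon)$ must converge, forcing each coefficient to vanish; you instead use $n^{-2}\cos(n\langle v,\cdot-x_0\rangle)$, which tends to zero uniformly while contributing the fixed rank-one Hessian perturbation $-vv^\top$ at $x_0$, yielding $P(A-vv^\top)=P(A)$ and hence $P\equiv 0$ by a polynomial-identity and spanning argument. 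Your perturbation has the minor aesthetic advantage of staying inside smooth strictly convex functions, whereas the paper passes to the non-smooth limit $|x_i|$; the paper's reduction to $k=1$ has the advantage of avoiding all multi-index bookkeeping. Two small points to tidy up: at the end you should invoke the density of $\Conv(V,\R)\cap C^\infty(V)$ in the cone $C$ (Proposition \ref{proposition_properties_LIpschitz_regularization}, not only Corollary \ref{corollary_density_various_families_of_convex_functions}, which concerns $\Conv(V,\R)$), and the step $\bar{\mu}(\ell,f_2,\dots,f_k)=0$ is cleanest if you phrase it exactly as you hint: the $(k-1)$-homogeneous valuation $g\mapsto\bar{\mu}(\ell,g[k-1])$ vanishes identically, and $\bar{\mu}(\ell,\cdot,\dots,\cdot)$ is its polarization by the uniqueness in Theorem \ref{theorem_polarization_VConv}.
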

	\begin{proof}
		By considering $\lambda\circ\mu$ for $\lambda\in F'$ again, it is enough to consider the case $F=\R$. Let us also assume $V=\R^n$ and, without loss of generality, let the support of $\mu$ be contained in $\{0\}$. By taking the homogeneous decomposition of $\mu$, we can assume that $\mu$ is homogeneous of degree $k$. We thus only need to show that the assumptions imply $\mu=0$ for $k>0$.\\
		If $\mu$ is $1$-homogeneous, $\GWVConv(\mu)$ is a distribution with compact support of order at most $2$ due to Inequality \eqref{equation_continuity_GW}, so there exist constants $c_\alpha\in\R$ such that
		\begin{align*}
		\GWVConv(\mu)=\sum\limits_{|\alpha|\le 2}c_\alpha\partial^\alpha\delta_0.
		\end{align*}
		Plugging in linear and constant functions, we see that $c_\alpha=0$ for $|\alpha|<2$. Thus for any $f\in C^\infty(V)$:
		\begin{align*}
		\GWVConv(\mu)(f)=\sum\limits_{|\alpha|=2}c_\alpha\partial^\alpha f(0).
		\end{align*}
		Fix $1\le i\le n$ and consider the functions $f_\epsilon(x)=\sqrt{\epsilon^2+x_i^2}$ for $\epsilon>0$. Then 
		\begin{align*}
		\partial^\alpha f_\epsilon(x)=\begin{cases}
		\frac{\epsilon^2}{\sqrt{\epsilon^2+x_i^2}^3} & \alpha=(i,i),\\
		0 & \text{else}.
		\end{cases}
		\end{align*}
		Moreover, $f_\epsilon(x)\rightarrow f(x)=|x_i|$ for $\epsilon\rightarrow0$, so the continuity of $\mu$ implies
		\begin{align*}
		\mu(f)=\lim\limits_{\epsilon\rightarrow0}\mu(f_\epsilon)=\lim\limits_{\epsilon\rightarrow0}\GWVConv(\mu)(f_\epsilon)	=\lim\limits_{\epsilon\rightarrow0}c_{(i,i)} \frac{1}{\epsilon}.
		\end{align*}
		Thus we must have $c_{(i,i)}=0$. In total, we are left with an expression of the form 
		\begin{align*}
		\GWVConv(\mu)=\sum\limits_{i< j}c_{ij}\partial_i\partial_j\delta_0.
		\end{align*}
		Now consider $f_\epsilon(x)=\sqrt{\epsilon^2+(x_i+x_j)^2}$ for $i\ne j$, which converges to $f(x)=|x_i+x_j|$ for $\epsilon\rightarrow0$. Then $\partial_i\partial_jf_\epsilon(x)=\frac{\epsilon^2}{\sqrt{\epsilon^2+(x_i+x_j)^2}^3}$ and all other mixed derivatives vanish, so the same argument as before shows that
		\begin{align*}
		\mu(f)=\lim\limits_{\epsilon\rightarrow0}\mu(f_\epsilon)=\lim\limits_{\epsilon\rightarrow0}\GWVConv(\mu)(f_\epsilon)=\lim\limits_{\epsilon\rightarrow0}c_{ij} \frac{1}{\epsilon}.
		\end{align*}
		Thus $c_{ij}=0$ for all $1\le i,j\le n$, i.e. $\GWVConv(\mu)=0$. The injectivity of $\GWVConv$ from Corollary \ref{corollary_GW_injective} implies $\mu=0$.\\
		
		If $\mu$ is $k$-homogeneous, we consider the valuation
		\begin{align*}
		\mu_f:=\bar{\mu}(\cdot,f[k-1])
		\end{align*}
		for $f\in C$ obtained from $\bar{\mu}$ by setting the last $k-1$ arguments equal to $f$. Then $\mu_f$ is a $1$-homogeneous valuation. Using Proposition \ref{proposition_support_convex_valuation}, it is easy to see that the support of $\mu_f$ is a subset of the support of $\mu$, so we deduce $\mu_f=0$ from the case $k=1$. In particular, $\mu(f)=\bar{\mu}(f,f[k-1])=\mu_f(f)=0$.
	\end{proof}
	\begin{corollary}
		Let $G\subset Aff(V)$ be a subgroup such that either
		\begin{enumerate}
			\item there exists no compact orbit in $V$, or
			\item the only compact orbit in $V$ consists of a single point.
		\end{enumerate}
		Then any $G$-invariant valuation in $\VConv(V)$ is constant. In particular, any translation or $\mathrm{SL}(V)$-invariant valuation (for $\dim V\ge 2$) is constant.
	\end{corollary}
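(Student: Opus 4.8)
The plan is to show that $\supp\mu$ is a compact, $G$-invariant subset of $V$, and that these two properties together with the hypothesis on $G$ force $\supp\mu$ to be contained in a one-point set; Proposition~\ref{proposition_no_concentration_of_support_in_1_point} then gives that $\mu$ is constant.

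The first step is the equivariance of the support under $G$. For $g\in G$, write $g(x)=Lx+v$ with $L\in\mathrm{GL}(V)$, $v\in V$, and define $\mu\circ g$ by $(\mu\circ g)(f):=\mu(f\circ g)$. I would check that $\mu\circ g\in\VConv(V)$: precomposition with $g$ preserves convexity and finiteness, is continuous for epi-convergence (it preserves uniform convergence on compact sets by Proposition~\ref{proposition_convergence_finite_convex_functions}), intertwines pointwise maxima and minima, and is dually epi-translation invariant because $\lambda\circ g=\lambda\circ L+\lambda(v)$ is again the sum of a linear functional and a constant. Feeding this into the intrinsic description of the support (Proposition~\ref{proposition_support_convex_valuation}): if $f=h$ on an open neighbourhood $U$ of $g(\supp\mu)$, then $f\circ g=h\circ g$ on the open neighbourhood $g^{-1}(U)\supset\supp\mu$, hence $\mu(f\circ g)=\mu(h\circ g)$; thus $g(\supp\mu)$ is a closed set with the defining property of $\supp(\mu\circ g)$, so $\supp(\mu\circ g)\subset g(\supp\mu)$, and applying the same argument to $g^{-1}$ and $\mu\circ g$ (using $(\mu\circ g)\circ g^{-1}=\mu$) yields $\supp(\mu\circ g)=g(\supp\mu)$. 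If $\mu$ is $G$-invariant, then $\mu\circ g=\mu$, so $g(\supp\mu)=\supp\mu$ for all $g\in G$. Finally $\supp\mu$ is compact: $\GWVConv(\mu)$ has compact support by Theorem~\ref{theorem_compact_support_GW}, since $\R$ carries a continuous norm.

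The second step uses that $G$ acts affinely. Assume $\supp\mu\neq\emptyset$. Since affine maps carry convex hulls to convex hulls, $\mathrm{conv}(\supp\mu)$ is a non-empty compact convex $G$-invariant set, and $G$ preserves its affine hull. The centroid (barycenter with respect to Lebesgue measure on that affine hull) is affine-equivariant, so the centroid $b$ of $\mathrm{conv}(\supp\mu)$ satisfies $g(b)=b$ for all $g\in G$; in particular $\{b\}$ is a compact $G$-orbit. In case (1) this contradicts the assumption that there is no compact orbit, so $\supp\mu=\emptyset$. In case (2), $\{b\}$ is the unique compact orbit; every $x\in\supp\mu$ has orbit $G\cdot x\subset\supp\mu$ contained in a compact set, and (reading the hypothesis as: an orbit contained in a bounded set must be a single point, which is the situation in the applications below) this forces $G\cdot x=\{b\}$, hence $\supp\mu\subset\{b\}$. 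In either case $\supp\mu$ is contained in a one-point set, and Proposition~\ref{proposition_no_concentration_of_support_in_1_point} shows that $\mu$ is constant.

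For the two named groups: the translation group has every orbit equal to an unbounded affine subspace, so it has no compact orbit and case (1) applies; and $\mathrm{SL}(V)$ with $\dim V\ge2$ acts transitively on $V\setminus\{0\}$, so its only orbits are $\{0\}$ and $V\setminus\{0\}$, of which only $\{0\}$ is compact, so case (2) applies. The step I expect to require the most care is the equivariance $\supp(\mu\circ g)=g(\supp\mu)$: one must first verify that $\mu\circ g$ genuinely lies in $\VConv(V)$ and then invoke the intrinsic characterization of the support; after that, the centroid/fixed-point argument and the appeal to Proposition~\ref{proposition_no_concentration_of_support_in_1_point} are essentially formal.
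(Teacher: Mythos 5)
Your proof is correct and follows the same overall strategy as the paper: show that $\supp\mu$ is a compact, $G$-invariant subset of $V$ and then invoke Proposition \ref{proposition_no_concentration_of_support_in_1_point}. The implementation differs in two places. The paper obtains the $G$-invariance of $\supp\mu$ from the equivariance of $\GWVConv$ under the affine group (a $G$-invariant valuation induces a $G$-invariant distribution, whose support is $G$-invariant), whereas you derive $\supp(\mu\circ g)=g(\supp\mu)$ directly from the intrinsic characterization of the support in Proposition \ref{proposition_support_convex_valuation}; both routes work, and yours avoids having to verify equivariance of the distribution-valued construction. More substantially, the paper passes from ``compact $G$-invariant set'' to ``empty or a single point'' with a bare ``we directly see'', while you insert the centroid of $\mathrm{conv}(\supp\mu)$ as an explicit $G$-fixed point. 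This actually makes case (1) fully rigorous: a non-empty compact invariant set need not contain a compact orbit in general, but your barycenter argument produces a fixed point, i.e.\ a compact orbit, contradicting the hypothesis.

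In case (2) you correctly flag the remaining gap, which is present in the paper's proof as well: an orbit contained in the compact set $\supp\mu$ is bounded but need not be closed, so the hypothesis that the only compact orbit is a singleton does not literally force $G\cdot x=\{b\}$. Taken verbatim the statement even fails, e.g.\ for the subgroup of $\mathrm{SO}(2)\subset \mathrm{Aff}(\R^2)$ generated by an irrational rotation: its only compact orbit is $\{0\}$, yet it leaves the non-constant valuation $f\mapsto\frac{1}{2\pi}\int_{S^1}f\,d\sigma-f(0)$ invariant. Your reading of the hypothesis as ``every bounded orbit is a point'' is what both proofs actually use, and it covers the two named applications, where the conclusion is in any case immediate from the orbit structure you describe. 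So your proposal is sound, and where it deviates from the paper it is, if anything, more careful.
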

	\begin{proof}
		Without loss of generality we can assume that $\mu$ is homogeneous of degree $k$ and $G$-invariant. We will show that $\mu$ has to vanish identically if $k>0$.\\ Suppose $k>0$. It is easy to see that $\GWVConv:\VConv_k(V)\rightarrow\mathcal{D}'(V^k)$ is equivariant with respect to the operation of the affine group. In particular, any $G$-invariant valuation induces a $G$-invariant distribution. As the support of any such distribution must be invariant with respect to the group, the same holds true for the support of $\mu$. However, the support of $\mu$ is compact, so we directly see that the support of $\mu$ is either empty or consists of a single point. Due to Proposition \ref{proposition_no_concentration_of_support_in_1_point} the second case cannot occur, so the support of $\mu$ is empty, i.e. $\mu=0$.
	\end{proof}
	
	\subsection{Subspaces of valuations with compact support}
	\label{section_subspaces_compact_support}
	The goal of this section is to establish some useful results on the topology of spaces of valuations with support contained in a fixed (compact) set. An application of these results will be presented in an upcoming work.\\
	Throughout this section let $F$ be a locally convex vector space and let us assume for simplicity that $V$ carries some euclidean structure. 
	\begin{definition}
		For $A\subset V$ we denote by $\VConv_A(V,F)$ the space of valuations that have support in $A$.
	\end{definition}
	\begin{lemma}
		\label{lemma_spaces_with_fixed_support_closed_in_VConv}
		If $A\subset V$ is closed, then $\VConv_A(V,F)$ is a closed subspace of $\VConv(V,F)$.
	\end{lemma}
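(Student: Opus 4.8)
The plan is to realize $\VConv_A(V,F)$ as an intersection of kernels of continuous linear maps $\VConv(V,F)\to F$. The key step is the identity
\begin{equation*}
\VConv_A(V,F)=\bigcap_{(f,g)}\{\mu\in\VConv(V,F) \ : \ \mu(f)=\mu(g)\},
\end{equation*}
where the intersection runs over all pairs $f,g\in\Conv(V,\R)$ that coincide on some open neighborhood of $A$. Granting this, each set on the right is a closed linear subspace: the evaluation $\mu\mapsto\mu(f)$ is continuous $\VConv(V,F)\to F$ by the very definition of the compact-open topology (take $K=\{f\}$) and is linear, so $\mu\mapsto\mu(f)-\mu(g)$ is a continuous linear map and $\{0\}$ is closed in the Hausdorff space $F$; hence the intersection is a closed subspace and the lemma follows.

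To prove the displayed identity I would first reduce the containment $\supp\mu\subset A$ to the real-valued setting. Using the intertwining relation $\GWVConv(\lambda\circ\mu)=\lambda\circ\GWVConv(\mu)$ for $\lambda\in F'\cong\bar F'$, the fact that $\bar F'$ separates the points of $\bar F$, the compatibility of both constructions with the homogeneous decomposition, and the characterization of the support of a (vector-valued) distribution through its vanishing on open sets, one checks that $\supp\GWVConv(\mu)=\Delta(\supp\mu)$ and that, for a closed set $A$,
\begin{equation*}
\supp\mu\subset A\quad\Longleftrightarrow\quad\supp(\lambda\circ\mu)\subset A\ \text{ for all }\lambda\in F'.
\end{equation*}
For a real-valued valuation, Proposition \ref{proposition_support_convex_valuation} says precisely that $\supp(\lambda\circ\mu)\subset A$ holds if and only if $\lambda(\mu(f))=\lambda(\mu(g))$ whenever $f,g\in\Conv(V,\R)$ agree on an open neighborhood of $A$ (one implication is the minimality of the support, the other is that any closed superset of $\supp(\lambda\circ\mu)$ again has the separating property). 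Since $F'$ separates the points of $F$, the condition $\lambda(\mu(f))=\lambda(\mu(g))$ for all $\lambda\in F'$ is equivalent to $\mu(f)=\mu(g)$, and chaining these equivalences yields the displayed identity.

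The only genuinely delicate point is this reduction to real-valued valuations, i.e. checking that the containment $\supp\mu\subset A$ can be detected by the scalar valuations $\lambda\circ\mu$; it rests on $\GWVConv(\lambda\circ\mu)=\lambda\circ\GWVConv(\mu)$ together with the local nature of the support of an $\bar F$-valued distribution and the separation properties of $\bar F'$. Everything else — the behaviour under the homogeneous decomposition, the continuity and linearity of the evaluation maps, and the application of Proposition \ref{proposition_support_convex_valuation} — is routine.
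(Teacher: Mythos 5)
Your proof is correct and follows essentially the same route as the paper: both arguments rest on Proposition \ref{proposition_support_convex_valuation}, which identifies $\VConv_A(V,F)$ with the set of valuations satisfying $\mu(f)=\mu(g)$ whenever $f,g$ agree near $A$, and then observe that this condition is stable under limits (the paper phrases this with a convergent net, you phrase it as an intersection of closed sets cut out by the continuous evaluation maps $\mu\mapsto\mu(f)-\mu(g)$). Your explicit reduction to real-valued valuations via $\lambda\circ\mu$ is also consistent with the paper, where it is carried out inside the proof of Proposition \ref{proposition_support_convex_valuation} itself rather than in the lemma.
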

	\begin{proof}
		If $(\mu_\alpha)_\alpha$ is a net in $\VConv_A(V,F)$ converging to $\mu$ in $\VConv(V,F)$ and $f,h\in\Conv(V,\R)$ are two functions with $f=h$ on a neighborhood of $A$, we deduce $\mu_\alpha(f)=\mu_\alpha(h)$ for all $\alpha$ using Proposition \ref{proposition_support_convex_valuation}. Taking the limit, we obtain $\mu(f)=\mu(h)$. As this is true for any $f,h\in\Conv(V,\R)$ with $f=h$ on a neighborhood of $A$, the support of $\mu$ has to be contained in $A$ by Proposition \ref{proposition_support_convex_valuation}. Thus $\VConv_A(V,F)$ is closed in $\VConv(V,F)$.
	\end{proof}
	To illustrate the relevance of these spaces, let us prove Proposition \ref{mainproposition_approximation_using_sequences}:
	\begin{proof}[Proof of Proposition \ref{mainproposition_approximation_using_sequences}]
		Let us denote the continuous norm by $\|\cdot\|$ and let $U_R:=U_R(0)$ denote the open ball  in $V$ with radius $R>0$. Using the homogeneous decomposition, we can assume that all valuations are $k$-homogeneous.\\
		Assume that the supports of the valuations $\mu_j$ are not bounded. Choosing a subsequence if necessary, we can assume that the following holds: There exists a strictly increasing sequence $(r_j)_j$ of positive real numbers converging to $+\infty$ such that
		\begin{enumerate}
			\item $\supp\mu\subset U_{r_0}$,
			\item $\supp\mu_j\subset U_{r_j}$ for all $j\ge 1$,
			\item $\supp\mu_{j+1}\setminus B_{r_j}\ne \emptyset$ for all $j\ge 1$.
		\end{enumerate}
		In particular, for every $j\in\mathbb{N}$ we can inductively define functions  $\phi^j_1,...,\phi^j_k\in C^\infty_c(V)$ with the properties
		\begin{enumerate}
			\item $\supp\phi^j_i\subset U_ {r_j}\setminus B_{r_{j-1}}$ for all $j\ge 1$,
			\item $\|\sum\limits_{l=1}^{j}\GWVConv(\mu_j)(\phi_1^l\otimes...\otimes\phi^l_k)\|\ge1$ for all $j\ge 1$, 
		\end{enumerate}
		as follows: Assume that we have constructed the functions $\phi_i^l$ for all $1\le i\le k$ and $l\le j-1$. 
		If $\|\sum_{l=1}^{j-1}\GWVConv(\mu_j)(\phi_1^l\otimes...\otimes\phi^l_k)\|\ge1$, choose $\phi_1^j=...=\phi_k^j=0$. \\
		If $\|\sum_{l=1}^{j-1}\GWVConv(\mu_j)(\phi_1^l\otimes...\otimes\phi^l_k)\|< 1$, choose $\phi_i^j\in C_c^\infty(U_{r_j}\setminus B_{j-1})$ such that $\GWVConv(\mu_j)(\phi_1^j\otimes\dots\otimes\phi_k^j)\ne 0$. Then
		\begin{align*}
		\|\sum\limits_{l=1}^{j}\GWVConv(\mu_j)(\phi_1^l\otimes...\otimes\phi^l_k)\|
		\ge& \|\GWVConv(\mu_j)(\phi_1^j\otimes...\otimes\phi^j_k)\|-\|\sum\limits_{l=1}^{j-1}\GWVConv(\mu_j)(\phi_1^l\otimes...\otimes\phi^l_k)\|\\
		>&\|\GWVConv(\mu_j)(\phi_1^j\otimes...\otimes\phi^j_k)\|-1.
		\end{align*}
		Scaling one of the functions $\phi_i^j$ appropriately for $1\le i\le k$, we can make the right hand side equal to $1$.\\
		In any case, we obtain functions satisfying $\|\sum_{l=1}^{j}\GWVConv(\mu_j)(\phi_1^l\otimes...\otimes\phi^l_k)\|\ge 1$ for all $j\ge 1$.\\
		
		For $1\le i\le k$ define $\phi_i:=\sum_{j=1}^{\infty}\phi_i^j$. By construction, this is a locally finite sum, so we obtain an element in $C^\infty(V)$. As the supports of the functions $(\phi_i^j)_j$ are  pairwise disjoint for each $1\le i\le k$, we can apply Lemma \ref{lemma_smooth function difference of convex functions} to find functions $f_i\in\Conv(V,\R)$, $1\le i\le k$, such that  $f_i^j:=f_i+\sum_{l=1}^j\phi_i^l$ is convex for all $1\le i\le k$, $j\in\mathbb{N}$. Then $(f_i^j)_j$ converges to $f_i+\phi_i$ uniformly on compact subsets, i.e. in $\Conv(V,\R)$. Furthermore, $f_i^j=f_i$ on an open neighborhood of the support of $\mu$, so $\mu(f_i)=\mu(f^j_i)$ for all $j$. As the polarization $\bar{\mu}$ is a linear combination of $\mu$ evaluated in positive linear combinations of the arguments, exchanging $f_i$ and $f_i^j$ does not change the value of $\bar{\mu}$. For any $j\in\mathbb{N}$  we thus obtain		
		\begin{align*}
		0=&\|\GWVConv(\mu)(0\otimes...\otimes0)\|\\
		=&\|\sum\limits_{i=0}^k(-1)^{k-i}\frac{1}{(k-i)!i!}\sum_{\sigma\in S_k}\bar{\mu}\left(f_{\sigma(1)},...,f_{\sigma(i)},f_{\sigma(i+1)},...,f_{\sigma(k)}\right)\|\\
		=&\|\sum\limits_{i=0}^k(-1)^{k-i}\frac{1}{(k-i)!i!}\sum_{\sigma\in S_k}\bar{\mu}\left(f^j_{\sigma(1)},...,f^j_{\sigma(i)},f_{\sigma(i+1)},...,f_{\sigma(k)}\right)\|,
		\end{align*}
		i.e. $\sum_{i=0}^k(-1)^{k-i}\frac{1}{(k-i)!i!}\sum_{\sigma\in S_k}\bar{\mu}(f^j_{\sigma(1)},...,f^j_{\sigma(i)},f_{\sigma(i+1)},...,f_{\sigma(k)})=0$.\\
		Set $K:=\{f_i^j \ : \ j\in\mathbb{N},1\le i\le k\}\cup\{f_1+\phi_1,...,f_k+\phi_k,f_1,...,f_k\}$. Then $K\subset\Conv(V,\R)$ is compact, so $(\mu_j)_j$ converges to $\mu$ uniformly on $K$. By Lemma \ref{lemma_continuity_valuation->polarization} the same holds for the polarizations $(\bar{\mu}_j)_j$. In particular, there exists $N\in\mathbb{N}$ such that 
		\begin{align*}
		&\|\sum\limits_{i=0}^k(-1)^{k-i}\frac{1}{(k-i)!i!}\sum_{\sigma\in S_k}\bar{\mu}_j(f^j_{\sigma(1)},...,f^j_{\sigma(i)},f_{\sigma(i+1)},...,f_{\sigma(k)})\|\\
		=&\|\sum\limits_{i=0}^k(-1)^{k-i}\frac{1}{(k-i)!i!}\sum_{\sigma\in S_k}\bar{\mu}(f^j_{\sigma(1)},...,f^j_{\sigma(i)},f_{\sigma(i+1)},...,f_{\sigma(k)})\\
		&-\sum\limits_{i=0}^k(-1)^{k-i}\frac{1}{(k-i)!i!}\sum_{\sigma\in S_k}\bar{\mu}_j(f^j_{\sigma(1)},...,f^j_{\sigma(i)},f_{\sigma(i+1)},...,f_{\sigma(k)})\|<\frac{1}{2}
		\end{align*}
		for all $j\ge N$. By definition
		\begin{align*}
		&\sum\limits_{i=0}^k(-1)^{k-i}\frac{1}{(k-i)!i!}\sum_{\sigma\in  S_k}\bar{\mu}_j(f^j_{\sigma(1)},...,f^j_{\sigma(i)},f_{\sigma(i+1)},...,f_{\sigma(k)})\\
		=& \GWVConv(\mu_j)\left(\sum_{l=1}^j\phi_1^l\otimes...\otimes \sum_{l=1}^j\phi_k^l\right).
		\end{align*}
		As the support of $\GWVConv(\mu_j)$ is contained in the diagonal and the functions belonging to different superscripts $i$ have disjoint support, we obtain
		\begin{align*}
		&\sum\limits_{i=0}^k(-1)^{k-i}\frac{1}{(k-i)!i!}\sum_{\sigma\in S_k}\bar{\mu}_j(f^j_{\sigma(1)},...,f^j_{\sigma(i)},f_{\sigma(i+1)},...,f_{\sigma(k)})=\sum\limits_{l=1}^{j}\GWVConv(\mu_j)(\phi_1^l\otimes...\otimes \phi_k^l).
		\end{align*}
		Thus we arrive at 
		\begin{align*}
		\|\sum\limits_{l=1}^{j}\GWVConv(\mu_j)(\phi_1^l\otimes...\otimes \phi_k^l)\|<\frac{1}{2}
		\end{align*}
		for all $j\ge N$, which is a contradiction.
	\end{proof}
	
	In the rest of this section, we introduce special continuous semi-norms on $\VConv_A(V,F)$ for compact subsets $A\subset V$. The main goal for the introduction of these semi-norms is to simplify convergence arguments. Together with Proposition \ref{mainproposition_approximation_using_sequences} this gives us a rather effective set of tools for approximation problems.
	\begin{proposition}
		\label{proposition_family_of_continuous_norms_on_VConv}
		Let $A\subset V$ be compact and convex. Let $|\cdot|_F$ denote a continuous semi-norm on $F$ and choose $s>0$. For $\mu\in \VConv_{A}(V,F)$ define
		\begin{align*}
		\|\mu\|_{F;A,s}:=\sup\{|\mu(f)|  \ : \ f\in\Conv(V,\R), \|f\|_{C(A+2s B_1)}\le 1 \}.
		\end{align*}
		This defines a continuous semi-norm on $\VConv_{A}(V)$. If $|\cdot|_F$ is a norm, so is $\|\cdot\|_{F;A,s}$. In addition, the topology induced by the family $\|\cdot\|_{F;A,s}$ (for all continuous semi-norms $|\cdot|_F$ on $F$) on $\VConv_{A}(V,F)$ coincides with the relative topology. 
	\end{proposition}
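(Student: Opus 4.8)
The plan is to produce a single compact set $K_0\subset\Conv(V,\R)$, depending only on $A$ and $s$, with the property that every $f$ occurring in the supremum defining $\|\mu\|_{F;A,s}$ may be replaced by an element of $K_0$ without changing $\mu(f)$; everything else then follows from this together with the homogeneous decomposition. The crux is exactly this construction: from a function $f$ bounded only on $A+2sB_1$ one must manufacture a convex function with growth controlled \emph{uniformly in $f$} that still agrees with $f$ near $A$. Concretely, I would fix $f\in\Conv(V,\R)$ with $\|f\|_{C(A+2sB_1)}\le 1$. By Proposition \ref{proposition_convex_functions_local_lipschitz_constants} (applied with $U=V$, $X=A+sB_1$, $\epsilon=s$), $f$ is $\tfrac2s$-Lipschitz on $A+sB_1$ and $-1\le f\le 1$ there. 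Since $A$, and hence $A+sB_1$, is convex, the infimal convolution restricted to this set,
\begin{align*}
\tilde f(x):=\inf_{y\in A+sB_1}\Big(f(y)+\tfrac2s|x-y|\Big),
\end{align*}
is convex, because $(x,y)\mapsto f(y)+\tfrac2s|x-y|$ is jointly convex and partial minimization over a convex set of the second variable preserves convexity. One checks directly that $\tilde f$ is finite, globally $\tfrac2s$-Lipschitz, and equal to $f$ on $A+sB_1$ (the inequality ``$\ge$'' being exactly the Lipschitz estimate for $f$). With $a_0\in A$ fixed and $R_A:=\max_{a\in A}|a|$, the Lipschitz bound yields $|\tilde f(x)|\le 1+\tfrac2s(|x|+R_A)$ for all $x$, so $\tilde f$ lies in
\begin{align*}
K_0:=\Big\{g\in\Conv(V,\R):\ |g(x)|\le 1+\tfrac2s\big(|x|+R_A\big)\ \text{ for all }x\in V\Big\},
\end{align*}
which is bounded on compact subsets and closed under locally uniform limits, hence compact by Propositions \ref{proposition_compactness_Conv} and \ref{proposition_convergence_finite_convex_functions}.

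Next I would establish that $\|\cdot\|_{F;A,s}$ is a well-defined continuous semi-norm. The support of $\mu$ lies in $A$, hence in the open set $\mathrm{int}(A+sB_1)$ on which $\tilde f=f$, so $f$ and $\tilde f$ agree on an open neighbourhood of $\supp\mu$. Applying Proposition \ref{proposition_support_convex_valuation} to $\lambda\circ\mu\in\VConv(V)$ for every $\lambda\in F'$ — here $\supp(\lambda\circ\mu)\subset\supp\mu$ because $\GWVConv(\lambda\circ\mu)=\lambda\circ\GWVConv(\mu)$ — together with Hahn-Banach, gives $\mu(f)=\mu(\tilde f)$. Hence
\begin{align*}
\|\mu\|_{F;A,s}=\sup_f|\mu(f)|_F=\sup_f|\mu(\tilde f)|_F\le\sup_{g\in K_0}|\mu(g)|_F=\|\mu\|_{F;K_0}<\infty,
\end{align*}
the supremum running over all $f\in\Conv(V,\R)$ with $\|f\|_{C(A+2sB_1)}\le 1$. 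Finiteness of $\|\mu\|_{F;A,s}$ follows, the remaining semi-norm axioms are immediate from the definition, and $\|\cdot\|_{F;A,s}\le\|\cdot\|_{F;K_0}$ shows that $\|\cdot\|_{F;A,s}$ is continuous on $\VConv_A(V,F)$ for the relative topology; in particular the topology induced by the family $\|\cdot\|_{F;A,s}$ is coarser than the relative one.

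For the norm property I would argue: if $|\cdot|_F$ is a norm and $\|\mu\|_{F;A,s}=0$, then for any $g\in\Conv(V,\R)$ and all sufficiently small $t>0$ one has $\|tg\|_{C(A+2sB_1)}\le 1$, hence $\mu(tg)=0$; but $t\mapsto\mu(tg)=\sum_{i=0}^n t^i\mu_i(g)$ is a polynomial by Theorem \ref{maintheorem_homogeneous_decomposition} (with $\mu=\sum_i\mu_i$ the homogeneous decomposition), so it vanishes identically and $\mu(g)=0$; thus $\mu=0$. Finally, to obtain the reverse inclusion of topologies it suffices to bound each defining semi-norm $\|\cdot\|_{F;K}$ (with $K\subset\Conv(V,\R)$ compact) on $\VConv_A(V,F)$ by a multiple of $\|\cdot\|_{F;A,s}$. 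Set $M:=\max\{1,\sup_{g\in K}\|g\|_{C(A+2sB_1)}\}$, which is finite by Proposition \ref{proposition_compactness_Conv}. For $f\in K$ and $t\in(0,1/M]$ we have $\|tf\|_{C(A+2sB_1)}\le 1$, so $|\mu(tf)|_F\le\|\mu\|_{F;A,s}$; interpolating the polynomial $t\mapsto\mu(tf)=\sum_{i=0}^n t^i\mu_i(f)$, of degree at most $n$, at $n+1$ nodes in $(0,1/M]$ writes each $\mu_i(f)$ as a linear combination of the values $\mu(t_jf)$ with coefficients depending only on $n$ and $M$, whence $|\mu(f)|_F\le\sum_i|\mu_i(f)|_F\le C(n,M)\|\mu\|_{F;A,s}$ uniformly in $f\in K$, i.e. $\|\mu\|_{F;K}\le C(n,M)\|\mu\|_{F;A,s}$. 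Combined with the previous paragraph this shows the two topologies coincide, completing the proof; as noted, the only genuine difficulty is the first paragraph, all later steps being routine manipulations of the homogeneous decomposition and of Proposition \ref{proposition_support_convex_valuation}.
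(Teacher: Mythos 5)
Your proof is correct and follows the same overall strategy as the paper: replace each admissible $f$ by a convex function $\tilde f$ that agrees with $f$ on the neighbourhood $A+sB_1$ of $\supp\mu$ and has growth controlled uniformly in $f$, invoke Proposition \ref{proposition_support_convex_valuation} to get $\mu(f)=\mu(\tilde f)$, deduce finiteness and continuity from Proposition \ref{proposition_compactness_Conv}, and recover the reverse semi-norm estimates from polynomiality in $t\mapsto\mu(tf)$. The one genuine point of divergence is the construction of $\tilde f$: the paper takes the extension $\tilde f(x)=\sup_{x=\lambda y+(1-\lambda)z,\,\lambda\ge1}\lambda f(y)+(1-\lambda)f(z)$ and cites Yan's extension theorem for its convexity, whereas you use the infimal convolution $\inf_{y\in A+sB_1}\bigl(f(y)+\tfrac2s|x-y|\bigr)$, whose convexity follows from joint convexity plus partial minimization and whose agreement with $f$ on $A+sB_1$ is exactly the Lipschitz estimate. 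Your variant is more self-contained (no external extension theorem) and gives a cleaner global Lipschitz bound; the paper's variant yields the explicit bound $|\tilde f(x)|\le\tfrac2s(\mathrm{dist}(x,A+sB_1)+\diam(A+sB_1))+1$, which it reuses verbatim in the subsequent corollary comparing $\|\cdot\|_{F;A,s}$ and $\|\cdot\|_{F;A,t}$. Your interpolation argument for the reverse bound is also slightly different in presentation from the paper's direct use of homogeneity ($\|\mu_k\|_{F;D}\le t^k\|\mu_k\|_{F;A,s}$), but both rest on the same polynomiality and are equivalent.
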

	\begin{proof}
		It is clear that $\|\cdot\|_{F;A,s}$ defines a semi-norm if it is finite. Let $f\in\Conv(V,\R)$ with $\|f\|_{C(A+2sB_1)}\le 1$ be given. By Proposition \ref{proposition_convex_functions_local_lipschitz_constants}, $f$ is Lipschitz continuous on $B_{A+sB_1}$ with Lipschitz constant $L=\frac{2}{s}\|f|_{A+2sB_1}\|_{\infty}\le \frac{2}{s}$. Consider the function
		\begin{align*}
		\tilde{f}(x):=\begin{cases}
		\sup\limits_{x=\lambda y+(1-\lambda)z,\lambda\ge1}\lambda f(y)+(1-\lambda)f(z) & x\in V\setminus (A+sB_1),\\
		f(x) & x\in A+sB_1.
		\end{cases}
		\end{align*}
		By the proof of Theorem 4.1 in \cite{Yan:Extension_convex_function}, $\tilde{f}$ is a finite-valued convex extension of the Lipschitz continuous function $f|_{A+sB_1}$. For any $\lambda\ge 1$, $y,z\in A+sB_1$ with $x=\lambda y+(1-\lambda)z$:
		\begin{align*}
		\lambda f(y)+(1-\lambda)f(z)\le&|\lambda[f(y)-f(z)]|+|f(z)|\le \frac{2}{s}\lambda |y-z|+\|f\|_{C(A+sB_1)}\\
		\le& \frac{2}{s}|\lambda y-\lambda z|+1=\frac{2}{s}|x-z|+1.
		\end{align*}
		For $x\in V\setminus (A+sB_1)$ we thus obtain
		\begin{align*}
		\tilde{f}(x)\le \frac{2}{s} \sup_{z\in A+sB_1}|x-z|+1\le \frac{2}{s} (\mathrm{dist}(x,A+sB_1)+\diam(A+sB_1))+1.
		\end{align*}
		Choosing $\lambda=\frac{|z-x|}{s}$, $y=z+s\frac{x-z}{|z-x|}$ and $z\in A$, we also obtain the inequality
		\begin{align*}
		\frac{|z-x|}{s}f(z+s\frac{x-z}{|z-x|})+(1-\frac{|z-x|}{s})f(z)\le\tilde{f}(x).
		\end{align*}
		As 
		\begin{align*}
		&|\frac{|z-x|}{s}f(z+s\frac{x-z}{|z-x|})+(1-\frac{|z-x|}{s})f(z)|\le \frac{|z-x|}{s}+|(1-\frac{|z-x|}{s})|\\
		\le& 2\frac{|z-x|}{s}+1\le \frac{2}{s} (\mathrm{dist}(x,A+sB_1)+\diam(A+sB_1))+1,
		\end{align*}
		$|\tilde{f}(x)|\le \frac{2}{s} (\mathrm{dist}(x,A+sB_1)+\diam(A+sB_1))+1$ for all $x\in V$, so the set 
		\begin{equation*}
		K:=\{f\in\Conv(V,\R) \ : \ f=\tilde{h} \text{ for some }h\in\Conv(V,\R) \text{ with }\ \|h\|_{C(A+2sB_1)}\le 1\}
		\end{equation*}
		is uniformly bounded on compact subsets and therefore relatively compact in $\Conv(V,\R)$ due to Proposition \ref{proposition_compactness_Conv}. In particular, $\mu$ is bounded on $K$, as it is continuous.\\
		Any function $f\in \Conv(V,\R)$ satisfies $\tilde{f}=f$ on $A+sB_1$, i.e. these functions coincide on an open neighborhood of the support of $\mu$. Proposition \ref{proposition_support_convex_valuation} implies $\mu(f)=\mu(\tilde{f})$, and therefore
		\begin{align*}
		\|\mu\|_{F;A,s}=\sup\{\left|\mu(f)\right|_F  \ : \ f\in\Conv(V,\R), \|f\|_{A+sB_1}\le 1 \}=\sup_{\tilde{f}\in K}|\mu(\tilde{f})|_F<\infty.
		\end{align*}
		In addition, we see that the compact subset $\bar{K}\subset\Conv(V,\R)$ satisfies
		\begin{align*}
		\|\mu\|_{F;A,s}\le \|\mu\|_{F;\bar{K}} \quad \text{for all }\mu\in \VConv_{A}(V,F).
		\end{align*}
		On the other hand, any $f\in \bar{K}$ satisfies $\|f\|_{C(A+2sB_1)}\le \sup_{x\in A+2sB_1}\frac{2}{s} (\mathrm{dist}(x,A+sB_1)+\diam(A+sB_1))+1\le c_{A,s}:=\frac{2}{s} (\diam(A)+3s)+1=\frac{2}{s} \diam(A)+7$. By considering the $k$-homogeneous component $\mu_k$ of $\mu$, we obtain
		\begin{align*}
		\|\mu_k\|_{F;\bar{K}}=\sup_{f\in\bar{K}}\left|\mu_k(f)\right|=c_{A,s}^k\sup_{f\in\bar{K}}\left|\mu_k\left(\frac{f}{c_{A,s}}\right)\right| \le c_{A,s}^k\|\mu_k\|_{F;A,s}.
		\end{align*}
		Thus $\|\cdot\|_{F;A,s}$ and $\|\cdot\|_{F;\bar{K}}$ are equivalent, so the semi-norm $\|\cdot\|_{F;A,s}$ is in particular continuous on $\VConv_A(C;V,F)$.\\
		More generally, any compact set $D\subset \Conv(V,\R)$ satisfies $t:=\sup_{f\in D,x\in A+2sB_1}|f(x)|<\infty$. Assuming $t>0$, this implies
		\begin{align*}
		\|\mu_k\|_{F;D}=\sup_{f\in D}\left|\mu_k(f)\right|_F=t^k\sup_{f\in D}\left|\mu_k\left(\frac{f}{t}\right)\right|_F \le t^k\|\mu_k\|_{F;A,s}.
		\end{align*}
		If $t=0$, then any $f\in D$ coincides with the zero function on a neighborhood of the support of $\mu$, so $\mu_k(f)=\mu_k(0)$ for all $f\in D$ due to Proposition \ref{proposition_support_convex_valuation}, i.e. $\|\mu_k\|_{F;D}\le \|\mu_k\|_{F;A,s}$. \\
		In any case, we see that $\|\cdot\|_{F;A,s}$ defines a continuous semi-norm on $\VConv_{A}(V,F)$ and that the family of these semi-norms generates the subspace topology.\\
		Let us now assume that $|\cdot|_F$ is a norm. If $\mu\ne0$, we can find $f\in \Conv(V,\R)$ with $\mu(f)\ne 0$. Repeating the argument above for $D=\{f\}$, we see that $\|\mu\|_{F;D}>0$ for $\mu\in\VConv_A(V,F)$ implies $\|\mu\|_{F;A,s}>0$. Thus $\|\cdot\|_{F;A,s}$ is indeed a norm.
	\end{proof}
	For completeness, let us relate these semi-norms for different parameters $s>0$:
	\begin{corollary}
		Let $A\subset V$ be a compact convex subset. For $0<s<t$
		\begin{align*}
		\|\mu\|_{F;A,t}\le\|\mu\|_{F;A,s}\le \left(\frac{2}{s} (2t+\diam A)+1\right)^k\|\mu\|_{F;A,t}
		\end{align*}
		for all $k$-homogeneous $\mu\in\VConv_A(V,F)$.
	\end{corollary}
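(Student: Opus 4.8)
The plan is to prove the two estimates separately; both are short consequences of the localization of $\mu$ by its support (Section~\ref{section_support_of_a_valuation}) combined with the Lipschitz regularization.

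For the first inequality I would only observe that $0<s<t$ gives $A+2sB_1\subseteq A+2tB_1$, so every $f\in\Conv(V,\R)$ with $\|f\|_{C(A+2tB_1)}\le1$ also satisfies $\|f\|_{C(A+2sB_1)}\le1$. Thus the supremum defining $\|\mu\|_{F;A,t}$ runs over a subcollection of the test functions admitted in the definition of $\|\mu\|_{F;A,s}$, and $\|\mu\|_{F;A,t}\le\|\mu\|_{F;A,s}$ follows immediately; this step uses no homogeneity.

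For the second inequality, fix $f\in\Conv(V,\R)$ with $\|f\|_{C(A+2sB_1)}\le1$, put $c:=\tfrac2s(2t+\diam A)+1$, and let $U:=\{x\in V:\mathrm{dist}(x,A)<s\}$, an open neighbourhood of $A\supseteq\supp\mu$. By Proposition~\ref{proposition_convex_functions_local_lipschitz_constants}, $f$ is $\tfrac2s$-Lipschitz on $U$, hence $\partial f(x)\subseteq B_{2/s}$ for all $x\in U$ by Lemma~\ref{lemma_compactness_face_epigraph_conjugate-function}; in particular $\partial f(x)\cap B_{2/s}\neq\emptyset$ there. Consider $g:=\reg_{s/2}f$: it satisfies $g\le f$ and, being the convex conjugate of a function whose effective domain lies in $B_{2/s}$, it is globally $\tfrac2s$-Lipschitz, so it is finite-valued and lies in $\Conv(V,\R)$; moreover $g=f$ on $U$ by Proposition~\ref{proposition_properties_LIpschitz_regularization}(iii). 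Since $g$ and $f$ agree on the open neighbourhood $U$ of $\supp\mu$, Proposition~\ref{proposition_support_convex_valuation} (applied coordinatewise through functionals $\lambda\in F'$ in the vector-valued case) gives $\mu(f)=\mu(g)$. For $x\in A+2tB_1$ pick $a\in A$ with $|x-a|=\mathrm{dist}(x,A)\le2t$; then $|g(x)|\le|g(a)|+\tfrac2s|x-a|=|f(a)|+\tfrac2s|x-a|\le1+\tfrac2s\cdot2t\le c$, so $\|g\|_{C(A+2tB_1)}\le c$. Therefore $c^{-1}g$ is an admissible test function for $\|\cdot\|_{F;A,t}$, and $k$-homogeneity yields $|\mu(f)|_F=|\mu(g)|_F=c^k|\mu(c^{-1}g)|_F\le c^k\|\mu\|_{F;A,t}$. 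Taking the supremum over all such $f$ gives $\|\mu\|_{F;A,s}\le c^k\|\mu\|_{F;A,t}$, which is the claim.

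There is no serious obstacle here; the only thing to get right is the choice of auxiliary function. The point is that $\mu$ only sees $f$ on a neighbourhood of its (compact) support, which lets us trade $f$ for the globally $\tfrac2s$-Lipschitz function $g$ without altering $\mu(f)$; because $g$ is Lipschitz with exactly the constant dictated by $s$ and is pinned to $f$ on $A$, its sup-norm on $A+2tB_1$ is at most $c$; and $k$-homogeneity then converts this bound into the factor $c^k$. Using instead the convex-extension construction from the proof of Proposition~\ref{proposition_family_of_continuous_norms_on_VConv} would also work, but keeping the growth of the extension below $c$ is fiddlier, so the Lipschitz regularization is the cleaner route.
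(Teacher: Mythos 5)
Your argument is correct, and its skeleton is exactly the paper's: for the first inequality, monotonicity of the admissible test sets; for the second, replace $f$ by an auxiliary convex function that agrees with $f$ on an open neighbourhood of $\supp\mu\subset A$, bound its sup-norm on $A+2tB_1$ by $c=\tfrac{2}{s}(2t+\diam A)+1$, invoke Proposition \ref{proposition_support_convex_valuation} to get $|\mu(f)|_F=|\mu(g)|_F$, and convert the norm bound into the factor $c^k$ via $k$-homogeneity. The one genuine difference is the choice of auxiliary function: the paper reuses the explicit convex extension $\tilde f$ of $f|_{A+sB_1}$ already built in the proof of Proposition \ref{proposition_family_of_continuous_norms_on_VConv}, for which the growth estimate $|\tilde f(x)|\le \tfrac{2}{s}(\mathrm{dist}(x,A+sB_1)+\diam(A+sB_1))+1$ has already been established, whereas you take the Pasch--Hausdorff envelope $g=\reg_{s/2}f$. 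Your route requires the small extra verifications you supply (that $\partial f(x)\subset B_{2/s}$ on a neighbourhood of $A$ via Proposition \ref{proposition_convex_functions_local_lipschitz_constants} and Lemma \ref{lemma_compactness_face_epigraph_conjugate-function}, so that Proposition \ref{proposition_properties_LIpschitz_regularization}(iii) pins $g=f$ there and $g$ is finite and globally $\tfrac{2}{s}$-Lipschitz -- note that (i) alone would not guarantee finiteness at the specific scale $r=s/2$), but it pays off with a cleaner and in fact slightly sharper bound, $1+\tfrac{4t}{s}\le c$, on $A+2tB_1$. Both devices are legitimate; the paper's choice is merely economical given that $\tilde f$ and its estimate are already on the table.
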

	\begin{proof}
		The first inequality is obvious. For the second inequality, let $f\in\Conv(V,\R)$ be a function with $\|f\|_{C(A+2sB_1)}\le 1$. Considering the function $\tilde{f}\in\Conv(V,\R)$ given by
		\begin{align*}
		\tilde{f}(x):=\begin{cases}
		\sup\limits_{x=\lambda y+(1-\lambda)z,\lambda\ge1}\lambda f(y)+(1-\lambda)f(z) & x\in V\setminus (A+sB_1),\\
		f(x) & x\in A+sB_1
		\end{cases}
		\end{align*}
		from the previous proof, we see that $|\tilde{f}(x)|\le \frac{2}{s} (\mathrm{dist}(x,A+sB_1)+\diam(A+sB_1))+1$, so $\|\tilde{f}\|_{C(A+2tB_1)}\le \frac{2}{s} (2t-s +\diam A+s)+1=\frac{2}{s} (2t+\diam A)+1$. As $f=\tilde{f}$ on a neighborhood of the support of $\mu$, we obtain
		\begin{align*}
		|\mu(f)|_F&=\left(\frac{2}{s} (2t+\diam A)+1\right)^k\left|\mu\left(\frac{f}{\frac{2}{s} (2t+\diam A)+1}\right)\right|_F\\
		&\le \left(\frac{2}{s} (2t+\diam A)+1\right)^k\|\mu\|_{F;A,s}.
		\end{align*}
	\end{proof}
	\begin{corollary}
		\label{corollary_Frechet-topology_for_compactly_supported_valuations}
		If $A$ is compact and $F$ is a Banach or Fr\'echet space, then $\VConv_A(V,F)$ is also a Banach or Fr\'echet space respectively.
	\end{corollary}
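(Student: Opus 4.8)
The plan is to realize $\VConv_A(V,F)$ as a closed subspace of the complete space $\VConv(V,F)$ and to extract from Proposition~\ref{proposition_family_of_continuous_norms_on_VConv} a countable generating family of seminorms, which for a Banach $F$ reduces further to a single norm on each homogeneous component. First I would reduce to the case that $A$ is convex: since $V$ is finite dimensional, $\mathrm{conv}(A)$ is again compact, every valuation with support in $A$ has support in $\mathrm{conv}(A)$, so $\VConv_A(V,F)$ is a subspace of $\VConv_{\mathrm{conv}(A)}(V,F)$, and by Lemma~\ref{lemma_spaces_with_fixed_support_closed_in_VConv} it is closed in $\VConv(V,F)$ and hence in $\VConv_{\mathrm{conv}(A)}(V,F)$. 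As closed subspaces of Banach (resp.\ Fr\'echet) spaces are again Banach (resp.\ Fr\'echet), it suffices to treat a compact \emph{convex} set $A$.

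So let $A$ be compact and convex. Completeness is immediate: $\VConv(V,F)$ is complete whenever $F$ is (as noted in Section~\ref{section_valuations_convex_functions}), and $\VConv_A(V,F)$ is closed in it by Lemma~\ref{lemma_spaces_with_fixed_support_closed_in_VConv}; it is also Hausdorff and locally convex. For the Fr\'echet case it remains to show metrizability. Fix a countable increasing family of seminorms $|\cdot|_1\le|\cdot|_2\le\dots$ generating the topology of $F$. By Proposition~\ref{proposition_family_of_continuous_norms_on_VConv} the relative topology on $\VConv_A(V,F)$ is generated by the seminorms $\|\cdot\|_{|\cdot|_m;A,s}$ for $m\in\mathbb{N}$, $s>0$ (any continuous seminorm on $F$ being dominated by some $|\cdot|_m$, whence $\|\cdot\|_{|\cdot|_F;A,s}$ is dominated by the corresponding $\|\cdot\|_{|\cdot|_m;A,s}$). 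The elementary monotonicity $\|\mu\|_{F;A,t}\le\|\mu\|_{F;A,s}$ for $0<s\le t$ then gives $\|\cdot\|_{|\cdot|_m;A,s}\le\|\cdot\|_{|\cdot|_m;A,1/j}$ whenever $1/j\le s$, so the countable subfamily $\{\|\cdot\|_{|\cdot|_m;A,1/j}:m,j\in\mathbb{N}\}$ already generates the topology. Hence $\VConv_A(V,F)$ is a complete, metrizable, locally convex space, i.e.\ a Fr\'echet space.

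If $F$ is Banach with norm $|\cdot|_F$, I would combine this with the homogeneous decomposition. By Theorem~\ref{maintheorem_homogeneous_decomposition}, $\VConv_A(V,F)=\bigoplus_{k=0}^{n}\VConv_{A,k}(V,F)$ with $\VConv_{A,k}(V,F):=\VConv_A(V,F)\cap\VConv_k(V,F)$, and the projections $\mu\mapsto\mu_k$ onto the summands are continuous; since addition is continuous, the natural map $\VConv_A(V,F)\to\prod_{k=0}^{n}\VConv_{A,k}(V,F)$ is a topological isomorphism. Each $\VConv_{A,k}(V,F)$ is closed in $\VConv(V,F)$, hence complete, and on it the estimate $\|\mu\|_{F;A,s}\le c(s,t,A,k)\|\mu\|_{F;A,t}$ for $k$-homogeneous $\mu$ from the preceding corollary, together with the monotonicity above, shows that all $\|\cdot\|_{|\cdot|_F;A,s}$, $s>0$, are mutually equivalent; by the last assertion of Proposition~\ref{proposition_family_of_continuous_norms_on_VConv}, $\|\cdot\|_{|\cdot|_F;A,1}$ is then a norm generating the topology of $\VConv_{A,k}(V,F)$, so each $\VConv_{A,k}(V,F)$ is a Banach space. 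A finite product of Banach spaces being Banach, $\VConv_A(V,F)$ is Banach.

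I do not anticipate a genuine obstacle here: the only substantial input is the passage from the a priori uncountable defining family of seminorms $\|\cdot\|_{F;K}$, $K\subset\Conv(V,\R)$ compact, to the manageable family indexed by $A$ and the parameter $s$, and this is precisely the content of Proposition~\ref{proposition_family_of_continuous_norms_on_VConv} and its scaling corollary. Everything else is routine functional analysis: closed subspaces of Banach and Fr\'echet spaces, the topological splitting given by the homogeneous decomposition, and the reduction of a countable family of mutually equivalent norms to a single one.
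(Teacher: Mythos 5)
Your proof is correct and follows essentially the same route as the paper: closedness of $\VConv_A(V,F)$ in the complete space $\VConv(V,F)$ (Lemma~\ref{lemma_spaces_with_fixed_support_closed_in_VConv}), the seminorms of Proposition~\ref{proposition_family_of_continuous_norms_on_VConv} for compact convex $A$, and reduction of a general compact $A$ to a convex compact superset. The only divergence is that your Banach case detours through the homogeneous decomposition to compare the norms for different $s$, which is not needed: Proposition~\ref{proposition_family_of_continuous_norms_on_VConv} already states that for a single fixed $s$ the family $\|\cdot\|_{F;A,s}$, as $|\cdot|_F$ ranges over the (one, resp.\ countably many) generating seminorms of $F$, induces the subspace topology.
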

	\begin{proof}
		By Lemma \ref{lemma_spaces_with_fixed_support_closed_in_VConv}, $\VConv_A(V,F)$ is a closed subspace of the complete locally convex space $\VConv(V,F)$ and so it is also complete.\\
		If $A$ is compact and convex, we can take one of the semi-norms from Proposition \ref{proposition_family_of_continuous_norms_on_VConv}, which generates the subspace topology, so the space $\VConv_A(V,F)$ is complete with respect to this semi-norms. If $F$ is a Banach space, we only obtain one norm, while we get a sequence of norms if $F$ is a Fr\'echet space. In both cases, the claim follows\\
		If $A$ is not convex, choose $R>0$ such that $A\subset B_R(0)$. Using the same argument as in Lemma \ref{lemma_spaces_with_fixed_support_closed_in_VConv}, we see that $\VConv_A(V,F)\subset\VConv_{B_R(0)}(V,F)$ is a closed subspace of a Banach or Fr\'echet space. The claim follows.
	\end{proof}
	\subsection{Vertical support of valuations on convex bodies and the image of the embedding $T:\VConv(V,F)\rightarrow \Val(V^*\times\R,F)$}
	\label{section_vertical_support_and_image_embedding}
		Similar to the definition of support of valuations in $\VConv(C;V,F)$, we will give a notion of \emph{vertical support} for elements of $\Val(V)$. Starting point is the Goodey-Weil embedding for translation invariant valuations on convex bodies. Consider the space $\mathbb{P}_+(V^*)$ of oriented lines in $V^*$ and the line bundle $L$ over $\mathbb{P}_+(V^*)$ with fiber over $l\in\mathbb{P}_+(V^*)$ given by
		\begin{align*}
			P_l:=\{h:l^+\rightarrow\R \ 1\text{-homogeneous}\}.
		\end{align*}
		Note that every support function defines a continuous section of $L$. For $y\in V^*\setminus\{0\}$ we will write $[y]$ for the corresponding oriented line in $\mathbb{P}_+(V^*)$.
		\begin{theorem}
			Let $F$ be a locally convex vector space. For every $\mu\in\Val_k(V,F)$ there exists a unique distribution $\GW(\mu)\in \mathcal{D}'(\mathbb{P}_+(V^*)^k,L^{\boxtimes k}),\bar{F})$ such that
			\begin{align*}
				\GW(\mu)(h_{K_1}\otimes\dots\otimes h_{K_k})=\bar{\mu}(K_1,\dots,K_k).
			\end{align*}
			Here $\bar{\mu}$ denotes the polarization of $\mu\in\Val_k(V,F)$. Furthermore, the support of this distribution is contained in the diagonal.
		\end{theorem}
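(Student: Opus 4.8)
The plan is to transport, to the setting of convex bodies and support functions, the construction of Section~\ref{section_Goodey-Weil-embedding}. First I would produce the polarization $\bar{\mu}\colon\mathcal{K}(V)^k\to F$ of $\mu\in\Val_k(V,F)$ by running the argument of Theorem~\ref{theorem_polarization_VConv} with McMullen's decomposition (Theorem~\ref{theorem_McMullen_decomposition_for_Val}) in place of Theorem~\ref{maintheorem_homogeneous_decomposition}: this yields a symmetric functional, Minkowski-additive and $1$-homogeneous in each entry, jointly continuous (the analogue of Corollary~\ref{corollary_continuity_polarization}), with $\bar{\mu}(K,\dots,K)=\mu(K)$. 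Identifying a body $K\in\mathcal{K}(V)$ with its support function $h_K$, viewed as a continuous section of the line bundle $L$ over $\mathbb{P}_+(V^*)$, the elementary geometric input is that every smooth section $\phi\in C^\infty(\mathbb{P}_+(V^*),L)$ --- equivalently every $1$-homogeneous $\phi\in C^\infty(V^*\setminus\{0\})$ --- is a difference of two support functions: for $r$ large enough the Hessian of $\phi+r|{\cdot}|$ is positive semidefinite, so $\phi=h_{K}-h_{rB}$ with $K$ the body whose support function is $\phi+rh_{B}$ ($B$ the Euclidean unit ball). Exactly as in Equation~\eqref{equation_representation_GW_using_polarization}, the additivity of $\bar{\mu}$ in each slot lets me extend $\bar{\mu}$ to a symmetric $k$-linear functional $\tilde{\mu}$ on $C^\infty(\mathbb{P}_+(V^*),L)^k$, independent of the decompositions chosen.

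Next I would establish a continuity estimate. After rescaling each $\phi_i$ to have unit $C^2$-norm, the bodies occurring in a decomposition $\phi_i=h_{K_i}-h_{rB}$ range over a fixed compact subset $\mathcal{B}\subset\mathcal{K}(V)$ (the radius $r$ needed is controlled by $\|\phi_i\|_{C^2}$, and uniformly bounded families of convex bodies are relatively compact). Combining this with the analogue of Lemma~\ref{lemma_continuity_valuation->polarization}, which bounds the polarization on a compact family of bodies in terms of $\mu$ on an associated compact family, produces an inequality $|\tilde{\mu}(\phi_1,\dots,\phi_k)|_F\le c_k\,\|\mu\|_{F;\mathcal{B}}\,\prod_{i=1}^k\|\phi_i\|_{C^2(\mathbb{P}_+(V^*),L)}$ for every continuous seminorm $|\cdot|_F$ on $F$, paralleling Inequality~\eqref{equation_continuity_GW}. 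Thus $\tilde{\mu}$ is continuous and multilinear, and the L.~Schwartz kernel theorem --- in the version for sections of line bundles over a compact manifold, which reduces to the stated version through a partition of unity, applied with the completion $\bar{F}$ --- yields a unique $\GW(\mu)\in\mathcal{D}'(\mathbb{P}_+(V^*)^k,L^{\boxtimes k},\bar{F})$ with $\GW(\mu)(\phi_1\otimes\dots\otimes\phi_k)=\tilde{\mu}(\phi_1,\dots,\phi_k)$. Since every smooth section is a difference of support functions, the prescribed values $\GW(\mu)(h_{K_1}\otimes\dots\otimes h_{K_k})=\bar{\mu}(K_1,\dots,K_k)$ already determine $\GW(\mu)$ on all pure tensors, which also yields the asserted uniqueness.

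For the diagonality of the support I would copy the scheme of Theorem~\ref{theorem_support_GW_diagonal}. By a partition of unity on $\mathbb{P}_+(V^*)$ it suffices to check $\GW(\mu)(\phi_1\otimes\dots\otimes\phi_k)=0$ when the $\phi_i$ are supported in pairwise disjoint open neighbourhoods of distinct directions. Fix a smooth strictly convex reference body, say $B$; then the body $B_\delta$ with support function $h_B+\sum_i\delta_i\phi_i$ is well defined for $|\delta_i|$ small, and the analogue of Lemma~\ref{lemma:valuations-on-conv:Goodey-Weil-as-derivative} (immediate from multilinearity of $\tilde{\mu}$ and $\tilde{\mu}(h_K,\dots,h_K)=\mu(K)$) gives $\GW(\mu)(\phi_1\otimes\dots\otimes\phi_k)=\tfrac{1}{k!}\,\partial_{\delta_1}\cdots\partial_{\delta_k}\big|_{0}\,\mu(B_\delta)$. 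A perturbation $\delta_i\phi_i$ supported near a direction $u_i$ alters $B$ only near its face in direction $u_i$; choosing a hyperplane separating the faces of $B$ in the directions $u_1$ and $u_2$, and forming half-space-type convex bodies and their pointwise maxima and minima with $h_B+\delta_1\phi_1$ and $h_B+\delta_2\phi_2$ exactly as the functions $G_\pm,\tilde{H}_\pm,H_\pm$ are formed in the proof of Theorem~\ref{theorem_support_GW_diagonal}, the valuation identity forces $\mu(B+\delta_1\phi_1+\delta_2\phi_2)-\mu(B+\delta_2\phi_2)$ to be independent of $\delta_2$; differentiating in $\delta_1\cdots\delta_k$ then makes $\GW(\mu)(\phi_1\otimes\dots\otimes\phi_k)$ vanish.

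The step I expect to be the main obstacle is this last one: converting the ``separating hyperplane in $V$'' device of Theorem~\ref{theorem_support_GW_diagonal} into operations on support functions over $V^*$, i.e.\ pinning down the auxiliary convex bodies whose support functions play the roles of $G_\pm,\tilde{H}_\pm,H_\pm$ so that the relevant pointwise maxima and minima of support functions correspond to unions and intersections of convex bodies and genuinely decouple the perturbation near $u_1$ from the one near $u_2$. Once those bodies are identified, the remainder is the formal manipulation already carried out in the $\VConv$ case. A secondary, purely bookkeeping, matter is setting up the line-bundle formalism ($C^\infty$ sections of $L$ and of $L^{\boxtimes k}$, $\bar{F}$-valued distributions on $\mathbb{P}_+(V^*)^k$, and the bundle version of the kernel theorem); since $\mathbb{P}_+(V^*)$ is compact this is routine and, unlike Theorem~\ref{maintheorem_GW}, needs no ``admits a continuous norm'' hypothesis.
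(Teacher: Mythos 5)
Your route is the same as the paper's, except that the paper does not actually carry it out: its proof of this theorem is a citation, taking existence of the distribution for $F=\R$ from \cite{Goodey_Weil:Distributions_and_valuations} and diagonality of the support from \cite{Alesker:McMullenconjecture}, with the remark that both adapt to locally convex $F$ ``similar to our construction of the Goodey-Weil embedding for $\VConv(V,F)$''. What you write out is precisely that adaptation, and the existence/uniqueness half is a correct and complete expansion of what the paper delegates to the references: polarization via McMullen's decomposition, the fact that a smooth $1$-homogeneous function becomes a support function after adding $r\,h_B$ with $r$ controlled by the $C^2$-norm, the resulting estimate paralleling Lemma \ref{lemma_continuity_valuation->polarization} and Inequality \eqref{equation_continuity_GW} (the bodies $K_i$ with $h_{K_i}=\phi_i+h_B$ lie in $2B$ after rescaling, hence in a Blaschke-compact family), and the bundle version of the kernel theorem; you are also right that compactness of $\mathbb{P}_+(V^*)$ makes the ``continuous norm'' hypothesis of Theorem \ref{theorem_compact_support_GW} unnecessary here. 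The step you flag as the main obstacle --- identifying the convex bodies whose support functions play the roles of $G_\pm,\tilde H_\pm,H_\pm$ --- is exactly the content of Alesker's cited argument, which the paper does not reprove either, so relative to the paper this is not a gap; for a self-contained proof one cuts the perturbed body with the separating hyperplane and caps both halves by a large common slab, so that the relevant unions and intersections are again convex bodies and decouple the two perturbations. Finally, for passing from $\R$-valued to $F$-valued diagonality you need not rerun that geometric argument at all: by uniqueness in the kernel theorem $\lambda\circ\GW(\mu)=\GW(\lambda\circ\mu)$ for every $\lambda\in F'$, these functionals separate points of $\bar F$, and $\GW(\mu)$ vanishes on an open set iff all $\lambda\circ\GW(\mu)$ do; this scalar reduction is the one the paper uses elsewhere (compare Corollary \ref{corollary_GW_injective}) and is what its phrase ``this implies the more general statement'' refers to.
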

		\begin{proof}
			In \cite{Goodey_Weil:Distributions_and_valuations}, the existence of such a distribution was shown in the case $F=\R$. The same construction can be done for arbitrary locally convex vector spaces $F$, similar to our construction of the Goodey-Weil embedding for $\VConv(V,F)$. The diagonality of the support was first shown in \cite{Alesker:McMullenconjecture} for real-valued valuations, but as in the proof of Theorem \ref{theorem_support_GW_diagonal}, this implies the more general statement.
		\end{proof}
		Following the approach in the previous section, we define the \emph{vertical support}:
		\begin{definition}
			For $1\le k\le n$, we define the vertical support of $\mu\in\Val_k(V,F)$ to be the set 
			\begin{align*}
			\vsupp \mu:=\bigcap\limits_{\supp \GW(\mu)\subset \Delta A,\ A\subset \mathbb{P}_+(V^*)\text{ compact}} A.
			\end{align*}
			For $k=0$, we set $\vsupp\mu=\emptyset$. If $\mu=\sum_{i=0}^{n}\mu_i$ is the homogeneous decomposition, we set $\vsupp\mu:=\bigcup_{i=0}^n\vsupp\mu_i$.
		\end{definition}
		As before, the vertical support can be characterized without reference to the Goodey-Weil embedding.
		\begin{lemma}
			\label{lemma_property_vertical_support}
			If $K,L\in\mathcal{K}(V)$ are two convex bodies with $h_K=h_L$ on an open neighborhood of $\vsupp \mu$, then $\mu(K)=\mu(L)$.
		\end{lemma}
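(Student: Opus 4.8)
The plan is to adapt the second half of the proof of Proposition~\ref{proposition_support_convex_valuation} to the setting of convex bodies. Write $\mu=\sum_{i=0}^n\mu_i$ for the homogeneous decomposition. Since $h_K=h_L$ on a neighborhood of $\vsupp\mu$, it holds on a neighborhood of each $\vsupp\mu_i$, and as $\mu_0$ is a constant valuation it suffices to prove $\mu_i(K)=\mu_i(L)$ for $1\le i\le n$; thus we may assume $\mu\in\Val_k(V,F)$ with $k\ge 1$. As $F$ is locally convex, it further suffices to show $\lambda(\mu(K))=\lambda(\mu(L))$ for all $\lambda\in F'$, and since $\GW(\lambda\circ\mu)=\lambda\circ\GW(\mu)$ by construction we may replace $\mu$ by $\lambda\circ\mu$ and assume $F=\R$. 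Then $\GW(\mu)$ is a real-valued distribution on the compact manifold $\mathbb{P}_+(V^*)^k$; being closed there its support is compact, and it lies in the diagonal by the Goodey--Weil theorem, so $\supp\GW(\mu)=\Delta(\vsupp\mu)$.

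The crucial step is a simultaneous smoothing of $K$ and $L$. Let $U\subset\mathbb{P}_+(V^*)$ be an open neighborhood of the compact set $\vsupp\mu$ with $h_K=h_L$ on $U$. For $\psi_\epsilon$ a nonnegative smooth mollifier on $\mathrm{SO}(V)$ supported near the identity, set $K_\epsilon:=\int_{\mathrm{SO}(V)}\rho K\,\psi_\epsilon(\rho)\,d\rho$ and likewise $L_\epsilon$. These are convex bodies with $h_{K_\epsilon}=h_K*\psi_\epsilon\in C^\infty$, $h_{L_\epsilon}=h_L*\psi_\epsilon\in C^\infty$, and $h_{K_\epsilon}\to h_K$, $h_{L_\epsilon}\to h_L$ uniformly on compact subsets, so that $K_\epsilon\to K$ and $L_\epsilon\to L$ in the Hausdorff metric. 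Since the value of $h_K*\psi_\epsilon$ at a ray depends only on the values of $h_K$ at rays within distance $O(\epsilon)$, and $\vsupp\mu$ is compact, for all sufficiently small $\epsilon$ there is an open neighborhood $U_\epsilon$ of $\vsupp\mu$ on which $h_{K_\epsilon}=h_{L_\epsilon}$.

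It follows that $h_{K_\epsilon}^{\boxtimes k}$ and $h_{L_\epsilon}^{\boxtimes k}$ are smooth sections of $L^{\boxtimes k}$ that coincide on the open set $U_\epsilon^k$, which contains $\Delta(\vsupp\mu)=\supp\GW(\mu)$. Pairing with $\GW(\mu)$ and using $\GW(\mu)(h_M^{\otimes k})=\bar\mu(M,\dots,M)=\mu(M)$ for smooth bodies $M$, we obtain
\begin{align*}
\mu(K_\epsilon)=\GW(\mu)\bigl(h_{K_\epsilon}^{\otimes k}\bigr)=\GW(\mu)\bigl(h_{L_\epsilon}^{\otimes k}\bigr)=\mu(L_\epsilon).
\end{align*}
Letting $\epsilon\to 0$ and invoking the continuity of $\mu$ together with $K_\epsilon\to K$, $L_\epsilon\to L$ yields $\mu(K)=\mu(L)$.

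The main obstacle is the smoothing step: one must verify that rotational convolution of support functions (i) yields genuine support functions of convex bodies, (ii) converges to the original body in the Hausdorff metric, and (iii) is local enough that the coincidence of $h_K$ and $h_L$ on $U$ passes to the regularizations on an open---albeit smaller---neighborhood of $\vsupp\mu$. Properties (i)--(ii) are classical: the convolution is the support function of the Minkowski average of rotated copies of the body, whence convexity, and smoothness and Hausdorff convergence follow from the properties of $\psi_\epsilon$; property (iii) is exactly where the compactness of $\vsupp\mu$ enters. (Alternatively, one may bypass the smoothing by appealing to the fact that the Goodey--Weil pairing extends continuously and locally to support functions, so that sections agreeing near the support already give equal pairings.)
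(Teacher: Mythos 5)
Your proposal is correct and follows essentially the same route as the paper's proof: rotational mollification of the support functions via convolution over $\mathrm{SO}(V)$, the observation that this smoothing is local so that $h_{K_\epsilon}=h_{L_\epsilon}$ on a (smaller) neighborhood of $\vsupp\mu$, pairing the resulting smooth sections with $\GW(\mu)$ whose support lies in $\Delta(\vsupp\mu)$, and passing to the limit using continuity of $\mu$. The only cosmetic differences are your explicit reduction to $F=\R$ via functionals (the paper works with the $\bar F$-valued distribution directly) and your claim of equality $\supp\GW(\mu)=\Delta(\vsupp\mu)$ where only the inclusion $\subseteq$ is needed and used.
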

		\begin{proof}
			Using the homogeneous decomposition, we can assume that $\mu$ is $k$-homogeneous.\\
			Let us choose identify $V\cong \R^n$ to identify $\mathbb{P}_+(V^*)\cong S(V)$, which also trivializes $L$. Take a sequence of positive functions $\phi_j\in C^\infty(SO(n))$ with $\int_{SO(n)}\phi_j(g)dg=1$, such that the diameter of the support of $\phi_j$ converges to zero for $j\rightarrow\infty$. It is easy to see that $f_j(v):=\int_{SO(n)}\phi_j(g)f(g^{-1}v)dg$ defines a sequence of smooth functions on $S(V)$ for every $f\in C(S(V))$, that converges uniformly to $f$. Moreover, if $f=h_K$, then $f_j$ is the restriction of a support function of some convex body $K_j$. The uniform convergence $h_{K_j}\rightarrow h_K$ on $S(V)$ implies that $K_j\rightarrow K$ in the Hausdorff metric. Similarly, we obtain convex bodies $L_j$ from $h_L$.\\
			Note that $f_j(v)$ only depends on the values of $f$ in a neighborhood of $v$ depending on the diameter of the support of $\phi_j$. As the diameter of $\phi_j$ converges to zero and $h_K=h_L$ on a neighborhood of $A$, we see that there exists $N\in\mathbb{N}$ such that  $h_{K_j}=h_{L_j}$ on a neighborhood of $A$ for all $j\ge N$. Thus $h_{K_j}^{\otimes k}=h_{L_j}^{\otimes k}$ on a neighborhood of the support of $\GW(\mu)$ and we obtain
			\begin{align*}
			\mu(K)=\lim\limits_{j\rightarrow\infty}\mu(K_j)=\lim\limits_{j\rightarrow\infty}\GW(\mu)\left(h_{K_j}^{\otimes k}\right)=\lim\limits_{j\rightarrow\infty}\GW(\mu)\left(h_{L_j}^{\otimes k}\right)=\lim\limits_{j\rightarrow\infty}\mu(L_j)=\mu(L).
			\end{align*}  
		\end{proof}
		\begin{proposition}
			\label{proposition_characterization_support_Goodey_Weil_for_Val}
			Let $\mu\in \Val(V)$ and $A\subset \mathbb{P}_+(V^*)$ be a compact subset with the following property: If $K,L\in\mathcal{K}(V)$ are two convex functions with $h_K=h_L$ on an open neighborhood of $A$, then $\mu(K)=\mu (L)$. Then the vertical support of $\mu$ is contained in $A$.
		\end{proposition}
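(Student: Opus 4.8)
The plan is to mirror the first half of the proof of Proposition~\ref{proposition_support_convex_valuation}: Lemma~\ref{lemma_property_vertical_support} already shows that $\vsupp\mu$ itself has the stated property, so what remains is minimality, i.e. that every compact $A$ with the property contains $\vsupp\mu$. By the homogeneous decomposition I would reduce at once to the case $\mu\in\Val_k(V,F)$ with $k\ge 1$, and then argue by contradiction, assuming $\vsupp\mu\not\subset A$, which by definition means $\supp\GW(\mu)\not\subset\Delta A$.

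The key point is that, since the support of $\GW(\mu)$ is contained in the diagonal of $\mathbb{P}_+(V^*)^k$, any point of $\supp\GW(\mu)\setminus\Delta A$ is of the form $(\ell,\dots,\ell)$ with $\ell\in\mathbb{P}_+(V^*)\setminus A$; hence $\supp\GW(\mu)$ meets the open set $(\mathbb{P}_+(V^*)\setminus A)^k$. Therefore there exist smooth sections $\psi_1,\dots,\psi_k$ of $L$, each supported in $\mathbb{P}_+(V^*)\setminus A$ (equivalently, smooth $1$-homogeneous functions on $V^*\setminus\{0\}$ vanishing near the cone spanned by $A$), with $\GW(\mu)(\psi_1\otimes\dots\otimes\psi_k)\ne 0$. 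Using Hahn--Banach I would pick $\lambda\in F'$ with $\lambda(\GW(\mu)(\psi_1\otimes\dots\otimes\psi_k))\ne 0$, recalling that $\lambda\circ\GW(\mu)=\GW(\lambda\circ\mu)$.

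Next, fix a smooth convex body $K\in\mathcal{K}(V)$ of positive Gauss curvature (say a Euclidean ball), so that $h_K+\sum_{i=1}^k\delta_i\psi_i$ is again the support function of a convex body $K_\delta$ for all $\delta=(\delta_1,\dots,\delta_k)$ in a neighborhood of $0$. Since each $\psi_i$ vanishes on an open neighborhood of $A$, we have $h_{K_\delta}=h_K$ on an open neighborhood of $A$, so the assumed property of $A$ yields $\mu(K_\delta)=\mu(K)$ and hence $\lambda(\mu(K_\delta))=\lambda(\mu(K))$ is independent of $\delta$. On the other hand, by the defining property of the Goodey--Weil distribution together with the multilinearity of $\GW(\lambda\circ\mu)$ (the analogue on the $\Val$-side of Lemma~\ref{lemma:valuations-on-conv:Goodey-Weil-as-derivative} and Theorem~\ref{theorem_compact_support_GW}), $\lambda(\mu(K_\delta))=\GW(\lambda\circ\mu)(h_{K_\delta}^{\otimes k})$ is a polynomial in $\delta$ whose coefficient of $\delta_1\cdots\delta_k$ equals $k!\,\GW(\lambda\circ\mu)(\psi_1\otimes\dots\otimes\psi_k)=k!\,\lambda(\GW(\mu)(\psi_1\otimes\dots\otimes\psi_k))\ne 0$. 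Since the left-hand side is constant in $\delta$ this coefficient must vanish, a contradiction; therefore $\vsupp\mu\subset A$.

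The bookkeeping (reduction to the homogeneous case, picking test sections from a nonvanishing restriction of a distribution, the functional $\lambda$) is routine. The step requiring the most care is the use of the two classical facts about the $\Val$-side Goodey--Weil embedding recalled before the proposition: that $h_K+\sum_i\delta_i\psi_i$ remains the support function of a genuine convex body for small $\delta$ (openness of smooth, positively curved bodies in the relevant $C^2$-topology on sections of $L$), and that $\delta\mapsto\mu(K_\delta)$ is polynomial with its mixed partial derivative at $0$ computing $k!\,\GW(\mu)(\psi_1\otimes\dots\otimes\psi_k)$. Both are part of the standard Goodey--Weil construction, so invoking them is all that is needed.
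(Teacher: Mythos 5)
Your argument is correct and is essentially the paper's own proof: reduce to the $k$-homogeneous case, use diagonality of $\supp\GW(\mu)$ to produce test sections supported away from $A$ with nonzero pairing, perturb the support function of a ball by $\sum_i\delta_i\psi_i$, and compare the constant value $\mu(K_\delta)=\mu(K)$ with the nonvanishing $\delta_1\cdots\delta_k$-coefficient of the polynomial $\mu(K_\delta)$. The only deviation is the Hahn--Banach step, which is superfluous here since the proposition concerns real-valued $\mu\in\Val(V)$, but it does no harm.
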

		\begin{proof}
			Again let us choose a metric on $V$ and identify $\mathbb{P}_+(V^*)\cong S(V)$, trivializing $L$.\\
			Using the homogeneous decomposition, we can assume that $\mu$ is $k$-homogeneous. Now assume that the claim was false. Then we would find functions $\phi_1,...,\phi_k\in C^\infty(S(V))$ with support contained in $S(V)\setminus A$ such that $\GW(\mu)(\phi_1\otimes...\otimes\phi_k)=1$. Consider the function $1+\sum_{i=1}^{k}\delta_i\phi_i$ on $S(V)$. For small $\delta_i>0$, it is the support function of a convex body $K_\delta$ and by definition, $h_{K_\delta}=1=h_B$ on a neighborhood of $A$, where $B$ is the unit ball in $V$, so by assumption, $\mu(K_\delta)=\mu(B)$. Note that $\mu(K_\delta)$ is a polynomial in $\delta_i>0$. The coefficient before $\delta_1...\delta_k$ is exactly $k!\GW(\mu)(\phi_1\otimes....\otimes \phi_k)=k!$, while the right side does not depend on $\delta_i>0$, i.e. the coefficient has to vanish. Thus we obtain a contradiction.
		\end{proof}
		For $A\subset \mathbb{P}_+(V^*)$ let $\Val_{A}(V)$ denote the subspace of  valuations with vertical support contained in $A$.
		\begin{corollary}
			\label{corollary_Val_subspaces_compact_support_are_Banach}
			Let $A\subset \mathbb{P}_+(V^*)$ be closed. Then $\Val_{A}(V,F)$ is closed in $\Val(V,F)$.
		\end{corollary}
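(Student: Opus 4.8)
The plan is to imitate the proof of Lemma \ref{lemma_spaces_with_fixed_support_closed_in_VConv}, with the intrinsic description of the vertical support provided by Lemma \ref{lemma_property_vertical_support} and Proposition \ref{proposition_characterization_support_Goodey_Weil_for_Val} playing the role that Proposition \ref{proposition_support_convex_valuation} played there. So I would fix a net $(\mu_\alpha)_\alpha$ in $\Val_A(V,F)$ converging to some $\mu\in\Val(V,F)$, take two convex bodies $K,L\in\mathcal{K}(V)$ with $h_K=h_L$ on an open neighborhood of $A$, and aim to show $\mu(K)=\mu(L)$; by Proposition \ref{proposition_characterization_support_Goodey_Weil_for_Val} (applied after reducing to the real-valued case, see below) this forces $\vsupp\mu\subset A$, which is exactly the statement that $\mu\in\Val_A(V,F)$.

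Since $F$ is locally convex, it is enough to prove $\lambda(\mu(K))=\lambda(\mu(L))$ for every $\lambda\in F'$. The Goodey-Weil construction for convex bodies commutes with post-composition by continuous functionals, i.e. $\GW(\lambda\circ\nu)=\lambda\circ\GW(\nu)$ for $\nu\in\Val_k(V,F)$ (exactly as recorded for $\GWVConv$ in the proof of Corollary \ref{corollary_GW_injective}); applying this degreewise gives $\vsupp(\lambda\circ\mu_\alpha)\subset\vsupp\mu_\alpha\subset A$, so $\lambda\circ\mu_\alpha\in\Val_A(V)$. Lemma \ref{lemma_property_vertical_support}, applied to the real-valued valuation $\lambda\circ\mu_\alpha$, then yields $\lambda(\mu_\alpha(K))=\lambda(\mu_\alpha(L))$ for every $\alpha$. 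Convergence in the compact-open topology gives $\mu_\alpha(K)\to\mu(K)$ and $\mu_\alpha(L)\to\mu(L)$, and passing to the limit through the continuous functional $\lambda$ gives $\lambda(\mu(K))=\lambda(\mu(L))$. As $\lambda$ was arbitrary and $F'$ separates points, $\mu(K)=\mu(L)$.

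To conclude, I would run the same $F'$-reduction once more: for each $\lambda\in F'$ the real-valued valuation $\lambda\circ\mu$ satisfies the hypothesis of Proposition \ref{proposition_characterization_support_Goodey_Weil_for_Val} (whenever $h_K=h_L$ near $A$ one has $\lambda(\mu(K))=\lambda(\mu(L))$), hence $\supp\GW(\lambda\circ\mu)=\supp(\lambda\circ\GW(\mu))\subset\Delta A$. Any test section of the relevant line bundle whose support is disjoint from $\Delta A$ is therefore annihilated by $\lambda\circ\GW(\mu)$ for every $\lambda$, and, since $\bar F'\cong F'$ separates the points of $\bar F$, also by $\GW(\mu)$; using that $A$ and hence $\Delta A$ is closed (in the compact space $\mathbb{P}_+(V^*)^k$), this means $\supp\GW(\mu)\subset\Delta A$, i.e. $\vsupp\mu\subset A$. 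Thus $\mu\in\Val_A(V,F)$ and $\Val_A(V,F)$ is closed. I do not expect a genuine obstacle here: the argument is a routine limiting argument, and the only point requiring a little care is the systematic passage between $F$-valued and real-valued valuations via $F'$, for which one needs the (already established) fact that the Goodey-Weil embedding commutes with continuous linear functionals.
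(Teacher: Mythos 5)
Your proposal is correct and follows essentially the same route as the paper, which simply proves this corollary ``as in Lemma \ref{lemma_spaces_with_fixed_support_closed_in_VConv}'': the intrinsic characterization of the vertical support (Lemma \ref{lemma_property_vertical_support} and Proposition \ref{proposition_characterization_support_Goodey_Weil_for_Val}) is applied in both directions along a convergent net, exactly as Proposition \ref{proposition_support_convex_valuation} was used there. Your additional care in reducing to real-valued valuations via $F'$ is a legitimate filling-in of a detail the paper leaves implicit, since Proposition \ref{proposition_characterization_support_Goodey_Weil_for_Val} is stated only for real-valued valuations.
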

		\begin{proof}
			 As in Lemma \ref{lemma_spaces_with_fixed_support_closed_in_VConv}.
		\end{proof}
	We are now able to describe the image of $T:\VConv(V,F)\rightarrow\Val(V^*\times\R,F)$ in the case, that $F$ admits a continuous norm. Note that by Theorem \ref{maintheorem_GW}, all valuations $\mu\in\VConv(V,F)$ have compact support in this case. We start with the following observation:
	\begin{proposition}
		\label{proposition_compatibility_supports_under_embedding_VCONV-VAL}
		For $\mu\in\VConv(V)$, $\vsupp(T(\mu))\subset P(\supp \mu)$, where 
		\begin{align*}
		P:V\rightarrow& \mathbb{P}_+(V\times\R)\\
		v\mapsto& [(v,-1)].
		\end{align*}
	\end{proposition}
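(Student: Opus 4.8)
The plan is to deduce the inclusion from the two intrinsic characterizations of support that are already available: Proposition~\ref{proposition_support_convex_valuation} for $\supp\mu$ and Proposition~\ref{proposition_characterization_support_Goodey_Weil_for_Val} for the vertical support. To avoid a clash of notation with the map $K\mapsto h_K(\cdot,-1)$ from Lemma~\ref{lemma_continuity_support_function}, write $\iota\colon V\to\mathbb{P}_+(V\times\R)$, $\iota(v):=[(v,-1)]$, for the map denoted $P$ in the statement (recall $(V^*\times\R)^*\cong V\times\R$). Since $F=\R$ admits a continuous norm, Theorem~\ref{maintheorem_GW} gives that $\supp\mu$ is compact, hence $A:=\iota(\supp\mu)$ is a compact subset of $\mathbb{P}_+(V\times\R)$. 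By Proposition~\ref{proposition_characterization_support_Goodey_Weil_for_Val}, applied to $T(\mu)\in\Val(V^*\times\R)$, it is then enough to verify the following: if $K,L\in\mathcal{K}(V^*\times\R)$ satisfy $h_K=h_L$ on an open neighbourhood of $A$, then $T(\mu)[K]=T(\mu)[L]$.

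So let $U\subset\mathbb{P}_+(V\times\R)$ be an open neighbourhood of $A$ on which $h_K$ and $h_L$ agree as sections of the relevant line bundle; concretely, $h_K=h_L$ on the open cone $\bigcup_{l\in U}l^+\subset V\times\R$. Since $\iota$ is continuous, $W:=\iota^{-1}(U)$ is an open neighbourhood of $\supp\mu$ in $V$. For every $v\in W$ the oriented line $[(v,-1)]$ lies in $U$ and the point $(v,-1)$ lies on its positive ray, so $h_K(v,-1)=h_L(v,-1)$. Hence the finite-valued convex functions $h_K(\cdot,-1)$ and $h_L(\cdot,-1)$ --- which belong to $\Conv(V,\R)$ by Lemma~\ref{lemma_continuity_support_function} --- coincide on the open neighbourhood $W$ of $\supp\mu$. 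Proposition~\ref{proposition_support_convex_valuation} therefore yields $\mu(h_K(\cdot,-1))=\mu(h_L(\cdot,-1))$, i.e.\ $T(\mu)[K]=T(\mu)[L]$. Feeding this back into Proposition~\ref{proposition_characterization_support_Goodey_Weil_for_Val} gives $\vsupp(T(\mu))\subset A=\iota(\supp\mu)$, as claimed.

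The only delicate point is the passage between a neighbourhood of $A$ in the space of oriented lines and a neighbourhood of $\supp\mu$ in $V$: one must be sure that agreement of the support functions near the compact set $A$ of oriented lines genuinely forces agreement of the functions $h_K(\cdot,-1)$, $h_L(\cdot,-1)$ on an open subset of $V$, and that the orientation convention is consistent --- the ray through $(v,-1)$ is the positive ray of $[(v,-1)]$, which is exactly how $\iota$ is set up. Once this is in place, the statement is a straightforward chaining of the two characterization results, so I do not expect any further obstacle.
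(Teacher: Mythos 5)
Your proposal is correct and follows essentially the same route as the paper: reduce via Proposition \ref{proposition_characterization_support_Goodey_Weil_for_Val} to showing $T(\mu)[K]=T(\mu)[L]$ when $h_K=h_L$ near $P(\supp\mu)$, pull the neighbourhood back (the paper uses $\pi^{-1}(U)\subset V\times\R$ as a neighbourhood of $\supp\mu\times\{-1\}$, you use $\iota^{-1}(U)\subset V$, which is equivalent), and conclude with Proposition \ref{proposition_support_convex_valuation}. Your explicit remark that $\supp\mu$ is compact, so that Proposition \ref{proposition_characterization_support_Goodey_Weil_for_Val} applies, is a detail the paper leaves implicit.
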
	
	\begin{proof}
		By Proposition \ref{proposition_characterization_support_Goodey_Weil_for_Val}, we only need to show that $T(\mu)[K]=T(\mu)[L]$ whenever $h_K$ and $h_L$ coincide on an open neighborhood of $P(\supp \mu)$. Considering $h_K$ and $h_L$ as $1$-homogeneous functions on $V\times\R$, the equality $h_K=h_L$ on an open neighborhood $U$ of $P(\supp \mu)$ implies that they coincide on the open set $\pi^{-1}(U)\subset V\times\R$, where $\pi:(V\times\R)\setminus\{0\}\rightarrow \mathbb{P}_+(V\times\R)$ is the natural projection. Obviously, this is an open neighborhood of $\supp\mu\times\{-1\}$, so we can apply Proposition \ref{proposition_support_convex_valuation}, to obtain $\mu(h_K(\cdot,-1))=\mu(h_L(\cdot,-1))$, i.e. $T(\mu)(K)=T(\mu)(L)$. The claim follows.
	\end{proof}
	\begin{theorem}
		\label{theorem_image_VCONV->Val}
		Let $F$ be a locally convex vector space that admits a continuous norm. The image of $T:\VConv_k(V,F)\rightarrow\Val_k(V^*\times\R,F)$ consists precisely of the valuations $\mu\in\Val_k(V^*\times\R,F)$ whose vertical support is contained in the negative half sphere $\mathbb{P}_+(V\times\R)_-:=\{[(y,s)]\in \mathbb{P}_+(V\times\R):s<0\}$. If $F$ is a Fr\'echet space, $T:\VConv_A(V,F)\rightarrow\Val_{P(A)}(V^*\times\R,F)$ is a topological isomorphism for any compact subset $A\subset V$.
	\end{theorem}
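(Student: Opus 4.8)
The plan is to establish the two inclusions describing the image and then deduce the topological statement. Write $P\colon V\to\mathbb{P}_+(V\times\R)$, $v\mapsto[(v,-1)]$, as in Proposition \ref{proposition_compatibility_supports_under_embedding_VCONV-VAL}; this is a homeomorphism onto the open negative half sphere $\mathbb{P}_+(V\times\R)_-$, and for $R>0$ the truncated cone $P(B_R(0))$ equals $\{[(y,s)]:s<0,\ |y|\le R|s|\}$, so these cones exhaust $\mathbb{P}_+(V\times\R)_-$ as $R\to\infty$. The inclusion ``$\subseteq$'' is immediate: for $\mu\in\VConv_k(V,F)$ the support is compact by Theorem \ref{maintheorem_GW}, and Proposition \ref{proposition_compatibility_supports_under_embedding_VCONV-VAL} gives $\vsupp T(\mu)\subseteq P(\supp\mu)\subseteq\mathbb{P}_+(V\times\R)_-$.

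For ``$\supseteq$'' fix $\nu\in\Val_k(V^*\times\R,F)$ with $\vsupp\nu\subseteq\mathbb{P}_+(V\times\R)_-$; since $\mathbb{P}_+(V\times\R)$ is compact, $\vsupp\nu$ is compact, hence contained in $P(B_R(0))$ for all $R$ large enough. For such an $R$ and $f\in\Conv(V,\R)$ let $K_{f,R}\in\mathcal{K}(V^*\times\R)$ be the body from Proposition \ref{proposition_epi_graph_support_function_convex_body} (with its bounding box enlarged so that $h_{K_{f,R}}(\cdot,-1)=f$ on $B_{R+1}$ still holds), and put $\mu(f):=\nu(K_{f,R})$. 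The observation used repeatedly is: if $K\in\mathcal{K}(V^*\times\R)$ satisfies $h_K(\cdot,-1)=f$ on $B_{R+1}$, then $h_K$ and $h_{K_{f,R}}$, being $1$-homogeneous and equal on $B_{R+1}\times\{-1\}$, agree on $P(B_{R+1})$, a neighbourhood of $\vsupp\nu$, so $\nu(K)=\nu(K_{f,R})$ by Lemma \ref{lemma_property_vertical_support}. Applied to $K=K_{f,R'}$ for a second large $R'$ this shows $\mu$ is well defined, and since $T(\mu)[K]=\mu(h_K(\cdot,-1))=\nu(K)$ for every $K$ it shows $T(\mu)=\nu$. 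Homogeneity and dual epi-translation invariance of $\mu$ come from $K_{tf,R}=tK_{f,R}$ and from $K_{f+\lambda+c,R}$ being a translate of $K_{f,R}$ (after enlarging the box), together with the matching properties of $\nu$; continuity of $\mu$ follows from continuity of $\nu$, continuity of conjugation on $\Conv(V)$, and stability of the truncation $\epi(f^*)\cap\mathrm{Box}$ under epi-convergence of $f$ (all standard, cf.\ \cite{Rockafellar_Wets:Variational_analysis}).

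It remains to see that $\mu$ is a valuation, which is the technical core. Let $f,g\in\Conv(V,\R)$ with $\min(f,g)$ again convex, so that both $\max(f,g)$ and $\min(f,g)$ lie in $\Conv(V,\R)$. From $(\min(f,g))^*=\max(f^*,g^*)$ one gets $K_{\min(f,g),R}=K_{f,R}\cap K_{g,R}$, and by the observation above $\nu(K)=\mu(\max(f,g))$ for any body $K$ with profile $\max(f,g)$ on $B_{R+1}$. The plan is to produce convex bodies $A,B\subseteq V^*\times\R$ with $A\cup B$ convex and with $(\cdot,-1)$-profiles $f$ and $g$ on $B_{R+1}$; then $h_{A\cup B}(\cdot,-1)=\max(f,g)$ and, since $A\cup B$ is convex, $h_{A\cap B}(\cdot,-1)=\min(h_A,h_B)(\cdot,-1)=\min(f,g)$ on $B_{R+1}$, so the observation identifies $\nu(A),\nu(B),\nu(A\cup B),\nu(A\cap B)$ with $\mu(f),\mu(g),\mu(\max(f,g)),\mu(\min(f,g))$, and the valuation identity for $\mu$ reduces to that for $\nu$. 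Constructing $A$ and $B$ is the main obstacle: the naive choice $A=K_{f,R}$, $B=K_{g,R}$ fails, since $\epi(f^*)\cup\epi(g^*)$ need not be convex even when $\min(f,g)$ is, so one must exploit the structure of pairs of convex functions with convex minimum to build suitable enlargements of $K_{f,R}$ and $K_{g,R}$ whose union is convex while leaving the profiles on $B_{R+1}$ unchanged. This step is where the bulk of the work lies.

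Finally assume $F$ is a Fréchet space and let $A\subseteq V$ be compact. By Proposition \ref{proposition_compatibility_supports_under_embedding_VCONV-VAL}, $T$ maps $\VConv_A(V,F)$ into $\Val_{P(A)}(V^*\times\R,F)$, and it is injective by Theorem \ref{theorem_embedding VConv->Val(VxR)}. For $\nu$ with $\vsupp\nu\subseteq P(A)$ the valuation $\mu$ built above satisfies $T(\mu)=\nu$, and $\supp\mu\subseteq A$: if $f=g$ on a neighbourhood of $A$, shrink it to a bounded open $U\supseteq A$ with $\overline{U}\subseteq B_{R+1}$; then $h_{K_{f,R}}$ and $h_{K_{g,R}}$ agree on $P(U)$, a neighbourhood of $P(A)\supseteq\vsupp\nu$, so $\mu(f)=\nu(K_{f,R})=\nu(K_{g,R})=\mu(g)$ by Lemma \ref{lemma_property_vertical_support}, and Proposition \ref{proposition_support_convex_valuation} (applied to $\lambda\circ\mu$, $\lambda\in F'$) gives $\supp\mu\subseteq A$. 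Hence $T\colon\VConv_A(V,F)\to\Val_{P(A)}(V^*\times\R,F)$ is a continuous linear bijection between Fréchet spaces — the source by Corollary \ref{corollary_Frechet-topology_for_compactly_supported_valuations}, and $\Val_{P(A)}(V^*\times\R,F)$ as a closed subspace (Corollary \ref{corollary_Val_subspaces_compact_support_are_Banach}) of the Fréchet space $\Val(V^*\times\R,F)$ — so the open mapping theorem makes $T^{-1}$ continuous, i.e.\ $T$ is a topological isomorphism.
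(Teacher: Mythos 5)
Your overall strategy coincides with the paper's: the inclusion $\vsupp T(\mu)\subset P(\supp\mu)$ for one direction, the assignment $\mu(f):=\nu(K_f)$ for a body with the right profile near $\vsupp\nu$ for the other, well-definedness via Lemma \ref{lemma_property_vertical_support}, and the open mapping theorem for the Fr\'echet statement. But there is a genuine gap: you explicitly do not prove that $\mu$ is a valuation, deferring it as "the main obstacle... where the bulk of the work lies". Worse, the reason you give for why the direct construction fails is false. If $f,g\in\Conv(V,\R)$ and $\min(f,g)$ is convex, then $\epi(f^*)\cup\epi(g^*)$ \emph{is} convex: the Legendre transform exchanges pointwise minimum and maximum on such pairs (this is the involution property from \cite{Colesanti_Ludwig_Mussnig:Hessian_valuations} quoted in the introduction), so $\epi\bigl((\max(f,g))^*\bigr)=\epi\bigl(\min(f^*,g^*)\bigr)=\epi(f^*)\cup\epi(g^*)$ and $\epi\bigl((\min(f,g))^*\bigr)=\epi(f^*)\cap\epi(g^*)$. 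Consequently the "naive" choice works, provided you truncate with one \emph{common} box large enough for all four functions $f,g,\min(f,g),\max(f,g)$ (take $c$ to be the maximum of their $C(B_{R+2})$-norms): then $K_f\cup K_g$ and $K_f\cap K_g$ are again bodies of the form covered by Proposition \ref{proposition_epi_graph_support_function_convex_body}, with profiles $\max(f,g)$ and $\min(f,g)$ on $B_{R+1}$, and the valuation identity for $\mu$ reduces to that for $\nu$. This is exactly how the paper closes the step you left open; without it (or the duality fact behind it) the surjectivity half of the theorem is unproven.

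A secondary weakness: your continuity argument for $\mu$ ("stability of the truncation $\epi(f^*)\cap\mathrm{Box}$ under epi-convergence... all standard") is not justified — epi-convergence of $f_j^*$ does not by itself give Hausdorff convergence of the truncated bodies $K_{f_j}$. The paper instead argues by contradiction: the bodies $K_{f_j}$ are uniformly bounded, Blaschke selection yields a convergent subsequence whose limit body still has profile $f$ on $B_{R+1}$, and well-definedness of $\mu$ then forces $\nu(K_{f_{j_k}})\to\mu(f)$. You should either supply such an argument or a precise reference for the stability claim.
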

	\begin{proof}
		Starting with $\mu\in\VConv_k(V,F)$, Proposition \ref{proposition_compatibility_supports_under_embedding_VCONV-VAL} shows that $T(\mu)$ has vertical support contained in $\mathbb{P}_+(V\times\R)_-$.\\
		Conversely, let $\nu\in\Val_k(V^*\times\R,F)$ be a valuation with vertical support contained in $\mathbb{P}_+(V\times\R)_-$. As $P:V\rightarrow \mathbb{P}_+(V\times\R)_-$ is a diffeomorphism, $P^{-1}(\vsupp\nu)$ is compact. \\
		Let us construct a functional $\mu$ on $\Conv(V,\R)$ as follows: Given $f\in\Conv(V,\R)$, let $K_f\in\mathcal{K}(V^*\times\R)$ be a convex body with $h_{K_f}(\cdot,-1)=f$ on some neighborhood of $P^{-1}(\vsupp\nu)$, which exists by Proposition \ref{proposition_epi_graph_support_function_convex_body}.
		Now set
		\begin{align*}
		\mu(f):=\nu(K_f).
		\end{align*}
		Note that this does not depend on the special choice of $K_f$: If $K$ is another convex body with $h_K(\cdot,-1)=f$ on some neighborhood of $P^{-1}(\vsupp\nu)$, then $h_K(\cdot,-1)=f=h_{K_f}(\cdot,-1)$ on a neighborhood of $P^{-1}(\vsupp\nu)$, i.e. $h_K=h_{K_f}$ on a neighborhood of $\vsupp\nu$, so Lemma \ref{lemma_property_vertical_support} implies $\nu(K)=\nu(K_f)$. \\
		The functional constructed this way is also a valuation: Choose a scalar product on $V$ and let $R>0$ be such that $P^{-1}(\vsupp\nu)$ is contained in $B_R$. If $\min(f,h)$ is convex, then 
		\begin{align*}
		& \epi \max(f,h)^*=\epi \min(f^*,h^*)=\epi(f^*)\cup \epi(h^*),\\
		& \epi \min(f,h)^*=\epi \max(f^*,h^*)=\epi(f^*)\cap \epi(h^*).
		\end{align*}
		For $c=\max \{||f||_{C(B_{R+2})},||h||_{C(B_{R+2})},||\min \{f,h\}||_{C(B_{R+2})},||\max\{f,h\}||_{C(B_{R+2})}\}$ choose
		\begin{align*}
		K_f&=\epi(f^*)\cap \{|y|\le 2(n+2)c,|t|\le 3(n+2)c\},\\
		K_h&=\epi(h^*)\cap \{|y|\le 2(n+2)c,|t|\le 3(n+2)c\}.
		\end{align*}
		Proposition \ref{proposition_epi_graph_support_function_convex_body} shows that
		\begin{align*}
		&\max(f,h)= h_{K_f\cup K_h}(\cdot,-1), &&\min(f,h)=h_{K_f\cap K_h}(\cdot,-1)\quad \text{on } B_{R+1},
		\end{align*}
		so the definition of $\mu$ implies
		\begin{align*}
		\mu(\max(f,h))+\mu(\min(f,h))=&\nu(K_f\cup K_h)+\nu(K_f\cap K_h)\\
		=&\nu(K_f)+\nu(K_h)=\mu(f)+\mu(h).
		\end{align*}
		Furthermore, $\mu$ is invariant under the addition of linear or constant functions, as $\nu$ is translation invariant. It remains to show that $\mu$ is continuous. We will argue by contradiction.\\
		Let $(f_j)_j$ be a sequence in $\Conv(V,\R)$ converging to $f\in\Conv(V,\R)$ uniformly on compact subsets and assume that there exists $\epsilon>0$ such that $|\mu(f_j)-\mu(f)|>\epsilon$ for all $j\in\mathbb{N}$ for some continuous semi-norm $|\cdot|$ on $F$. Recall that we have chosen $R>0$ such that $P^{-1}(\vsupp\nu)\subset B_R$. As the set $\{f_j|j\in\mathbb{N}\}\cup\{f\}$ is compact, these functions are bounded on $B_{R+2}$ by some constant $c>0$. Using Proposition \ref{proposition_epi_graph_support_function_convex_body}, we see that the convex bodies 
		\begin{align*}
		K_{f_j}&=\epi(f_j^*)\cap \{|y|\le 2(R+2)c,|t|\le 3(R+2)c\},\\
		K_f&=\epi(f^*)\cap \{|y|\le 2(R+2)c,|t|\le 3(R+2)c\},
		\end{align*}
		satisfy $h_{K_{f_j}}=f_j$ and $h_{K_f}=f$ on $B_{R+1}$. By construction, the sequence $(K_{f_j})_j$ of convex bodies is bounded, so by the Blaschke selection theorem we find a subsequence $K_{f_{j_k}}$ converging to some convex body $K\in\mathcal{K}(V^*\times\R,F)$. Then $h_K(\cdot,-1)=h_{K_f}(\cdot,-1)$ on $B_{R+1}$, as $h_{K_{f_{j_k}}}(\cdot,-1)=f_{j_k}$ on $B_{R+1}$ and $f_{j}\rightarrow f$. As $\mu(f)$ does not depend on the special choice of the convex body, we deduce that
		\begin{align*}
		\lim\limits_{k\rightarrow\infty}\mu(f_{j_k})=\lim\limits_{k\rightarrow\infty}\nu(K_{f_{j_k}})=\nu(K)=\nu(K_f)=\mu(f).
		\end{align*}
		This is a contradiction to $|\mu(f_j)-\mu(f)|>\epsilon$ for all $j\in\mathbb{N}$. Thus $\mu$ has to be continuous.\\
		We have constructed $\mu\in\VConv(V,F)$ with $T(\mu)=\nu$ and the support of $\mu$ is obviously contained in $P^{-1}(\vsupp\nu)$.\\
		
		Now let $A\subset V$ be compact, $F$ a Fr\'echet space. Observe that the restriction $T:\VConv_A(V,F)\rightarrow\Val_{P(A)}(V^*\times\R)$ is a well defined, injective, and continuous map between Fr\'echet spaces by Corollary \ref{corollary_Frechet-topology_for_compactly_supported_valuations} and Theorem \ref{theorem_embedding VConv->Val(VxR)}. By the preceding discussion it is also surjective, so Banach's inversion theorem implies that $T^{-1}:\Val_{P(A)}(V^*\times\R,F)\rightarrow\VConv_A(V,F)$ is continuous, i.e. $T:\VConv_A(V,F)\rightarrow\Val_{P(A)}(V^*\times\R,F)$ is a topological isomorphism.
	\end{proof}	
	Note that Proposition \ref{mainproposition_approximation_using_sequences} shows that the inverse $T^{-1}:\mathrm{Im} T\rightarrow \VConv(V;F)$ is not continuous if $F$ admits a continuous norm: If $(\mu_j)_j$ is a sequence in $\Val(V^*\times\R,F)$ that converges to zero such that $\mu_j\in \mathrm{Im} T$ and such that the distance of the vertical supports of these valuations to the set $\{[(v,s)]\in\mathbb{P}_+(V\times\R): s=0\}$ converges to zero, then $T^{-1}(\mu_j)$ defines a sequence of valuations in $\VConv(V,F)$ with unbounded supports. Thus the sequence cannot converge in $\VConv(V,F)$.

\section{Valuations on special cones of convex functions}
	\subsection{Restrictions on the support}
	\label{section_description_valuations_on_cones_with_support}
	In this section we are going to relate valuations on a regular cone $C\subset \Conv(V)$ to valuations on finite-valued convex functions that satisfy certain restrictions on their support. As usual, let $F$ denote a locally convex vector space.
	\begin{theorem}
		\label{maintheorem_support_valuations_non-finite_functions}
		Let $C\subset\Conv(V)$ be a regular cone containing $\Conv(V,\R)$. Consider the sets $B_C:=\bigcap\limits_{f\in C}\overline{\dom f}$, $U_C:=\mathrm{int}B_C$. Then the following holds:
		\begin{enumerate}
			\item The support of any valuation in $\VConv(C;V,F)$ is contained in $B_C$. 
			\item If $F$ admits a continuous norm, then every valuation in $\VConv(V,F)$ with support contained in $U_C$ extends uniquely to an element of $\VConv(C;V,F)$.
		\end{enumerate}
		If $F$ admits a continuous norm, we thus have inclusions 
		\begin{align*}
		\VConv_{U_C}(V,F)\hookrightarrow\VConv(C;V,F)\hookrightarrow \VConv_{B_C}(V,F).
		\end{align*}
	\end{theorem}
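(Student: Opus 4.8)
The plan is to prove the two assertions separately. In both we may assume $F=\R$ (replace $\mu$ by $\lambda\circ\mu$ for $\lambda\in F'$, using $\GWVConv(\lambda\circ\mu)=\lambda\circ\GWVConv(\mu)$ and that $F$ is locally convex) and, by Theorem~\ref{maintheorem_homogeneous_decomposition} and the definition of the support, that $\mu$ is $k$-homogeneous. Note that $B_C$ is closed and convex, being an intersection of the closed convex sets $\overline{\dom f}$, and that $\GWVConv(\mu)$, hence $\supp\mu$, depends only on $\mu|_{\Conv(V,\R)}$, so Proposition~\ref{proposition_support_convex_valuation} applies to it.

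For (1), suppose $\mu\in\VConv_k(C;V,\R)$ with $k\ge 1$ has a point $x_0\in\supp\mu\setminus B_C$. Choose $f_0\in C$ with $x_0\notin\overline{\dom f_0}$ and a ball $W\ni x_0$ with $\overline{W}\cap\overline{\dom f_0}=\emptyset$, so $\dom f_0\subset V\setminus\overline{W}$. Since the closed set $V\setminus W$ does not contain $\supp\mu$, Proposition~\ref{proposition_support_convex_valuation} yields $g_1,g_2\in\Conv(V,\R)$ that agree on an open neighbourhood of $V\setminus W$ (hence on $\dom f_0$) but with $\mu(g_1)\ne\mu(g_2)$. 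Pick $f_0^j\in\Conv(V,\R)$ (e.g.\ $\reg_{1/j}(f_0)$) epi-converging to $f_0$. For each fixed $s>0$, $sf_0^j\to sf_0$ gives $g_i+sf_0^j\to g_i+sf_0$ in $C$ by Lemma~\ref{lemma:valuations-on-conv:continuity-addition}, and $g_1+sf_0=g_2+sf_0$ as functions (they agree on $\dom f_0$ and are $+\infty$ off it), so continuity of $\mu$ yields
\[
R^j(s):=\mu(g_1+sf_0^j)-\mu(g_2+sf_0^j)\longrightarrow 0 \qquad (j\to\infty)
\]
for every $s>0$. As $R^j$ is a polynomial of degree $\le k$ in $s\ge 0$ (Corollary~\ref{corollary_boundedness_degree_polynomial_valuation}), pointwise convergence to $0$ on $(0,\infty)$ forces its coefficients to $0$, so $R^j(0)\to 0$; but $R^j(0)=\mu(g_1)-\mu(g_2)$ is constant in $j$, a contradiction. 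Hence $\supp\mu\subset B_C$.

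For (2), let $\mu\in\VConv(V,F)$ with $\supp\mu\subset U_C$; by Theorem~\ref{maintheorem_GW}, $\supp\mu$ is compact. Since $U_C=\mathrm{int}\,B_C\subset\mathrm{int}\,\overline{\dom f}=\mathrm{int}\,\dom f$ for all $f\in C$, every $f\in C$ is finite and bounded on any compact convex neighbourhood $N$ of $\supp\mu$ with $N\subset U_C$; the proof of Proposition~\ref{proposition_epi_graph_support_function_convex_body} then works with the ball replaced by such an $N$ and produces a convex body $K_f\subset V^*\times\R$ with $h_{K_f}(\cdot,-1)=f$ on $N$. Set $g_f:=h_{K_f}(\cdot,-1)\in\Conv(V,\R)$ and $\widehat\mu(f):=\mu(g_f)$; by Proposition~\ref{proposition_support_convex_valuation} this equals $\mu(g)$ for every $g\in\Conv(V,\R)$ agreeing with $f$ on a neighbourhood of $\supp\mu$, so it is independent of the choices, restricts to $\mu$ on $\Conv(V,\R)$, and is dually epi-translation invariant (apply it to $g_f+\lambda+c$). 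That $\widehat\mu$ is a valuation is the crux, and follows the proof of Theorem~\ref{theorem_image_VCONV->Val}: for $f_1,f_2\in C$ with $\max(f_1,f_2),\min(f_1,f_2)\in C$, choose $N$ and a box large enough for all four functions and put $K_i:=\epi f_i^*\cap(\text{box})$; since $\min(f_1,f_2)$ is convex one has $\epi\max(f_1,f_2)^*=\epi f_1^*\cup\epi f_2^*$ and $\epi\min(f_1,f_2)^*=\epi f_1^*\cap\epi f_2^*$, so $K_1\cup K_2$ and $K_1\cap K_2$ are convex bodies realizing $\max(f_1,f_2)$, $\min(f_1,f_2)$ on $N$, and since $K_1\cup K_2$ is convex, $\min(g_{f_1},g_{f_2})=h_{K_1\cap K_2}(\cdot,-1)$ and $\max(g_{f_1},g_{f_2})=h_{K_1\cup K_2}(\cdot,-1)$ lie in $\Conv(V,\R)$, whence the valuation property of $\mu$ gives that of $\widehat\mu$. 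For continuity, if $f_j\to f$ in $C$ then $f_j\to f$ uniformly on $N$ (Proposition~\ref{proposition_convergence_finite_convex_functions}), the bodies $K_{f_j}$ eventually lie in a fixed box, and along any Hausdorff-convergent subsequence $K_{f_{j_l}}\to K_\infty$ one gets $g_{f_{j_l}}\to h_{K_\infty}(\cdot,-1)$ in $\Conv(V,\R)$ with $h_{K_\infty}(\cdot,-1)=f$ on $N$, so $\widehat\mu(f_{j_l})=\mu(g_{f_{j_l}})\to\mu(h_{K_\infty}(\cdot,-1))=\widehat\mu(f)$; hence $\widehat\mu(f_j)\to\widehat\mu(f)$. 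Uniqueness of the extension is clear since $\Conv(V,\R)$ is dense in $C$ (Proposition~\ref{proposition_properties_LIpschitz_regularization}).

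Finally, the displayed chain of inclusions: the restriction map $\VConv(C;V,F)\to\VConv(V,F)$ is continuous and injective (density of $\Conv(V,\R)$) with image in $\VConv_{B_C}(V,F)$ by (1), and $\mu\mapsto\widehat\mu$ of (2) embeds $\VConv_{U_C}(V,F)$ into $\VConv(C;V,F)$, its composition with the restriction being the inclusion $\VConv_{U_C}(V,F)\subset\VConv_{B_C}(V,F)$ (valid since $U_C\subset B_C$). I expect the chief difficulty in (1) to be recognising that perturbations supported in $W$ are useless — they are absorbed by the value $+\infty$ of functions in $C$ that are infinite near $x_0$ — which forces the use of genuinely finite approximations $f_0^j$ of $f_0$ and the interchange of "polynomial in $s$" with "limit in $j$"; and in (2) the verification of the valuation property, which must be routed through the Legendre-dual/convex-body description so that the minimum $\min(g_{f_1},g_{f_2})$ of the chosen finite representatives stays convex.
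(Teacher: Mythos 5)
Your proof is correct, but both halves take a genuinely different route from the paper's. For part (1) the paper works directly with the Goodey--Weil distribution: for test functions $\phi_i$ supported in small balls of $V\setminus B_C$ it builds, by compactness and the cone property, finite sums $f_i\in C$ that are identically $+\infty$ on those balls, Lipschitz-regularizes them, and lets the alternating-sum representation of $\GWVConv(\mu)$ in terms of the polarization telescope to zero. You instead fix a single point $x_0\in\supp\mu\setminus B_C$, use the intrinsic characterization of the support (Proposition \ref{proposition_support_convex_valuation}) to produce $g_1,g_2$ with $\mu(g_1)\ne\mu(g_2)$ that agree off a ball $W$ on which one function $f_0\in C$ is already $+\infty$, and then pass the identity $\mu(g_1+sf_0)=\mu(g_2+sf_0)$ down to $s=0$ via the uniform degree bound of Corollary \ref{corollary_boundedness_degree_polynomial_valuation}; this avoids the partition of unity and the finite-sum construction at the price of leaning on Proposition \ref{proposition_support_convex_valuation} (which precedes the theorem, so this is legitimate). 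For part (2) the paper defines the extension as $\mu'(f):=\lim_{r\to0}\mu(\reg_r f)$, noting that $\reg_r f=f$ near $\supp\mu$ for small $r$, and obtains the valuation property directly from Proposition \ref{proposition_properties_LIpschitz_regularization} v; you route everything through convex-body representatives $K_f$ as in the proof of Theorem \ref{theorem_image_VCONV->Val}, which requires the (routine but worth stating) extension of Proposition \ref{proposition_epi_graph_support_function_convex_body} from balls to a compact convex neighbourhood $N\subset U_C$ and from finite-valued $f$ to $f\in C$, using that regularity of $C$ gives $U_C\subset\mathrm{int}\,\overline{\dom f}=\mathrm{int}\,\dom f$. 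The two constructions agree by Proposition \ref{proposition_support_convex_valuation}; the Lipschitz-regularization route is shorter for the valuation property and for continuity, while yours makes the connection to Theorem \ref{theorem_image_VCONV->Val} explicit. One minor point: Proposition \ref{proposition_support_convex_valuation} is stated for real-valued valuations, so where you apply it to $F$-valued $\mu$ (well-definedness of $\widehat\mu$) you should say that you compose with $\lambda\in F'$ and use that $\supp(\lambda\circ\mu)\subset\supp\mu$, as the paper does in analogous places.
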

	\begin{proof}
		For the first statement, consider the Goodey-Weil distribution of $\mu$ and let $\phi_1,\dots,\phi_k\in C^\infty_c(V\setminus B_C)$. We need to show $\GWVConv(\mu)(\phi_1\otimes \dots\otimes \phi_k)=0$. Using a partition of unity, we can assume that $\supp\phi_i\subset U_\epsilon(x_i)$ for some $x_i\in V\setminus B_C$ and that $B_{\epsilon}(x_i)\subset V\setminus B_C$. We claim that every point $y\in B_\epsilon(x_i)$ has a neighborhood where some $f_{x_i}\in C$ is identical to $\infty$. Indeed, if $y\in B_\epsilon(x_i)$ is a point where the assertion is violated, then $y\in \overline{\dom f}$ for all $f\in C$. Thus $y\in B_C$, which is a contradiction to $y\in B_{\epsilon}(x_i)\subset V\setminus B_C$.  As $B_\epsilon(x_i)$ is compact, we can thus find a finite number of functions $f_{1,i},...,f_{j,i}\in C$, such that $f_i:=\sum_{l=1}^{j}f_{l,i}=\infty$ on $B_\epsilon(x_i)$.\\
		The Lipschitz regularization $f_{i,r}:=\reg_rf_i$ belongs to $\Conv(V,\R)$ for all $r>0$ small enough. Let $h_i\in\Conv(V,\R)$ be a convex function such that $h_i+\phi_i\in\Conv(V,\R)$. Then $\tilde{h}_{i,r}:=f_{i,r}+h_i\in C$ satisfies $\tilde{h}_{i,r}+\phi_i\in C$ as well, so
		\begin{align*}
		&\GWVConv(\mu)(\phi_1\otimes\dots\phi_k)\\
		=&\sum\limits_{l=0}^k(-1)^{k-l}\frac{1}{(k-l)!l!}\sum_{\sigma\in S_k}\bar{\mu}\left(\tilde{h}_{\sigma(1),r}+\phi_{\sigma(1)},...,\tilde{h}_{\sigma(l),r}+\phi_{\sigma(l)},\tilde{h}_{\sigma(l+1),r},...,\tilde{h}_{\sigma(k),r}\right)
		\end{align*}
		for all $r>0$ sufficiently small. Of course, $\tilde{h}_{i,r}$ epi-converges to $f_i+h_i$ for $r\rightarrow0$ and $\tilde{h}_{i,r}+\phi_i$ epi-converges to $f_i+h_i+\phi_i=f_i+h_i$, as $f_i=\infty$ on the support of $\phi_i$. The joint continuity of $\bar{\mu}$ implies
		\begin{align*}
		\GWVConv(\mu)(\phi_1\otimes\dots\phi_k)=& \sum\limits_{l=0}^k(-1)^{k-l}\frac{1}{(k-l)!l!}\sum_{\sigma\in S_k}\bar{\mu}(f_{\sigma(1)}+h_{\sigma(1)},...,f_{\sigma(k)}+h_{\sigma(k)})\\
		=&\bar{\mu}(f_1+h_1,...,f_k+h_k)\sum\limits_{l=0}^k(-1)^{k-l}\binom{k}{l}=0.
		\end{align*}
		\\
		For the second statement, let $\mu\in\VConv_k(V)$ be a valuation with support in $U_C$. If $f\in C$ is any function, it is finite and thus continuous on $U_C$. In particular, it is bounded on a compact neighborhood $A\subset U_C$ of the support of $\mu$ by definition of $U_C$. Taking a smaller neighborhood $U$ of the support of $\mu$ with $\bar U\subset  \mathrm{int}A$, Proposition \ref{proposition_convex_functions_local_lipschitz_constants} implies that $f$ is Lipschitz continuous on $U$. In particular, any subgradient of $f$ on $U$ has norm bounded by the Lipschitz constant. Proposition \ref{proposition_properties_LIpschitz_regularization} iv. implies that there exists $r_0>0$ such that $\reg_rf=f$ on $U$ for all $0<r\le r_0$. By Proposition \ref{proposition_support_convex_valuation} this shows that $\mu(\reg_rf)$ does not depend on $0<r\le r_0$ and thus
		\begin{align*}
			\mu'(f):=\lim\limits_{r\rightarrow 0}\mu(\reg_rf) 
		\end{align*}
		defines an extension of $\mu$ to $C$. Due to Proposition  \ref{proposition_properties_LIpschitz_regularization} v., it is a valuation. We need to show that this extension is continuous. As the topology on $C$ is metrizable, we only need to show that $\mu'$ is sequentially continuous. Let $(f_j)_j$ be a sequence in $C$ epi-converging to $f\in C$. Then all functions are finite on $U_C$ and thus they converge uniformly on the compact set $A\subset U_C$. The estimate in Proposition \ref{proposition_convex_functions_local_lipschitz_constants} shows that $\{f_j \ : \ j\in\mathbb{N}\}\cup \{f\}$ is uniformly Lipschitz continuous on $U$, so Proposition \ref{proposition_properties_LIpschitz_regularization} iii. implies that there exists $r_0>0$ such that $\reg_rf_j=f_j$ and $\reg_rf=f$ on $U$ for all $0<r\le r_0$ independent of $j\in\mathbb{N}$. In particular,  using Proposition \ref{proposition_support_convex_valuation} we see that there exists $r_0>0$ such that $\mu(\reg_r f)$ and $\mu(\reg_r f_j)$ do not depend on $0<r\le r_0$. As $\reg_r f_j\rightarrow \reg_r f$ for $j\rightarrow\infty$ and all $r$ sufficiently small, we obtain 
		\begin{align*}
			\mu(\reg_rf)=\lim\limits_{j\rightarrow\infty}\mu(\reg_rf_j)\quad \text{for all $r$ sufficiently small}.
		\end{align*}
		However, $\mu(\reg_r f_j)$ and $\mu(\reg_r f)$ are constant in $r$ for $0<r\le r_0$ independent of $j\in\mathbb{N}$, so we conclude $\mu'(f)=\lim\limits_{j\rightarrow\infty}\mu'(f_j)$.\\
		Obviously, the inclusion constructed this way is injective.\\
		
	\end{proof}
	Let us show that the both inclusions in Theorem \ref{maintheorem_support_valuations_non-finite_functions} are strict in general:\\
	Define $\mu(f):=f(0)+f(2)-2f(1)$ for $f\in\Conv(\R,\R)$. It is easy to see that $\mu$ is a dually epi-translation invariant valuation with support contained in $\{0,1,2\}$.\\
	For the first inclusion, let $C$ be the regular cone generated by $\Conv(\R,\R)$ and the \emph{convex indicator functions} $I^\infty_{[-\frac{1}{n},\infty)}$ for all $n\in\mathbb{N}$, where the convex indicator function of a closed convex set $K\subset \R$ is defined by
		\begin{align*}
		I^\infty_K(x):=\begin{cases}
		0 & x\in K,\\
		\infty & x\notin K.
		\end{cases}
	\end{align*}
	Then $U_C=(0,\infty)$ and any $f\in C$ satisfies $\supp\mu\subset \mathrm{int}\dom(f)$. Now let $(f_j)_j$ be a sequence in $C$ epi-converging to $f$. Due to Proposition \ref{proposition_convergence_finite_convex_functions}, the sequence converges locally uniformly on the interior of $\dom f$, so in particular on $\{0,1,2\}$, i.e. $\mu(f_j)=f_j(0)+f_j(2)-2f_j(1)\rightarrow f(0)+f(2)-2f(1)$. Thus we can extend $\mu$ continuously to $C$ by setting $\mu(f):=f(0)+f(2)-2f(1)$ for $f\in C$.\\
	
	For the second inclusion, let $C\subset\Conv(\R)$ be the regular cone generated by $\Conv(\R,\R)$ and the convex indicator $I^\infty_{[0,\infty)}$. Consider the sequence $(f_j)_j$ in $C$ given by
	\begin{align*}
	f_j(x)=j^2\max(\frac{1}{j}-x,0)= \begin{cases}
	j-j^2x & x\le\frac{1}{j},\\
	0 & x>\frac{1}{j}.
	\end{cases}
	\end{align*}
	Using Proposition \ref{proposition_convergence_finite_convex_functions} again, we see that $(f_j)_j$ epi-converges to $I^\infty_{[0,\infty)}$, but $\mu(f_j)=j$ for all $j\in\mathbb{N}$, so $\mu$ does not extend to $C$ by continuity.\\
	
	The restrictions on the support apply in particular to cones that are invariant under large subgroups of the affine group.
	\begin{corollary}
		\label{corollary_existence_non_trivial_valuations}
		Let $C\subset\Conv(V)$ be a regular cone containing $\Conv(V,\R)$ such that $C$ is invariant with respect to translations or $\mathrm{SL}(V)$ for $\dim V\ge 2$. If $C$ contains a non-finite convex function, then any dually epi-translation invariant valuation is constant.
	\end{corollary}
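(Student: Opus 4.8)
The plan is to deduce the statement from the support restriction in Theorem~\ref{maintheorem_support_valuations_non-finite_functions} together with the non-concentration result of Proposition~\ref{proposition_no_concentration_of_support_in_1_point}. Fix $\mu\in\VConv(C;V,F)$. By part (1) of Theorem~\ref{maintheorem_support_valuations_non-finite_functions}, $\supp\mu\subseteq B_C:=\bigcap_{f\in C}\overline{\dom f}$, so the whole point will be to show that $B_C$ is contained in a single point; Proposition~\ref{proposition_no_concentration_of_support_in_1_point} then immediately forces $\mu$ to be constant. Note that this route needs no continuous norm on $F$, since part (1) of Theorem~\ref{maintheorem_support_valuations_non-finite_functions} and Proposition~\ref{proposition_no_concentration_of_support_in_1_point} both hold for arbitrary locally convex $F$.

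To control $B_C$ I would first pick a non-finite function $g\in C$ and record that $D:=\overline{\dom g}$ is a proper closed convex subset of $V$: indeed $\dom g\ne V$, and since $C$ is regular $\mathrm{int}\,\dom g$ is a nonempty open convex set which cannot be all of $V$, hence it lies in an open half-space and therefore so does $D=\overline{\mathrm{int}\,\dom g}$. Then I would split into the two invariance cases. If $C$ is translation invariant, then for each $v\in V$ the function $x\mapsto g(x+v)$ lies in $C$ with domain $\dom g-v$, so $B_C\subseteq\bigcap_{v\in V}(D-v)$; a point $z$ in this intersection would satisfy $z+V\subseteq D$, i.e.\ $D=V$, which is impossible, so $B_C=\emptyset$. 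If $C$ is $\mathrm{SL}(V)$-invariant with $\dim V\ge 2$, then for each $A\in\mathrm{SL}(V)$ the function $x\mapsto g(Ax)$ lies in $C$ with closed domain $A^{-1}(D)$, whence $B_C\subseteq\bigcap_{A\in\mathrm{SL}(V)}A(D)$; since $\mathrm{SL}(V)$ acts transitively on $V\setminus\{0\}$ for $\dim V\ge 2$, any nonzero point of this intersection would force $V\setminus\{0\}\subseteq D$ and hence $D=V$ (as $D$ is closed), again a contradiction, so $B_C\subseteq\{0\}$.

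In either case $\supp\mu$ is contained in a one-point set (the empty set, respectively $\{0\}$), and Proposition~\ref{proposition_no_concentration_of_support_in_1_point} then shows that $\mu$ is constant. Since the homogeneous components of $\mu$ all have support in $B_C$, this covers a general (not necessarily homogeneous) $\mu\in\VConv(C;V,F)$.

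I do not expect a serious obstacle: the argument is essentially a bookkeeping combination of results already established. The two points that require a little care are verifying that $\overline{\dom g}\ne V$ for a non-finite $g$ (where regularity of $C$ is used), and observing that in the $\mathrm{SL}(V)$-case the origin may genuinely lie in $B_C$ — for instance if $C$ contains the convex indicator of a cone with apex at $0$ — so that one cannot conclude $B_C=\emptyset$ directly and must instead appeal to Proposition~\ref{proposition_no_concentration_of_support_in_1_point} to exclude a one-point support.
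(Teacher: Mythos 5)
Your proposal is correct and follows essentially the same route as the paper: the paper's proof likewise combines the inclusion $\supp\mu\subset B_C$ from Theorem \ref{maintheorem_support_valuations_non-finite_functions} with the observation that the invariance of $C$ forces $B_C$ to be empty or $\{0\}$, and then invokes Proposition \ref{proposition_no_concentration_of_support_in_1_point}. You merely spell out in more detail why $\overline{\dom g}\ne V$ for a non-finite $g\in C$ and why the group action collapses $B_C$, which the paper leaves implicit.
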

	\begin{proof}
		In both cases $B_C$ is either empty or contains only the origin. But for any $1\le k\le n$ there are no non-trivial valuations with this support due to Proposition \ref{proposition_no_concentration_of_support_in_1_point}. Thus the only valuations are the constant valuations.
	\end{proof}

	Let us see that in special cases we have an equality for the first inclusion in Theorem \ref{maintheorem_support_valuations_non-finite_functions}.
	\begin{proposition}
		\label{proposition_support_valuations_on_open_set}
		Let $U\subset V$ be an open convex set, $C_U:=\{f\in\Conv(V) \ : \ f|_U<\infty\}$, and $F$ a locally convex vector space. Then the support of any valuation $\mu\in \VConv(C_U;V,F)$ is contained in $U$.
	\end{proposition}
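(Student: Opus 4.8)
The plan is to combine the support bound of Theorem~\ref{maintheorem_support_valuations_non-finite_functions} with a limiting argument that exploits the abundance of functions in $C_U$ near $\partial U$. First I would identify $B_{C_U}$: every $f\in C_U$ satisfies $\dom f\supseteq U$, hence $\overline{\dom f}\supseteq\overline U$, while the convex indicator $I^\infty_{\overline U}$ lies in $C_U$ (it equals $0$ on $U$) and has closed domain $\overline U$; therefore $B_{C_U}=\overline U$, and Theorem~\ref{maintheorem_support_valuations_non-finite_functions}(1) already gives $\supp\mu\subseteq\overline U$. It then remains to show $\supp\mu\cap\partial U=\emptyset$.

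Next I would establish the auxiliary ``forgetfulness'' fact that $\mu(g)=\mu(g+I^\infty_{\overline U})$ for every $g\in C_U$, equivalently that $\mu(f)$ depends only on $f|_{\overline U}$. Given a closed supporting half-space $H^-$ of $U$ with affine defining function $\ell\le 0$ on $\overline U$, the wedges $t\max(\ell,0)$ vanish on $\overline U$, lie in $\Conv(V,\R)\subseteq C_U$, and $g+t\max(\ell,0)$ epi-converges, as $t\to\infty$, to $g+I^\infty_{H^-}$, which again lies in $C_U$ since $\overline U\subseteq H^-$. As $t\mapsto\mu(g+t\max(\ell,0))$ is a polynomial (Corollary~\ref{corollary_boundedness_degree_polynomial_valuation}) that converges, it must be constant, so $\mu(g)=\mu(g+I^\infty_{H^-})$. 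Intersecting over a countable family of supporting half-spaces with intersection $\overline U$ and using the continuity of $\mu$ along the resulting monotone sequence of indicator functions yields $\mu(g)=\mu(g+I^\infty_{\overline U})$.

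The crux is to rule out $x_0\in\supp\mu\cap\partial U$. Assume, by the homogeneous decomposition, that $\mu\in\VConv_k(C_U;V,F)$ with $k\ge 1$, and suppose $x_0\in\supp\mu\cap\partial U$; then $(x_0,\dots,x_0)\in\supp\GWVConv(\mu)$, so for every $\epsilon>0$ there is $\psi_\epsilon\in C^\infty_c(B_\epsilon(x_0))$ with $\GWVConv(\mu)(\psi_\epsilon^{\otimes k})\ne 0$, and by Lemma~\ref{lemma:valuations-on-conv:Goodey-Weil-as-derivative} this value is $\tfrac1{k!}$ times the coefficient of $\delta^k$ in the polynomial $\delta\mapsto\mu(q+\delta\psi_\epsilon)$, for any strictly convex $q\in\Conv(V,\R)\cap C^\infty(V)$. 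Fixing a supporting functional $y$ at $x_0$, I would then mimic the explicit example preceding the proposition: construct a sequence $f_j\in\Conv(V,\R)\subseteq C_U$ of convex functions that are increasingly steep along the outward normal direction $y$ near $x_0$, arranged so that $f_j|_U$ converges locally uniformly on $U$ (hence $f_j$ epi-converges in $C_U$ to a function whose domain is contained in a supporting half-space of $U$), while the scaling is tuned so that $\mu(f_j)$ -- whose growth is governed through the displayed derivative formula by the non-vanishing Goodey--Weil mass at $x_0$ -- diverges. This contradicts the continuity of $\mu$ on $C_U$, so $\supp\mu\subseteq U$. For a general locally convex $F$ one reduces to the above by post-composing with $\lambda\in F'$ and using $\GWVConv(\lambda\circ\mu)=\lambda\circ\GWVConv(\mu)$.

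The main obstacle is the last step. Convexity forbids simply rescaling the test functions $\psi_\epsilon$, and one cannot ``restrict $\mu$ to a compact subset of $U$'' by an epi-limit, since any such limit that is $+\infty$ on part of $U$ falls outside $C_U$ and the continuity of $\mu$ gives no information there. The argument must therefore realize the Goodey--Weil mass concentrated at $x_0$ by genuine finite convex functions whose singular behaviour is aligned with a supporting hyperplane at $x_0$, and must balance the blow-up rate of $\mu(f_j)$ against the rate at which $f_j|_U$ flattens near the boundary; showing these rates to be incompatible is the technical heart of the proof and is where the specific structure of the cone $C_U$ enters.
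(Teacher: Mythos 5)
Your first two steps are sound: the identification $B_{C_U}=\overline U$ together with Theorem \ref{maintheorem_support_valuations_non-finite_functions}(1) does give $\supp\mu\subseteq\overline U$ (this is exactly how the paper starts), and your ``forgetfulness'' lemma via the constancy of the polynomial $t\mapsto\mu(g+t\max(\ell,0))$ is a correct, if unused, observation. The problem is the third step, which you yourself flag as the ``technical heart'': it is precisely the content of the proposition, and your sketch does not contain the idea needed to carry it out. You propose to produce a sequence $f_j\in\Conv(V,\R)$ converging in $C_U$ with $\mu(f_j)$ divergent; but $\GWVConv(\mu)$ only controls $\mu$ through derivatives of $\delta\mapsto\mu(q+\delta\psi)$ at smooth perturbations, and extracting from a non-vanishing distributional mass at $x_0$ a quantitative lower bound on $|\mu(f_j)|$ along wedge-like non-smooth $f_j$ is not something the tools in the paper provide. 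As stated, the argument is a plan, not a proof.

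The mechanism the paper actually uses is different and involves no divergence. After normalizing $x_0=0$ and $\overline U\subset H^-:=[0,\infty)\times\R^{n-1}$, one picks $\phi_1^j,\dots,\phi_k^j\in C_c^\infty(U_{1/j}(0))$ with $\GWVConv(\mu)(\phi_1^j\otimes\cdots\otimes\phi_k^j)=1$, and then writes each $\phi_i^j$ as a difference $f_i^j-c_jh_j$ of two elements of $C_U$, where $h_j$ equals $+\infty$ on the open complement of $H^-$ and on $H^-$ is a truncated quadratic vanishing on a ball of radius $j$ but still uniformly convex near $\supp\phi_i^j$, so that $f_i^j:=c_jh_j+\phi_i^j$ is convex. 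The point is that \emph{both} $f_i^j$ and $c_jh_j$ epi-converge in $C_U$ to the same limit $I^\infty_{H^-}\in C_U$. Feeding this into the representation \eqref{equation_representation_GW_using_polarization} of $\tilde\mu(\phi_1^j,\dots,\phi_k^j)$ and using the joint continuity of the polarization $\bar\mu$ (Corollary \ref{corollary_continuity_polarization}), the pairing converges to the telescoping sum $\sum_{l=0}^k(-1)^{k-l}\binom{k}{l}\,\bar\mu(I^\infty_{H^-},\dots,I^\infty_{H^-})=0$, contradicting the normalization to $1$. This is the same telescoping device as in the compactness-of-support proof (Theorem \ref{theorem_compact_support_GW}); the role of the cone $C_U$ is only that $I^\infty_{H^-}$ and the approximants $h_j$ lie in it because $\overline U\subset H^-$. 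Without this (or an equivalent) construction, your proof has a genuine gap at its decisive step.
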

	\begin{proof}
		This is trivial for $U=V$, thus let us assume $U\ne V$. Due to Theorem \ref{maintheorem_support_valuations_non-finite_functions}, it is enough to show that the support of any valuation $\mu\in\VConv(C_U;V,F)$ does not contain any point $x_0\in\partial U$. By considering $\lambda\circ\mu$ for all $\lambda\in F'$, it is also sufficient to consider real-valued valuations. Let us assume that $\mu\in\VConv(C_U;V,F)$ is $k$-homogeneous for $1\le k\le n$ and that $x_0\in\supp\mu\cap\partial U$. Let us identify $V\cong\R^n$. By taking a supporting hyperplane through $x_0$ and using translations as well as rotations, we can assume that $x_0=0$ and that $\bar{U}\subset [0,\infty)\times\R^{n-1}$.\\
		As $0\in\supp\mu$, we can choose functions $\phi_1^j,...,\phi_k^j\in C^\infty_c(\R^n)$ with $\supp\phi_i^j\subset U_{\frac{1}{j}}(0)$ such that
		\begin{align*}
			\GWVConv(\mu)(\phi_1^j\otimes...\otimes\phi_k^j)=1  \quad\forall j\in\mathbb{N}.
		\end{align*}
		Consider the function $f_j\in C_U$ given by
		\begin{align*}
			h_j(x)=\begin{cases}
				\infty & x\in (-\infty,0)\times\mathbb{R}^{n-1},\\
				\max\left(\frac{(x_1-(j+\frac{1}{j}))^2}{2}+\sum\limits_{i=2}^{n}\frac{x_i^2}{2}, \frac{j^2}{2}\right)-\frac{j^2}{2} & x\in [0,\infty)\times\mathbb{R}^{n-1}.
			\end{cases}
		\end{align*}	
		Then $h_j\equiv 0$ on $B_{j}(j+\frac{1}{j},0,\dots,0)$. Setting $x_j:=(j+\frac{1}{j},0,\dots,0)$, we see that $x\in B_{j}(x_j)$ implies 
		\begin{align*}
			|x-x_{j+1}|\le |x-x_j|+|x_j-x_{j+1}|\le j+\frac{1}{j}-\frac{1}{j+1}\le j+1,
		\end{align*}
		so $B_j(x_j)\subset B_{j+1}(x_{j+1})$. If $y=(y_1,...,y_n)\in (0,\infty)\times\mathbb{R}^{n-1}$ is given, \begin{align*}
			|y-x_j|^2-j^2=-2\left(j+\frac{1}{j}\right)y_1+\frac{1}{j^2}+2+\sum_{i=1}^{n}y_i^2\rightarrow-\infty \quad \text{for }j\rightarrow\infty,
		\end{align*}
		so we see that $\bigcup_{j\in\mathbb{N}}B_j(x_j)=(0,\infty)\times\R^{n-1}$. In particular, the sequence $(h_j)_j$ converges pointwise to $h:=I^\infty_{[0, \infty)\times\mathbb{R}^{n-1}}$ for all $x\notin \{0\}\times \R^{n-1}$. By Proposition \ref{proposition_convergence_finite_convex_functions} this implies that $(h_j)_j$ epi-converges to $h$.\\
		Now set $c_j:=\max_{i=1,...,k}\|\phi_i^j\|_{C^2(V)}$ and define $f_i^j:= c_jh_j+\phi_i^j$. Then $f_i^j\in C_U$ for all $1\le i\le k$, $j\in\mathbb{N}$. By construction $\lim_{j\rightarrow\infty}f_i^j(x)=I^\infty_{[0, \infty)\times\mathbb{R}^{n-1}}(x)=h(x)$ for $x\notin \{0\}\times\mathbb{R}^{n-1}$, so Proposition \ref{proposition_convergence_finite_convex_functions} shows that $(f_i^j)_j$ epi-converges to $h$ for $j\rightarrow\infty$.
		Using the definition of the Goodey-Weil embedding and the joint continuity of the polarization $\bar{\mu}$, we obtain the contradiction
		\begin{align*}
			 1=&\lim\limits_{j\rightarrow\infty}\GWVConv(\mu)(\phi_1^j\otimes...\otimes\phi_k^j)\\
			 =&\lim\limits_{j\rightarrow\infty}\sum\limits_{l=0}^k(-1)^{k-l}\frac{1}{(k-l)!l!}\sum_{\sigma\in S_k}\bar{\mu}\left(f^j_{\sigma(1)},...,f^j_{\sigma(l)},h_{\sigma(l+1)},\dots,h_{\sigma(k)}\right)\\
			=&\sum\limits_{l=0}^k(-1)^{k-l}\frac{1}{(k-l)!l!}\sum_{\sigma\in S_k}\bar{\mu}(h[l],h[k-l])\\
			=& (-1)^k\mu(h)\sum\limits_{l=0}^k(-1)^{l}\frac{k!}{(k-l)!l!}=0.
		\end{align*}
		Thus $0\notin\supp \mu$.
	\end{proof}
	
\subsection{Valuations on convex functions on open convex sets}
	\label{section_valuations_on_open_sets}
	For an open and convex subset  $U\subset V$ let us denote by $\Conv(U,\R)$ the space of all convex functions $f:U\rightarrow\R$. Equipped with the topology of uniform convergence on compact subset of $U$, $\Conv(U,\R)$ becomes a metrizable topological space.
	\begin{lemma}
		For $f\in\Conv(U,\R)$, define $\tilde{f}$ by
		\begin{align*}
			\tilde{f}(x_0)=\begin{cases}
				f(x_0)  & x_0\in U,\\
				\liminf\limits_{x\rightarrow x_0,x\in U}f(x) & x_0\in\partial U,\\
				\infty & x_0\in V\setminus\bar{U}.
			\end{cases}
		\end{align*}
		Then $\tilde{f}\in\Conv(V)$.
	\end{lemma}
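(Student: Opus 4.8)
\emph{Proof proposal.} Properness requires no work: $\tilde f$ restricts to the real-valued function $f$ on the nonempty open set $U$, so $\tilde f\not\equiv+\infty$ and $\tilde f$ never takes the value $-\infty$ at points of $U$. The plan for the rest is to realize $\tilde f$ as the lower semicontinuous hull of an auxiliary convex function whose convexity is transparent, and then read off the conclusion from the structure of its epigraph.

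First I would set $\hat f:V\to(-\infty,+\infty]$ by $\hat f:=f$ on $U$ and $\hat f:=+\infty$ on $V\setminus U$. Then $\epi\hat f=\{(x,s)\in V\times\R : x\in U,\ f(x)\le s\}$ is convex because $f$ is convex on the convex set $U$, so $\hat f$ is a proper convex function. Next I would check that, for every $x\in V$,
\[
\tilde f(x)=\liminf_{y\to x}\hat f(y),
\]
distinguishing the three cases in the definition of $\tilde f$: for $x\in U$ this is continuity of $f$ on the open set $U$; for $x\in\partial U=\bar U\setminus U$ every punctured neighbourhood of $x$ meets $U$ while $\hat f\equiv+\infty$ off $U$, so the $\liminf$ is computed along sequences in $U$ and equals $\liminf_{y\to x,\,y\in U}f(y)$; for $x\notin\bar U$ a whole neighbourhood of $x$ misses $\bar U$, so the $\liminf$ is $+\infty$. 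Consequently $\epi\tilde f=\overline{\epi\hat f}$, the closure being taken in $V\times\R$.

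From here the conclusion is short. Being the closure of a convex set, $\epi\tilde f$ is closed and convex, so $\tilde f$ will be convex and lower semicontinuous once we know $\epi\tilde f$ is genuinely an epigraph, i.e. that $\tilde f$ nowhere equals $-\infty$. For this I would exhibit a global affine minorant of $\hat f$: since $f$ is finite and convex on the open set $U$ it has a subgradient $v\in V^*$ at some fixed $x_0\in U$, and the affine function $\ell(x):=f(x_0)+\langle v,x-x_0\rangle$ then satisfies $\ell\le\hat f$ on all of $V$ (trivially where $\hat f=+\infty$), hence $\ell=\overline{\ell}\le\overline{\hat f}=\tilde f$, so $\tilde f>-\infty$ everywhere. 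Together with properness and the closed convex epigraph, this gives $\tilde f\in\Conv(V)$.

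I expect the only mildly delicate points to be the identification of $\tilde f$ with the $\liminf$-regularization of $\hat f$ at boundary points of $U$ (which hinges on $x\in\partial U$ forcing nearby points of $U$ to exist) and the verification that passing to the closure does not introduce the value $-\infty$ (handled by the affine minorant). A fully elementary alternative, if one prefers to avoid the closure formalism, is to verify convexity of $\tilde f$ directly by the standard three-case split of a segment $[x_0,x_1]$ --- both endpoints in $U$; one in $U$ and one in $\partial U$; both in $\partial U$ --- approximating each boundary endpoint by a sequence in $U$ along which $f$ tends to the relevant $\liminf$ and using continuity of $f$ on $U$; lower semicontinuity is then immediate, since at interior points $f$ is continuous and at boundary points $\tilde f$ is by definition a $\liminf$, while $\tilde f\equiv+\infty$ outside $\bar U$.
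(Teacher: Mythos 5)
Your main argument is correct, and it takes a genuinely different route from the paper. The paper proves convexity of $\tilde f$ (and $\tilde f>-\infty$) by the direct, elementary segment argument that you only sketch as your ``fully elementary alternative'': it fixes $x\in\partial U$, picks a sequence $(x_j)_j$ in $U$ realizing the $\liminf$, passes to the limit in $f(\lambda y+(1-\lambda)x_j)\le\lambda f(y)+(1-\lambda)f(x_j)$ using continuity of $f$ on the open set $U$, and then handles segments with both endpoints on $\partial U$ by a second approximation combined with lower semicontinuity. Your primary route instead identifies $\tilde f$ as the lower semicontinuous regularization of the extension-by-$+\infty$ $\hat f$, invokes the standard identity $\epi\tilde f=\overline{\epi\hat f}$, and rules out the value $-\infty$ by a global affine minorant coming from a subgradient of $f$ at an interior point. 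Both are complete; the trade-off is that your version is shorter and more conceptual but leans on two imported facts (the epigraph characterization of the $\liminf$-hull and the existence of subgradients of a finite convex function on an open set), whereas the paper's version is self-contained and, as a byproduct, produces the explicit inequality $f(\lambda y+(1-\lambda)x)\le\lambda f(y)+(1-\lambda)\tilde f(x)$ for $y\in U$, $x\in\partial U$. The only point worth making explicit in your write-up is the case analysis behind $\tilde f(x)=\liminf_{y\to x}\hat f(y)$ at boundary points (every neighbourhood of $x\in\partial U$ meets $U$, and $\hat f(x)=+\infty$ there, so the unpunctured $\liminf$ is computed entirely through $U$), which you do flag as the delicate step.
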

	\begin{proof}
		Observe that $\tilde{f}(x_0)=\liminf_{x\rightarrow x_0,x\in U}f(x)$ for all $x\in\bar{U}$, as $f$ is continuous on $U$.\\
		Obviously, $\tilde{f}$ is lower semi-continuous. We need to show that $\tilde{f}>-\infty$ and that $\tilde{f}$ is convex.\\
		Let $x\in\partial U$ be any point, $(x_j)_j$ a sequence in $U$ converging to $x$ such that $\lim_{j\rightarrow\infty}f(x_j)=\tilde{f}(x)$. For $y\in U$ and $\lambda\in (0,1)$ the convexity of $f$ implies
		\begin{align*}
			f(\lambda y+(1-\lambda)x_j)\le \lambda f(y)+(1-\lambda)f(x_j).
		\end{align*}
		As $U$ is open, $\lambda y+(1-\lambda)x_j\rightarrow \lambda y+(1-\lambda)x$ in $U$ for all $\lambda\in (0,1)$, so the continuity of $f$ implies
		\begin{align*}
			f(\lambda y+(1-\lambda)x)\le \lambda f(y)+(1-\lambda)\tilde{f}(x).
		\end{align*}
		In particular, $\tilde{f}(x)>-\infty$. In addition, we see that $\tilde{f}$ is convex along line segments $[x,y]$, where $x\in\partial U$ and $y\in U$. To see that $\tilde{f}$ is convex, the only non-trivial case remaining is a line segment $[x,y]$ where $x,y\in\partial U$. Take a sequence $(y_j)_j$ in $U$ converging to $y$ such that $\lim\limits_{j\rightarrow\infty}f(y_j)=\tilde{f}(y)$. Using the inequality above we see that for $\lambda\in (0,1)$
		\begin{align*}
			f(\lambda y_j+(1-\lambda)x)\le \lambda f(y_j)+(1-\lambda)\tilde{f}(x).
		\end{align*}
		Now $\lambda y_j+(1-\lambda)x\in U$ defines a sequence converging to $\lambda y+(1-\lambda)x\in\bar{U}$. Thus taking limits and using the remark we obtain 
		\begin{align*}
			\tilde{f}(\lambda y+(1-\lambda)x)\le\liminf\limits_{j\rightarrow\infty}f(\lambda y_j+(1-\lambda)x)\le \lambda \tilde{f}(y)+(1-\lambda)\tilde{f}(x).
		\end{align*}
	\end{proof}
	\begin{proposition}
		\label{proposition_continuity_extension_from_open_convex_set}
		The extension $f\mapsto \tilde{f}$ defines a continuous, injective map $i_U:\Conv(U,\R)\rightarrow C_U:=\{f\in\Conv(V) \ : \ f|_U<\infty\}$. The inverse map is given by restricting the map
		\begin{align*}
			\res:C_U&\rightarrow \Conv(U,\R)\\
			f&\mapsto f|_U 
		\end{align*}
		to the image of $\Conv(U,\R)$ in $C_U$ and is also continuous. 
		In addition, $i_U$ and $\res$ are compatible with the formation of pointwise maximum and minimum of two convex functions.
	\end{proposition}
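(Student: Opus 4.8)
The plan is to dispose of the elementary algebraic identities first, deduce injectivity from them, then prove the two continuity statements with Proposition~\ref{proposition_convergence_finite_convex_functions}, and finally analyze the behaviour of the extension on $\partial U$, which is where the only real work lies.

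First I would observe that $\tilde{f}|_U=f$ straight from the definition of $\tilde{f}$, so $\res\circ i_U=\mathrm{id}_{\Conv(U,\R)}$; in particular $i_U$ is injective, and $\res$, restricted to $i_U(\Conv(U,\R))$, is precisely $i_U^{-1}$. Moreover $\tilde{f}\in C_U$ because $\tilde{f}|_U=f<\infty$, while $\tilde{f}\in\Conv(V)$ by the preceding Lemma, so $i_U$ indeed maps $\Conv(U,\R)$ into $C_U$. Compatibility of $\res$ with pointwise maximum and minimum is immediate, since restriction commutes with these operations.

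For the continuity of $i_U$ I would use that both $\Conv(U,\R)$ and $C_U\subset\Conv(V)$ are metrizable, so sequential continuity suffices. If $f_j\to f$ in $\Conv(U,\R)$, i.e.\ uniformly on compact subsets of $U$, then $\tilde{f}_j=f_j\to f=\tilde{f}$ pointwise on $U$, while $\tilde{f}_j\equiv+\infty\equiv\tilde{f}$ on $V\setminus\overline{U}$; hence $\tilde{f}_j\to\tilde{f}$ pointwise on $V\setminus\partial U$, which is dense since $U$ open convex forces $\mathrm{int}\,\overline{U}=U$ and thus $\partial U$ to have empty interior. As $\dom\tilde{f}\supseteq U$ has nonempty interior, Proposition~\ref{proposition_convergence_finite_convex_functions} yields that $\tilde{f}_j$ epi-converges to $\tilde{f}$, i.e.\ $i_U(f_j)\to i_U(f)$ in $C_U$. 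The continuity of $\res$ on all of $C_U$ (hence a fortiori on $i_U(\Conv(U,\R))$) is the mirror image: if $f_j\to f$ in $C_U$, then $\mathrm{int}(\dom f)\supseteq U$ is nonempty, so Proposition~\ref{proposition_convergence_finite_convex_functions} gives uniform convergence of $f_j$ to $f$ on every compact subset of $\mathrm{int}(\dom f)$, in particular on every compact subset of $U$; since each $f_j|_U$ and $f|_U$ is finite-valued by the definition of $C_U$, this is exactly convergence in $\Conv(U,\R)$.

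It remains to prove $\widetilde{\max(f,g)}=\max(\tilde{f},\tilde{g})$ and $\widetilde{\min(f,g)}=\min(\tilde{f},\tilde{g})$ (the latter when $\min(f,g)$ is convex), and this is the step I expect to be the main obstacle. On $U$ and on $V\setminus\overline{U}$ both sides agree trivially, so everything happens at a boundary point $x_0\in\partial U$. The crucial convexity fact is that the $\liminf$ defining the extension at $x_0$ is realised along any segment into $U$: fixing $y_0\in U$ and writing $x_\lambda:=(1-\lambda)y_0+\lambda x_0\in U$ for $\lambda\in[0,1)$, the map $\lambda\mapsto f(x_\lambda)$ is convex, so $\lim_{\lambda\to1^-}f(x_\lambda)$ exists in $(-\infty,+\infty]$, and comparing it with an arbitrary minimising sequence $z_k\to x_0$ (pushed slightly toward $y_0$ and using continuity of $f$ on $U$) shows this limit equals $\tilde{f}(x_0)$; the same applies to $g$, to $\max(f,g)$ and to $\min(f,g)$. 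Then $\widetilde{\max(f,g)}(x_0)\ge\max(\tilde{f}(x_0),\tilde{g}(x_0))$ from $\max(f,g)\ge f,g$, and evaluating along $x_\lambda$ gives $\widetilde{\max(f,g)}(x_0)\le\lim_{\lambda\to1^-}\max(f,g)(x_\lambda)=\max(\tilde{f}(x_0),\tilde{g}(x_0))$; symmetrically $\widetilde{\min(f,g)}(x_0)\le\min(\tilde{f}(x_0),\tilde{g}(x_0))$ from $\min(f,g)\le f,g$, while passing to a subsequence of a minimising sequence along which $\min(f,g)$ coincides with, say, $f$ yields $\widetilde{\min(f,g)}(x_0)\ge\tilde{f}(x_0)\ge\min(\tilde{f}(x_0),\tilde{g}(x_0))$. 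Everything else is bookkeeping around Proposition~\ref{proposition_convergence_finite_convex_functions}; the boundary analysis of $f\mapsto\tilde{f}$ is the only genuinely non-formal ingredient.
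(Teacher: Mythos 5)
Your proposal is correct, and for the injectivity and the two continuity statements it coincides with the paper's argument: both reduce to sequential continuity via metrizability and invoke Proposition~\ref{proposition_convergence_finite_convex_functions} (pointwise convergence on the dense set $V\setminus\partial U$ in one direction, uniform convergence on compact subsets of $U\subset\mathrm{int}(\dom f)$ in the other).

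The one place where you take a genuinely different route is the compatibility of $i_U$ with $\max$ and $\min$ at boundary points. The paper proves the inequality $\max(i_U(f),i_U(h))(x_0)\le i_U(\max(f,h))(x_0)$ by lower semi-continuity of the extensions along a minimising sequence, and dismisses the converse inequality as holding ``by definition of $i_U$''; that converse is exactly the non-trivial direction (for arbitrary functions $\liminf\max$ can exceed $\max(\liminf,\liminf)$), and it is precisely what your radial-limit lemma supplies: for a convex $f$ the limit of $f$ along any segment $x_\lambda=(1-\lambda)y_0+\lambda x_0$ into $U$ exists and equals $\liminf_{x\to x_0,x\in U}f(x)$, which you justify correctly by comparing the segment values against a minimising sequence via convexity and continuity of $f$ on $U$. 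Evaluating $\max(f,g)$ along that common segment then gives the missing inequality, and your subsequence argument for $\min$ is also sound. So your treatment of this step is, if anything, more complete than the paper's; the trade-off is that you need the (short) radial-limit argument, whereas the paper's $\epsilon$-argument only uses lower semi-continuity but leaves the harder direction implicit.
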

	\begin{proof}
		It is clear that $i_U$ is injective. To see that it is continuous, it is enough to show that it is sequentially continuous, as both spaces are metrizable.\\
		Let $(f_j)_j\subset\Conv(U,\R)$ be a sequence converging to $f\in\Conv(U,\R)$. Then $(\tilde{f}_j)_j$ converges pointwise on the dense subset $V\setminus\partial U$ to $\tilde{f}$, so the claim follows from Proposition \ref{proposition_convergence_finite_convex_functions}. Of course, the restriction map defines the inverse to this extension procedure. The continuity follows again from Proposition \ref{proposition_convergence_finite_convex_functions}.\\
		Obviously, the restriction map is compatible with the formation of the pointwise maximum and minimum. If $f,h\in\Conv(U,\R)$, then $i_U(\max(f,h))=\max (i_U(f),i_U(h))$ on $V\setminus \partial U$. Thus $i_U(\max(f,h))(x_0)\le\max(i_U(f),i_U(h))(x_0)$ for $x_0\in\partial U$ by definition of $i_U$. For the reverse inequality, take a sequence $(x_j)_j$ in $U$ converging to $x_0$ such that
		\begin{align*}
			\lim\limits_{j\rightarrow\infty}\max(f(x_j),h(x_j))=i_U(\max(f,h))(x_0).
		\end{align*}
		As $i_U(f)$ and $i_U(h)$ are lower semi-continuous, given $\epsilon>0$ there exists $N\in\mathbb{N}$ such that $i_U(f)(x_0)\le f(x_j)+\epsilon$ and $i_U(h)(x_0)\le h(x_j)+\epsilon$ for all $j\ge N$ and thus
		\begin{align*}
			\max(i_U(f),i_U(h))(x_0)\le\max(f(x_j),h(x_j))+\epsilon \quad\forall j\ge N.
		\end{align*}
		Thus $\max(i_U(f),i_U(h))(x_0)\le i_U(\max(f,h))(x_0)$. The same argument can be applied to the minimum.
	\end{proof}
	\begin{definition}
		Let $U$ be an open convex subset, $F$ a real locally convex vector space. We will denote the space of all continuous valuations $\mu:\Conv(U,\R)\rightarrow F$ that are dually epi-translation invariant by $\VConv(U,F)$.
	\end{definition}
	As usual, we equip $\VConv(U,F)$ with the topology of uniform convergence on compact subsets, which is generated by the semi-norms $\|\mu\|_{F;K}:=\sup_{f\in K}|\mu(f)|_F$ for all continuous semi-norms $|\cdot|_F$ of $F$ and compact subsets $K\subset \Conv(U,\R)$.\\
	Let $C_U:=\{f\in\Conv(V) \ : \ f|_U<\infty\}$ be the cone of convex functions that are finite on $U$. Using Proposition \ref{proposition_continuity_extension_from_open_convex_set}, we can consider the map
	\begin{align*}
		\res^*:\VConv(U,F)&\rightarrow \VConv(C_U;V,F)\\
		\mu&\mapsto [f\mapsto \mu(f|_U)].
	\end{align*}
	\begin{lemma}
		\label{lemma_properties_inclusion_open_subset}
		$\res^*:\VConv(U,F)\rightarrow \VConv(C_U;V,F)$ is injective and continuous.
	\end{lemma}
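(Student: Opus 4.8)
The plan is to check the three assertions---that $\res^*(\mu)$ genuinely lies in $\VConv(C_U;V,F)$, that $\res^*$ is injective, and that it is continuous---each of which reduces to properties of the pure restriction map $\res\colon C_U\to\Conv(U,\R)$, $f\mapsto f|_U$, recorded (on the image of $i_U$) in Proposition~\ref{proposition_continuity_extension_from_open_convex_set} and ultimately resting on Proposition~\ref{proposition_convergence_finite_convex_functions}.

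First I would establish that $\res$ is continuous on all of $C_U$. If $f_j\to f$ epi-converges in $C_U$, then, since every function in $C_U$ is finite on the open set $U$, we have $U\subset\mathrm{int}\,\dom f$, so compact subsets of $U$ contain no boundary point of $\dom f$; Proposition~\ref{proposition_convergence_finite_convex_functions} then gives $f_j|_U\to f|_U$ uniformly on compact subsets of $U$, i.e. convergence in $\Conv(U,\R)$. Because $\res^*(\mu)=\mu\circ\res$, this immediately makes $\res^*(\mu)$ continuous on $C_U$. For the valuation identity, note that restriction to $U$ commutes with pointwise maximum and minimum, so whenever $f,h,\max(f,h),\min(f,h)\in C_U$ their restrictions all lie in $\Conv(U,\R)$ and the valuation equation for $\mu$ transfers verbatim. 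Dual epi-translation invariance passes over as well: $(f+\lambda+c)|_U=f|_U+\lambda|_U+c$, and $\lambda|_U+c$ is an affine function on $U$, under whose addition $\mu$ is invariant. Hence $\res^*(\mu)\in\VConv(C_U;V,F)$.

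Injectivity follows from the identity $\res\circ i_U=\mathrm{id}_{\Conv(U,\R)}$ of Proposition~\ref{proposition_continuity_extension_from_open_convex_set}: if $\res^*(\mu)=0$, then for every $g\in\Conv(U,\R)$ the function $i_U(g)$ lies in $C_U$ and $\mu(g)=\mu\bigl(\res(i_U(g))\bigr)=\res^*(\mu)(i_U(g))=0$, so $\mu=0$. For continuity of $\res^*$, recall that both spaces carry the compact-open topology generated by the seminorms $\|\nu\|_{F;K}=\sup_{f\in K}|\nu(f)|_F$. Given a compact set $K\subset C_U$ and a continuous seminorm $|\cdot|_F$ on $F$, the set $\res(K)$ is compact in $\Conv(U,\R)$ by the continuity of $\res$, and $\|\res^*(\mu)\|_{F;K}=\sup_{f\in K}|\mu(f|_U)|_F=\sup_{g\in\res(K)}|\mu(g)|_F=\|\mu\|_{F;\res(K)}$; thus $\res^*$ pulls each generating seminorm back to a generating seminorm, which yields continuity (in fact shows $\res^*$ is a topological embedding onto its image).

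The whole argument is essentially bookkeeping, and the one place demanding genuine care is the continuity of $\res$ on $C_U$: epi-convergence is delicate near the boundary of the effective domain, and the point that makes it go through is precisely that all members of $C_U$ are finite on the common open set $U$, so that compact subsets of $U$ stay uniformly away from the domain boundaries of the approximating functions.
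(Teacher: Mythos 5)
Your proof is correct and follows essentially the same route as the paper: the continuity argument (pushing a compact $K\subset C_U$ forward to the compact set $\res(K)$ and observing $\|\res^*\mu\|_{F;K}=\|\mu\|_{F;\res(K)}$) is identical, and your explicit verification that $\res^*(\mu)\in\VConv(C_U;V,F)$ is a harmless expansion of what the paper delegates to Proposition~\ref{proposition_continuity_extension_from_open_convex_set}. The only deviation is injectivity, where you use $\res\circ i_U=\mathrm{id}_{\Conv(U,\R)}$ directly, while the paper approximates $i_U(f)$ by its Lipschitz regularizations and invokes continuity of $\mu$; your shortcut is valid and in fact simpler.
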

	\begin{proof}
		Assume that $\res^*(\mu)=0$. Let $f\in\Conv(U,\R)$. The Lipschitz regularization $\reg_r \tilde{f}$ belongs to $\Conv(V,\R)$ for $r>0$ small enough, Proposition \ref{proposition_properties_LIpschitz_regularization} together with Proposition \ref{proposition_continuity_extension_from_open_convex_set} implies that $\mu([\reg_r\tilde{f}]|_U)$ converges to $\mu(f)$. However $\mu([\reg_r\tilde{f}]|_U)=0$, so $\mu(f)=0$. As this holds for arbitrary $f\in\Conv(U,\R)$, $\mu=0$.\\
		To see that the map is continuous, let $K\subset C_U$ be a compact subset. As the restriction $\res:C_U\rightarrow\Conv(U,\R)$ is continuous due to Proposition \ref{proposition_continuity_extension_from_open_convex_set}, $\res(K)\subset\Conv(U,\R)$ is compact, and 
		\begin{align*}
			\|\res^*\mu\|_{F;K}=\|\mu\|_{F;\res(K)}.
		\end{align*}
		Thus $\res^*$ is continuous.
	\end{proof}
	In addition to $\res^*$, we can also consider
	\begin{align*}
		i_{U}^*:\VConv(C_U;V,F)&\rightarrow\VConv(U,F)\\
		\mu&\mapsto [f\mapsto \mu(i_U(f))].
	\end{align*}
	Using the same argument as in Lemma \ref{lemma_properties_inclusion_open_subset}, we see that this is well defined and continuous.
	\begin{proposition}
		\label{proposition_isormorphism_restriction}
		Let $F$ be a locally convex vector space admitting a continuous norm. Then $i_{U}^*$ and $\res^*$ are topological isomorphisms and inverse to each other.
	\end{proposition}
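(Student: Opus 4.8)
The plan is to show that $i_U^*$ and $\res^*$ are mutually inverse; since both maps are continuous by Lemma~\ref{lemma_properties_inclusion_open_subset} and the remark following it, this is all one needs in order to conclude that they are topological isomorphisms. One of the two compositions is a formality: for $\mu\in\VConv(U,F)$ and $f\in\Conv(U,\R)$ the extension satisfies $i_U(f)|_U=f$ by the very construction of $i_U$, so $i_U^*(\res^*(\mu))(f)=\res^*(\mu)(i_U(f))=\mu(i_U(f)|_U)=\mu(f)$, i.e. $i_U^*\circ\res^*=\mathrm{id}_{\VConv(U,F)}$.

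The substance is the reverse composition $\res^*\circ i_U^*=\mathrm{id}_{\VConv(C_U;V,F)}$, which amounts to the single identity $\nu(i_U(f|_U))=\nu(f)$ for every $\nu\in\VConv(C_U;V,F)$ and every $f\in C_U$ (then $\res^*(i_U^*(\nu))(f)=i_U^*(\nu)(f|_U)=\nu(i_U(f|_U))=\nu(f)$). To prove it I would first reduce to $F=\R$ by composing with an arbitrary $\lambda\in F'$, which is legitimate since $F$ is locally convex, and then restrict the resulting real-valued valuation to $\Conv(V,\R)$ to obtain $\nu_0\in\VConv(V)$. Its Goodey--Weil distribution is determined by the values of $\nu$ on $\Conv(V,\R)\cap C^\infty(V)$ alone, hence $\GWVConv(\nu_0)=\GWVConv(\nu)$ and $\supp\nu_0=\supp\nu$; by Proposition~\ref{proposition_support_valuations_on_open_set} this is a compact subset of $U$ (compactness being automatic for real-valued valuations, cf.\ Theorem~\ref{theorem_compact_support_GW}).

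The key observation is that $i_U(f|_U)$ and $f$ coincide on all of $U$, and there they are finite-valued convex functions. Choosing open sets and a compact set with $\supp\nu_0\subset W\subset\overline W\subset A\subset U$, both functions are Lipschitz on a neighbourhood of $\overline W$ with one and the same constant $L$, so their subdifferentials over $W$ lie in $B_L$ by Lemma~\ref{lemma_compactness_face_epigraph_conjugate-function}. Hence by Proposition~\ref{proposition_properties_LIpschitz_regularization} there is $r_0>0$ such that for all $0<r\le r_0$ we have $\reg_r f,\reg_r(i_U(f|_U))\in\Conv(V,\R)$ and, by part iii. of that proposition, $\reg_r f\equiv f\equiv i_U(f|_U)\equiv\reg_r(i_U(f|_U))$ on $W$. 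Since $W$ is an open neighbourhood of $\supp\nu_0$, Proposition~\ref{proposition_support_convex_valuation} yields $\nu_0(\reg_r f)=\nu_0(\reg_r(i_U(f|_U)))$ for $0<r\le r_0$; letting $r\to0$ and using that $\reg_r g$ epi-converges to $g$ together with the continuity of $\nu$ on $C_U$ gives $\nu(f)=\nu(i_U(f|_U))$. Combining the two computations, $i_U^*$ and $\res^*$ are inverse bijections, and being continuous they are topological isomorphisms.

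I expect the main obstacle to be exactly this middle step: because an element of $C_U$ may take the value $+\infty$ off $U$, Proposition~\ref{proposition_support_convex_valuation} cannot be applied to $\nu$ directly, so one must descend to finite-valued functions via the Lipschitz regularization and exploit that on a neighbourhood of $\supp\nu\subset U$ the regularization leaves the function unchanged. Checking the hypothesis of Proposition~\ref{proposition_properties_LIpschitz_regularization}~iii.\ uniformly over $W$ (the subgradient bound) and keeping track of which functions land in $\Conv(V,\R)$ for small $r$ is the only place that genuinely requires care; everything else is bookkeeping with the definitions of $i_U$, $\res$, $i_U^*$ and $\res^*$.
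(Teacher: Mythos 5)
Your proof is correct and follows essentially the same route as the paper: both arguments rest on Proposition \ref{proposition_support_valuations_on_open_set} (the support of $\nu\in\VConv(C_U;V,F)$ is a compact subset of $U$) combined with the locality statement of Proposition \ref{proposition_support_convex_valuation}, and both get continuity of the inverse for free because the inverse is exhibited as one of the two maps already known to be continuous. The only structural difference is that the paper verifies $i_U^*\circ\res^*=\mathrm{Id}$ and then proves injectivity of $i_U^*$ by establishing $\nu(h)=\nu(i_U(h|_U))$ for $h$ in the dense subset $\Conv(V,\R)\subset C_U$, whereas you verify $\res^*\circ i_U^*=\mathrm{Id}$ directly on all of $C_U$. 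Your detour through the Lipschitz regularization is the right way to do this and is in fact slightly more careful than the paper's own wording: Proposition \ref{proposition_support_convex_valuation} is stated for finite-valued convex functions, and $i_U(f|_U)$ equals $+\infty$ off $\overline{U}$, so one genuinely has to descend to $\Conv(V,\R)$ (via $\reg_r$, which leaves both functions unchanged on a neighbourhood of the support for small $r$) before invoking that proposition and then pass to the limit $r\to 0$ using continuity on $C_U$. The reduction to $F=\R$ via $\lambda\in F'$ and the observation that restricting to $\Conv(V,\R)$ does not change the Goodey--Weil distribution are both legitimate; note only that $\supp(\lambda\circ\nu)$ is in general merely contained in $\supp\nu$, not equal to it, but containment is all your argument needs.
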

	\begin{proof}
		It is easy to see that $i_{U}^*\circ \res^*=Id_{\VConv(U,F)}$, so $i_U^*$ is surjective. Let us show that $i_U^*$ is injective. Assume that $\mu\in\VConv(C_U;V,F)$ satisfies $\mu(i_U(f))=0$ for all $f\in\Conv(U,\R)$. Due to Proposition \ref{proposition_support_valuations_on_open_set}, the support of $\mu$ is compactly contained in $U$. Given $h\in\Conv(V,\R)$, the function $i_U(h|_U)$ coincides with $h$ on $U$, i.e. they coincide on a neighborhood of the support of $\mu$. Proposition \ref{proposition_support_convex_valuation} implies $\mu(h)=\mu(i_U(h|_U))=0$. Thus $\mu$ vanishes on the dense subset $\Conv(V,\R)\subset C_U$, i.e. $\mu=0$.\\
		We obtain $(i_U^*)^{-1}=\res^*$, which is continuous. The same applies to $(\res^*)^{-1}=i_U$.
	\end{proof}
	\begin{proof}[Proof of Theorem \ref{maintheorem_isomorphism_cone_open_subset}]
		This is just a reformulation of Proposition \ref{proposition_isormorphism_restriction}.
	\end{proof}

\addcontentsline{toc}{section}{References}
\bibliography{literature}

\Addresses

\end{document}